\newtheorem{thm}{Theorem}[section]
\newtheorem{lemma}[thm]{Lemma}
\newtheorem{prop}[thm]{Proposition}
\newtheorem{cor}[thm]{Corollary}
\theoremstyle{definition}
\newtheorem{defn}[thm]{Definition}
\newtheorem{example}[thm]{Example}
\theoremstyle{remark}
\newtheorem{rem}[thm]{Remark}
\numberwithin{equation}{section}
\newcommand{\abs}[1]{\lvert#1\rvert}
\DeclareMathOperator{\ann}{Ann}
\DeclareMathOperator{\Add}{Add}
\DeclareMathOperator{\add}{add}
\DeclareMathOperator{\Ker}{Ker}
\DeclareMathOperator{\Coker}{Coker}
\DeclareMathOperator{\Img}{Im}
\DeclareMathOperator{\Mat}{Mat}
\DeclareMathOperator{\Hom}{Hom}
\DeclareMathOperator{\End}{End}
\DeclareMathOperator{\Soc}{Soc}
\DeclareMathOperator{\Rad}{Rad}
\DeclareMathOperator{\I}{I}
\newcommand{\sm}{\sigma[M]}
\newcommand{\annl}{\ann^\ell}
\newcommand{\annr}{\ann^r}
\newcommand{\ess}{\leq^\text{ess}}
\newcommand{\dleq}{\leq^\oplus}
\newcommand{\blankbox}[2]{%
  \parbox{\columnwidth}{\centering
    \setlength{\fboxsep}{0pt}%
    \fbox{\raisebox{0pt}[#2]{\hspace{#1}}}%
  }%
}
\begin{document}

\title{$\mathfrak{m}$-Baer and $\mathfrak{m}$-Rickart lattices}

\author{Mauricio Medina-B\'arcenas}
\address{Facultad de Ciencias, Universidad Nacional Aut\'onoma de M\'exico, Circuito Exterior,
	Ciudad Universitaria 04510, Ciudad de México, M\'exico}
\email{mmedina@ciencias.unam.mx}

\thanks{The first author was supported by the grant ``Programa de Becas Posdoctorales en la UNAM 2021'' from the Universidad Nacional Aut\'onoma de M\'exico (UNAM)}

\author{Hugo Rinc\'on Mej\'ia}
\address{Facultad de Ciencias, Universidad Nacional Aut\'onoma de M\'exico, Circuito Exterior,
	Ciudad Universitaria 04510, Ciudad de México, M\'exico}
\email{hurincon@ciencias.unam.mx}

\subjclass[2010]{Primary 06C05, 06C15, 16D10; Secondary 08A35, 06B35}


\date{}


\keywords{Baer lattice, Rickart lattice, linear morphism, Baer module, Rickart module}

\begin{abstract}
	In this paper we introduce the notions of Rickart and Baer lattices and their duals. We show that part of the theory of Rickart and Baer modules can be understood just using techniques from the theory of lattices. For, we use linear morphisms introduced by T. Albu and M. Iosif. We focus on a submonoid with zero $\mathfrak{m}$ of the monoid of all linear endomorphism of a lattice $\mathcal{L}$ in order to give a more general approach and apply our results in the theory of modules. We also show that $\mathfrak{m}$-Rickart and $\mathfrak{m}$-Baer lattices can be characterized by the annihilators in $\mathfrak{m}$ generated by idempotents as in the case of modules.
\end{abstract}

\maketitle

\section{Introduction}\label{intro}

The study of rings and their modules is closely related with the study of lattices. For any ring $R$, every $R$-module $M$ has associated a partial ordered set with arbitrary suprema and infima, namely the lattice of submodules of $M$. This lattice of submodules $\Lambda(M)$, is a complete upper-continuous modular lattice for any $R$-module $M$. The greatest element is $M$ and $0$ is the lowest. The supremum is given by the sum of submodules and the infimum is given by the intersection. Much of the algebraic structure of $M$ is codified by its lattice of submodules. Classical concepts in modules are concepts which can be defined in any complete modular lattice. For example, essential submodule, small submodule, finitely generated submodule, finitely cogenerated submodule, Noetherian module, Artinian module, distributive module and so on \cite{calugareanu2013lattice}. This is why sometimes the language and the techniques from lattice theory gives a better context for algebraic properties. Also, an advantage of working with lattices is the ``point-free" setting that it provides which allows us to apply the results to more general situations. For example, it is known that the subobjects of an object in a complete and cocomplete abelian category is a complete modular lattice \cite[Ch. IV]{stenstromrings}. The intention of this paper is to carry the notions of Baer and Rickart modules to lattices. We want to show that these concepts and its implications mostly depend just of the ordered structure of the lattice of submodules and its relation with the homomorphisms. For, we will use linear morphisms between lattices which were introduced by  T. Albu and M. Iosif in \cite{albu2013category}.

Given a ring $R$ and $X\subseteq R$, the left annihilator of $X$ is given by the left ideal $\annl(X)=\{r\in R\mid rx=0\;\text{ for all }\;x\in X\}$. A ring $R$ is said to be \emph{Baer} (resp. \emph{left Rickart}) if $\annl(X)$ (resp. $\annl(x)$) is generated by an idempotent as a left ideal for every $X\subseteq R$ (resp. $x\in R$). We have that every idempotent $e^2=e$ in a ring $R$ induces a decomposition on $R$ as left $R$-module $R=Re\oplus R(1-e)$ and every direct summand of $_RR$ is generated by an idempotent. Moreover, $\annl(x)$ for an $x\in R$ can be seen as the kernel of the $R$-endomorphism $\_\cdot x:R\to R$ (multiply by $x$ from the right). Hence,
 
 \begin{prop}
 	The following conditions are equivalent for a ring $R$:
 	\begin{enumerate}[label=\emph{(\alph*)}]
 		\item $R$ is a Baer (resp. left Rickart) ring.
 		\item $\annl(X)$ (resp. $\annl(x)$) is a direct summand of $_RR$ for every $X\subseteq R$ (resp. $x\in R$).
 		\item $\bigcap_{f\in X}\Ker f$ (resp. $\Ker f$) is a direct summand of $_RR$ for every $X\subseteq \End_R(R)$ (resp. $f\in \End_R(R)$).
 	\end{enumerate}
 \end{prop}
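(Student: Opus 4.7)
The plan is to first establish the standard anti-isomorphism $\End_R(R) \cong R^{\mathrm{op}}$ so that the annihilator condition in (b) and the kernel condition in (c) become literally the same condition on submodules of $_RR$. Then the equivalence with (a) follows from the two facts already highlighted in the paragraph preceding the proposition: every idempotent $e\in R$ splits $_RR = Re \oplus R(1-e)$, and conversely every direct summand of $_RR$ is of the form $Re$ for some idempotent $e$.

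More concretely, I would proceed in three short steps. First, recall that for any $f\in\End_R(R)$, letting $x=f(1)$, one has $f(r)=rx$ for all $r\in R$, so $\Ker f = \annl(x)$; moreover the assignment $f\mapsto f(1)$ is a bijection between $\End_R(R)$ and $R$. Consequently, for every subset $X\subseteq \End_R(R)$, setting $X' = \{f(1)\mid f\in X\}\subseteq R$ we get
\[
\bigcap_{f\in X}\Ker f \;=\; \bigcap_{x\in X'}\annl(x) \;=\; \annl(X'),
\]
and every subset of $R$ arises this way. This gives (b) $\Leftrightarrow$ (c) immediately, simultaneously for the Baer and Rickart versions.

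Second, to prove (a) $\Leftrightarrow$ (b), observe that a left ideal $I\leq{}_RR$ is generated by an idempotent (as a left ideal) if and only if $I$ is a direct summand of $_RR$: if $I=Re$ with $e^2=e$, then $_RR = Re\oplus R(1-e)$; conversely, if $_RR = I\oplus J$, writing $1=e+e'$ with $e\in I$, $e'\in J$ forces $e^2=e$ and $I=Re$. Applying this equivalence with $I=\annl(X)$ (resp. $I=\annl(x)$) yields (a) $\Leftrightarrow$ (b) in both variants.

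There is essentially no obstacle here, since each step is a direct translation using facts already recorded in the text; the only thing to be careful with is checking that the bijection $\End_R(R)\leftrightarrow R$ respects arbitrary intersections of kernels, which is precisely the content of the first displayed equation above. The point of the proposition is not its difficulty but the motivational observation that the ``annihilator'' formulation of Baer and Rickart rings is really a statement about kernels being direct summands, which is what will allow the subsequent lattice-theoretic generalization.
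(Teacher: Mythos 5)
Your proof is correct and follows exactly the argument the paper sketches in the paragraph preceding the proposition (identifying each $f\in\End_R(R)$ with right multiplication by $f(1)$ so that $\Ker f=\annl(f(1))$, and using the standard correspondence between idempotent-generated left ideals and direct summands of $_RR$); the paper itself states the result without a formal proof, and your write-up supplies precisely the details it leaves implicit.
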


It is the last item which allows us to generalize the notion of Baer and Rickart rings to modules, as it was made by G. Lee, S.T. Rizvi and C. Roman in \cite{rizvibaer, leerickart}. An $R$-module $M$ with endomorphism ring $\End_R(M)$ is said to be \emph{Baer} (resp. \emph{Rickart}) if $\bigcap_{f\in X}\Ker f$ (resp. $\Ker f$) is a direct summand of $M$ for every $X\subseteq \End_R(M)$ (resp. $f\in \End_R(M)$). As it might be expected, the dual notions can be defined similarly. The concepts of dual-Rickart and dual-Baer modules were studied also by Lee, Rizvi and Roman \cite{lee2011dual} and by D.K. T\"ut\"unc\"u and R. Tribak in \cite{tutuncu2010dual} respectively. More recently, these notions have been taken to the categorical context by S. Crivei and A. K\"or in \cite{crivei2016rickart}. From these works, it is clear that, independently of the context of generality, the lattice of subojects is playing a principal rol. It is our intention to revel this rol and show how the lattice techniques are enough to recover much of the theory presented in \cite{crivei2016rickart,leerickart,lee2011dual} and \cite{rizvibaer}.

In the theory of Baer and Rickart modules (or Baer and Rickart objects), it is used all the endomorphims of a given module. In our case, given a lattice $\mathcal{L}$ and the monoid $\End_{lin}(\mathcal{L})$ of all linear endomorphism of $\mathcal{L}$, we will use submonoids of $\End_{lin}(\mathcal{L})$ with certain characteristics and we will develop the theory of Rickart and Baer lattices relative to these submonoids. On one hand this gives a more general context for the theory and on the other hand, with this approach we will be able to apply our results to the case of modules and recover the known results. Since we are more interested in module theory, our applications will be made for modules, nevertheless it is not difficult to see that many of them also apply to the context of abelian categories.

The paper is divided as follows: Section \ref{intro} is this introduction. In Section \ref{pre}, we present the linear morphisms and some properties of them which will be needed along the paper. In particular we study the analogous of the projections on direct summands of a module. We also show that an idempotent linear morphism on a lattice $\mathcal{L}$ induces a decomposition of $\mathcal{L}$ as in the case of modules (Proposition \ref{idemcomp}). The Section \ref{mbrl} is the main part of this manuscript. Here we introduce the concepts of $\mathfrak{m}$-Baer and $\mathfrak{m}$-Rickat lattices and their duals. The prefix $\mathfrak{m}$ will stand for a submonoid with zero of the monoid $\End_{lin}(\mathcal{L})$ of all linear endomorphisms of a lattice $\mathcal{L}$. We show that for an $R$-module $M$, the endomorphism ring $\End_R(M)$ can be seen as a submonoid of $\End_{lin}(\Lambda(M))$ where $\Lambda(M)$ is the lattice of submodules of $M$. With this identification we are able to recover many results of Baer and Rickart modules from the results in lattices (see Proposition \ref{lricmric}). We determine when a initial interval of a $\mathfrak{m}$-Baer (or $\mathfrak{m}$-Rickart or their duals) lattice inherits the property (Proposition \ref{complbaer} and Proposition \ref{compldbaer}). It is proved that the (resp. arbitrary) intersection of complements in a $\mathfrak{m}$-Rickart lattice (resp. $\mathfrak{m}$-Baer lattice) is a complement provided that the $\mathfrak{m}$ contains all the projections (Proposition \ref{baerricscip} and Proposition \ref{riccipssp}). Later, we study the behaviour of the linear endomorphisms of these lattices. We give a lattice-counterpart of the condition $D_2$ known for modules \cite{mohamedcontinuous} and we use the condition $C_2$ defined in \cite{albu2016conditions} to characterize $\mathfrak{m}$-Rickart lattices and dual-$\mathfrak{m}$-Rickart lattices (Proposition \ref{ricd2} and Proposition \ref{dricc2}). Since $\mathfrak{m}$ is a monoid with zero, it possible to define annihilators. Then, $\mathcal{L}$ is an $\mathfrak{m}$-Rickart lattice if and only if the right annihilator of each $\varphi\in\mathfrak{m}$ is generated by an idempotent and $\ker\varphi$ is $\mathfrak{m}$-$\mathcal{L}$-generated. (Corollary \ref{ricendoric} and its dual Corollary \ref{dricendodric}). In a similar way we characterize $\mathfrak{m}$-Baer lattices and their dual (Proposition \ref{baercar} and Proposition \ref{dbaercar}). At the end of this section we study when an $\mathfrak{m}$-Rickart lattice is $\mathfrak{m}$-Baer. In Section \ref{ksing}, we translate to lattices the concepts of $\mathcal{K}$-nonsingularity, $\mathcal{T}$-nonsingularity and their duals \cite[Definition 9.4]{crivei2016rickart}. With this, we are able to characterize $\mathfrak{m}$-Baer and dual-$\mathfrak{m}$-Baer lattices. The main result of this section states that $\mathcal{L}$ is $\mathfrak{m}$-Baer and $\mathfrak{m}$-$\mathcal{K}$-cononsingular if and only if $\mathcal{L}$ is $\mathfrak{m}$-$\mathcal{K}$-nonsingular and satisfies the condition $C_1$ (Corollary \ref{baercarK} and its dual Corollary \ref{dbaercarT}). In the last section, we study indecomposable Rickart lattices and their products. It is show that an indecomposable lattice $\mathcal{L}$ is Rickart if and only if $\mathcal{L}=2$, the complete lattice of two elements (Lemma \ref{ricind2}). As aplications we give some conditions to know when a Rickart $R$-module is simple or semisimple (Corollary \ref{ricmodsimp} and Corollary \ref{ricmodss}). Finally, we present some results about the direct product of Rickart lattices.

\section{Preliminaries}\label{pre}

Given a module $M$, the set $\Lambda(M)$ of submodules of $M$ is an upper-continuous complete modular lattice where the supremum is given by the sum of submodules and the infimum by the intersection of submodules \cite[Ch. III]{stenstromrings}. An element $a$ in a complete lattice $(\mathcal{L},\leq,\bigvee,\bigwedge,\mathbf{0},\mathbf{1})$ is \emph{complemented} if there exists $b\in\mathcal{L}$ such that $a\wedge b=\mathbf{0}$ and $a\vee b=\mathbf{1}$ where $\mathbf{0}$ and $\mathbf{1}$ denote the least and the greatest elements of $\mathcal{L}$ respectively. The element $b$ is called \emph{a complement of $a$ in $\mathcal{L}$}. In the lattice $\Lambda(M)$, $\mathbf{0}=0$ and $\mathbf{1}=M$. This implies that a submodule $N\in\Lambda(M)$ is a complement if and only if $N$ is a direct summand of $M$. Hence, an $R$-module $M$ with endomorphism ring $\End_R(M)$ is \emph{Baer} (resp. \emph{Rickart}) if and only if $\bigcap_{f\in X}\Ker f$ (resp. $\Ker f$) has a complement in $\Lambda(M)$ for every $X\subseteq \End_R(M)$ (resp. $f\in \End_R(M)$). 

On the other hand, given a morphism $f:M\to N$ of $R$-modules, $f$ induces a $\bigvee$-semilattice morphism $f_\ast:\Lambda(M)\to\Lambda(N)$ given by $f_\ast(L)=f(L)$ for all $L\in\Lambda(M)$. Note that $f_\ast(0)=0$ and $f_\ast(L+\Ker f)=f_\ast(L)$ for all $L\in\Lambda(M)$. Also, $f_\ast$ induces an isomorphism of lattices between $\Lambda(M/\Ker f)$ and $\Lambda(f(M))$. In \cite{albu2013category}, the authors capture the properties of this morphism in the following definition.

\begin{defn}\label{linmor}
	Let $\varphi:\mathcal{L}\to \mathcal{L}'$ be a map between the bounded lattices $\mathcal{L}, \mathcal{L}'$.  $\varphi$ is called a \emph{linear morphism} if there exist $\ker_\varphi\in\mathcal{L}$, called the \emph{kernel} of $\varphi$ and $a'\in\mathcal{L}'$ such that 
	\begin{enumerate}
		\item $\varphi(x)=\varphi(x\vee \ker_\varphi)$ for all $x\in\mathcal{L}$.
		\item $\varphi$ induces an isomorphism of lattices $\overline{\varphi}:[\ker_\varphi,\mathbf{1}]\to [\mathbf{0},a']$ given by $\overline{\varphi}(x)=\varphi(x)$ for all $x\in[\ker_\varphi,\mathbf{1}]$.
	\end{enumerate}
\end{defn}

\begin{rem}\label{linmorjoins}
    In the case that the lattices are complete, we will assume that the isomorphism in item (2) of Definition \ref{linmor} is an isomorphism of complete lattices. Note that in this case a linear morphism commutes with arbitrary joins \cite[Proposition 1.3]{albu2013category}.
\end{rem}

\begin{example}
    Let $M$ and $N$ be two $R$-modules. If $f:M\to N$ is a morphism of $R$-modules, then $f_\ast:\Lambda(M)\to \Lambda(N)$ is a linear morphism of lattices.
\end{example}

\begin{prop}
	Let $\mathcal{L}$ be a bounded modular lattice. Let $x\in\mathcal{L}$ be an element with complement $x'$. Then, the map $\pi_x:\mathcal{L}\to \mathcal{L}$ given by $\pi_x(a)=(a\vee x')\wedge x$ is a linear morphism.
\end{prop}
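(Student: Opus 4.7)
The plan is to take $\ker_{\pi_x}=x'$ and the element $a'\in\mathcal{L}$ in Definition \ref{linmor} to be $x$ itself, and then verify the two conditions directly, with modularity doing the essential work.

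For condition (1), I would just compute: since $\pi_x(a\vee x')=((a\vee x')\vee x')\wedge x=(a\vee x')\wedge x=\pi_x(a)$, the absorption statement is immediate. Note also that $\pi_x(x')=(x'\vee x')\wedge x=x'\wedge x=\mathbf{0}$, so $x'$ really behaves as a kernel.

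For condition (2), I would produce an explicit inverse. Define $\iota:[\mathbf{0},x]\to[x',\mathbf{1}]$ by $\iota(b)=b\vee x'$, and let $\overline{\pi_x}$ be the restriction of $\pi_x$ to $[x',\mathbf{1}]$. Observe first that for $a\in[x',\mathbf{1}]$, since $a\geq x'$, we have $\overline{\pi_x}(a)=(a\vee x')\wedge x=a\wedge x\leq x$, so the image lies in $[\mathbf{0},x]$. Now the two compositions are checked with modularity. For $a\in[x',\mathbf{1}]$, since $x'\leq a$, modularity gives $\iota(\overline{\pi_x}(a))=(a\wedge x)\vee x'=a\wedge(x\vee x')=a\wedge\mathbf{1}=a$. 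For $b\in[\mathbf{0},x]$, since $b\leq x$, modularity gives $\overline{\pi_x}(\iota(b))=(b\vee x')\wedge x=b\vee(x'\wedge x)=b\vee\mathbf{0}=b$. Thus $\overline{\pi_x}$ and $\iota$ are mutually inverse bijections between $[x',\mathbf{1}]$ and $[\mathbf{0},x]$, and both are order-preserving by construction, so they give a lattice isomorphism; if the lattice is complete, one checks that $\overline{\pi_x}$ and $\iota$ preserve arbitrary joins (again by modularity and distribution of $\wedge$ over directed joins in the relevant intervals) so that the isomorphism is one of complete lattices in the sense of Remark \ref{linmorjoins}.

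The only real subtlety, and the step I would treat most carefully, is invoking modularity correctly in the two composition identities: both rely on comparing $x'$ with $a$ and $b$ with $x$, respectively, which is precisely where the hypothesis $a\geq x'$ or $b\leq x$ is needed. Everything else is formal manipulation.
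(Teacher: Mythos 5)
Your proof is correct and follows essentially the same route as the paper: verify the absorption identity for $x'$, and exhibit $(-)\vee x'$ as a two-sided inverse of $\overline{\pi_x}$ on $[x',\mathbf{1}]$, with modularity applied exactly at the two compositions where $x'\leq a$ and $b\leq x$. The extra remarks about order-preservation and the complete-lattice case are harmless but not needed for the statement as given, which only assumes a bounded modular lattice.
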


\begin{proof}
	Let $a\in\mathcal{L}$. Then, $\pi_x(a\vee x')=((a\vee x')\vee x')\wedge x=(a\vee x')\wedge x=\pi_x(a)$, thus $ x'$ is a kernel for $\pi_x$.
    Note that $\overline{\pi_x}:[x',\mathbf{1}]\to [\mathbf{0},x]$ has the inverse $ (-)\vee x' : [\mathbf{0},x]\to [x',\mathbf{1}]$. Because if $b\in [x',\mathbf{1}] $ then $\overline{\pi_x}(b)=(x'\vee b)\wedge x=b\wedge x $, and so
    $(b\wedge x)\vee x' =b\wedge(x\vee x')=b$, by modularity. If $c\in [\mathbf{0} ,x] $ then $\overline{\pi_x}(c\vee x')=((c\vee x')\vee x')\wedge x=(c\vee x')\wedge x =c\vee(x\wedge x')=c$, by modularity.
    Thus, $\pi_x$ is a linear morphism and $\ker_{\pi_x}=x'$.
\end{proof}

\begin{cor}
A bounded modular lattice $\mathcal{L}$ is boolean if and only if $a\wedge\_:\mathcal{L}\to \mathcal{L}$  is a linear morphism for all $a\in\mathcal{L}. $ 
\end{cor}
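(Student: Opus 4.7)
The plan is to prove both implications. For the forward direction, if $\mathcal{L}$ is boolean, each $a \in \mathcal{L}$ has a complement $a'$, so the previous proposition tells us $\pi_a(x) = (x \vee a') \wedge a$ is a linear morphism. Using distributivity, $\pi_a(x) = (x \wedge a) \vee (a' \wedge a) = x \wedge a$, so $a \wedge \_$ coincides with $\pi_a$ and is a linear morphism.

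For the backward direction, I would fix $a \in \mathcal{L}$, set $\varphi = a \wedge \_$, and let $k = \ker_\varphi$ with image-top $a' = \varphi(\mathbf{1}) = a$. From the isomorphism $\overline{\varphi} : [k,\mathbf{1}] \to [\mathbf{0},a]$ sending $k$ to $\mathbf{0}$, I immediately read off $a \wedge k = \mathbf{0}$. Then I would observe that both $a \vee k$ and $\mathbf{1}$ lie in $[k,\mathbf{1}]$ and both map under $\overline{\varphi}$ to $a \wedge (a \vee k) = a = \varphi(\mathbf{1})$; injectivity of the isomorphism forces $a \vee k = \mathbf{1}$. Hence $k$ is a complement of $a$, so $\mathcal{L}$ is complemented.

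To upgrade complemented to boolean, I would use that a linear morphism between complete lattices preserves joins (Remark \ref{linmorjoins}); the same preservation for finite joins follows directly from the isomorphism $\overline{\varphi}$ in Definition \ref{linmor}(2) by writing $\varphi(x \vee y) = \varphi((x \vee k) \vee (y \vee k))$ and applying $\overline{\varphi}^{-1}$. Therefore $a \wedge (x \vee y) = (a \wedge x) \vee (a \wedge y)$ holds for all $a, x, y \in \mathcal{L}$, i.e., $\mathcal{L}$ is distributive. Combined with the complemented property, $\mathcal{L}$ is boolean.

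The mild obstacle is the complement step: one must not confuse the kernel of $a \wedge \_$ (an element of $\mathcal{L}$) with the complement of $a$ directly, but rather derive the complement identities from the defining properties of $\ker_\varphi$ together with injectivity of $\overline{\varphi}$. Everything else is immediate from the definition of linear morphism and from standard lattice-theoretic equivalences (complemented plus distributive equals boolean in a bounded modular lattice).
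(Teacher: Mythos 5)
Your proof is correct. The forward direction and the verification that $k=\ker_{a\wedge\_}$ is a complement of $a$ (via $\overline{\varphi}(k)=\mathbf{0}$ and injectivity of $\overline{\varphi}$ applied to $a\vee k$ and $\mathbf{1}$) coincide with the paper's argument. Where you genuinely diverge is in upgrading ``complemented'' to ``boolean.'' The paper instead observes that the kernel $k$ is the \emph{largest} element disjoint from $a$ --- if $b\wedge a=\mathbf{0}$ then $\varphi(b\vee k)=\mathbf{0}=\varphi(k)$, so $b\leq k$ by injectivity --- and then invokes a cited result of Stenstr\"om that a complemented modular lattice with this uniqueness/maximality property of complements is boolean. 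You bypass that external citation entirely: since any linear morphism satisfies $\varphi(x\vee y)=\overline{\varphi}\bigl((x\vee k)\vee(y\vee k)\bigr)=\varphi(x)\vee\varphi(y)$ (the lattice isomorphism $\overline{\varphi}$ preserves binary joins, no completeness needed), the hypothesis forces $a\wedge(x\vee y)=(a\wedge x)\vee(a\wedge y)$ for all $a,x,y$, i.e.\ distributivity, and a bounded complemented distributive lattice is boolean. Your route is more self-contained and arguably more transparent, since it derives distributivity directly from the defining property of linear morphisms; the paper's route is shorter on the page but leans on a nontrivial structural fact about modular lattices. In fact your argument does not even use modularity in the backward direction, which is a small bonus in generality.
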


\begin{proof}
$\Rightarrow$ Suppose $\mathcal{L}$ is boolean and let $a'$ denote the complement of $a$.  For each $x\in \mathcal{L}$ we have that $x\wedge a=\left(x\vee a'\right)\wedge a$. Thus $a\wedge\_=\pi_a$ which is a linear morphism.

$\Leftarrow$ Let $a\in\mathcal{L}$ and $k=\ker_{a\wedge\_}$. Then, $a\wedge\left(x \vee k \right) = a\wedge x$ for all $x\in \mathcal{L}$, and $a\wedge k =0$. Since $\overline{a\wedge\_}:[k,\mathbf{1}] \to [\mathbf{0},a]$  is an isomorphism and $(k\vee a)\wedge a = a$, then $k\vee a=\mathbf{1}$. Thus $k$ is a complement of $a$. 
Now, if $b\wedge a =0$  with $b\in \mathcal{L}$, then  $b\leq k $ because $k$ is the kernel of $a\wedge \_$.
It follows from \cite[Ch. III Proposition 4.4]{stenstromrings} that $\mathcal{L}$ is boolean.
\end{proof}

\begin{defn}
	Let $\mathcal{L}$ be a bounded modular lattice and $x\in\mathcal{L}$ be an element with a complement. The linear morphism $\pi_x$ is called the \emph{projection on $x$}.
\end{defn}

\begin{rem}
    Given an element $x$ with complement in a lattice $\mathcal{L}$ is possible that $x$ has more than one complement. Therefore, $\pi_x$ depends of the complement of $x$ which is taken.
\end{rem}

\begin{lemma}\label{kerpi}
	Let $\mathcal{L}$ be a bounded modular lattice and $x,y\in\mathcal{L}$  elements with complements $x'$ and $y'$ respectively. Then $\ker_{\pi_y\pi_x}=(x\wedge y')\vee x'$.
\end{lemma}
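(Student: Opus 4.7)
The strategy is to set $k := (x \wedge y') \vee x'$ and verify that it satisfies the two conditions of Definition \ref{linmor} for the composition $\pi_y \pi_x$, i.e., show that $k$ really is a kernel. The whole proof will be two direct computations with the formula $\pi_z(a) = (a \vee z') \wedge z$, with modularity doing all the work.

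\textbf{Step 1 (the element $k$ is mapped to $\mathbf{0}$).} First I would compute
\[ \pi_x(k) = (((x \wedge y') \vee x') \vee x') \wedge x = ((x \wedge y') \vee x') \wedge x. \]
Since $x \wedge y' \leq x$, modularity gives $((x \wedge y') \vee x') \wedge x = (x \wedge y') \vee (x' \wedge x) = x \wedge y'$. Then, because $x \wedge y' \leq y'$, we have $\pi_y(x \wedge y') = ((x \wedge y') \vee y') \wedge y = y' \wedge y = \mathbf{0}$. Hence $\pi_y\pi_x(k) = \mathbf{0}$.

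\textbf{Step 2 (condition (1) of Definition \ref{linmor}).} I would show $\pi_y\pi_x(a \vee k) = \pi_y\pi_x(a)$ for every $a \in \mathcal{L}$. Since $x \wedge y' \leq x$, modularity yields
\[ \pi_x(a \vee k) = (a \vee x' \vee (x \wedge y')) \wedge x = (x \wedge y') \vee ((a \vee x') \wedge x) = (x \wedge y') \vee \pi_x(a). \]
Applying $\pi_y$ and using $x \wedge y' \leq y'$ to absorb the extra term into $y'$:
\[ \pi_y\pi_x(a \vee k) = ((x \wedge y') \vee \pi_x(a) \vee y') \wedge y = (\pi_x(a) \vee y') \wedge y = \pi_y\pi_x(a). \]

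\textbf{Step 3 (condition (2) of Definition \ref{linmor}).} I would verify that the restriction $\overline{\pi_y\pi_x} \colon [k, \mathbf{1}] \to [\mathbf{0}, \pi_y\pi_x(\mathbf{1})]$ is a lattice isomorphism. The composition $\pi_y \pi_x$ is a composition of linear morphisms, so it is itself linear (see \cite[Proposition 1.3]{albu2013category}); by Steps 1 and 2 the element $k$ is a kernel for it, and the induced isomorphism then comes for free from the definition. More explicitly, $\bar{\pi_x}$ restricts to an isomorphism $[k, \mathbf{1}] \to [x \wedge y', x]$ (the image of $[k, \mathbf{1}]$ under $\bar{\pi_x}$, using $\overline{\pi_x}^{-1}(c) = c \vee x'$), and $\bar{\pi_y}$ then restricts to an isomorphism $[x \wedge y', x] \to [\mathbf{0}, \pi_y(x)]$, because $x \wedge y'$ is exactly the largest element of $[\mathbf{0}, x]$ that sits below $y' = \ker_{\pi_y}$.

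\textbf{Expected obstacle.} There is no real obstacle: everything is a routine application of modularity plus the fact that $x \wedge y' \leq x$ and $x \wedge y' \leq y'$, so this element is exactly the bridge between the kernels $x'$ of $\pi_x$ and $y'$ of $\pi_y$. The only thing to be careful about is invoking modularity with the correct comparability hypothesis at each step, particularly in Step 2 where one must rewrite the join $a \vee x' \vee (x \wedge y')$ before meeting with $x$.
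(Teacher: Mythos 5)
Your proof is correct, but it takes a different route from the paper's. The paper disposes of the lemma in one line by quoting the general composition formula from Albu--Iosif \cite[Lemma 2.1]{albu2013category}, namely $\ker_{\psi\varphi}=\overline{\varphi}^{-1}(\ker_\psi\wedge\varphi(\mathbf{1}))$, and then evaluating $\overline{\pi_x}^{-1}(y'\wedge x)=(y'\wedge x)\vee x'$. You instead verify both conditions of Definition~\ref{linmor} for the candidate $k=(x\wedge y')\vee x'$ directly by modular computations; this is longer but self-contained, and your Steps 1--2 are exactly right. What the paper's approach buys is brevity and reusability of the external lemma; what yours buys is independence from that lemma and an explicit picture of how $[k,\mathbf{1}]$ factors through $[x\wedge y',x]$ via the modular diamond isomorphism $[x\wedge y',x]\cong[y',x\vee y']$.

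Two small points of care. First, in Step 3 the sentence ``by Steps 1 and 2 the element $k$ is a kernel for it, and the induced isomorphism then comes for free'' overstates what Steps 1--2 give: $\varphi(k)=\mathbf{0}$ together with condition (1) holds for \emph{every} element below the true kernel, so those steps only show $k\le\ker_{\pi_y\pi_x}$. The actual content of Step 3 is your explicit decomposition of the restriction into the two isomorphisms $[k,\mathbf{1}]\to[x\wedge y',x]\to[\mathbf{0},\pi_y(x)]$, which does establish condition (2) and hence equality; keep that part and drop the ``comes for free'' framing. (Equivalently, you could finish by showing maximality directly: if $\pi_y\pi_x(a)=\mathbf{0}$ then $\pi_x(a)\le x\wedge y'$, whence $a\le(x\wedge y')\vee x'=k$.) Second, the fact that a composition of linear morphisms is linear is \cite[Lemma 2.1]{albu2013category}, not Proposition 1.3, which concerns preservation of joins.
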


\begin{proof}
	It follows from the proof of \cite[Lemma 2.1]{albu2013category} that $\ker_{\pi_y\pi_x}=\overline{\pi_x}^{-1}(\ker_{\pi_y}\wedge x)$. Since $\pi_x$ induces the canonical isomorphism between $[x',\mathbf{1}]$ and $[\mathbf{0},x]$, it follows that
	\[\ker_{\pi_y\pi_x}=\overline{\pi_x}^{-1}(\ker_{\pi_y}\wedge x)=\overline{\pi_x}^{-1}(y'\wedge x)=(y'\wedge x)\vee x'.\]
\end{proof}

\begin{lemma}\label{modpix}
	Let $M$ be a module and $M=N\oplus N'$ . Let $\pi:M\to N$ and $\iota:N\to M$ be the canonical projection and the  inclusion respectively. Then $(\iota\pi)_\ast=\pi_N:\Lambda(M)\to \Lambda(M)$.
\end{lemma}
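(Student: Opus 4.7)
The plan is to unfold both sides on an arbitrary $L\in\Lambda(M)$ and check equality as submodules of $M$. By definition of the induced map, $(\iota\pi)_\ast(L)=\iota(\pi(L))=\pi(L)$, viewed as a submodule of $M$ contained in $N$. On the other hand, using $N'$ as the chosen complement of $N$, the projection linear morphism acts by $\pi_N(L)=(L\vee N')\wedge N=(L+N')\cap N$. So the lemma reduces to the set-theoretic equality $\pi(L)=(L+N')\cap N$.

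For the inclusion $\pi(L)\subseteq (L+N')\cap N$, I would take $x\in L$ and write $\pi(x)=x-(x-\pi(x))$. Since $x-\pi(x)$ lies in $N'=\ker\pi$, this exhibits $\pi(x)$ as an element of $L+N'$; it also lies in $N$ because $\pi$ takes values in $N$. For the reverse inclusion, take $y\in(L+N')\cap N$ and write $y=\ell+n'$ with $\ell\in L$, $n'\in N'$. Applying $\pi$ and using $\pi|_N=\mathrm{id}_N$ and $\pi|_{N'}=0$, we get $y=\pi(y)=\pi(\ell)\in\pi(L)$.

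I do not expect a real obstacle here; the only thing to be careful about is making the conventions match, namely that the complement of $N$ used to define the lattice-theoretic $\pi_N$ is precisely the submodule $N'$ that also serves as the kernel of the module-theoretic $\pi$. Once that is pinned down, the proof is a two-line modularity-style verification, and the argument extends verbatim to any $L\in\Lambda(M)$.
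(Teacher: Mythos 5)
Your proof is correct and follows essentially the same route as the paper: both reduce the claim to the element-wise identity $\iota\pi(L)=(L+N')\cap N$ and verify the two inclusions by decomposing elements along $M=N\oplus N'$. The only cosmetic difference is that you phrase the first inclusion via $\pi(x)=x-(x-\pi(x))$ with $x-\pi(x)\in N'=\Ker\pi$, whereas the paper writes the same decomposition from the other side; the substance is identical.
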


\begin{proof}
	We just have to check the equality $\iota\pi(L)=(L+N')\cap N$ for all $L\leq M$ where $N'$ is a complement of $N$. Let $n=\ell+n'$ with $n\in N$, $n'\in N'$ and $\ell\in L$. Then, $\ell=n-n'$. This implies that $n=\iota\pi(\ell)$. Therefore, $(L+N')\cap N\subseteq \iota\pi(L)$. Now, let $\ell\in L$. Then, there exist $n\in N$ and $n'\in N'$ such that $\ell=n+n'$. Hence, $n=\ell-n'\in(L+N')\cap N$. Since $n=\iota\pi(\ell)$, $\iota\pi(L)\subseteq (L+N')\cap N$.
\end{proof}

Given a lattice $\mathcal{L}$, the set of all linear morphisms $\varphi:\mathcal{L}\to\mathcal{L}$ is a monoid with zero using the composition of functions. This set will be denoted by $\End_{lin}(\mathcal{L})$ and its elements will be called linear endomorphisms of $\mathcal{L}$.

\begin{prop}\label{idemcomp}
	Let $\mathcal{L}$ be a bounded modular lattice and $\varphi\in\End_{lin}(\mathcal{L})$. If $\varphi$ is idempotent, then $\mathbf{0}=\ker_\varphi\wedge\varphi(\mathbf{1})$ and $\mathbf{1}=\ker_\varphi\vee\varphi(\mathbf{1})$.
\end{prop}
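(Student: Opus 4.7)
The plan is to leverage the defining isomorphism $\overline{\varphi}:[\ker_\varphi,\mathbf{1}]\to[\mathbf{0},a']$ of Definition \ref{linmor} together with the idempotence of $\varphi$. A preliminary observation is that a lattice isomorphism must send top to top and bottom to bottom, so $a' = \overline{\varphi}(\mathbf{1}) = \varphi(\mathbf{1})$ and $\varphi(\ker_\varphi) = \overline{\varphi}(\ker_\varphi) = \mathbf{0}$. With this identification the target interval is exactly $[\mathbf{0},\varphi(\mathbf{1})]$.

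For the join identity $\ker_\varphi \vee \varphi(\mathbf{1}) = \mathbf{1}$, I would compare the two elements $\mathbf{1}$ and $\varphi(\mathbf{1}) \vee \ker_\varphi$, both of which lie in $[\ker_\varphi,\mathbf{1}]$, by applying $\overline{\varphi}$ to them. The first yields $\varphi(\mathbf{1})$; the second yields $\varphi(\varphi(\mathbf{1}) \vee \ker_\varphi) = \varphi(\varphi(\mathbf{1}))$ by item (1) of Definition \ref{linmor}, and this equals $\varphi(\mathbf{1})$ by idempotence. Injectivity of $\overline{\varphi}$ then forces $\varphi(\mathbf{1}) \vee \ker_\varphi = \mathbf{1}$.

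For the meet identity $\ker_\varphi \wedge \varphi(\mathbf{1}) = \mathbf{0}$, the strategy is to evaluate $\varphi(\ker_\varphi \wedge \varphi(\mathbf{1}))$ in two different ways. On the one hand, since $\ker_\varphi \wedge \varphi(\mathbf{1}) \in [\mathbf{0},\varphi(\mathbf{1})]$, surjectivity of $\overline{\varphi}$ provides a $y \in [\ker_\varphi,\mathbf{1}]$ with $\varphi(y) = \ker_\varphi \wedge \varphi(\mathbf{1})$; idempotence then gives $\varphi(\ker_\varphi \wedge \varphi(\mathbf{1})) = \varphi(\varphi(y)) = \varphi(y) = \ker_\varphi \wedge \varphi(\mathbf{1})$. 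On the other hand, $\varphi$ is monotone (its expression $\overline{\varphi}(-\vee\ker_\varphi)$ as a composition of monotone maps shows this, and Remark \ref{linmorjoins} provides an alternative justification), so $\varphi(\ker_\varphi \wedge \varphi(\mathbf{1})) \leq \varphi(\ker_\varphi) = \mathbf{0}$. Combining the two computations yields $\ker_\varphi \wedge \varphi(\mathbf{1}) = \mathbf{0}$.

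The main obstacle to keep in mind is that the isomorphism $\overline{\varphi}$ is only defined on the upper interval $[\ker_\varphi,\mathbf{1}]$, whereas the element $\ker_\varphi \wedge \varphi(\mathbf{1})$ sits below $\ker_\varphi$ and is not in its domain. The key trick is to use idempotence to transfer the question from $\ker_\varphi \wedge \varphi(\mathbf{1})$ itself to its image under $\varphi$, which does lie in the range of $\overline{\varphi}$, and then exploit monotonicity together with $\varphi(\ker_\varphi) = \mathbf{0}$ to squeeze it down to $\mathbf{0}$.
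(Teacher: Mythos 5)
Your proof is correct and follows essentially the same route as the paper's: both parts hinge on the isomorphism $\overline{\varphi}:[\ker_\varphi,\mathbf{1}]\to[\mathbf{0},\varphi(\mathbf{1})]$, with injectivity plus idempotence giving the join identity, and the trick of writing $\ker_\varphi\wedge\varphi(\mathbf{1})$ as $\varphi$ of a preimage so that idempotence forces it to equal its own image under $\varphi$, which is $\mathbf{0}$. The only cosmetic difference is that you invoke monotonicity to get $\varphi(\ker_\varphi\wedge\varphi(\mathbf{1}))=\mathbf{0}$, where the paper uses $\varphi(y)=\varphi(y\vee\ker_\varphi)=\varphi(\ker_\varphi)$ directly; both are immediate from Definition \ref{linmor}.
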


\begin{proof}
	Since $\varphi$ is a linear morphism, $\varphi$ induces an isomorphism $\overline{\varphi}:[\ker_\varphi,\mathbf{1}]\to[\mathbf{0},\varphi(\mathbf{1})]$. Write $z=\ker_\varphi\vee\varphi(\mathbf{1})$. Then,
	\[\overline{\varphi}(z)=\varphi(z)=\varphi(\ker_\varphi\vee\varphi(\mathbf{1}))=\varphi\varphi(\mathbf{1})=\varphi(\mathbf{1})=\overline{\varphi}(\mathbf{1}).\]
	Hence, $z=\mathbf{1}$. Now, write $y=\ker_\varphi\wedge\varphi(\mathbf{1})$. Then $y\in[0,\varphi(\mathbf{1})]$. Consider $\overline{\varphi}^{-1}(y)\in[\ker_\varphi,\mathbf{1}]$. It follows that
	\[\mathbf{0}=\varphi(y)=\varphi\overline{\varphi}\overline{\varphi}^{-1}(y)=\varphi\varphi\overline{\varphi}^{-1}(y)=\varphi\overline{\varphi}^{-1}(y)=y.\]
\end{proof}

\begin{prop}\label{exmorf}
	Let $\mathcal{L}$ be a bounded modular lattice and $x,x',y\in\mathcal{L}$ with $x'$ a complement of $x$. Suppose that there exists a linear morphism $\varphi:[\mathbf{0},x]\to[\mathbf{0},y]$. Then $\varphi$ can be extended to a linear morphism $\widehat{\varphi}:\mathcal{L}\to \mathcal{L}$.
\end{prop}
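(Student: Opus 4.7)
The natural candidate for the extension is $\widehat{\varphi}=\varphi\circ\pi_{x}$, that is,
\[
\widehat{\varphi}(a)=\varphi\bigl((a\vee x')\wedge x\bigr)
\]
for all $a\in\mathcal{L}$, regarding the codomain $[\mathbf{0},y]$ as sitting inside $\mathcal{L}$. The plan is to show that $\widehat{\varphi}$ is a linear morphism with $\ker_{\widehat{\varphi}}=\ker_{\varphi}\vee x'$ and image $\varphi(x)$, and then to verify that it does restrict to $\varphi$ on $[\mathbf{0},x]$.

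First I would record two preparatory facts that do the bookkeeping. Since $\ker_{\varphi}\leq x$, modularity gives $\pi_{x}(\ker_{\varphi})=(\ker_{\varphi}\vee x')\wedge x=\ker_{\varphi}\vee(x'\wedge x)=\ker_{\varphi}$, and of course $\pi_{x}(x')=\mathbf{0}$ because $x'=\ker_{\pi_{x}}$. Using that linear morphisms between complete lattices preserve arbitrary joins (Remark \ref{linmorjoins}), I get for every $a\in\mathcal{L}$
\[
\pi_{x}\bigl(a\vee\ker_{\varphi}\vee x'\bigr)=\pi_{x}(a)\vee\pi_{x}(\ker_{\varphi})\vee\pi_{x}(x')=\pi_{x}(a)\vee\ker_{\varphi}.
\]
Applying $\varphi$ and using property (1) of $\varphi$, the right-hand side collapses to $\varphi(\pi_{x}(a))=\widehat{\varphi}(a)$. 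This establishes condition (1) of Definition \ref{linmor} for $\widehat{\varphi}$ with the candidate kernel $\ker_{\varphi}\vee x'$.

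For condition (2), I would use that $\overline{\pi_{x}}\colon[x',\mathbf{1}]\to[\mathbf{0},x]$ is an isomorphism by construction, and that it sends $\ker_{\varphi}\vee x'$ to $(\ker_{\varphi}\vee x')\wedge x=\ker_{\varphi}$ (again by modularity, since $\ker_{\varphi}\leq x$). Hence $\overline{\pi_{x}}$ restricts to an isomorphism $[\ker_{\varphi}\vee x',\mathbf{1}]\to[\ker_{\varphi},x]$. Composing with the isomorphism $\overline{\varphi}\colon[\ker_{\varphi},x]\to[\mathbf{0},\varphi(x)]$ coming from the linearity of $\varphi$ yields the desired isomorphism $\overline{\widehat{\varphi}}\colon[\ker_{\varphi}\vee x',\mathbf{1}]\to[\mathbf{0},\varphi(x)]$. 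So $\widehat{\varphi}\in\End_{lin}(\mathcal{L})$.

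Finally, for $a\in[\mathbf{0},x]$, modularity gives $\pi_{x}(a)=(a\vee x')\wedge x=a\vee(x'\wedge x)=a$, so $\widehat{\varphi}(a)=\varphi(a)$; thus $\widehat{\varphi}$ genuinely extends $\varphi$. I do not expect a serious obstacle here: the only subtle point is the repeated use of the hypothesis $\ker_{\varphi}\leq x$ together with modularity to control how $\pi_{x}$ behaves on $\ker_{\varphi}$ and on elements of $[\mathbf{0},x]$; once that is isolated the rest is formal.
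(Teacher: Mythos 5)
Your proof is correct, and it differs from the paper's in one substantive respect: you extend $\varphi$ by precomposing with the projection $\pi_x$, i.e.\ $\widehat{\varphi}(a)=\varphi((a\vee x')\wedge x)$, whereas the paper sets $\widehat{\varphi}(a)=\varphi(a\wedge x)$. The two formulas coincide on $[\mathbf{0},x]$ and on $[x',\mathbf{1}]$ (by modularity), which is where the paper carries out its verification, but they differ on general elements of $\mathcal{L}$, and only your version satisfies condition (1) of Definition \ref{linmor} with kernel $\ker_\varphi\vee x'$ for \emph{every} $a$: with the formula $\varphi(\_\wedge x)$ one would need $\varphi(a\wedge x)=\varphi((a\vee\ker_\varphi\vee x')\wedge x)$, which already fails for $\varphi$ the identity on $[\mathbf{0},x]$ in the subspace lattice of a plane when $a$, $x$, $x'$ are three distinct lines --- the same phenomenon behind the paper's observation that $x\wedge\_$ is linear for all $x$ only in boolean lattices. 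So your explicit verification of condition (1), via $\pi_x(a\vee\ker_\varphi\vee x')=\pi_x(a)\vee\ker_\varphi$ followed by property (1) of $\varphi$, is precisely the step that needs the projection rather than the meet, and your choice of $\pi_x$ is what makes it go through. The treatment of condition (2) is the same in both arguments: restrict $\overline{\pi_x}$ to an isomorphism $[\ker_\varphi\vee x',\mathbf{1}]\to[\ker_\varphi,x]$ and compose with $\overline{\varphi}$. One small economy: since the proposition concerns bounded, not complete, modular lattices, you do not need Remark \ref{linmorjoins}; finite join-preservation of the linear morphism $\pi_x$ suffices, or you can obtain $(a\vee x'\vee\ker_\varphi)\wedge x=\ker_\varphi\vee((a\vee x')\wedge x)$ in one line from modularity using $\ker_\varphi\leq x$.
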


\begin{proof}
	Define $\widehat{\varphi}:\mathcal{L}\to\mathcal{L}$ as $\widehat{\varphi}(a)=\varphi(a\wedge x)$. Note that $\widehat{\varphi}(\ker_\varphi\vee x')=\varphi((\ker_\varphi\vee x')\wedge x)=\varphi(\ker_\varphi)=\mathbf{0}$. The morphism $\varphi$ induces an isomorphism $\overline{\varphi}:[\ker_\varphi,x]\to[\mathbf{0},\varphi(x)]$. On the other hand, there is a canonical isomorphism $[x',\mathbf{1}]\to[\mathbf{0},x]$ given by $x\wedge\_$. Therefore, there are isomorphisms
	\[\xymatrix{[\ker_\varphi\vee x',\mathbf{1}]\ar[r]_\cong ^{x\wedge\_} & [\ker_\varphi,x]\ar[r]_\cong^{\overline{\varphi}} & [\mathbf{0},\varphi(x)].}\]
	Note that $\widehat{\varphi}(\mathbf{1})=\varphi(\mathbf{1}\wedge x)=\varphi(x)$. Thus $\widehat{\varphi}$ is a linear morphism and $\ker_{\widehat{\varphi}}=\ker_\varphi\vee x'$.
\end{proof}

\begin{defn}
	Let $\mathcal{L}$ be a lattice and $x\in\mathcal{L}$. It is said that $x$ is \emph{fully invariant in $\mathcal{L}$} if $\varphi(x)\leq x$ for all $\varphi\in\End_{lin}(\mathcal{L})$.
\end{defn}

\begin{lemma}
	Let $\mathcal{L}$ be a complete lattice and $\{x_i\}_I$ be a family of elements of $\mathcal{L}$. If each $x_i$ is fully invariant in $\mathcal{L}$, then so is $\bigvee_{i\in I}x_i$.
\end{lemma}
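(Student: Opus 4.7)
The plan is to reduce this to the fact, recorded in Remark \ref{linmorjoins}, that linear morphisms on a complete lattice commute with arbitrary joins. Fix an arbitrary $\varphi \in \End_{lin}(\mathcal{L})$; I must show that $\varphi\bigl(\bigvee_{i\in I} x_i\bigr) \leq \bigvee_{i\in I} x_i$, and then the definition of fully invariant is satisfied for the join.

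First I would apply Remark \ref{linmorjoins} to rewrite
\[\varphi\Bigl(\bigvee_{i\in I} x_i\Bigr) = \bigvee_{i\in I} \varphi(x_i).\]
Then, since each $x_i$ is fully invariant by hypothesis, I have $\varphi(x_i) \leq x_i$ for every $i \in I$. Taking the supremum on both sides over $i \in I$ preserves the order relation in the complete lattice $\mathcal{L}$, yielding $\bigvee_{i\in I} \varphi(x_i) \leq \bigvee_{i\in I} x_i$. Chaining the two relations gives the desired inequality $\varphi\bigl(\bigvee_{i\in I} x_i\bigr) \leq \bigvee_{i\in I} x_i$, and since $\varphi$ was arbitrary the conclusion follows.

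There is no real obstacle here, provided one is willing to quote Remark \ref{linmorjoins}: the whole argument rests on the join-preservation of linear endomorphisms. The only point worth flagging is that without completeness and the convention fixed in that remark, the equality $\varphi(\bigvee_i x_i) = \bigvee_i \varphi(x_i)$ would not be automatic for infinite families, so the hypothesis that $\mathcal{L}$ be complete is used essentially.
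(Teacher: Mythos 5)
Your argument is correct and is exactly the proof given in the paper: both invoke the fact that linear morphisms on a complete lattice commute with arbitrary joins (Remark \ref{linmorjoins}, via \cite[Proposition 1.3]{albu2013category}) and then bound $\bigvee_{i\in I}\varphi(x_i)$ by $\bigvee_{i\in I}x_i$ using the hypothesis on each $x_i$. Nothing further is needed.
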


\begin{proof}
	Let $\varphi\in\End_{lin}(\mathcal{L})$. It follows from \cite[Proposition 1.3]{albu2013category} and Remark \ref{linmorjoins} that 
	\[\varphi\left(\bigvee_{i\in I}x_i \right)=\bigvee_{i\in I}\varphi(x_i)\leq\bigvee_{i\in I}x_i.\]
\end{proof}

\begin{prop}\label{fipi}
	Let $\mathcal{L}$ be a complete modular lattice and $\{a_i\}_I$ be an independent family such that $\mathbf{1}=\bigvee_{i\in I}a_i$.  
	\begin{enumerate}
		\item If $I$ is finite, then $x\leq\bigvee_{i\in I}\pi_{a_i}(x)$ for all $x\in\mathcal{L}$.
		\item If $\mathcal{L}$ is upper-continuous, then $x\leq\bigvee_{i\in I}\pi_{a_i}(x)$ for all $x\in\mathcal{L}$.
	\end{enumerate}
\end{prop}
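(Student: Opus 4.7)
The plan is to establish part (1) by induction on $|I|$, proving the base case $|I|=2$ by a direct modular computation, and then to deduce part (2) from part (1) by applying it inside the intervals $[\mathbf{0},a_F]$ for finite $F\subseteq I$ and passing to the join using upper-continuity.

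For the base case $|I|=2$, note that independence together with $a_1\vee a_2=\mathbf{1}$ makes $a_2$ a complement of $a_1$ and vice versa, so $\pi_{a_1}(x)=(x\vee a_2)\wedge a_1$ and $\pi_{a_2}(x)=(x\vee a_1)\wedge a_2$. Write $p=\pi_{a_1}(x)$ and $q=\pi_{a_2}(x)$. Since $a_2\le x\vee a_2$, modularity gives $p\vee a_2=\bigl((x\vee a_2)\wedge a_1\bigr)\vee a_2=(x\vee a_2)\wedge(a_1\vee a_2)=x\vee a_2$, and symmetrically $q\vee a_1=x\vee a_1$. Hence $x\le(p\vee a_2)\wedge(q\vee a_1)$. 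Using $q\le a_2\le p\vee a_2$, modularity yields $(p\vee a_2)\wedge(q\vee a_1)=q\vee\bigl((p\vee a_2)\wedge a_1\bigr)$, and a second application with $p\le a_1$ gives $(p\vee a_2)\wedge a_1=p\vee(a_2\wedge a_1)=p$. Therefore $x\le p\vee q$, as required.

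For the inductive step with $|I|=n\ge3$ I would set $b=a_2\vee\cdots\vee a_n$. The pair $\{a_1,b\}$ is independent with join $\mathbf{1}$, so the base case gives $x\le\pi_{a_1}(x)\vee\pi_b(x)$. Inside the complete modular interval $[\mathbf{0},b]$ the family $\{a_2,\dots,a_n\}$ is independent with join $b$, so the inductive hypothesis applied to $\pi_b(x)\in[\mathbf{0},b]$ yields $\pi_b(x)\le\bigvee_{i=2}^n\pi^{b}_{a_i}(\pi_b(x))$, where $\pi^{b}_{a_i}$ denotes the projection computed inside $[\mathbf{0},b]$. Since $\pi_b(x)\le x\vee a_1$ and the interval-complement $\bigvee_{2\le j\ne i}a_j$ of $a_i$ inside $[\mathbf{0},b]$ is dominated by the $\mathcal{L}$-complement $a_i'=\bigvee_{j\ne i}a_j$, monotonicity of the defining formula gives $\pi^{b}_{a_i}(\pi_b(x))\le\pi_{a_i}(x)$, and combining these bounds yields $x\le\bigvee_{i=1}^n\pi_{a_i}(x)$.

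For part (2), for each finite $F\subseteq I$ I would apply (1) inside the interval $[\mathbf{0},a_F]$, where $a_F=\bigvee_{i\in F}a_i$, to the element $x\wedge a_F$; using the same comparison of complements this gives $x\wedge a_F\le\bigvee_{i\in F}\pi_{a_i}(x)\le\bigvee_{i\in I}\pi_{a_i}(x)$. Since $\{a_F\}_{F\text{ finite}}$ is directed with join $\mathbf{1}$, upper-continuity yields $x=x\wedge\mathbf{1}=\bigvee_{F}(x\wedge a_F)\le\bigvee_{i\in I}\pi_{a_i}(x)$. The only delicate point throughout is the monotonicity comparison between projections taken inside an interval and the projections $\pi_{a_i}$ in $\mathcal{L}$, which is forced by the inclusion of the corresponding complements; once that comparison is in hand both the induction in (1) and the directed-join step in (2) become mechanical.
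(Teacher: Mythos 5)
Your proof is correct and follows essentially the same route as the paper: the same $n=2$ base case, the same inductive decomposition $b=a_2\vee\cdots\vee a_n$ with projections inside $[\mathbf{0},b]$ compared against the projections in $\mathcal{L}$, and the same directed-join/upper-continuity argument for part (2). The only differences are local: where the paper settles the base case by exhibiting a forbidden pentagon, you compute $x\le(p\vee a_2)\wedge(q\vee a_1)=q\vee\bigl((p\vee a_2)\wedge a_1\bigr)=q\vee p$ directly with three applications of the modular law, which is a clean and correct alternative, and your inequality $\pi^{b}_{a_i}(\pi_b(x))\le\pi_{a_i}(x)$ suffices where the paper establishes the corresponding equality.
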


\begin{proof}
(1) We will prove it first for $n=2$. Then $\mathbf{1}=a_1\vee a_2$ and $\mathbf{0}=a_1\wedge a_2$. 
Let us compute the following:
\[x\vee\pi_{a_1}(x)=x\vee((x\vee a_2)\wedge a_1)=(x\vee a_2)\wedge(x\vee a_1).\]
Analogously,
\[x\vee\pi_{a_2}(x)=(x\vee a_1)\wedge(x\vee a_2).\]
Note that $x\leq\pi_{a_1}(x)\vee\pi_{a_2}(x)$ if and only if $\pi_{a_1}(x)\vee\pi_{a_2}(x)=\pi_{a_1}(x)\vee\pi_{a_2}(x)\vee x=(x\vee a_1)\wedge(x\vee a_2)$. Suppose that $x\nleq\pi_{a_1}(x)\vee\pi_{a_2}(x)$. Then $\pi_{a_1}(x)\vee\pi_{a_2}(x)\neq(x\vee a_1)\wedge(x\vee a_2)$. Therefore, we have the following pentagon
\[\xymatrix{& x\vee a_1\ar@{-}[dl]\ar@{-}[ddr] & \\ (x\vee a_1)\wedge(x\vee a_2)\ar@{-}[d] & & \\ \pi_{a_1}(x)\vee\pi_{a_2}(x)\ar@{-}[dr] & & a_1\ar@{-}[dl] \\ & \pi_{a_1}(x). & }\]
Which is a contradiction. Thus, $x\leq\pi_{a_1}(x)\vee\pi_{a_2}(x)$. Now suppose that the result is true for any complete modular lattice and any independent family of length at most $n-1$. Set $b=\bigvee_{i=2}^na_i$. Then $\mathbf{0}=a_1\wedge b$ and $\mathbf{1}=a_1\vee b$. We have that $\pi_b(x)=(x\vee a_1)\wedge b$. Let $2\leq j\leq n$. Then,
\[\pi_b(x)\vee a_j=((x\vee a_1)\wedge b)\vee a_j=(x\vee a_1\vee a_j)\wedge b.\]
This implies that $\pi_b(x)\vee a_2\vee\cdots a_{n-1}=(x\vee a_1\vee a_2\vee\cdots a_{n-1})\wedge b$. Hence

\begin{equation*}
\begin{split}
(\pi_b(x)\vee a_2\vee\cdots a_{n-1})\wedge a_n & =((x\vee a_1\vee a_2\vee\cdots a_{n-1})\wedge b)\wedge a_n \\
& =(x\vee a_1\vee a_2\vee\cdots a_{n-1})\wedge a_n \\
& =\pi_{a_n}(x)
\end{split}
\end{equation*}

Thus, for any $1\leq j\leq n$, 
\[\left( \pi_b(x)\vee \left(\bigvee_{i\neq j}^{2\leq i\leq n}a_i \right)\right)\wedge a_j =\pi_{a_j}(x). \]
Note that the left hand of the equality is the projection of $\pi_b(x)$ in $a_j$ in the lattice $[\mathbf{0},b]$. By induction hypothesis
\[\pi_b(x)\leq \bigvee_{j=2}^n\left(\left( \pi_b(x)\vee \left(\bigvee_{i\neq j}^{2\leq i\leq n}a_i \right)\right)\wedge a_j \right)=\bigvee_{i=2}^n\pi_{a_i}(x).\]
On the other hand, also by induction hypothesis $x\leq\pi_{a_1}(x)\vee\pi_b(x)$. Thus,
\[x\leq\bigvee_{i=1}^n\pi_{a_i}(x).\]

(2) For $\ell>0$, let $B=\{b_\ell\}$ be the family of elements of $\mathcal{L}$ given by $b_\ell=a_1\vee\cdots\vee a_\ell$. Then $B$ is a directed family and $\bigvee B=\bigvee_{i\in I}a_i=\mathbf{1}$. Since $\mathcal{L}$ is upper-continuous,
\[x=x\wedge\mathbf{1}=x\wedge\bigvee B=\bigvee\{x\wedge b_\ell\mid b_\ell\in B\}.\]
We have that $\mathbf{0}\leq x\wedge b_\ell\leq a_1\vee\cdots\vee a_\ell$. By item (1), 
\[x\wedge b_\ell\leq \bigvee_{i=1}^\ell\pi_{a_i}(x\wedge b_\ell)\leq\bigvee_{i=1}^\ell\pi_{a_i}(x).\]
Thus, $x\leq\bigvee_{i\in I}\pi_{a_i}(x)$.
\end{proof}

The next result appears in \cite[Lemma 1.3]{albu2021strongly} without a proof. We present a proof of it as a corollary of last proposition.

\begin{cor}\label{fidis}
	Let $\mathcal{L}$ be a complete modular lattice and $x\in\mathcal{L}$. If $x$ is fully invariant in $\mathcal{L}$ then $x=\bigvee_{i=1}^n(x\wedge a_i)$ for any independent family $\{a_1,...,a_n\}$ in $\mathcal{L}$ such that $\mathbf{1}=\bigvee_{i=1}^na_i$. Moreover, $\pi_{a_i}(x)=x\wedge a_i$ for all $1\leq i\leq n$. In addition, if $\mathcal{L}$ is upper-continuous then the result is true for any infinite family.
\end{cor}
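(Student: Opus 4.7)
The plan is to derive this directly from Proposition \ref{fipi}, since the heavy lifting (the inequality $x\leq\bigvee_i\pi_{a_i}(x)$) has already been done there. The only extra ingredient is to identify the projections $\pi_{a_i}(x)$ with the meets $x\wedge a_i$, and this is where full invariance enters.

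First I would show the identity $\pi_{a_i}(x)=x\wedge a_i$ for each $i$. For a fixed $i$, the complement of $a_i$ used to define the projection is $a_i':=\bigvee_{j\neq i}a_j$, so by definition $\pi_{a_i}(x)=(x\vee a_i')\wedge a_i$. On one hand, $x\wedge a_i\leq x\leq x\vee a_i'$ and $x\wedge a_i\leq a_i$, hence $x\wedge a_i\leq(x\vee a_i')\wedge a_i=\pi_{a_i}(x)$. On the other hand, because $\pi_{a_i}\in\End_{lin}(\mathcal{L})$ and $x$ is fully invariant we have $\pi_{a_i}(x)\leq x$, and also $\pi_{a_i}(x)\leq a_i$ since $\pi_{a_i}(\mathbf{1})=a_i$; combining these gives $\pi_{a_i}(x)\leq x\wedge a_i$.

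With this identification in hand, I would apply Proposition \ref{fipi}(1): $x\leq\bigvee_{i=1}^n\pi_{a_i}(x)=\bigvee_{i=1}^n(x\wedge a_i)$. The reverse inequality $\bigvee_{i=1}^n(x\wedge a_i)\leq x$ is immediate since each term is below $x$. Hence $x=\bigvee_{i=1}^n(x\wedge a_i)$, which settles the finite case.

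For the extension to infinite independent families under upper-continuity, the argument is verbatim the same with Proposition \ref{fipi}(2) replacing (1): the equality $\pi_{a_i}(x)=x\wedge a_i$ uses only the fact that $a_i$ has a complement and that $x$ is fully invariant, and the inclusion $x\leq\bigvee_{i\in I}\pi_{a_i}(x)$ is exactly the content of part (2). I don't see any real obstacle here; the subtle point is merely making sure the complement used in the definition of $\pi_{a_i}$ is $\bigvee_{j\neq i}a_j$, which is legitimate precisely because the family is independent with join $\mathbf{1}$.
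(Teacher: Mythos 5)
Your proof is correct and follows essentially the same route as the paper's: establish $\pi_{a_i}(x)=x\wedge a_i$ by combining full invariance ($\pi_{a_i}(x)\leq x$, hence $\leq x\wedge a_i$) with the easy inequality $x\wedge a_i\leq\bigl(x\vee\bigvee_{j\neq i}a_j\bigr)\wedge a_i$, and then invoke Proposition \ref{fipi}. The paper's proof is a compressed version of exactly this argument, so there is nothing to add.
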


\begin{proof}
	Let $1\leq i\leq n$. By hypothesis, $\pi_{a_i}(x)\leq x$. Then
	\[\pi_{a_i}(x)\leq x\wedge a_i\leq \left(x\vee\bigvee_{i\neq j}a_j \right)\wedge a_i=\pi_{a_i}(x).\]
	Thus, $\pi_{a_i}(x)=x\wedge a_i$ for all $1\leq i\leq n$. By Proposition \ref{fipi},
	\[x\leq\bigvee_{i=1}^n\pi_{a_i}(x)=\bigvee_{i=1}^nx\wedge a_i.\]
\end{proof}

\section{$\mathfrak{m}$-Rickart and $\mathfrak{m}$-Baer lattices}\label{mbrl}

Given an $R$-module $M$ and an endomorphism $f:M\to M$, there is a linear morphism $f_\ast:\Lambda(M)\to\Lambda(M)$ induced by $f$. Then, there is a homomorphism of monoids with zero $(-)_\ast:\End_R(M)\to\End_{lin}(\Lambda(M))$. Let $\mathfrak{E}_M$ denote the image of $\End_R(M)$ under $(-)_\ast$. Then $\mathfrak{E}_M$ is a submonoid with zero of $\End_{lin}(\Lambda(M))$.

\begin{defn}
	Let $\mathcal{L}$ be a complete lattice and let $\mathfrak{m}$ be a submonoid with zero of $\End_{lin}(\mathcal{L})$.
	\begin{itemize}
		\item $\mathcal{L}$ is called \emph{$\mathfrak{m}$-Baer} if $\bigwedge_{\varphi\in X}\ker_\varphi$ has a complement in $\mathcal{L}$ for all $X\subseteq\mathfrak{m}$.
		\item $\mathcal{L}$ is called \emph{$\mathfrak{m}$-Rickart} if $\ker_\varphi$ has a complement in $\mathcal{L}$ for all $\varphi\in\mathfrak{m}$.
	\end{itemize}
	If the submonoid we are considering is $\End_{lin}(\mathcal{L})$, we will omit the $\mathfrak{m}$.
\end{defn}


\begin{defn}
	Let $\mathcal{L}$ be a complete lattice and let $\mathfrak{m}$ be a submonoid with zero of $\End_{lin}(\mathcal{L})$.
	\begin{itemize}
		\item $\mathcal{L}$ is called \emph{dual-$\mathfrak{m}$-Baer} if $\bigvee_{\varphi\in X}\varphi(\mathbf{1})$ has a complement in $\mathcal{L}$ for all $X\subseteq\mathfrak{m}$.
		\item $\mathcal{L}$ is called \emph{dual-$\mathfrak{m}$-Rickart} if $\varphi(\mathbf{1})$ has a complement in $\mathcal{L}$ for all $\varphi\in\mathfrak{m}$.
	\end{itemize}
	If the submonoid we are considering is $\End_{lin}(\mathcal{L})$, we will omit the $\mathfrak{m}$.
\end{defn}

\begin{rem}
	It is clear that (dual-)$\mathfrak{m}$-Baer implies (dual-)$\mathfrak{m}$-Rickart. Note that every complemented lattice is $\mathfrak{m}$-Baer and dual-$\mathfrak{m}$-Baer for every submonoid $\mathfrak{m}$. For example, the lattice of submodules of a semisimple $R$-module.
\end{rem}

The next result follows from the above definition and it will be the key to apply our results in the context of modules.

\begin{prop}\label{lricmric}
	Let $M$ be an $R$-module and $\Lambda(M)$ the lattice of submodules of $M$. The following conditions are equivalent:
	\begin{enumerate}[label=\emph{(\alph*)}]
	    \item $\Lambda(M)$ is (dual-)$\mathfrak{E}_M$-Rickart (resp. (dual-)$\mathfrak{E}_M$-Baer).
	    \item $M$ is a (dual-)Rickart (resp. (dual-)Baer) module.
	\end{enumerate}
\end{prop}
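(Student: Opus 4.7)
The plan is to set up a dictionary between linear endomorphisms in $\mathfrak{E}_M$ and module endomorphisms in $\End_R(M)$, and then read off each of the four claimed equivalences directly. By definition $\mathfrak{E}_M$ is the image of $\End_R(M)$ under $(-)_\ast$, so every $\varphi \in \mathfrak{E}_M$ has the form $f_\ast$ for some $f \in \End_R(M)$. The assignment $(-)_\ast$ need not be injective, but for any $X \subseteq \mathfrak{E}_M$ one can pick a set-theoretic preimage $Y \subseteq \End_R(M)$ whose image under $(-)_\ast$ is exactly $X$, which is all we need for the quantifiers in the definitions.

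Next I would record the three key identifications: (i) $\ker_{f_\ast} = \Ker f$ as an element of $\Lambda(M)$; (ii) $f_\ast(\mathbf{1}) = \Img f$; and (iii) an element $N \in \Lambda(M)$ has a complement in the lattice sense if and only if $N$ is a direct summand of $M$. Identification (iii) is immediate from $\mathbf{0} = 0$, $\mathbf{1} = M$, and the fact that joins and meets in $\Lambda(M)$ are sums and intersections of submodules (this is already pointed out at the start of Section \ref{pre}). Identification (ii) is immediate from $f_\ast(M) = f(M) = \Img f$. Identification (i) is the one place requiring an actual check: one verifies that $\Ker f$ satisfies the two clauses of Definition \ref{linmor} for $f_\ast$, namely $f(L + \Ker f) = f(L)$ for every submodule $L$, and that $N \mapsto f(N)$ is a lattice isomorphism $[\Ker f, M] \to [0, f(M)]$, which is just the submodule correspondence theorem.

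With this dictionary in hand each of the four cases is a direct unpacking. For the Rickart case, (i) and (iii) give that $\ker_\varphi$ has a complement in $\Lambda(M)$ iff $\Ker f$ is a direct summand of $M$, and ranging over $\varphi \in \mathfrak{E}_M$ (equivalently over $f \in \End_R(M)$) yields the equivalence. For dual-Rickart the same argument works with (ii) in place of (i). For the Baer case, after choosing a preimage $Y$ of $X$, one has $\bigwedge_{\varphi \in X}\ker_\varphi = \bigcap_{f \in Y} \Ker f$, and (iii) closes the argument. For dual-Baer, analogously $\bigvee_{\varphi \in X}\varphi(\mathbf{1}) = \sum_{f \in Y}\Img f$, and (iii) again concludes.

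The main, and essentially only, obstacle is identification (i): verifying that the lattice-theoretic kernel produced by Definition \ref{linmor} applied to $f_\ast$ really coincides with the module-theoretic kernel $\Ker f$. Once that is checked, the remaining content is bookkeeping over the four cases.
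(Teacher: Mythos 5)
Your proposal is correct and follows essentially the same route as the paper: the paper's proof is exactly the dictionary $\ker_{f_\ast}=\Ker f$ (resp. $f_\ast(M)=\Img f$) together with the observation that complemented elements of $\Lambda(M)$ are the direct summands, carried out explicitly only for the Rickart case with the others declared similar. Your additional care about choosing a set-theoretic preimage of $X\subseteq\mathfrak{E}_M$ and verifying that $\Ker f$ really is the lattice-theoretic kernel of $f_\ast$ fills in details the paper leaves implicit, but does not change the argument.
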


\begin{proof}
	We will only prove the Rickart case, the others are similar. (a)$\Rightarrow$(b) Let $f:M\to M$ be an endomorphism of $M$. Consider the linear morphism $f_\ast:\Lambda(M)\to \Lambda(M)$. By hypothesis, $\ker_{f_\ast}$ is a complement in $\Lambda(M)$. Note that $\ker_{f_\ast}=\Ker f$, hence $\Ker f$ is a direct summand of $M$.
	
	(b)$\Rightarrow$(a) Let $\varphi\in\mathfrak{E}_M$. Then, there exists an $R$-endomorphism $f:M\to M$ such that $f_\ast=\varphi$. Since $M$ is Rickart, $\Ker f=\ker_{\varphi}$ is direct summand of $M$, that is, $\ker_{\varphi}$ is complemented in $\Lambda(M)$. 
\end{proof}

The following example shows that if we take the monoid $\End_{lin}(\Lambda(M))$ then the implication (b)$\Rightarrow$(a) of Proposition \ref{lricmric} might not be true.

\begin{example}
	Let $K$ be a field. Consider the ring $R=\left(\begin{smallmatrix}
	K & K \\
	0 & K
	\end{smallmatrix}\right)$ and the left $R$-module $M=\left(\begin{smallmatrix}
	0 & K \\
	0 & K
	\end{smallmatrix}\right)$. Since $\End_R(M)\cong K$, $M$ is a Rickart module. On the other hand, the lattice of submodules of $M$ is $\Lambda(M)=\{0, N=\left(\begin{smallmatrix}
	0 & K \\
	0 & 0
	\end{smallmatrix}\right), M\}$. Consider the following linear morphism $\varphi:\Lambda(M)\to \Lambda(M)$ given by $\varphi(0)=0$, $\varphi(N)=0$ and $\varphi(M)=N$, represented in the figure below. Then $\ker_\varphi=N$ which is not a complement in $\Lambda(M)$.
\end{example}

\begin{figure}[h]\label{fig1}
	\[\xymatrix{\bullet\ar@{-}[d]\ar[rrd] & & \bullet\ar@{-}[d] \\ \bullet\ar@{-}[d]\ar[rrd] & & \bullet\ar@{-}[d] \\ \bullet\ar[rr] & & \bullet}\]
	\caption{}
\end{figure}

\begin{rem}
	Note that the lattice of submodules of the $\mathbb{Z}$-module $\mathbb{Z}_4$ is isomorphic to the lattice $\Lambda(M)$ of the previous example. Consider the homomorphism $f:\mathbb{Z}_4\to \mathbb{Z}_4$ given by multiply by 2. Then $f_\ast=\varphi$, the linear morphism in Figure 1.
\end{rem}


\textbf{Notation:} Let $\mathcal{L}$ be a complete modular lattice and $a,x\in\mathcal{L}$. There are two canonical linear morphisms $\iota_x:[\mathbf{0},x]\to \mathcal{L}$ the inclusion, and $\rho_a:\mathcal{L}\to[a,\mathbf{1}]$ given by $\rho_a(y)=a\vee y$. Notice that if $a$ has a complement $a'$, then the interval $[a',\mathbf{1}]$ is canonically isomorphic to $[\mathbf{0},a]$ and so $\pi_a=\iota_{a}(\_\wedge a)\rho_{a'}$.

Given a lattice $\mathcal{L}$ and a submonoid with zero,  $\mathfrak{m}\subseteq\End_{lin}(\mathcal{L})$, we will say that $\mathfrak{m}$ \emph{contains all the projections} if $\pi_a\in\mathfrak{m}$ for every complement $a\in\mathcal{L}$.

\begin{rem}
	Given an $R$-module $M$, the submonoid $\mathfrak{E}_M$ of $\End_{lin}(\Lambda(M))$ contains all the projections by Proposition \ref{modpix}.
\end{rem}

\begin{prop}
Let $\mathcal{L}$ be a bounded modular lattice and $\mathfrak{m}\subseteq\End_{lin}(\mathcal{L})$ be a submonoid containing all the projections. The following conditions are equivalent:
\begin{enumerate}[label=\emph{(\alph*)}]
\item $\mathcal{L}$ is $\mathfrak{m}$-Rickart and for all $a\in\mathcal{L}$ there exists (a unique) $b\in\mathcal{L}$ and an isomorphism $\theta:[a,\mathbf{1}]\to[\mathbf{0},b]$ such that $\iota_b\theta\rho_a\in\mathfrak{m}$. 
\item $\mathcal{L}$ is complemented (boolean).
\end{enumerate}
\end{prop}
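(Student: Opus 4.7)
The plan is to extract both directions from a single computation of the kernel and image of $\varphi := \iota_b \theta \rho_a$. Since $\rho_a(x) = a \vee x$ lands in $[a,\mathbf{1}]$ with induced isomorphism the identity on $[a,\mathbf{1}]$, and $\theta\colon[a,\mathbf{1}]\to[\mathbf{0},b]$ sends $a\mapsto\mathbf{0}$ and $\mathbf{1}\mapsto b$, a direct check of Definition \ref{linmor} against $\varphi(x)=\theta(a\vee x)$ yields $\varphi(x)=\varphi(x\vee a)$ for all $x$ and identifies $\overline{\varphi}$ with $\theta$. Hence $\ker_\varphi=a$ and $\varphi(\mathbf{1})=b$.

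Given this, (a)$\Rightarrow$(b) is immediate: for any $a\in\mathcal{L}$, the hypothesis furnishes some $\varphi\in\mathfrak{m}$ with $\ker_\varphi=a$, and $\mathfrak{m}$-Rickartness provides a complement of $a$; since $a$ was arbitrary, $\mathcal{L}$ is complemented. For (b)$\Rightarrow$(a), every element of a complemented lattice has a complement, so $\ker_\varphi$ is complemented for every $\varphi\in\mathfrak{m}$ and $\mathfrak{m}$-Rickartness is automatic. For the existence of the pair $(b,\theta)$, fix $a\in\mathcal{L}$ and a complement $a'$; by modularity the map $\theta\colon[a,\mathbf{1}]\to[\mathbf{0},a']$, $x\mapsto x\wedge a'$, is a lattice isomorphism with inverse $y\mapsto y\vee a$. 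Setting $b=a'$, the composite $\iota_{a'}\theta\rho_a(x)=(x\vee a)\wedge a'=\pi_{a'}(x)$ lies in $\mathfrak{m}$ because $\mathfrak{m}$ contains every projection.

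For the parenthetical strengthening to the boolean case, the direction \emph{unique $b \Rightarrow$ boolean} follows by applying the canonical construction above to two complements $a',a''$ of a fixed $a$: each yields a valid pair in $\mathfrak{m}$, so uniqueness of $b$ forces $a'=a''$, and a complemented modular lattice with unique complements is distributive, hence boolean. The converse \emph{boolean $\Rightarrow$ unique $b$} is where I expect the main obstacle: given $\iota_b\theta\rho_a\in\mathfrak{m}$ with kernel $a$ and image $b$, one wants to conclude $b=a'$ (the unique complement of $a$), but in a general modular lattice one can build non-idempotent linear morphisms of the form $\iota_b\theta\rho_a$ whose image is not a complement of the kernel, so distributivity has to be invoked to rule these out. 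The natural route is to show that in the boolean setting $\varphi=\iota_b\theta\rho_a$ is forced to be idempotent (equivalently, that $\theta$ must take the canonical form $x\mapsto x\wedge a'$), so that Proposition \ref{idemcomp} delivers $b=\varphi(\mathbf{1})$ as a complement of $\ker_\varphi=a$; uniqueness of complements in a boolean lattice then pins $b$ down to $a'$.
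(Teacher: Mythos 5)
Your argument for the main equivalence (Rickart $+$ existence of $(b,\theta)$ iff complemented) and for the direction ``unique $b$ $\Rightarrow$ boolean'' is correct and coincides with the paper's proof: the paper likewise observes that $\iota_b\theta\rho_a$ is a linear morphism in $\mathfrak{m}$ with kernel $a$ (so Rickartness complements $a$), takes $\theta=\_\wedge a'$ and $b=a'$ for the converse so that the composite is $\pi_{a'}\in\mathfrak{m}$, and derives booleanness from uniqueness of $b$ by noting that every complement $a'$ of $a$ furnishes an admissible pair, so uniqueness forces all complements to coincide, and a modular lattice with unique complements is boolean.

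The one place you stop short, ``boolean $\Rightarrow$ unique $b$,'' is handled in the paper by the single sentence ``If $\mathcal{L}$ is boolean, then $a'$ is unique'' --- that is, the paper only verifies that the canonical witness $b=a'$ is the unique \emph{complement} of $a$; it never shows that an arbitrary admissible $b$ must be a complement of $a$ in the first place. Your instinct that this is the real obstacle is sound, but the repair you propose (forcing $\iota_b\theta\rho_a$ to be idempotent in the boolean setting) cannot work. Already in the four-element boolean lattice $\{\mathbf{0},x,y,\mathbf{1}\}$ with $a=x$, the intervals $[x,\mathbf{1}]$ and $[\mathbf{0},x]$ are both two-element chains, and the unique isomorphism $\theta:[x,\mathbf{1}]\to[\mathbf{0},x]$ yields a linear endomorphism $\iota_x\theta\rho_x$ with kernel $x$ and image $x$; it is not idempotent (its square is $0$), and if $\mathfrak{m}=\End_{lin}(\mathcal{L})$ it lies in $\mathfrak{m}$, so both $b=x$ and $b=y$ are admissible and $b$ is genuinely not unique. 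So the literal statement ``boolean $\Rightarrow$ unique $b$'' fails for $\mathfrak{m}=\End_{lin}(\mathcal{L})$, and the only version of this direction that is actually proved (in the paper or that you could prove) is the weaker one in which ``unique'' refers to uniqueness of the complement serving as $b$, or in which $\mathfrak{m}$ is small enough (e.g.\ generated by the projections) to exclude morphisms like the one above. You should either adopt the paper's weaker reading explicitly or add a hypothesis on $\mathfrak{m}$; no amount of distributivity will close the gap as stated.
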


\begin{proof}
(a)$\Rightarrow$(b)Let us take $a,b\in\mathcal{L}$ as in the hypothesis. Then the linear morphism,
\[\xymatrix{\mathcal{L}\ar[r]^-{\rho_a} & [a,\mathbf{1}]\ar[r]^\theta_\cong & [\mathbf{0},b]\ar[r]^-{\iota_b} & \mathcal{L}}\]
is in $\mathfrak{m}$ and has kernel $a$. Since $\mathcal{L}$ is $\mathfrak{m}$-Rickart, $a$ has
a complement in $\mathcal{L}$. Now suppose that $b$ is unique an let $a'$ be a complement of $a$. Then, there is a canonical isomorphism $[a,\mathbf{1}]\cong[\mathbf{0},a']$ given by $\_\wedge a'$. Therefore, $\iota_{a'}(\_\wedge a')\rho_a=\pi_{a'}\in\mathfrak{m}$. By hypothesis, $b=a'$. Since $L$ is modular and every element has a unique complement, $\mathcal{L}$ is boolean \cite[Ch. II, Sec. 1, Corollary 3]{gratzer2002general}.

(b)$\Rightarrow$(a) Since $\mathcal{L}$ is complemented, $\mathcal{L}$ is $\mathfrak{m}$-Rickart. If $a\in\mathcal{L}$ and $a'$ is a complement of
$a,$  then $a'\wedge\_:[a,\mathbf{1}]\to[\mathbf{0},a']$ is an isomorphism, and $\iota_{a'}(a'\wedge\_)\rho_{a}=\pi_{a'}\in\mathfrak{m}$. If $\mathcal{L}$ is boolean, then $a'$ is unique.
\end{proof} 

\begin{cor}
    The following conditions are equivalent for a module $M$:
    \begin{enumerate}[label=\emph{(\alph*)}]
        \item $M$ is Rickart and for all $N\leq M$ there exists a (unique) $L\leq M$ such that $M/N\cong L$.
        \item $M=\bigoplus_IS_i$ with $S_i$ simple ($S_i\ncong S_j$ for all $i\neq j\in I$).
    \end{enumerate}
\end{cor}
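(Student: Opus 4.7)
My plan is to apply the preceding proposition to $\mathcal{L}=\Lambda(M)$ with $\mathfrak{m}=\mathfrak{E}_M$; by Lemma \ref{modpix} (as noted in the remark just above), $\mathfrak{E}_M$ contains all projections, so the hypothesis of that proposition is satisfied. By Proposition \ref{lricmric}, ``$\Lambda(M)$ is $\mathfrak{E}_M$-Rickart'' is equivalent to ``$M$ is Rickart'', so what remains is to translate the interval clause of (a) and the complemented/boolean conclusion in (b) into module-theoretic terms.

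For the interval clause, fix $N\leq M$ in the role of $a$. An isomorphism $\theta\colon[N,M]\to[\mathbf{0},L]$ satisfying $\iota_L\theta\rho_N\in\mathfrak{E}_M$ means this composite equals $g_\ast$ for some $g\in\End_R(M)$; then $\Ker g=\ker_{g_\ast}=N$ and $\Img g=g_\ast(\mathbf{1})=L$, so $g$ induces a module isomorphism $M/N\cong L$. Conversely, any module isomorphism $\gamma\colon M/N\to L$ produces the endomorphism $M\twoheadrightarrow M/N\xrightarrow{\gamma}L\hookrightarrow M$, whose associated linear morphism has exactly the required form. Thus the interval clause translates into: for every $N\leq M$ there exists $L\leq M$ with $M/N\cong L$. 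This is the main subtlety: a priori a lattice isomorphism $[N,M]\to[\mathbf{0},L]$ need not come from a module map, but membership in $\mathfrak{E}_M$ forces precisely this, via the identification of $\mathfrak{E}_M$ with the image of $\End_R(M)$ under $(-)_\ast$.

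For conclusion (b) of the proposition, $\Lambda(M)$ is complemented iff every submodule of $M$ is a direct summand iff $M$ is semisimple iff $M=\bigoplus_I S_i$ with the $S_i$ simple. Moreover, $\Lambda(M)$ is boolean iff it is additionally distributive; for a semisimple module this holds iff the simple summands are pairwise non-isomorphic, since any two isomorphic simple summands $S,S'\subseteq M$ would generate a sublattice of type $M_3$ (consisting of $0$, $S$, $S'$, the diagonal, and $S\oplus S'$), contradicting distributivity. Combined with the translation above, this matches the parenthetical uniqueness clauses in both (a) and (b) and completes the reduction to the previous proposition.
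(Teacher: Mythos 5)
Your proof is correct and is exactly the intended derivation: the paper states this corollary with no proof, as an immediate specialization of the preceding proposition to $\mathcal{L}=\Lambda(M)$, $\mathfrak{m}=\mathfrak{E}_M$, and you supply precisely the right translations (the membership $\iota_L\theta\rho_N\in\mathfrak{E}_M$ forcing the lattice isomorphism to come from a module isomorphism $M/N\cong L$, and complemented/boolean corresponding to semisimple/semisimple with pairwise non-isomorphic summands). The only sub-claim you leave one-directional is that pairwise non-isomorphic simple summands conversely force $\Lambda(M)$ to be distributive (every submodule is then a sum of a subset of the $S_i$, so $\Lambda(M)$ is the power set of $I$); this is standard and does not affect the argument.
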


\begin{prop}
	Let $\mathcal{L}$ be a complete modular lattice and $x\in \mathcal{L}$ be a fully invariant element. If $\mathcal{L}$ is Rickart and every endomorphism $\varphi\in\End_{lin}([\mathbf{0},x])$ can be extended to an endomorphism $\widehat{\varphi}\in\End_{lin}(\mathcal{L})$, then $[\mathbf{0},x]$ is a Rickart lattice.
\end{prop}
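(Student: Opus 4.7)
The plan is to fix an arbitrary $\varphi\in\End_{lin}([\mathbf{0},x])$, extend it to some $\widehat{\varphi}\in\End_{lin}(\mathcal{L})$ by hypothesis, use the Rickart property of $\mathcal{L}$ to obtain a complement of $\ker_{\widehat{\varphi}}$, and then intersect with $x$ to produce a complement of $\ker_\varphi$ inside the interval $[\mathbf{0},x]$. The full invariance of $x$ is what will make this last intersection actually cover all of $x$.

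The first task is to establish the identity $\ker_\varphi=x\wedge\ker_{\widehat{\varphi}}$. Since $\widehat{\varphi}$ extends $\varphi$, the equality $\widehat{\varphi}(a)=\varphi(a)$ holds for every $a\leq x$. Applying this to $\ker_\varphi\leq x$ gives $\widehat{\varphi}(\ker_\varphi)=\varphi(\ker_\varphi)=\mathbf{0}$, and since $\widehat{\varphi}$ vanishes precisely on elements below $\ker_{\widehat{\varphi}}$ (by the isomorphism $\overline{\widehat{\varphi}}\colon[\ker_{\widehat{\varphi}},\mathbf{1}]\to[\mathbf{0},\widehat{\varphi}(\mathbf{1})]$), this forces $\ker_\varphi\leq x\wedge\ker_{\widehat{\varphi}}$. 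Conversely, $x\wedge\ker_{\widehat{\varphi}}$ lies in $[\mathbf{0},x]$ and $\varphi(x\wedge\ker_{\widehat{\varphi}})=\widehat{\varphi}(x\wedge\ker_{\widehat{\varphi}})=\mathbf{0}$, so $x\wedge\ker_{\widehat{\varphi}}\leq\ker_\varphi$ by the analogous property of $\overline{\varphi}$ inside $[\mathbf{0},x]$.

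Now I would invoke the Rickart hypothesis on $\mathcal{L}$ to pick a complement $y$ of $\ker_{\widehat{\varphi}}$ in $\mathcal{L}$. The two-element family $\{y,\ker_{\widehat{\varphi}}\}$ is independent and has join $\mathbf{1}$, so Corollary \ref{fidis} applied to the fully invariant element $x$ yields $x=(x\wedge y)\vee(x\wedge\ker_{\widehat{\varphi}})=(x\wedge y)\vee\ker_\varphi$. Moreover, $(x\wedge y)\wedge\ker_\varphi\leq y\wedge\ker_{\widehat{\varphi}}=\mathbf{0}$, so $x\wedge y$ is a complement of $\ker_\varphi$ inside the interval $[\mathbf{0},x]$, proving $[\mathbf{0},x]$ is Rickart.

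The one subtle point is the appeal to Corollary \ref{fidis}: without the full invariance of $x$, the complement $y$ could fail to interact properly with $x$, and the decomposition of $x$ needed to extract a complement of $\ker_\varphi$ inside the interval would break down. Everything else is a transparent transfer of the Rickart property from $\mathcal{L}$ down to $[\mathbf{0},x]$ via the extension hypothesis.
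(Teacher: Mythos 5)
Your proposal is correct and follows essentially the same route as the paper: extend $\varphi$ to $\widehat{\varphi}$, identify $\ker_\varphi=x\wedge\ker_{\widehat{\varphi}}$, take a complement $y$ of $\ker_{\widehat{\varphi}}$ using the Rickart property of $\mathcal{L}$, and apply Corollary \ref{fidis} (via full invariance of $x$) to get $x=\ker_\varphi\vee(x\wedge y)$ with $\ker_\varphi\wedge(x\wedge y)=\mathbf{0}$. The only difference is that you spell out the verification of $\ker_\varphi=x\wedge\ker_{\widehat{\varphi}}$, which the paper takes as immediate from the restriction property.
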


\begin{proof}
	Let $\varphi:[\mathbf{0},x]\to[\mathbf{0},x]$ be a linear morphism. By hypothesis, $\varphi$ can be extended to a linear morphism $\widehat{\varphi}:\mathcal{L}\to \mathcal{L}$. Then $\ker_{\widehat{\varphi}}$ is a complement in $\mathcal{L}$. Since the restriction of $\widehat{\varphi}$ to $[\mathbf{0},x]$ is $\varphi$, $\ker_\varphi=\ker_{\widehat{\varphi}}\wedge x$. Let $y$ be a complement of $\ker_{\widehat{\varphi}}$ in $\mathcal{L}$. Then, $\mathbf{1}=\ker_{\widehat{\varphi}}\vee y$ and $\mathbf{0}=\ker_{\widehat{\varphi}}\wedge y$. It follows from Corollary \ref{fidis} that \[x=x\wedge(\ker_{\widehat{\varphi}}\vee y)=(x\wedge\ker_{\widehat{\varphi}})\vee (x\wedge y)=\ker_\varphi\vee(x\wedge y).\]
	It is clear that $\ker_\varphi\wedge(x\wedge y)=\mathbf{0}$. Therefore, $\ker_\varphi$ is a complement in $[\mathbf{0},x]$. Thus $[\mathbf{0},x]$ is a Rickart lattice.
\end{proof}


\begin{lemma}
Let $\mathcal{L}$ be a complete modular lattice. The following conditions are equivalent for $a\in\mathcal{L}$: 
\begin{enumerate}[label=\emph{(\alph*)}]
    \item $a$ is a complement in $\mathcal{L}$.
    \item the inclusion $\iota_a:[\mathbf{0},a]\to \mathcal{L}$ splits.
    \item the projection $\rho_a:\mathcal{L}\to[a,\mathbf{1}]$ splits.
\end{enumerate}
\end{lemma}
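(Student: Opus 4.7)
The plan is to establish $(a)\Leftrightarrow(b)$ and $(a)\Leftrightarrow(c)$ separately. In each forward direction I would cook up an explicit splitting from a given complement $a'$ of $a$; in each backward direction I would read off a complement of $a$ from the kernel--image data attached to the splitting.

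For $(a)\Rightarrow(b)$, I would take $\psi:\mathcal{L}\to[\mathbf{0},a]$, $\psi(x)=(x\vee a')\wedge a$, that is, the projection $\pi_a$ with codomain narrowed to $[\mathbf{0},a]$. This factors as $\rho_{a'}$ followed by the canonical isomorphism $[a',\mathbf{1}]\to[\mathbf{0},a]$ given by $\_\wedge a$, hence is a linear morphism, and $\psi\iota_a=\mathrm{id}$ reduces to the modular identity $(x\vee a')\wedge a=x\vee(a'\wedge a)=x$ valid for $x\leq a$. Dually, for $(a)\Rightarrow(c)$ I would take $\sigma:[a,\mathbf{1}]\to\mathcal{L}$, $\sigma(y)=y\wedge a'$, factored as the canonical isomorphism $[a,\mathbf{1}]\to[\mathbf{0},a']$ followed by $\iota_{a'}$; then $\rho_a\sigma=\mathrm{id}$ also drops out of modularity, since $a\vee(y\wedge a')=y\wedge(a\vee a')=y$ for $y\geq a$.

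For $(b)\Rightarrow(a)$, with $\psi$ a splitting of $\iota_a$ I would set $k=\ker_\psi$. Using monotonicity of $\psi$ (linear morphisms preserve joins by Remark \ref{linmorjoins}) together with the splitting identity, I would squeeze $a\wedge k$ between $\psi(a\wedge k)\leq\psi(k)=\mathbf{0}$ and $\psi(a\wedge k)=a\wedge k$, forcing $a\wedge k=\mathbf{0}$. For the join, I would first pin down $\psi(\mathbf{1})=a$ (it lies in the codomain $[\mathbf{0},a]$ yet dominates $\psi(a)=a$) and then invoke injectivity of the induced isomorphism $\overline{\psi}:[k,\mathbf{1}]\to[\mathbf{0},a]$ to conclude $a\vee k=\mathbf{1}$ from $\psi(a\vee k)=\psi(a)\vee\psi(k)=a=\psi(\mathbf{1})$.

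For $(c)\Rightarrow(a)$, let $\sigma$ split $\rho_a$ and put $b=\sigma(\mathbf{1})$. Applying $\rho_a\sigma=\mathrm{id}$ at $\ker_\sigma$ yields $\ker_\sigma=a\vee\sigma(\ker_\sigma)=a$, and applying it at $\mathbf{1}$ yields $a\vee b=\mathbf{1}$. For the meet, the iso $\overline{\sigma}:[a,\mathbf{1}]\to[\mathbf{0},b]$ pulls $a\wedge b\in[\mathbf{0},b]$ back to some $y\in[a,\mathbf{1}]$; splitting then forces $y=a\vee(a\wedge b)=a$, whence $a\wedge b=\sigma(a)=\mathbf{0}$. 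No serious obstacle arises; the only care needed is to keep straight which interval each element inhabits and to invoke the modular law at the right moment.
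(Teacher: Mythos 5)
Your proof is correct and follows essentially the same route as the paper: the same corestricted projection $\pi_a$ and the map $y\mapsto y\wedge a'$ furnish the splittings in the forward directions, and in the backward directions the complement is read off from $\ker_\psi$ (resp.\ $\sigma(\mathbf{1})$) exactly as in the paper's argument. The only difference is cosmetic: you establish $\ker_\sigma=a$ explicitly and verify $a\wedge b=\mathbf{0}$ by pulling $a\wedge b$ back through $\overline{\sigma}$ and applying the splitting identity, which is if anything slightly more careful than the paper's direct computation with $\overline{\varphi}^{-1}$.
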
 
\begin{proof}
(a)$\Rightarrow$(b) Suppose that  $a$ is a complement in $\mathcal{L},$ with complement $a'$. Consider the projection $\pi_{a}$ over $a$ and let us see that it is a splitting for the inclusion of $[\mathbf{0},a]$  in $\mathcal{L}.$ By modularity, we have that
\[\begin{array}{ccccc}
[\mathbf{0},a] & \rightarrowtail & \mathcal{L} & \overset{\pi_{a}}{\rightarrow} & [\mathbf{0},a]\\
x & \mapsto & x & \text{\ensuremath{\mapsto}} & \left(x\vee a'\right)\wedge a=x.
\end{array}\]

(a)$\Rightarrow$(c) Now, for $\rho_a$ consider $\varphi:[a,\mathbf{1}]\to \mathcal{L}$ defined as $\varphi(x)=x\wedge a'$. Then $\varphi(\mathbf{1})=a'$. It is clear that $\varphi$ induces an isomorphism $\overline{\varphi}:[a,\mathbf{1}]\to[\mathbf{0},\varphi(\mathbf{1})]$. Thus $\varphi$ is a linear morphism with $\ker_\varphi=a$. It follows that $\varphi\circ\rho_a=Id_{[a,\mathbf{1}]}$.

(b)$\Rightarrow$(a) Assume that $\varphi:\text{\ensuremath{\mathcal{L\rightarrow\mathit{[\mathbf{0},a]}}}}$ is a splitting for $\iota_a,$  i.e. $\varphi\circ\iota_a=Id_{[\mathbf{0},a]}.$  We will see that $k=\ker_\varphi$  is a complement for $a.$  For, $\mathbf{0}=\varphi(k)=\varphi\left(\left(k\wedge a\right)\vee k\right)=\varphi\left(k\wedge a\right)=k\wedge a.$  On the other hand, as $\bar{\varphi}:[k,\mathbf{1}]\rightarrow [\mathbf{0},a]$ is an isomorphism and $\bar{\varphi}{\left(\mathbf{1}\right)=a=\varphi\left(a\right)=\varphi}\left(a\vee k\right),$ then $a\vee k=\mathbf{1}.$ 

(c)$\Rightarrow$(a) Assume that $\varphi:[a,\mathbf{1}]\to\mathcal{L}$ is a splitting for $\rho_a$  i.e. $\rho_a\circ\varphi=Id_{[a,\mathbf{1}]}.$  We will see that $b=\varphi(\mathbf{1})$  is a complement for $a.$  For, $\mathbf{1}=\rho_a(\varphi(\mathbf{1}))=b\vee a$. On the other hand, as $\bar{\varphi}:[a,\mathbf{1}]\rightarrow [\mathbf{0},b]$ is an isomorphism and $\overline{\varphi}^{-1}(a\wedge b)=\overline{\varphi}^{-1}(a)\wedge \overline{\varphi}^{-1}(b)=a\wedge (a\vee b)=a=\overline{\varphi}^{-1}\left(\mathbf{0}\right)$. Thus, $\mathbf{0}=a\wedge b$.
\end{proof}

\begin{cor}
Let $\mathcal{L}$ be a complete modular lattice and $a,b\in\mathcal{L}$. If $a$ is a complement in $[\mathbf{0},b]$ and $b$ is a complement in $\mathcal{L}$ then $a$ is a complement in $\mathcal{L}. $
\end{cor}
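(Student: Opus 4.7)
The plan is to give a direct modularity calculation, and then note that this is precisely the statement that the two splittings provided by the previous lemma compose.

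Concretely, I would pick witnesses: let $a'\in[\mathbf{0},b]$ be a complement of $a$ in $[\mathbf{0},b]$, so $a\wedge a'=\mathbf{0}$ and $a\vee a'=b$, and let $b'\in\mathcal{L}$ be a complement of $b$ in $\mathcal{L}$, so $b\wedge b'=\mathbf{0}$ and $b\vee b'=\mathbf{1}$. My claim is that $a'\vee b'$ is a complement of $a$ in $\mathcal{L}$. The join side is free: $a\vee(a'\vee b')=(a\vee a')\vee b'=b\vee b'=\mathbf{1}$. For the meet side I use $a\leq b$ to replace $a\wedge x$ by $a\wedge(b\wedge x)$, and then modularity (applicable because $a'\leq b$) yields $b\wedge(a'\vee b')=a'\vee(b\wedge b')=a'$, so $a\wedge(a'\vee b')=a\wedge a'=\mathbf{0}$.

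Alternatively, and more in the spirit of the preceding lemma, one can argue abstractly: the hypothesis that $a$ is a complement in $[\mathbf{0},b]$ means by the lemma that $\iota^{[\mathbf{0},b]}_a:[\mathbf{0},a]\to[\mathbf{0},b]$ splits, and that $b$ is a complement in $\mathcal{L}$ means $\iota_b:[\mathbf{0},b]\to\mathcal{L}$ splits. Composing the two retractions produces a retraction for the composite inclusion $\iota_a=\iota_b\circ\iota^{[\mathbf{0},b]}_a:[\mathbf{0},a]\to\mathcal{L}$, so applying the lemma once more gives that $a$ is a complement in $\mathcal{L}$.

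Nothing here is really an obstacle; the only subtlety is remembering to use $a\leq b$ to localize the meet calculation inside $[\mathbf{0},b]$ before invoking modularity, and (in the second approach) checking that the composition of two linear morphisms is linear so that the lemma applies.
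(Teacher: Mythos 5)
Your proposal is correct. The direct computation checks out: with $a'$ a complement of $a$ in $[\mathbf{0},b]$ and $b'$ a complement of $b$ in $\mathcal{L}$, the identities $a\vee(a'\vee b')=b\vee b'=\mathbf{1}$ and $a\wedge(a'\vee b')=a\wedge\bigl(b\wedge(a'\vee b')\bigr)=a\wedge\bigl(a'\vee(b\wedge b')\bigr)=a\wedge a'=\mathbf{0}$ (using $a\leq b$ and then modularity with $a'\leq b$) are exactly right. The paper states this as an unproved corollary of the preceding splitting lemma, so your second argument --- composing the retraction of $\iota_a^{[\mathbf{0},b]}$ with that of $\iota_b$ and applying the lemma again --- is precisely the intended route, while your explicit witness $a'\vee b'$ is the concrete content underlying it.
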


\begin{prop}\label{compintric}
Let $\mathcal{L}$ be a complete modular lattice and $a\in\mathcal{L}$ be an element with complement $a'$. Let $\mathfrak{m}$ be a submonoid of $\End_{lin}(\mathcal{L})$ and let $\mathfrak{n}$ be a submonoid of $\End_{lin}([\mathbf{0},a])$ such that $\iota\psi\pi_a\in\mathfrak{m}$ for every element $\psi\in\mathfrak{n}$. If $\mathcal{L}$ is $\mathfrak{m}$-Rickart, then $[\mathbf{0},a]$ is a $\mathfrak{n}$-Rickart lattice.
\end{prop}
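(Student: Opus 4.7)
The plan is to reduce the $\mathfrak{n}$-Rickart property of $[\mathbf{0},a]$ to the $\mathfrak{m}$-Rickart property of $\mathcal{L}$ by exploiting the hypothesis that $\iota\psi\pi_a\in\mathfrak{m}$ for every $\psi\in\mathfrak{n}$. So fix $\psi\in\mathfrak{n}$, set $\varphi=\iota\psi\pi_a\in\mathfrak{m}$, and aim to recover a complement of $\ker_\psi$ in $[\mathbf{0},a]$ from a complement of $\ker_\varphi$ in $\mathcal{L}$.

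The first step is to identify $\ker_\varphi$. Using $\pi_a(x)=(x\vee a')\wedge a$ together with $\ker_\psi\leq a$ and modularity, one shows $(x\vee\ker_\psi\vee a')\wedge a=\ker_\psi\vee\pi_a(x)$, so that $\varphi(x\vee\ker_\psi\vee a')=\psi(\ker_\psi\vee\pi_a(x))=\psi(\pi_a(x))=\varphi(x)$. Moreover, on $[\ker_\psi\vee a',\mathbf{1}]$ the map $\varphi$ factors as the canonical modular isomorphism $x\mapsto x\wedge a$ from $[\ker_\psi\vee a',\mathbf{1}]$ to $[\ker_\psi,a]$ followed by $\overline{\psi}:[\ker_\psi,a]\to[\mathbf{0},\psi(a)]$, and $\varphi(\mathbf{1})=\psi(a)$. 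This proves $\ker_\varphi=\ker_\psi\vee a'$.

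Since $\mathcal{L}$ is $\mathfrak{m}$-Rickart, $\ker_\varphi$ has a complement $c\in\mathcal{L}$, and I claim $c^*:=\pi_a(c)=(c\vee a')\wedge a$ is a complement of $\ker_\psi$ in $[\mathbf{0},a]$. For the join, meeting the equality $(\ker_\psi\vee a')\vee c=\mathbf{1}$ with $a$ and invoking modularity (using $\ker_\psi\leq a$) yields $a=\ker_\psi\vee((a'\vee c)\wedge a)=\ker_\psi\vee c^*$. For the meet, modularity applied with $a'\leq\ker_\psi\vee a'$ gives $(\ker_\psi\vee a')\wedge(c\vee a')=a'\vee((\ker_\psi\vee a')\wedge c)=a'$; hence $\ker_\psi\wedge(c\vee a')\leq a'$, and combined with $\ker_\psi\leq a$ this forces $\ker_\psi\wedge c^*\leq a\wedge a'=\mathbf{0}$.

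The main obstacle is the modular bookkeeping in steps two and three: correctly pinning down $\ker_\varphi$ as $\ker_\psi\vee a'$ and then verifying that projecting $c$ along $\pi_a$ produces a complement inside the interval $[\mathbf{0},a]$. Once those two modular identities are in place, the Rickart property transfers immediately and yields a complement for $\ker_\psi$ in $[\mathbf{0},a]$, which is exactly what is required for $[\mathbf{0},a]$ to be $\mathfrak{n}$-Rickart.
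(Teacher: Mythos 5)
Your proof is correct and follows essentially the same route as the paper: both pass to the linear morphism $\iota\psi\pi_a\in\mathfrak{m}$, identify its kernel as $\ker_\psi\vee a'$, and transport a complement of that kernel in $\mathcal{L}$ back into the interval $[\mathbf{0},a]$. The only cosmetic difference is that you take $\pi_a(c)=(c\vee a')\wedge a$ and check both complement conditions by purely modular computations, whereas the paper takes $k'\wedge a$ and reuses the isomorphism $\overline{\psi}$ to verify the join; both variants work.
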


\begin{proof}
Let $\psi:[\mathbf{0},a]\longrightarrow [\mathbf{0},a]$ be a linear morphism in $\mathfrak{n}$. Let us take the projection $\pi_{a}:\mathcal{L\text{\ensuremath{\rightarrow}}L}$ and let $\pi_{a}^{\upharpoonright}:\mathcal{L\mathit{\longrightarrow [\mathbf{0},a]}}$ denote the corestriction to $[\mathbf{0},a]$ of $\pi_a.$ By hypothesis $\begin{array}{ccccccc}
\mathcal{L} & \overset{\pi_{a}^{\upharpoonright}}{\rightarrow} & [\mathbf{0},a] & \overset{\psi}{\rightarrow} & [\mathbf{0},a] & \overset{\iota}{\rightarrow} & \mathcal{L}\end{array}$  is a linear morphism in $\mathfrak{m}$ with kernel $k,$ say, and let us denote $b=\ker_\psi$. We have that $k=\left(\bar{\pi}\right)^{-1}\left(b\right).$ As $\mathcal{L}$ is $\mathfrak{m}$-Rickart, $k$ is a complement in $\mathcal{L}.$  Since $\left(a'\vee b\right)\wedge a=b$ with $a'$ a complement of $a$, $k=a'\text{\ensuremath{\vee}}b.$ Let $k'$ denote a complement of $k$ in $\mathcal{L}.$ Then,
\[\psi\left(a\right)=\left(\psi\pi_{a}\right)\left(1\right)=\left(\psi\pi_{a}\right)\left(k\vee k'\right)=\left(\psi\pi_{a}\right)\left(a'\vee b\vee k'\right)=\psi\left(b\vee k'\right).\]
As $\overline{\psi}:[b,a]\to[\mathbf{0},\psi\left(a\right)]$ is an isomorphism, then $b\vee k'=a.$ Thus, $\left(a\wedge k'\right)\vee b=a\wedge\left(k'\vee b\right)=a.$ Besides, $\left(k'\wedge a\right)\wedge b\leq k'\wedge b\leq k'\wedge\left(b\vee a'\right)=k'\wedge k=0.$ Hence, $k'\wedge a$ is a complement of $b$ in $[\mathbf{0},a].$ Thus $[\mathbf{0},a]$ is $\mathfrak{n}$-Rickart.
\end{proof}

We have the dual version of last proposition.

\begin{prop}
	Let $\mathcal{L}$ be a complete modular lattice and $a\in\mathcal{L}$ be an element with complement $a'$. Let $\mathfrak{m}$ be a submonoid of $\End_{lin}(\mathcal{L})$ and let $\mathfrak{n}$ be a submonoid of $\End_{lin}([\mathbf{0},a])$ such that $\iota\psi\pi_a\in\mathfrak{m}$ every element $\psi\in\mathfrak{n}$. If $\mathcal{L}$ is dual-$\mathfrak{m}$-Rickart, then $[\mathbf{0},a]$ is a dual-$\mathfrak{n}$-Rickart lattice.
\end{prop}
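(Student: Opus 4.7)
The plan is to run the argument of Proposition \ref{compintric} with the roles of kernel and image swapped, which turns out to be even simpler because one does not have to trace a complement back through an isomorphism: modularity alone intersects things cleanly on the image side.

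First, given an arbitrary $\psi\in\mathfrak{n}$, I would form the composite $\iota\psi\pi_a\in\mathfrak{m}$, where $\iota=\iota_a$ is the inclusion $[\mathbf{0},a]\hookrightarrow\mathcal{L}$ and $\pi_a$ is the projection on $a$ with respect to the given complement $a'$. Since $\pi_a(\mathbf{1})=(\mathbf{1}\vee a')\wedge a=a$ and $\iota$ is the identity on elements, one computes immediately that $(\iota\psi\pi_a)(\mathbf{1})=\psi(a)$. Because $\mathcal{L}$ is dual-$\mathfrak{m}$-Rickart, this element admits a complement $c\in\mathcal{L}$, so that $\psi(a)\vee c=\mathbf{1}$ and $\psi(a)\wedge c=\mathbf{0}$.

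The candidate complement of $\psi(a)$ inside the interval $[\mathbf{0},a]$ is then $c\wedge a$. That this works is a one-line modular-law check: since $\psi(a)\leq a$, modularity gives
\[\psi(a)\vee(c\wedge a)=(\psi(a)\vee c)\wedge a=\mathbf{1}\wedge a=a,\]
and on the other side
\[\psi(a)\wedge(c\wedge a)=\psi(a)\wedge c=\mathbf{0}.\]
Hence $c\wedge a$ is a complement of $\psi(a)=(\iota\psi\pi_a)(\mathbf{1})$ in $[\mathbf{0},a]$, which is precisely the dual-$\mathfrak{n}$-Rickart property for $\psi$.

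There is essentially no obstacle here; the only subtlety, and the step I would double-check, is the identification $(\iota\psi\pi_a)(\mathbf{1})=\psi(a)$ and the fact that $\iota\psi\pi_a$ really lies in the ambient monoid $\mathfrak{m}$ (which is guaranteed by hypothesis). Once those are in place, the verification that $c\wedge a$ works is immediate from modularity, and the proof is complete.
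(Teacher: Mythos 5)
Your proof is correct, and it is the natural dualization of the argument for Proposition \ref{compintric} (the paper itself omits the proof, remarking only that it is the dual version): the identification $(\iota\psi\pi_a)(\mathbf{1})=\psi(a)$ holds since $\pi_a(\mathbf{1})=a$, and the modular-law verification that $c\wedge a$ complements $\psi(a)$ in $[\mathbf{0},a]$ is exactly right. As you observe, the image side is genuinely easier than the kernel side because no complement needs to be pulled back through the isomorphism $\overline{\pi_a}$.
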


\begin{cor}
	Let $M$ be an $R$-module and $N$ be a direct summand of $M$. If $M$ is a (dual-)Rickart module, so is $N$.
\end{cor}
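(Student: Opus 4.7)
The plan is to obtain the corollary as a direct application of Proposition \ref{compintric} (and its dual), using Proposition \ref{lricmric} to pass back and forth between the module and lattice settings. The whole task reduces to verifying the hypothesis on the two monoids.

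First, I would fix the translation data. Assume $M=N\oplus N'$ and set $\mathcal{L}=\Lambda(M)$, $a=N$, $a'=N'$, so that $a$ is a complement in $\mathcal{L}$ and $[\mathbf{0},a]$ may be canonically identified with $\Lambda(N)$. Take $\mathfrak{m}=\mathfrak{E}_M$ and $\mathfrak{n}=\mathfrak{E}_N$, the images of $\End_R(M)$ and $\End_R(N)$ under $(-)_\ast$. By Proposition \ref{lricmric}, $M$ being (dual-)Rickart is equivalent to $\mathcal{L}$ being (dual-)$\mathfrak{m}$-Rickart, and analogously $N$ being (dual-)Rickart is equivalent to $[\mathbf{0},a]$ being (dual-)$\mathfrak{n}$-Rickart.

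Next I would check the key hypothesis of Proposition \ref{compintric}: for every $\psi\in\mathfrak{n}$, we have $\iota\,\psi\,\pi_a\in\mathfrak{m}$. Given $\psi\in\mathfrak{n}$, pick $g\in\End_R(N)$ with $g_\ast=\psi$. Let $\tilde\iota:N\hookrightarrow M$ and $\tilde\pi:M\twoheadrightarrow N$ be the inclusion and canonical projection associated with the decomposition $M=N\oplus N'$, and set $f=\tilde\iota\,g\,\tilde\pi\in\End_R(M)$. Then $f_\ast=\tilde\iota_\ast\,g_\ast\,\tilde\pi_\ast$, and Lemma \ref{modpix} gives $\tilde\iota_\ast\tilde\pi_\ast=\pi_N=\pi_a$ in $\End_{lin}(\mathcal{L})$; unwinding the corestriction/inclusion identification between $[\mathbf{0},a]$ and the image of $\pi_a$ in $\mathcal{L}$, this is exactly the statement that $f_\ast=\iota\,\psi\,\pi_a$. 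Hence $\iota\,\psi\,\pi_a\in\mathfrak{E}_M=\mathfrak{m}$, as required.

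Having established this, the Rickart case follows by applying Proposition \ref{compintric} directly: from $\mathcal{L}$ being $\mathfrak{m}$-Rickart one concludes that $[\mathbf{0},a]$ is $\mathfrak{n}$-Rickart, and Proposition \ref{lricmric} then yields that $N$ is a Rickart module. The dual-Rickart case is obtained by the same procedure, invoking the dual version of Proposition \ref{compintric} stated just above; the hypothesis $\iota\,\psi\,\pi_a\in\mathfrak{m}$ is identical. The only step that requires any care is the bookkeeping in verifying $f_\ast=\iota\,\psi\,\pi_a$, which hinges on Lemma \ref{modpix} and on the canonical identification of $\Lambda(N)$ with $[\mathbf{0},N]\subseteq\Lambda(M)$; once this is noted, the argument is immediate.
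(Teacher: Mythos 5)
Your argument is correct and is precisely the route the paper intends: the corollary is stated immediately after Proposition \ref{compintric} and its dual, with Lemma \ref{modpix} and Proposition \ref{lricmric} supplying the translation between $\End_R(M)$ and $\mathfrak{E}_M\subseteq\End_{lin}(\Lambda(M))$, and your verification that $f_\ast=\iota\,\psi\,\pi_N^{\upharpoonright}$ for $f=\tilde\iota g\tilde\pi$ is exactly the bookkeeping the paper leaves implicit. Nothing is missing.
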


\begin{prop}\label{complbaer}
	Let $\mathcal{L}$ be a complete modular lattice and $x\in\mathcal{L}$ be an element with complement $x'$. Let $\mathfrak{m}$ be a submonoid of $\End_{lin}(\mathcal{L})$ and let $\mathfrak{n}$ be a submonoid of $\End_{lin}([\mathbf{0},x])$ such that the extension given by Proposition \ref{exmorf} of every element $\psi\in\mathfrak{n}$ is in $\mathfrak{m}$. If $\mathcal{L}$ is $\mathfrak{m}$-Baer, then $[\mathbf{0},x]$ is a $\mathfrak{n}$-Baer lattice.
\end{prop}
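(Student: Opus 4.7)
The plan is to imitate the strategy of Proposition \ref{compintric}, only now with an arbitrary subset $X \subseteq \mathfrak{n}$ in place of a single morphism. Given such $X$, for each $\psi\in X$ let $\widehat{\psi}\in\mathfrak{m}$ denote the extension supplied by Proposition \ref{exmorf}; from its proof we have $\ker_{\widehat{\psi}} = \ker_\psi\vee x'$. The $\mathfrak{m}$-Baer hypothesis then yields a complement $k'$ in $\mathcal{L}$ of
\[k = \bigwedge_{\psi\in X}\ker_{\widehat{\psi}} = \bigwedge_{\psi\in X}(\ker_\psi \vee x').\]

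Set $b = \bigwedge_{\psi\in X}\ker_\psi \in [\mathbf{0}, x]$; this is the candidate whose complement we seek in $[\mathbf{0}, x]$. The first substep is to show $k = b\vee x'$. One inequality is clear, since $b\vee x' \leq \ker_\psi\vee x'$ for every $\psi$. For the reverse, note that $x' \leq k$ (as $x'$ lies below each meetand) and that modularity together with $\ker_\psi\leq x$ gives $(\ker_\psi\vee x')\wedge x = \ker_\psi$, whence $k\wedge x = b$. Invoking modularity once more with $x'\leq k$ yields $k = k\wedge(x\vee x') = (k\wedge x)\vee x' = b\vee x'$.

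The candidate for the complement of $b$ in $[\mathbf{0}, x]$ is the projection $\pi_x(k') = (k'\vee x')\wedge x$, rather than the naive intersection $k'\wedge x$ (which may fail because $k'$ need not lie below $x$). For the join, modularity via $b\leq x$ yields
\[b\vee \pi_x(k') = (b\vee k'\vee x')\wedge x = (k\vee k')\wedge x = \mathbf{1}\wedge x = x.\]
For the meet, $b\leq k$ gives $b\wedge \pi_x(k') = b\wedge k\wedge (k'\vee x')$; modularity with $x'\leq k$ yields $k\wedge(k'\vee x') = x'\vee(k\wedge k') = x'$, so $b\wedge \pi_x(k') = b\wedge x' = \mathbf{0}$ since $b\leq x$.

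The main obstacle is identifying $\pi_x(k')$ as the correct candidate for the complement; once the reduction $k = b\vee x'$ has been carried out via modularity, the remainder is routine meet/join bookkeeping in $\mathcal{L}$. Note that the hypothesis that $\mathfrak{n}$ is a submonoid plays no role here: only the extension hypothesis on individual elements of $\mathfrak{n}$ is used.
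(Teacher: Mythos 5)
Your proposal is correct and follows essentially the same route as the paper: both proofs hinge on the identity $\bigwedge_{\psi\in X}(\ker_\psi\vee x')=\bigl(\bigwedge_{\psi\in X}\ker_\psi\bigr)\vee x'$ (the paper obtains it via the complete-lattice isomorphism $[x',\mathbf{1}]\cong[\mathbf{0},x]$, you via the same modularity computation done by hand) and then invoke the $\mathfrak{m}$-Baer hypothesis on this element. Your only addition is to make explicit the final descent step by exhibiting $\pi_x(k')$ as the complement of $b$ in $[\mathbf{0},x]$, a step the paper leaves implicit; the verification is correct.
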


\begin{proof}
	Consider a family of linear morphisms $\{\varphi_i:[\mathbf{0},x]\to[\mathbf{0},x]\mid \varphi_i\in\mathfrak{n}\}_I$ and let $k=\bigwedge_I\ker_{\varphi_i}$. By hypothesis, the extension  $\widehat{\varphi}_i:\mathcal{L}\to\mathcal{L}$  with $\ker_{\widehat{\varphi}_i}=\ker_{\varphi_i}\vee x'$ is in $\mathfrak{m}$ for all $i\in I$. Since $\_\wedge x:[x',\mathbf{1}]\to[\mathbf{0},x]$ is an isomorphism with inverse $\_\vee x'$,
	\[k=(k\vee x')\wedge x\leq\bigwedge_I(\ker_{\varphi_i}\vee x')\wedge x=k.\]
	Therefore, $k\vee x'=\bigwedge_I(\ker_{\varphi_i}\vee x')$. By hypothesis, $k\vee x'=\bigwedge_I(\ker_{\varphi_i}\vee x')$ has a complement in $\mathcal{L}$. Thus, $k$ has a complement in $[\mathbf{0},x]$.
\end{proof}

\begin{prop}\label{compldbaer}
	Let $\mathcal{L}$ be a complete modular lattice and $x\in\mathcal{L}$ be an element with complement $x'$. Let $\mathfrak{m}$ be a submonoid of $\End_{lin}(\mathcal{L})$ and let $\mathfrak{n}$ be a submonoid of $\End_{lin}([\mathbf{0},x])$ such that the extension given by Proposition \ref{exmorf} of every element $\psi\in\mathfrak{n}$ is in $\mathfrak{m}$. If $\mathcal{L}$ is dual-$\mathfrak{m}$-Baer, then $[\mathbf{0},x]$ is a dual-$\mathfrak{n}$-Baer lattice.
\end{prop}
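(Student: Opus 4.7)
The plan is to mimic the proof of Proposition \ref{complbaer} in its natural dual form. Given a family $\{\varphi_i:[\mathbf{0},x]\to[\mathbf{0},x]\mid \varphi_i\in\mathfrak{n}\}_I$, I would first invoke Proposition \ref{exmorf} to extend each $\varphi_i$ to $\widehat{\varphi}_i:\mathcal{L}\to\mathcal{L}$, defined by $\widehat{\varphi}_i(a)=\varphi_i(a\wedge x)$. The key bookkeeping fact coming from that proposition is that $\widehat{\varphi}_i(\mathbf{1})=\varphi_i(\mathbf{1}\wedge x)=\varphi_i(x)$, and by the hypothesis each $\widehat{\varphi}_i$ lies in $\mathfrak{m}$.

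Set $y=\bigvee_{i\in I}\varphi_i(x)=\bigvee_{i\in I}\widehat{\varphi}_i(\mathbf{1})$. Since $\mathcal{L}$ is dual-$\mathfrak{m}$-Baer, $y$ has a complement $y'\in\mathcal{L}$, i.e.\ $y\vee y'=\mathbf{1}$ and $y\wedge y'=\mathbf{0}$. The claim is that $y'\wedge x$ is a complement of $y$ in the interval $[\mathbf{0},x]$.

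Both verifications are short modularity computations: because $y\leq x$, modularity gives
\[
y\vee (y'\wedge x)=(y\vee y')\wedge x=\mathbf{1}\wedge x=x,
\]
while $y\wedge(y'\wedge x)=(y\wedge x)\wedge y'=y\wedge y'=\mathbf{0}$. Hence $y$ has a complement in $[\mathbf{0},x]$, and therefore $[\mathbf{0},x]$ is dual-$\mathfrak{n}$-Baer.

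There isn't really a main obstacle here: the essential content is Proposition \ref{exmorf} (to identify the extensions with the same images as the originals), the hypothesis that these extensions live in $\mathfrak{m}$, and one modularity argument exploiting $y\leq x$. The only thing worth double-checking is that one does not need full invariance of $x$ for this direction (unlike in Proposition \ref{complbaer} where the interaction $\bigwedge_I(\ker_{\varphi_i}\vee x')$ versus $\bigwedge_I\ker_{\varphi_i}$ had to be controlled); on the join side the containment $\varphi_i(x)\leq x$ is automatic, and that is all that is used.
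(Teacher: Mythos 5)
Your proof is correct, and it supplies exactly the argument the paper omits (Proposition \ref{compldbaer} is stated without proof as the dual of Proposition \ref{complbaer}). The three ingredients — extending each $\varphi_i$ via Proposition \ref{exmorf} so that $\widehat{\varphi}_i(\mathbf{1})=\varphi_i(x)$ with $\widehat{\varphi}_i\in\mathfrak{m}$, applying the dual-$\mathfrak{m}$-Baer hypothesis to $y=\bigvee_I\widehat{\varphi}_i(\mathbf{1})$, and the modularity computation showing $y'\wedge x$ complements $y$ in $[\mathbf{0},x]$ (valid since $y\leq x$) — are all sound, and your remark that the image side avoids the $\bigwedge_I(\ker_{\varphi_i}\vee x')$ bookkeeping needed in the kernel case is accurate.
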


\begin{cor}
	Let $M$ be a (dual-)Baer module. Then every direct summand $N$ of $M$ is also a (dual-)Baer module.
\end{cor}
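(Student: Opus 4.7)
The plan is to reduce this module statement to the lattice Propositions \ref{complbaer} and \ref{compldbaer}, using the dictionary from Proposition \ref{lricmric} in both directions. Since $M$ is (dual-)Baer iff $\Lambda(M)$ is (dual-)$\mathfrak{E}_M$-Baer, and a direct summand $N$ of $M$ corresponds to an element $x = N$ of $\Lambda(M)$ with complement $x' = N'$ coming from the decomposition $M = N \oplus N'$, I aim to deduce that the initial interval $[\mathbf{0}, N] \cong \Lambda(N)$ is (dual-)$\mathfrak{E}_N$-Baer, and then transfer back.

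First I would take $\mathfrak{m} = \mathfrak{E}_M$ and $\mathfrak{n} = \mathfrak{E}_N$ in Propositions \ref{complbaer} and \ref{compldbaer}. The only nontrivial hypothesis to check is that, for every $\psi \in \mathfrak{E}_N$, the extension $\widehat{\psi} \in \End_{lin}(\Lambda(M))$ furnished by Proposition \ref{exmorf} again lies in $\mathfrak{E}_M$, i.e. is of the form $f_\ast$ for some $f \in \End_R(M)$.

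The key step, and the only place where a little care is required, is to produce such an $f$ explicitly. Given $\psi = g_\ast$ with $g \in \End_R(N)$, the natural candidate is $f := \iota \circ g \circ \pi \in \End_R(M)$, where $\pi : M \to N$ is the canonical projection and $\iota : N \hookrightarrow M$ is the canonical inclusion associated with $M = N \oplus N'$. The identification $f_\ast = \widehat{\psi}$ on $\Lambda(M)$ is then a direct consequence of Lemma \ref{modpix}: for any submodule $L \le M$, one has $f_\ast(L) = g\bigl((L+N') \cap N\bigr) = g_\ast(\pi_N(L))$, which matches the lattice extension in Proposition \ref{exmorf} (with kernel $\ker_{g_\ast} \vee N'$). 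With this verified, Propositions \ref{complbaer} and \ref{compldbaer} yield that $\Lambda(N)$ is $\mathfrak{E}_N$-Baer (respectively dual-$\mathfrak{E}_N$-Baer), and a final application of Proposition \ref{lricmric} transfers this back to $N$, giving the two cases simultaneously.

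The main (though very mild) obstacle is pinning down the module endomorphism that realises the abstract lattice-theoretic extension; once Lemma \ref{modpix} is brought to bear, everything else is a mechanical translation.
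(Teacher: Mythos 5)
Your overall route is exactly the paper's: the corollary is stated there as an immediate consequence of Propositions \ref{complbaer} and \ref{compldbaer} together with the dictionary of Proposition \ref{lricmric}, and the only real content is the verification that suitable extensions of elements of $\mathfrak{E}_N$ lie in $\mathfrak{E}_M$, which is what you supply. However, your key identification is not literally correct. The extension furnished by Proposition \ref{exmorf} is $\widehat{\psi}(L)=\psi(L\wedge N)=g(L\cap N)$, whereas for $f=\iota g\pi$ Lemma \ref{modpix} gives $f_\ast(L)=g\bigl((L+N')\cap N\bigr)$. These two maps agree on $[\mathbf{0},N]$ and share the kernel $\Ker g\oplus N'$, i.e.\ $\ker_{g_\ast}\vee N'$, but they differ on general submodules: for $M=N\oplus N'$ with $N\cong N'$ and $L$ the graph of an isomorphism $N\to N'$, one has $L\cap N=0$ while $(L+N')\cap N=N$, so $g(L\cap N)=0$ while $f_\ast(L)=g(N)$. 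Hence ``$f_\ast=\widehat{\psi}$'' is false, and indeed $L\mapsto g(L\cap N)$ need not be induced by any module endomorphism at all.

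The slip is harmless but should be acknowledged rather than papered over. The proof of Proposition \ref{complbaer} (and of \ref{compldbaer}) uses only that each extension belongs to $\mathfrak{m}$, restricts to $\varphi_i$ on $[\mathbf{0},x]$, and has kernel $\ker_{\varphi_i}\vee x'$; the values of the extension on elements not comparable with $x$ play no role beyond the kernel. Since $(\iota g\pi)_\ast\in\mathfrak{E}_M$ restricts to $g_\ast$ on $[0,N]$ and has kernel $\ker_{g_\ast}\vee N'$, substituting $(\iota g\pi)_\ast$ for the formal extension $\widehat{\psi}$ makes the argument of Propositions \ref{complbaer} and \ref{compldbaer} go through verbatim, and the corollary follows. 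So either state the hypothesis you are checking in this weaker, kernel-only form, or note explicitly that the two candidate extensions have the same kernel; as written, the clause ``which matches the lattice extension in Proposition \ref{exmorf}'' asserts an equality of maps that does not hold.
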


We have to notice the short proof we present in Proposition \ref{complbaer} which gives as corollary \cite[Theorem 2.17]{rizvibaer}.

\begin{defn}
	A complete lattice $\mathcal{L}$ satisfies \emph{the} (resp. \emph{strong}) \emph{complement intersection property} CIP (resp. SCIP) if the infimum of any finite family (resp. any family) of complements in $\mathcal{L}$ is a complement.
\end{defn}

\begin{defn}
	A complete lattice $\mathcal{L}$ satisfies \emph{the} (resp. \emph{strong}) \emph{complement supremum property} CSP (resp. SCSP) if the supremum of any finite (resp. any family)  of complements in $\mathcal{L}$ is a complement.
\end{defn}

\begin{rem}
	For a module $M$, the lattice $\Lambda(M)$ satisfies (resp. SCIP) CIP is equivalent to say that $M$ satisfies the (resp. strong) summand intersection property (SIP) (resp. SSIP). The dual concept is called the (resp. strong) summand sum property (resp. SSSP) SSP. 
\end{rem}

\begin{prop}\label{baerricscip}
	Let $\mathcal{L}$ be a complete lattice and let $\mathfrak{m}$ be a submonoid of $\End_{lin}(\mathcal{L})$ containing all the projections. Then $\mathcal{L}$ is $\mathfrak{m}$-Rickart (resp. dual-$\mathfrak{m}$-Rickart) and satisfies SCIP (resp. SCSP) if and only if $\mathcal{L}$ is $\mathfrak{m}$-Baer (resp. dual-$\mathfrak{m}$-Baer).
\end{prop}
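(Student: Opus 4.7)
The proof plan is straightforward in both directions, and the core observation is that the projections supplied by $\mathfrak{m}$ allow us to realize any complement as either a kernel or an image of some $\varphi \in \mathfrak{m}$.

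For the forward direction ($\Rightarrow$), suppose $\mathcal{L}$ is $\mathfrak{m}$-Rickart and satisfies SCIP. Given any family $X \subseteq \mathfrak{m}$, each $\ker_\varphi$ with $\varphi \in X$ is a complement by the $\mathfrak{m}$-Rickart hypothesis, so by SCIP the infimum $\bigwedge_{\varphi \in X} \ker_\varphi$ is again a complement, giving $\mathfrak{m}$-Baer. The dual argument is identical: if $\mathcal{L}$ is dual-$\mathfrak{m}$-Rickart and satisfies SCSP, each $\varphi(\mathbf{1})$ for $\varphi \in X$ is a complement, and SCSP makes $\bigvee_{\varphi \in X}\varphi(\mathbf{1})$ a complement.

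For the backward direction ($\Leftarrow$), observe first that $\mathfrak{m}$-Baer trivially implies $\mathfrak{m}$-Rickart by applying the definition to singletons $X = \{\varphi\}$. The substantive step is SCIP: take an arbitrary family $\{a_i\}_{i\in I}$ of complements in $\mathcal{L}$, choose for each $i$ a complement $a_i'$, and consider the projection $\pi_{a_i'}\in\mathfrak{m}$ (which lies in $\mathfrak{m}$ because $\mathfrak{m}$ contains all projections and $a_i'$ is a complement). By the computation $\ker_{\pi_{a_i'}} = a_i$ recorded in the proof of the first projection proposition, we get
\[
\bigwedge_{i \in I} a_i \;=\; \bigwedge_{i \in I} \ker_{\pi_{a_i'}},
\]
which has a complement by $\mathfrak{m}$-Baerness. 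The dual argument uses instead the projections $\pi_{a_i}$, noting that $\pi_{a_i}(\mathbf{1}) = (\mathbf{1}\vee a_i')\wedge a_i = a_i$, so $\bigvee_{i \in I} a_i = \bigvee_{i\in I}\pi_{a_i}(\mathbf{1})$ is a complement by dual-$\mathfrak{m}$-Baerness.

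There is no genuine obstacle here; the only thing to be careful about is choosing the \emph{right} projection in each case --- projection onto the complement $a_i'$ when we want $a_i$ as a kernel, but projection onto $a_i$ itself when we want $a_i$ as an image. The hypothesis that $\mathfrak{m}$ contains all projections is exactly what makes these $\pi$'s available in $\mathfrak{m}$, so both equivalences reduce to a one-line identification.
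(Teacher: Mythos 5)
Your proof is correct and follows essentially the same route as the paper: the forward direction is the direct application of SCIP (resp.\ SCSP) to the kernels (resp.\ images), and the converse recovers SCIP by writing each complement $a_i$ as $\ker_{\pi_{a_i'}}$ (resp.\ as $\pi_{a_i}(\mathbf{1})$) and invoking the Baer hypothesis. Your explicit remark about which projection to choose in each case ($\pi_{a_i'}$ for kernels, $\pi_{a_i}$ for images) is a helpful clarification but not a departure from the paper's argument.
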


\begin{proof}
	Let $X\subseteq\mathfrak{m}$. Since $\mathcal{L}$ is $\mathfrak{m}$-Rickart, $\ker_\varphi$ is a complement in $\mathcal{L}$ for every $\varphi\in X$. By hypothesis, $\bigwedge_{\varphi\in X}\ker_\varphi$ is also a complement. Thus, $\mathcal{L}$ is $\mathfrak{m}$-Baer. Convesely,  let $\{c_i\}_I$ be a family of elements of $\mathcal{L}$ with complement and let $c_i'$ be a complement of $c_i$ for all $i\in I$. By hypothesis, $\pi_{c_i'}\in\mathfrak{m}$. Therefore $\bigwedge_Ic_i=\bigwedge_I\ker_{\pi_{c_i'}}$ is complemented in $\mathcal{L}$. This proves that $\mathcal{L}$ satisfies SCIP. It is clear that $\mathcal{L}$ is $\mathfrak{m}$-Rickart. 
\end{proof}

\begin{lemma}\label{lemmaret}
	Let $\mathcal{L}$ be a complete modular lattice. Let $a,b,c\in\mathcal{L}$ such that $a\wedge b=\mathbf{0}$, $(a\vee b)\wedge c=\mathbf{0}$. Then $a\wedge(b\vee c)=\mathbf{0}$.
\end{lemma}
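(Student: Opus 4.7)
The plan is to apply the modular law twice, using the fact that $b\leq a\vee b$ to collapse $(a\vee b)\wedge(b\vee c)$ down to $b$, and then use $a\leq a\vee b$ to insert this factor into $a\wedge(b\vee c)$.

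First, since $b\leq a\vee b$, the modular law gives
\[(a\vee b)\wedge(b\vee c)=b\vee\bigl((a\vee b)\wedge c\bigr)=b\vee\mathbf{0}=b,\]
where the middle equality uses the hypothesis $(a\vee b)\wedge c=\mathbf{0}$.

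Next, since $a\leq a\vee b$, we have
\[a\wedge(b\vee c)=a\wedge(a\vee b)\wedge(b\vee c)=a\wedge b=\mathbf{0},\]
using the identity we just established together with the hypothesis $a\wedge b=\mathbf{0}$. This proves the claim.

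There is essentially no obstacle beyond recognising the correct shape of the modular law to invoke; once one writes $(a\vee b)\wedge(b\vee c)$ and notices that $b$ sits under $a\vee b$, the calculation is forced. The statement is simply the standard lemma that $\{a,b,c\}$ is independent whenever $\{a,b\}$ is independent and $c$ is independent from $a\vee b$, phrased asymmetrically.
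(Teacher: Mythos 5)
Your proof is correct: the single application of the modular law with $b\leq a\vee b$ gives $(a\vee b)\wedge(b\vee c)=b\vee\bigl((a\vee b)\wedge c\bigr)=b$, and the second step is just absorption using $a\leq a\vee b$. The paper states this lemma without proof, and your argument is exactly the standard one that would be intended, so there is nothing to compare against and nothing missing.
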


\begin{prop}\label{riccipssp}
	Let $\mathcal{L}$ be a complete modular lattice and let $\mathfrak{m}$ be a submonoid of $\End_{lin}(\mathcal{L})$ containing all the projections. If $\mathcal{L}$ is $\mathfrak{m}$-Rickart (resp. dual-$\mathfrak{m}$-Rickart) then $\mathcal{L}$ satisfies CIP (resp. SSP).
\end{prop}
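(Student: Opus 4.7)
The plan is to reduce both statements to the two-element case and then induct, producing the required complement via an explicit composition of two projections together with Lemma \ref{lemmaret} (or its dual).

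For the CIP half, let $a,b$ be complements in $\mathcal{L}$ with complements $a',b'$. Since $\mathfrak{m}$ contains all the projections, $\pi_{a'},\pi_b\in\mathfrak{m}$ and hence $\pi_{a'}\pi_b\in\mathfrak{m}$. By Lemma \ref{kerpi} applied with $y=a'$ and $x=b$ (so that $y'=a$ and $x'=b'$),
\[
\ker_{\pi_{a'}\pi_b}=(b\wedge a)\vee b'=(a\wedge b)\vee b'.
\]
The $\mathfrak{m}$-Rickart hypothesis gives a complement $c$ of $(a\wedge b)\vee b'$ in $\mathcal{L}$. Now apply Lemma \ref{lemmaret} with $a\wedge b$, $b'$, $c$ playing the roles of $a,b,c$: we have $(a\wedge b)\wedge b'=\mathbf{0}$ and $((a\wedge b)\vee b')\wedge c=\mathbf{0}$, so the lemma yields $(a\wedge b)\wedge(b'\vee c)=\mathbf{0}$; combined with $(a\wedge b)\vee b'\vee c=\mathbf{1}$, this exhibits $b'\vee c$ as a complement of $a\wedge b$. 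The general finite case follows by induction on the number of complements, using that the intersection $\bigwedge_{i=1}^{n-1}a_i$ is again a complement and then applying the two-element case.

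For the CSP half we dualise. Let $a,b$ be complements with complements $a',b'$; the same composition $\pi_{a'}\pi_b\in\mathfrak{m}$ now has image
\[
(\pi_{a'}\pi_b)(\mathbf{1})=\pi_{a'}(b)=(b\vee a)\wedge a'=(a\vee b)\wedge a',
\]
which by the dual-$\mathfrak{m}$-Rickart hypothesis admits a complement $d$ in $\mathcal{L}$. The dual of Lemma \ref{lemmaret} holds in any modular lattice (if $a\vee b=\mathbf{1}$ and $(a\wedge b)\vee c=\mathbf{1}$, then using $a\wedge b\le b$ the modular law gives $b=(a\wedge b)\vee(b\wedge c)$, so $a\vee(b\wedge c)=a\vee b=\mathbf{1}$). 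Applying it with $a\vee b$, $a'$, $d$ yields $(a\vee b)\vee(a'\wedge d)=\mathbf{1}$, while $(a\vee b)\wedge(a'\wedge d)\le((a\vee b)\wedge a')\wedge d=\mathbf{0}$. Hence $a'\wedge d$ is a complement of $a\vee b$, and induction handles arbitrary finite joins.

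The only real obstacle is spotting the correct composition $\pi_{a'}\pi_b$ so that Lemma \ref{kerpi} produces a kernel (resp.\ image) of the specific shape $(a\wedge b)\vee b'$ (resp.\ $(a\vee b)\wedge a'$); once this is in hand, Lemma \ref{lemmaret} (resp.\ its dual) is tailor-made to peel off the $b'$ (resp.\ $a'$) summand and isolate the complement of $a\wedge b$ (resp.\ $a\vee b$).
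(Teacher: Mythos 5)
Your proof is correct and follows essentially the same route as the paper: realize $(a\wedge b)\vee b'$ as $\ker_{\pi_{a'}\pi_b}$ via Lemma \ref{kerpi}, invoke the $\mathfrak{m}$-Rickart hypothesis to get a complement $c$, and then use Lemma \ref{lemmaret} to see that $b'\vee c$ complements $a\wedge b$. The only difference is that you make explicit the induction on the number of complements and the dual (SSP) argument, including a proof of the dual of Lemma \ref{lemmaret}, all of which the paper leaves implicit; these additions are correct.
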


\begin{proof}
	Let $x$ and $y$ be two complements in $\mathcal{L}$. Let $x'$ and $y'$ the complements of $x$ and $y$ respectively. By hypothesis and Lemma \ref{kerpi} $(x\wedge y)\vee x'$ has a complement  in $\mathcal{L}$. That is, there exists $z\in\mathcal{L}$ such that $((x\wedge y)\vee x')\vee z=\mathbf{1}$ and $((x\wedge y)\vee x')\wedge z=\mathbf{0}$. By Lemma \ref{lemmaret} $x'\vee z$ is a complement of $x\wedge y$.
\end{proof}

\begin{cor}
	The following conditions are equivalent for an Artinian (resp. Noetherian) complete modular lattice $\mathcal{L}$ and a submonoid $\mathfrak{m}\subseteq\End_{lin}(\mathcal{L})$ containing all the projections.
	\begin{enumerate}[label=\emph{(\alph*)}]
		\item $\mathcal{L}$ is $\mathfrak{m}$-Rickart (resp. dual-$\mathfrak{m}$-Rickart).
		\item $\mathcal{L}$ is $\mathfrak{m}$-Baer (resp. dual-$\mathfrak{m}$-Baer).
	\end{enumerate}
\end{cor}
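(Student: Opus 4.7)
The plan is to reduce the corollary to Propositions \ref{baerricscip} and \ref{riccipssp}, using the chain conditions to bridge the finitary and infinitary statements. One implication is trivial in both cases (Baer implies Rickart, dual-Baer implies dual-Rickart), so the work is only in the converse. By Proposition \ref{baerricscip}, to upgrade $\mathfrak{m}$-Rickart to $\mathfrak{m}$-Baer it suffices to verify SCIP, and to upgrade dual-$\mathfrak{m}$-Rickart to dual-$\mathfrak{m}$-Baer it suffices to verify SCSP. Proposition \ref{riccipssp} already gives the finite versions (CIP in the first case, CSP in the second), so the real task is to show that under the relevant chain condition, the finite property automatically strengthens to the arbitrary one.

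For the Artinian case, I would start with an arbitrary family $\{c_i\}_{i\in I}$ of complements and form
\[\mathcal{F}=\left\{\bigwedge_{i\in F}c_i \;\middle|\; F\subseteq I,\ F\text{ finite and nonempty}\right\}.\]
By CIP every element of $\mathcal{F}$ is a complement, and $\mathcal{F}$ is closed under pairwise meets (the meet of two finite meets is again a finite meet). The DCC then hands us a minimal element $c=\bigwedge_{i\in F_0}c_i$ of $\mathcal{F}$; since $\mathcal{F}$ is meet-closed, $c\wedge c_j\in\mathcal{F}$ for every $j\in I$, so minimality forces $c\wedge c_j=c$, i.e.\ $c\leq c_j$. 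Hence $c=\bigwedge_{i\in I}c_i$, an element of $\mathcal{F}$, and therefore a complement. This establishes SCIP and finishes the Artinian half via Proposition \ref{baerricscip}.

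The Noetherian case is strictly dual: replace meets by joins, DCC by ACC, and CIP by CSP. Given a family $\{c_i\}_{i\in I}$ of complements, the collection of finite joins is upward directed and consists entirely of complements by CSP; the ACC yields a maximum element, which must equal $\bigvee_{i\in I}c_i$. Thus SCSP holds, and dual-$\mathfrak{m}$-Rickart upgrades to dual-$\mathfrak{m}$-Baer by Proposition \ref{baerricscip}.

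I do not expect a substantive obstacle here; the reduction is entirely mechanical once Propositions \ref{baerricscip} and \ref{riccipssp} are in hand. The only point deserving care is confirming that a \emph{minimal} element of $\mathcal{F}$ is actually a lower bound for every $c_j$, which is where closure under finite meets enters; without that closure one would only get a minimal, not a minimum, element, and the argument would stall.
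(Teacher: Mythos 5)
Your proof is correct and follows essentially the same route as the paper: both use the chain condition to reduce an arbitrary meet (resp.\ join) of complements to a finite one and then invoke the CIP (resp.\ CSP/SSP) from Proposition \ref{riccipssp}. The only difference is organizational — the paper cites \cite[Proposition 1.3]{calugareanu2013lattice} for the finite-reduction step and concludes directly from the definition of $\mathfrak{m}$-Baer, whereas you prove that reduction inline (via a minimal element of the meet-closed family of finite meets) and package the conclusion through SCIP and Proposition \ref{baerricscip}.
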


\begin{proof}
	Let $X\subseteq\mathfrak{m}$ and let $k=\bigwedge_{\varphi\in X}\ker_\varphi$. Since $\mathcal{L}$ is Artinian, so is $[k,\mathbf{1}]$. It follows from \cite[Proposition 1.3]{calugareanu2013lattice} that there exists a finite subset $F\subseteq X$ such that $k=\bigwedge_{\varphi\in F}\ker_\varphi$. Hence $k$ is a complement by Proposition \ref{riccipssp}. The converse is trivial.
\end{proof}

In \cite[Corollary 2.23]{leerickart}, it is proved that if the module $M\oplus M$ has SIP, then $M$ is a Rickart module. The lattice counterpart of this result is not true in general as the following example shows.

\begin{example}
	Consider the following lattice $\mathcal{L}$
	\[\xymatrix{ & & \mathbf{1}\ar@{-}[dl] \ar@{-}[dr] & &  \\
	& a\vee c\ar@{-}[dl] \ar@{-}[dr] & & c\vee b\ar@{-}[dl] \ar@{-}[dr] & \\
	a\ar@{-}[dr] & & c\ar@{-}[dl] \ar@{-}[dr] & & b\ar@{-}[dl] \\
	 & k\ar@{-}[dr] & & f\ar@{-}[dl]  & \\
 	 & & \mathbf{0} & & }\]
  	Then $\mathcal{L}=[\mathbf{0},a\vee b]$ with $a\wedge b=\mathbf{0}$ and the lattices $[\mathbf{0},a]$ and $[\mathbf{0},b]$ are isomorphic. The elements with complement of $\mathcal{L}$ are $\{\mathbf{0},\mathbf{1},a,b\}$, therefore $\mathcal{L}$ has the CIP. Consider the linear morphism $\varphi:[\mathbf{0},a]\to [\mathbf{0},b]$ given by $\varphi(\mathbf{0})=\mathbf{0}$, $\varphi(k)=\mathbf{0}$ and $\varphi(a)=f$ (illustrated in Figure \ref{fig1}). Then, $\ker_\varphi=k$ but $k$ is not a complement in $[\mathbf{0},a]$.
\end{example}

\begin{defn}
	Let $\mathcal{L}$ be a modular lattice. Let $C(\mathcal{L})$ denote the set of complements in $\mathcal{L}$, that is,
	\[C(\mathcal{L})=\{a\in\mathcal{L}\mid a\;\text{has a complement}\}.\]
\end{defn}

\begin{cor}
	Let $\mathcal{L}$ be a complete modular lattice. If $\mathcal{L}$ is Baer or dual-Baer then $C(\mathcal{L})$ is a complete complemented lattice.
\end{cor}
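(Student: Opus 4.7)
The plan is to reduce the statement to the SCIP/SCSP that Proposition \ref{baerricscip} delivers, and then verify that closure of $C(\mathcal{L})$ under arbitrary meets (or joins) in $\mathcal{L}$, together with the presence of $\mathbf{0},\mathbf{1}$, is enough to make $C(\mathcal{L})$ a complete complemented lattice in its own right. The key preliminary observation is that $\mathfrak{m}=\End_{lin}(\mathcal{L})$ automatically contains all projections, since every $\pi_a$ was shown earlier to be a linear morphism, so Proposition \ref{baerricscip} applies unconditionally once the (dual-)Baer hypothesis is in force.

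Suppose first that $\mathcal{L}$ is Baer. I would apply Proposition \ref{baerricscip} with $\mathfrak{m}=\End_{lin}(\mathcal{L})$ to conclude that $\mathcal{L}$ satisfies SCIP, i.e. $C(\mathcal{L})$ is closed under arbitrary infima taken in $\mathcal{L}$. Since $\mathbf{1}\in C(\mathcal{L})$ (complement of $\mathbf{0}$), the poset $C(\mathcal{L})$ has a top element and all meets, hence is a complete lattice: the meet of any family $S\subseteq C(\mathcal{L})$ is just $\bigwedge_{\mathcal{L}} S$, and the join in $C(\mathcal{L})$ is $\bigwedge_{\mathcal{L}}\{a\in C(\mathcal{L})\mid a\geq s\ \forall s\in S\}$.

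To establish complementation in $C(\mathcal{L})$, I would take $a\in C(\mathcal{L})$ and any $\mathcal{L}$-complement $a'$; then $a$ itself witnesses that $a'\in C(\mathcal{L})$. The meet in $C(\mathcal{L})$ coincides with the meet in $\mathcal{L}$, giving $a\wedge_{C(\mathcal{L})}a'=\mathbf{0}$. For the join, the slightly subtle point (and the one to state carefully) is that although $a\vee_{C(\mathcal{L})}a'$ is computed by a different formula than $a\vee_{\mathcal{L}}a'$, any upper bound $u\in C(\mathcal{L})$ of $\{a,a'\}$ must satisfy $u\geq a\vee_{\mathcal{L}}a'=\mathbf{1}$, forcing $u=\mathbf{1}$. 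Hence $a\vee_{C(\mathcal{L})}a'=\mathbf{1}$, and $a'$ is a complement of $a$ in $C(\mathcal{L})$.

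The dual-Baer case is entirely symmetric: Proposition \ref{baerricscip} yields SCSP, so $C(\mathcal{L})$ is closed under arbitrary joins in $\mathcal{L}$; since $\mathbf{0}\in C(\mathcal{L})$ (complement of $\mathbf{1}$), $C(\mathcal{L})$ is a complete lattice whose joins agree with those of $\mathcal{L}$ and whose meets are computed as joins of lower bounds, and the same argument (with $\mathbf{0}$ in place of $\mathbf{1}$) shows each $a\in C(\mathcal{L})$ still has its $\mathcal{L}$-complement as a complement inside $C(\mathcal{L})$. The main thing to watch out for is precisely this mismatch between the ambient $\mathcal{L}$-operations and the induced operations on $C(\mathcal{L})$; the argument works because the complement equations in $\mathcal{L}$ force the corresponding $C(\mathcal{L})$-operations to take the extremal values $\mathbf{0}$ and $\mathbf{1}$.
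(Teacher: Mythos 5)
Your proof is correct and is exactly the derivation the paper intends (it states this as a corollary with the proof omitted): Baer gives SCIP via Proposition \ref{baerricscip} applied to $\mathfrak{m}=\End_{lin}(\mathcal{L})$, which contains all projections by modularity, and a subposet containing $\mathbf{0},\mathbf{1}$ and closed under arbitrary $\mathcal{L}$-meets is a complete lattice in which the $\mathcal{L}$-complement of each element still works as a complement; the dual-Baer case is symmetric via SCSP. Your explicit care about the mismatch between the induced join on $C(\mathcal{L})$ and the ambient join is the right point to flag, and your resolution of it is correct.
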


\begin{prop}
	Let $\mathcal{L}$ be a complete modular lattice and suppose that $C(\mathcal{L})$ is a sublattice of $\mathcal{L}$. The following conditions are equivalent:
	\begin{enumerate}[label=\emph{(\alph*)}]
		\item $\pi_x(y)=\pi_y(x)=x\wedge y$ for all $x,y\in C(\mathcal{L})$.
		\item $C(\mathcal{L})$ is a Boolean algebra.
	\end{enumerate}
\end{prop}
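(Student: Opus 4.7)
The plan is to exploit that in both directions the key identity relates the lattice-theoretic meet with a certain ``retraction'' operation, namely the projection morphism $\pi_x$. First, I observe that $C(\mathcal{L})$ is a bounded complemented sublattice: $\mathbf{0},\mathbf{1}\in C(\mathcal{L})$, and if $x\in C(\mathcal{L})$ with complement $x'$, then $x'\in C(\mathcal{L})$ as well (with complement $x$). So to show (b) it suffices to establish distributivity.

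For the direction (b)$\Rightarrow$(a), I would use that a Boolean algebra is distributive and every element has a \emph{unique} complement. Fix $x,y\in C(\mathcal{L})$ and let $x'$ be the (unique) complement of $x$; then $x,y,x'\in C(\mathcal{L})$, and since $C(\mathcal{L})$ is a sublattice of $\mathcal{L}$, the joins and meets of these elements computed in $\mathcal{L}$ coincide with those computed in $C(\mathcal{L})$. Applying distributivity in $C(\mathcal{L})$,
\[
\pi_x(y)=(y\vee x')\wedge x=(y\wedge x)\vee(x'\wedge x)=y\wedge x,
\]
and symmetrically $\pi_y(x)=x\wedge y$, proving (a).

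For the direction (a)$\Rightarrow$(b), the main point is that $\pi_z$ is a linear endomorphism of the \emph{complete} lattice $\mathcal{L}$, so by Remark \ref{linmorjoins} it preserves arbitrary joins (in particular binary ones). Thus, given $x,y,z\in C(\mathcal{L})$, since $x\vee y\in C(\mathcal{L})$ by the sublattice hypothesis, I can apply (a) three times and the join-preservation of $\pi_z$:
\[
(x\vee y)\wedge z=\pi_z(x\vee y)=\pi_z(x)\vee\pi_z(y)=(x\wedge z)\vee(y\wedge z).
\]
This is the distributive law in $C(\mathcal{L})$, and combined with the fact that $C(\mathcal{L})$ is complemented it yields that $C(\mathcal{L})$ is a Boolean algebra.

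I do not expect any real obstacle: the only subtlety is to keep straight that $\pi_x$ a priori depends on a choice of complement $x'$, but in (a)$\Rightarrow$(b) we only need \emph{some} valid $\pi_z$ to land us in the hypothesis (since the hypothesis is formulated for every pair $x,y\in C(\mathcal{L})$, whatever choice of complements was implicit in defining the $\pi$'s), while in (b)$\Rightarrow$(a) the complement is automatically unique. The substantive ingredients are thus two very lightweight facts already stated in the excerpt: that linear endomorphisms of a complete lattice preserve joins, and the explicit formula $\pi_x(a)=(a\vee x')\wedge x$.
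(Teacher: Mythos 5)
Your proof is correct and follows essentially the same route as the paper: the implication (a)$\Rightarrow$(b) reduces to distributivity of $C(\mathcal{L})$ via $\pi_z(x\vee y)=\pi_z(x)\vee\pi_z(y)$, and (b)$\Rightarrow$(a) is the direct computation $(y\vee x')\wedge x=(y\wedge x)\vee(x'\wedge x)=y\wedge x$ using distributivity. Your additional remarks on the choice of complement and on the sublattice operations agreeing with those of $\mathcal{L}$ are correct but only make explicit what the paper leaves implicit.
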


\begin{proof}
	(a)$\Rightarrow$(b) Since $C(\mathcal{L})$ is a complemented modular lattice, we just have to prove that $C(\mathcal{L})$ is distributive. Let $x,y,z\in C(\mathcal{L})$. Then,
	\[x\wedge(y\vee z)=\pi_x(y\vee z)=\pi(y)\vee\pi_x(z)=(x\wedge y)\vee (x\wedge z).\]
	(b)$\Rightarrow$(a) Let $x,y\in C(\mathcal{L})$. Then 
	\[\pi_x(y)=(y\vee x')\wedge x=(y\wedge x)\vee(x\wedge x')=y\wedge x\]
	where $x'$ is a complement of $x$. Analogously, $\pi_y(x)=x\wedge y$.
\end{proof}

Note that if $M$ is an $R$-module, the set $C(\Lambda(M))$ consists of all direct summands of $M$.

\begin{cor}
	Let $M$ be an $R$-module and suppose that $C(\Lambda(M))$ is a sublattice of $\Lambda(M)$. The following conditions are equivalent:
	\begin{enumerate}[label=\emph{(\alph*)}]
		\item $\pi_N(L)=\pi_L(N)=N\cap L$ for all direct summands $N$ and $L$ of $M$.
		\item $C(\Lambda(M))$ is a Boolean algebra.
	\end{enumerate}
\end{cor}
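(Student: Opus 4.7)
The plan is to derive this corollary as an immediate specialization of the preceding proposition to the lattice $\mathcal{L} = \Lambda(M)$. First I would recall that in the lattice of submodules, an element $N \in \Lambda(M)$ has a complement if and only if $N$ is a direct summand of $M$; hence $C(\Lambda(M))$ is precisely the set of direct summands of $M$. The hypothesis that $C(\Lambda(M))$ is a sublattice of $\Lambda(M)$ therefore matches the sublattice hypothesis of the preceding proposition.

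Next I would identify the lattice-theoretic projection $\pi_N$ on a direct summand $N$ (with chosen complement $N'$) with the module-theoretic notion the reader expects: by Lemma \ref{modpix}, the map $(\iota\pi)_\ast : \Lambda(M) \to \Lambda(M)$ induced by the canonical projection $\pi : M \to N$ followed by the inclusion $\iota : N \hookrightarrow M$ coincides with $\pi_N$. Consequently, the equality $\pi_N(L) = N \cap L$ in the lattice sense is literally the module equality $(L + N') \cap N = N \cap L$ for any direct summand $L$, so condition (a) of the corollary is exactly condition (a) of the proposition, phrased in the module vocabulary. Condition (b) is identical in the two statements.

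With these identifications in place, the equivalence (a)$\Leftrightarrow$(b) is an immediate restatement of the proposition. There is no real obstacle; the only thing to be careful about is reminding the reader that the $\pi_N$ appearing in the corollary can be taken to mean the lattice projection on $N$ in $\Lambda(M)$, which by Lemma \ref{modpix} agrees with the module projection $M \to N \hookrightarrow M$ applied submodule-wise. Thus the corollary requires no additional argument beyond citing the proposition.
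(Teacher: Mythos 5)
Your proposal is correct and follows exactly the route the paper intends: the corollary is the specialization of the preceding proposition to $\mathcal{L}=\Lambda(M)$, using the observation that $C(\Lambda(M))$ is the set of direct summands of $M$ and that, by Lemma \ref{modpix}, the lattice projection $\pi_N$ agrees with the map induced by the module projection onto $N$. No further argument is needed.
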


\begin{cor}
    The following conditions are equivalent for a semisimple $R$-module $M$:
    \begin{enumerate}[label=\emph{(\alph*)}]
        \item $\pi_N(L)=\pi_L(N)=N\cap L$ for all submodules $N$ and $L$ of $M$.
        \item $M=\bigoplus_IS_i$ with $S_i\ncong S_j$ for every $i\neq j$.
    \end{enumerate}
\end{cor}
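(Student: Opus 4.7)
The plan is to reduce to the previous corollary and then analyze when the submodule lattice of a semisimple module is Boolean.

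Since $M$ is semisimple, every submodule of $M$ is a direct summand, so $C(\Lambda(M))=\Lambda(M)$. In particular, $C(\Lambda(M))$ is trivially a sublattice of $\Lambda(M)$, and the quantification ``for all direct summands'' coincides with ``for all submodules''. Applying the previous corollary to this situation, condition (a) is equivalent to $C(\Lambda(M))=\Lambda(M)$ being a Boolean algebra. Hence the whole task reduces to showing, for a semisimple $M$, that $\Lambda(M)$ is Boolean if and only if $M\cong\bigoplus_I S_i$ with pairwise non-isomorphic simple summands $S_i$.

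For the direction ($\Leftarrow$), I would argue that if the simple summands are pairwise non-isomorphic, then for every submodule $N\leq M=\bigoplus_I S_i$ the canonical projection of $N$ to each $S_i$ is either zero or an isomorphism (by Schur's lemma together with the absence of nonzero morphisms between non-isomorphic simples), whence $N=\bigoplus_{i\in J}S_i$ for the unique set $J=\{i\in I\mid N\cap S_i\neq 0\}$. This gives a lattice isomorphism $\Lambda(M)\cong\mathcal{P}(I)$ (powerset), which is Boolean.

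For the direction ($\Rightarrow$), I would prove the contrapositive: if there exist distinct indices $i\neq j$ with $S_i\cong S_j$, then $\Lambda(M)$ is not distributive. Fixing an isomorphism $\phi:S_i\to S_j$ and setting $\Delta=\{(s,\phi(s))\mid s\in S_i\}\leq S_i\oplus S_j\leq M$, I observe that $S_i$, $S_j$, $\Delta$ are three pairwise distinct simple submodules of $S_i\oplus S_j$ with pairwise intersection $0$ and pairwise sum $S_i\oplus S_j$. Then
\[
S_i\wedge(S_j\vee\Delta)=S_i\wedge(S_i\oplus S_j)=S_i\neq 0=(S_i\wedge S_j)\vee(S_i\wedge\Delta),
\]
so $\Lambda(M)$ contains a non-distributive sublattice and therefore cannot be Boolean. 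The main (mild) obstacle is just identifying the correct three-element ``diamond'' configuration coming from the diagonal embedding; the rest is a direct application of the previous corollary.
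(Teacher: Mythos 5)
Your proposal is correct and follows the route the paper intends: the corollary is stated without proof precisely because it is the preceding corollary specialized to the case $C(\Lambda(M))=\Lambda(M)$, combined with the classical fact that the submodule lattice of a semisimple module is Boolean exactly when the simple summands are pairwise non-isomorphic (the paper cites this characterization from Stenstr\"om elsewhere, whereas you reprove it via Schur's lemma and the $M_3$ diamond on the diagonal $\Delta$ -- both fine). One small inaccuracy in the ($\Leftarrow$) direction: for a general submodule $N$ the projection $N\to S_i$ need not be zero or an isomorphism (take $N=M$ with $\lvert I\rvert\geq 2$); the dichotomy holds for each \emph{simple} submodule of $N$, and applying it to the simple constituents of the semisimple module $N$ yields the desired conclusion $N=\bigoplus_{i\in J}S_i$.
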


\begin{prop}\label{rickpix}
	Let $\mathcal{L}$ be a complete modular lattice. The following conditions are equivalent for $\varphi\in\End_{lin}(\mathcal{L})$:
	\begin{enumerate}[label=\emph{(\alph*)}]
		\item $\ker_\varphi$ is a complement in $\mathcal{L}$.
		\item There exists a complement $x\in\mathcal{L}$ such that $\varphi=\varphi\pi_x$ and $\ker_\varphi\wedge x=\mathbf{0}$.
	\end{enumerate}
\end{prop}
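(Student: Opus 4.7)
The plan is to establish the equivalence by taking, in both directions, the complement $x$ to be literally a complement of $\ker_\varphi$ itself; the proof then reduces to a careful use of modularity together with the two defining properties of a linear morphism.

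For the implication (a)$\Rightarrow$(b), suppose $\ker_\varphi$ has a complement in $\mathcal{L}$, and choose $x$ to be one such complement, so that $\ker_\varphi \wedge x = \mathbf{0}$ and $\ker_\varphi \vee x = \mathbf{1}$. This gives the second required condition for free. To prove $\varphi = \varphi\pi_x$, I compute $\pi_x$ with respect to the complement $\ker_\varphi$ of $x$, so that $\pi_x(a) = (a \vee \ker_\varphi) \wedge x$ for every $a \in \mathcal{L}$. The key observation is that every element $b \in [\ker_\varphi, \mathbf{1}]$ can be rewritten via modularity (using $\ker_\varphi \leq b$ and $\ker_\varphi \vee x = \mathbf{1}$) as
\[
b = b \wedge (\ker_\varphi \vee x) = \ker_\varphi \vee (b \wedge x).
\]
Applying this to $b = a \vee \ker_\varphi$ and using item (1) of Definition \ref{linmor} (which absorbs the $\ker_\varphi$ summand), I obtain $\varphi(a) = \varphi(a \vee \ker_\varphi) = \varphi((a \vee \ker_\varphi) \wedge x) = \varphi(\pi_x(a))$, which is precisely $\varphi = \varphi\pi_x$.

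For the converse (b)$\Rightarrow$(a), I will show directly that the given element $x$ is itself a complement of $\ker_\varphi$. Half is hypothesis: $\ker_\varphi \wedge x = \mathbf{0}$. For the other half, pick an arbitrary complement $x'$ of $x$ in $\mathcal{L}$ and compute $\pi_x(x') = (x' \vee x') \wedge x = x' \wedge x = \mathbf{0}$. Using $\varphi = \varphi\pi_x$ and the fact that $\varphi(\mathbf{0}) = \mathbf{0}$ (which follows from $\varphi(\mathbf{0}) = \varphi(\mathbf{0} \vee \ker_\varphi) = \overline{\varphi}(\ker_\varphi) = \mathbf{0}$), I conclude $\varphi(x') = \mathbf{0}$. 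Because $\overline{\varphi}$ is an isomorphism $[\ker_\varphi, \mathbf{1}] \to [\mathbf{0}, \varphi(\mathbf{1})]$ and $\varphi(x' \vee \ker_\varphi) = \varphi(x') = \mathbf{0} = \overline{\varphi}(\ker_\varphi)$, this forces $x' \vee \ker_\varphi = \ker_\varphi$, i.e. $x' \leq \ker_\varphi$. Therefore $\mathbf{1} = x \vee x' \leq x \vee \ker_\varphi$, giving $x \vee \ker_\varphi = \mathbf{1}$, so $x$ complements $\ker_\varphi$.

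I expect the only delicate point to be the modular rewriting $b = \ker_\varphi \vee (b \wedge x)$ and its use to push $\varphi$ through $\pi_x$; everything else is a direct unwinding of the definitions of linear morphism and projection, together with the observation that any linear morphism sends $\mathbf{0}$ to $\mathbf{0}$.
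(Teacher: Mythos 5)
Your proof is correct and follows essentially the same route as the paper's: in (a)$\Rightarrow$(b) you take $x$ to be a complement of $\ker_\varphi$ and use the modular identity $a\vee\ker_\varphi=\ker_\varphi\vee((a\vee\ker_\varphi)\wedge x)$ to absorb $\pi_x$; in (b)$\Rightarrow$(a) you show $x'\leq\ker_\varphi$ from $\varphi(x')=\mathbf{0}$ exactly as the paper does. The only (harmless) difference is at the very end, where you deduce $x\vee\ker_\varphi=\mathbf{1}$ directly instead of invoking modularity to conclude $\ker_\varphi=x'$; just make sure the complement $x'$ you use is the one with respect to which $\pi_x$ is defined, since $\pi_x$ depends on that choice.
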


\begin{proof}
	(a)$\Rightarrow$(b) By hypothesis there exists $x\in\mathcal{L}$ such that $x$ is a complement of $\ker_\varphi$. Let $\ker_\varphi\leq y\leq\mathbf{1}$. By the modular property, $y=y\wedge\mathbf{1}=y\wedge(\ker_\varphi\vee x)=\ker_\varphi\vee(y\wedge x)$. Then $\varphi(y)=\varphi(y\wedge x)$. Let $a\in\mathcal{L}$. Then,
	\[\varphi\pi_x(a)=\varphi((a\vee\ker_\varphi)\wedge x)=\varphi(a\vee \ker_\varphi)=\varphi(a).\]
	
	(b)$\Rightarrow$(a) Let $x\in\mathcal{L}$ be a complement such that $\varphi=\varphi\pi_x$ and $x\wedge\ker_\varphi=\mathbf{0}$. Let $x'$ be a complement of $x$. Then, $\varphi(x')=\varphi\pi_x(x')=\varphi(\mathbf{0})=\mathbf{0}$. This implies that $x'\leq \ker_\varphi$. By the modularity, $\ker_\varphi=x'$.
\end{proof}

\begin{cor}
	Let $M$ be an $R$-module. The following conditions are equivalent for $f\in\End_R(M)$:
	\begin{enumerate}[label=\emph{(\alph*)}]
		\item $\Ker f$ is a direct summand of $M$.
		\item There exists a direct summand $N$ of $M$ such that $N\cap\Ker f=0$ and $f(L)=f\pi(L)$ for all $L\leq M$ where $\pi:M\to N$ is the canonical projection.
	\end{enumerate}
\end{cor}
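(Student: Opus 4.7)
The plan is to deduce this corollary directly from Proposition \ref{rickpix} applied to the linear endomorphism $f_\ast\in\End_{lin}(\Lambda(M))$. Two translations form the bridge between the lattice-theoretic statement and the module-theoretic one: first, the complements in $\Lambda(M)$ are precisely the direct summands of $M$, and second, by Lemma \ref{modpix}, $(\iota\pi)_\ast=\pi_N$ in $\Lambda(M)$, where $\pi:M\to N$ is the canonical projection onto a direct summand $N$ and $\iota:N\to M$ is the inclusion. Moreover, $\ker_{f_\ast}=\Ker f$ by definition.

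For (a)$\Rightarrow$(b), I would apply Proposition \ref{rickpix} to $f_\ast$. Since $\Ker f$ is assumed to be a direct summand of $M$, $\ker_{f_\ast}$ is a complement in $\Lambda(M)$, so the proposition produces a complement $N\in\Lambda(M)$ (a direct summand of $M$) with $N\wedge\ker_{f_\ast}=\mathbf{0}$ and $f_\ast=f_\ast\pi_N$. Translating, $N\cap\Ker f=0$, and for every $L\leq M$,
\[
f(L)=f_\ast(L)=f_\ast\pi_N(L)=f_\ast(\iota\pi)_\ast(L)=f\pi(L),
\]
using Lemma \ref{modpix} in the third equality.

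For (b)$\Rightarrow$(a), I would run the same translation backwards. The hypothesis $f(L)=f\pi(L)$ for all $L\leq M$ reads, under $(-)_\ast$ and Lemma \ref{modpix}, as $f_\ast=f_\ast\pi_N$, while $N\cap\Ker f=0$ reads as $N\wedge\ker_{f_\ast}=\mathbf{0}$. Proposition \ref{rickpix} then yields that $\ker_{f_\ast}=\Ker f$ is a complement in $\Lambda(M)$, i.e., a direct summand of $M$.

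Since the argument is essentially a dictionary lookup between $\Lambda(M)$ and $M$ via Proposition \ref{rickpix} and Lemma \ref{modpix}, I do not anticipate any real obstacle; the only point requiring care is identifying $\pi_N$ with $(\iota\pi)_\ast$, which is exactly what Lemma \ref{modpix} guarantees.
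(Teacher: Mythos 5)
Your proposal is correct and is exactly the intended argument: the corollary is stated in the paper as an immediate consequence of Proposition \ref{rickpix}, obtained by applying it to $f_\ast$, identifying complements in $\Lambda(M)$ with direct summands of $M$, and invoking Lemma \ref{modpix} to identify $\pi_N$ with $(\iota\pi)_\ast$. Your handling of both directions, including the functoriality step $f_\ast(\iota\pi)_\ast=(f\iota\pi)_\ast$, matches what the paper leaves implicit.
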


\begin{cor}\label{rickpix2}
	The following conditions are equivalent for a complete modular lattice $\mathcal{L}$ and a submonoid $\mathfrak{m}\subseteq\End_{lin}(\mathcal{L})$ containing all the projections:
	\begin{enumerate}[label=\emph{(\alph*)}]
		\item $\mathcal{L}$ is $\mathfrak{m}$-Rickart.
		\item For every $\varphi\in\mathfrak{m}$ there exists a complement $x\in\mathcal{L}$ such that $\varphi=\varphi\pi_x$ and $x\wedge \ker_\varphi=\mathbf{0}$.
	\end{enumerate}
\end{cor}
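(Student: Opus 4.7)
The plan is to apply Proposition \ref{rickpix} pointwise to each element of $\mathfrak{m}$. The corollary is essentially a global ``family-wise'' reformulation of Proposition \ref{rickpix}, which already gives, for a single linear endomorphism $\varphi$, the equivalence between $\ker_\varphi$ being a complement and the existence of a complement $x$ such that $\varphi=\varphi\pi_x$ and $x\wedge\ker_\varphi=\mathbf{0}$.

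For the direction (a)$\Rightarrow$(b), I would fix an arbitrary $\varphi\in\mathfrak{m}$. By the definition of $\mathfrak{m}$-Rickart, $\ker_\varphi$ has a complement in $\mathcal{L}$, so condition (a) of Proposition \ref{rickpix} holds for $\varphi$. The implication (a)$\Rightarrow$(b) of that proposition then supplies a complement $x\in\mathcal{L}$ with the required properties $\varphi=\varphi\pi_x$ and $x\wedge\ker_\varphi=\mathbf{0}$. Since $\varphi$ was arbitrary, this gives (b).

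For the reverse direction (b)$\Rightarrow$(a), I would again fix $\varphi\in\mathfrak{m}$ and apply (b) of the hypothesis to obtain a complement $x$ with $\varphi=\varphi\pi_x$ and $x\wedge\ker_\varphi=\mathbf{0}$. Then the implication (b)$\Rightarrow$(a) of Proposition \ref{rickpix} yields that $\ker_\varphi$ is a complement in $\mathcal{L}$. Since this holds for every $\varphi\in\mathfrak{m}$, the lattice $\mathcal{L}$ is $\mathfrak{m}$-Rickart.

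There is no genuine obstacle here: the only content is quantifying Proposition \ref{rickpix} over all elements of $\mathfrak{m}$. The hypothesis that $\mathfrak{m}$ contains all the projections is not actually invoked in the argument itself (the projections $\pi_x$ appearing in (b) live in $\End_{lin}(\mathcal{L})$ and need not belong to $\mathfrak{m}$ for the statement to make sense), though it is a standing assumption consistent with the framework developed in this section.
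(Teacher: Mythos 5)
Your proof is correct and is exactly the argument the paper intends: the corollary is stated without proof precisely because it is the pointwise application of Proposition \ref{rickpix} to every $\varphi\in\mathfrak{m}$. Your side remark that the hypothesis ``$\mathfrak{m}$ contains all the projections'' is not actually used is also accurate, since Proposition \ref{rickpix} is stated for arbitrary $\varphi\in\End_{lin}(\mathcal{L})$ and the projection $\pi_x$ appearing in (b) need not lie in $\mathfrak{m}$.
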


\begin{cor}
    The following conditions are equivalent for an $R$-module $M$:
    \begin{enumerate}[label=\emph{(\alph*)}]
		\item $M$ is Rickart.
		\item For every $f\in\End_R(M)$ there exists a direct summand $N$ of $M$ such that $N\cap\Ker f=0$ and $f(L)=f\pi(L)$ for all $L\leq M$ where $\pi:M\to N$ is the canonical projection.
	\end{enumerate}
\end{cor}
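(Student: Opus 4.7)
The plan is to deduce this corollary directly from Corollary \ref{rickpix2} applied to the lattice $\Lambda(M)$ and the submonoid $\mathfrak{E}_M\subseteq \End_{lin}(\Lambda(M))$. The bridge between the module and the lattice is provided by Proposition \ref{lricmric} together with Lemma \ref{modpix}.

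First I would verify the hypotheses of Corollary \ref{rickpix2} in this setting. The lattice $\Lambda(M)$ is complete modular, and by the remark following Proposition \ref{lricmric} the submonoid $\mathfrak{E}_M$ contains all the projections: indeed, for every direct summand $N$ with a fixed complement, Lemma \ref{modpix} gives $\pi_N = (\iota\pi)_\ast \in \mathfrak{E}_M$. By Proposition \ref{lricmric}, $M$ is a Rickart module if and only if $\Lambda(M)$ is $\mathfrak{E}_M$-Rickart.

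Next I would translate condition (b) of Corollary \ref{rickpix2} into module-theoretic language. Given $f \in \End_R(M)$, set $\varphi = f_\ast \in \mathfrak{E}_M$; note $\ker_\varphi = \Ker f$. Complements in $\Lambda(M)$ are precisely direct summands of $M$, so a complement $x\in\Lambda(M)$ corresponds to a direct summand $N\leq M$. The condition $x\wedge\ker_\varphi = \mathbf{0}$ becomes $N \cap \Ker f = 0$. Under the identification of Lemma \ref{modpix}, $\pi_x = (\iota\pi)_\ast$ where $\pi:M\to N$ is the canonical projection (with respect to the chosen complement of $N$), so the equality $\varphi = \varphi\pi_x$ of linear endomorphisms of $\Lambda(M)$ is equivalent to $f_\ast(L) = f_\ast(\iota\pi)_\ast(L) = f(\pi(L))$ for every submodule $L\leq M$, i.e.\ $f(L) = f\pi(L)$.

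With these dictionary entries in hand, the two directions of the corollary are simply the two directions of Corollary \ref{rickpix2} read through this translation, so no further work is needed. The only mild point to keep in mind is the choice of complement for $N$ that determines $\pi$; since any such choice yields the same linear morphism on $\Lambda(M)$ via Lemma \ref{modpix} (the complement enters only through the definition of $\pi_x$, and the statement quantifies existentially over $N$), the equivalence is unambiguous and there is no genuine obstacle beyond this bookkeeping.
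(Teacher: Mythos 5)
Your derivation is correct and is exactly the intended one: the paper states this corollary without proof as the specialization of Corollary \ref{rickpix2} to $\mathcal{L}=\Lambda(M)$ and $\mathfrak{m}=\mathfrak{E}_M$, using Proposition \ref{lricmric} for the Rickart identification and Lemma \ref{modpix} for $\pi_N=(\iota\pi)_\ast$, just as you do. The translation of $\varphi=\varphi\pi_x$ into $f(L)=f\pi(L)$ and your remark on the choice of complement are both accurate, so nothing is missing.
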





\begin{lemma}\label{isolin}
	Let $\mathcal{L}$ be a complete modular lattice, $a,x\in\mathcal{L}$ and $\theta:[a,\mathbf{1}]\to[\mathbf{0},x]$ be an isomorphism of lattices. Then $\iota_x\theta\rho_a:\mathcal{L}\to \mathcal{L}$ is a linear morphism. 
\end{lemma}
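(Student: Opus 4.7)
The plan is to verify the two defining conditions of a linear morphism (Definition \ref{linmor}) directly, by unpacking what $\varphi := \iota_x \theta \rho_a$ does on an arbitrary element. For $y \in \mathcal{L}$, I would first observe that $\varphi(y) = \theta(a \vee y)$, since $\rho_a$ joins with $a$ and $\iota_x$ is the inclusion. The natural candidate for the kernel is $a$ itself, and the natural candidate for the ``image top'' is $x$.

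Next I would check condition (1) of Definition \ref{linmor}. Since $\varphi(y \vee a) = \theta(a \vee y \vee a) = \theta(a \vee y) = \varphi(y)$ for every $y \in \mathcal{L}$, the element $a$ qualifies as a kernel. For condition (2), I would restrict $\varphi$ to $[a, \mathbf{1}]$: for $y \geq a$ we have $\varphi(y) = \theta(a \vee y) = \theta(y)$, so $\overline{\varphi}$ is literally the lattice isomorphism $\theta$ followed by the inclusion into $\mathcal{L}$. Because any lattice isomorphism between bounded lattices preserves the top and bottom, $\theta(\mathbf{1}) = x$ and $\theta(a) = \mathbf{0}$, which identifies the codomain of $\overline{\varphi}$ with $[\mathbf{0}, x] = [\mathbf{0}, \varphi(\mathbf{1})]$, exactly the shape required in the definition.

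Finally, in view of Remark \ref{linmorjoins}, I would note that since $\mathcal{L}$ is complete, the intervals $[a,\mathbf{1}]$ and $[\mathbf{0},x]$ inherit completeness, and a lattice isomorphism between complete lattices automatically preserves arbitrary joins and meets; so $\overline{\varphi} = \theta$ is a complete lattice isomorphism as required. This completes the verification that $\varphi$ is a linear morphism with $\ker_\varphi = a$. There is no real obstacle here: the content of the lemma is essentially the observation that composing with $\rho_a$ and $\iota_x$ only collapses everything below $a$ into $a$ and then identifies $[a,\mathbf{1}]$ with $[\mathbf{0},x]$ via $\theta$, which is precisely the behavior prescribed by Definition \ref{linmor}.
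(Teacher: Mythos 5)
Your proof is correct and follows essentially the same route as the paper's: identify $a$ as the kernel via $\varphi(y\vee a)=\theta(y\vee a\vee a)=\varphi(y)$, and observe that the restriction of $\varphi$ to $[a,\mathbf{1}]$ is just $\theta$, which is already the required isomorphism onto $[\mathbf{0},x]$. Your version merely spells out condition (2) and the completeness remark more explicitly than the paper does.
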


\begin{proof}
	Define $\varphi:\mathcal{L}\to \mathcal{L}$ as $\varphi(y)=\iota_x\theta\rho_a(y)$. Note that $\varphi(y\vee a)=\theta(y\vee a\vee a)=\theta(y\vee a)=\varphi(y)$. Since $\theta:[a,\mathbf{1}]\to [\mathbf{0},x]$ is an isomorphism, it follows that $\ker_\varphi=a$.
\end{proof}

\begin{defn}
	Let $\mathcal{L}$ be a complete modular lattice, $a,x\in\mathcal{L}$ and $\mathfrak{m}\subseteq\End_{lin}(\mathcal{L})$ be a submonoid. It is said that $\mathcal{L}$ satisfies \emph{$\mathfrak{m}$-$D_2$ condition} if whenever there is an isomorphism $\theta:[a,\mathbf{1}]\overset{\cong}{\rightarrow}[\mathbf{0},x]$ with $x$ a complement in $\mathcal{L}$ and $\iota_x\theta\rho_a$ in $\mathfrak{m}$, implies that $a$ is a complement.
\end{defn}

Recall that an $R$-module $M$ satisfies the $(D_2)$ condition if whenever $N\cong M/L$ with $L,N\leq M$ and $N$ a direct summand of $M$ then $L$ is a direct summand of $M$ \cite[pp. 57]{mohamedcontinuous}. For example, every projective $R$-module satisfies ($D_2$).

\begin{lemma}
	The following conditions are equivalent for an $R$-module $M$:
	\begin{enumerate}[label=\emph{(\alph*)}]
		\item $M$ satisfies $(D_2)$ condition.
		\item $\Lambda(M)$ satisfies $\mathfrak{E}_M$-$D_2$ condition.
	\end{enumerate}
\end{lemma}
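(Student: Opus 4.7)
The plan is to verify both implications by translating between $R$-linear endomorphisms of $M$ and linear endomorphisms of $\Lambda(M)$ lying in $\mathfrak{E}_M$, using the identification of the interval $[L,M]\subseteq\Lambda(M)$ with $\Lambda(M/L)$ and the fact that a direct summand $N$ of $M$ is exactly a complement of $N$ in $\Lambda(M)$.

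For (a)$\Rightarrow$(b): Assume $M$ satisfies $(D_2)$, and suppose we are given an isomorphism $\theta:[L,M]\to[0,N]$ such that $N$ is a direct summand of $M$ and the linear endomorphism $\iota_N\theta\rho_L$ of $\Lambda(M)$ lies in $\mathfrak{E}_M$. Then there is an $R$-endomorphism $f:M\to M$ with $f_\ast=\iota_N\theta\rho_L$. I will read off $\ker_{f_\ast}=L$ and $f_\ast(M)=N$ directly from the definitions, which gives $\Ker f=L$ and $f(M)=N$; hence $M/L\cong N$ as $R$-modules via the map induced by $f$. Since $N$ is a direct summand of $M$, the $(D_2)$ hypothesis forces $L$ to be a direct summand, i.e.\ a complement in $\Lambda(M)$.

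For (b)$\Rightarrow$(a): Assume $\Lambda(M)$ satisfies $\mathfrak{E}_M$-$D_2$, and suppose $N\cong M/L$ with $N$ a direct summand of $M$. I will fix an $R$-isomorphism $g:M/L\to N$ and define $f:M\to M$ as the composition $M\twoheadrightarrow M/L\xrightarrow{g}N\hookrightarrow M$. The induced linear morphism $f_\ast:\Lambda(M)\to\Lambda(M)$ satisfies $\ker_{f_\ast}=L$ and $f_\ast(M)=N$, so $f_\ast$ factors as $\iota_N\theta\rho_L$ for a canonical lattice isomorphism $\theta:[L,M]\to[0,N]$ (obtained by composing $[L,M]\cong\Lambda(M/L)$ with the isomorphism $\Lambda(M/L)\cong\Lambda(N)=[0,N]$ induced by $g$). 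By construction $\iota_N\theta\rho_L=f_\ast\in\mathfrak{E}_M$, so the $\mathfrak{E}_M$-$D_2$ hypothesis applied to $\theta$ gives that $L$ is a complement in $\Lambda(M)$, i.e.\ a direct summand of $M$.

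The only slightly delicate point I expect is the bookkeeping in the second direction: verifying that the lattice isomorphism $\theta$ arising from the module isomorphism $g$ is exactly the one such that $\iota_N\theta\rho_L$ equals $f_\ast$ on every submodule $A\leq M$. This amounts to checking that $f(A)=g\bigl((A+L)/L\bigr)$ for all $A\leq M$, which is immediate from the definition of $f$ but is the single step where care is needed to match the lattice-level and module-level constructions.
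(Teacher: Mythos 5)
Your proof is correct and follows essentially the same route as the paper: both directions translate between $R$-endomorphisms of $M$ and elements of $\mathfrak{E}_M$ via $f\mapsto f_\ast$, and the verification $f(A)=g\bigl((A+L)/L\bigr)$ you flag in (b)$\Rightarrow$(a) is exactly the check the paper performs. The only (harmless) difference is in (a)$\Rightarrow$(b), where you obtain $M/L\cong N$ from $\Ker f=L$ and $f(M)=N$ via the first isomorphism theorem — relying implicitly on the uniqueness of the kernel of a linear morphism to identify $\Ker f$ with $\ker_{\iota_N\theta\rho_L}=L$ — whereas the paper writes down $g:M/L\to N$, $g(m+L)=f(m)$, and checks well-definedness, injectivity and surjectivity directly against $\theta$.
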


\begin{proof}
	(a)$\Rightarrow$(b) Let $N,L\in\Lambda(M)$ with $N$ a direct summand of $M$ and $\theta:[L,M]\to[0,N]$ an isomorphism of lattices such that $\iota_N\theta\rho_L\in\mathfrak{E}_M$. Then, there exists an endomorphism $f\in\End_R(M)$ such that $\iota_N\theta\rho_L=f_\ast$. Let $g:M/L\to N$ defined as $g(m+L)=f(m)$. Since $f_\ast(L)=0$, $g$ is well defined. Suppose $g(m+L)=0$. This implies that $f_\ast(Rm)=0$. Since $\theta$ is an isomorphism, $Rm\subseteq L$. Thus $g$ is inyective. Let $n\in N$. Consider $Rn$. Then, there exists $K$ with $L\leq K\leq M$ such that $\theta(K)=Rn$. Since $\rho_L$ is induced by the canonical projection $M\to M/L$, $f_\ast(K)=Rn$. This implies that there exists $k\in K$ such that $f(k)=n$. Hence $g$ is an isomorphism. By hypothesis, $L$ is a direct summand of $M$. Thus $\Lambda(M)$ satisfies $\mathfrak{E}_M$-$D_2$.
	
	(b)$\Rightarrow$(a) Let $N,L$ be submodules of $M$ with $N$ a direct summand and suppose that there exists an isomorphism $f:M/L\to N$. Then $f$ induces an isomorphism of lattices $\theta:[L,M]\to[0,N]$. Note that $\iota_N\theta\rho_L(K)=i_Nfp_L(K)$ where $i_N:N\to M$ and $p_L:M\to M/L$ are the canonical inclusion and the canonical projection respectively. Therefore $\iota_N\theta\rho_L=(i_Nfp_L)_\ast$, that is, $\iota_N\theta\rho_L\in\mathfrak{E}_M$. By hypothesis, $L$ is a complement in $\Lambda(M)$, that is, $L$ is a direct summand.
\end{proof}

\begin{defn}
	Let $\mathcal{L}$ be a complete modular lattice, $a,x,x'\in\mathcal{L}$ with $x'$ a complement of $x$. It is said that $\mathcal{L}$ satisfies \emph{$\mathfrak{m}$-$C_2$ condition} if whenever there is an isomorphism $\theta:[\mathbf{0},x]\overset{\cong}{\rightarrow}[\mathbf{0},a]$ and $\iota_a\theta(x\wedge\_)\rho_{x'}$ is in $\mathfrak{m}$, implies that $a$ is a complement.
\end{defn}

\begin{rem}
    In the case when $\mathfrak{m}=\End_{lin}(\mathcal{L})$ the above definition agrees with the condition $(C_2)$ given in \cite[Definition 1.1]{albu2016conditions}.
\end{rem}

Recall that an $R$-module $M$ satisfies the $(C_2)$ condition if whenever $N\cong L$ with $L,N\leq M$ and $N$ a direct summand of $M$ then $L$ is a direct summand of $M$ \cite[pp. 18]{mohamedcontinuous}. For example, every injective $R$-module satisfies ($C_2$).

\begin{lemma}
	The following conditions are equivalent for an $R$-module $M$.
	\begin{enumerate}[label=\emph{(\alph*)}]
		\item $M$ satisfies $(C_2)$ condition.
		\item $\Lambda(M)$ satisfies $\mathfrak{E}_M$-$C_2$ condition.
	\end{enumerate}
\end{lemma}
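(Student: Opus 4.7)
The plan is to mimic almost verbatim the proof of the preceding lemma on the $D_2$-condition, only with quotients replaced by submodules. Both directions hinge on translating between an $R$-module isomorphism $N\cong L$ (the data in $(C_2)$) and the linear-morphism composite $\iota_L\theta(N\cap\_)\rho_{N'}$ (the data in $\mathfrak{E}_M$-$C_2$).

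For (a)$\Rightarrow$(b), assume $N$ is a direct summand of $M$ with complement $N'$, $L\leq M$, $\theta:[0,N]\to[0,L]$ is a lattice isomorphism, and $\iota_L\theta(N\cap\_)\rho_{N'}=f_\ast$ for some $f\in\End_R(M)$. First I would compute $f_\ast(N')=\theta(N\cap(N'+N'))=\theta(N\cap N')=0$, so $f$ vanishes on $N'$ and its restriction $g:=f|_N:N\to L$ is a well-defined $R$-homomorphism. For $n\in N$, modularity gives
\[
g_\ast(Rn)=\theta(N\cap(Rn+N'))=\theta(Rn),
\]
which forces injectivity (a lattice isomorphism sends $0$ only to $0$), while $g(N)=\theta(N)=L$ gives surjectivity. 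Hence $N\cong L$ as $R$-modules, and $(C_2)$ yields that $L$ is a direct summand of $M$.

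For (b)$\Rightarrow$(a), given an $R$-isomorphism $h:N\to L$ with $N$ a direct summand with complement $N'$, set $\theta:=h_\ast:[0,N]\to[0,L]$ (a lattice isomorphism) and $g:=i\circ h\circ\pi\in\End_R(M)$, where $\pi:M\to N$ is the canonical projection and $i:L\hookrightarrow M$ is the inclusion. By Lemma \ref{modpix}, $\pi(K)=(K+N')\cap N$ for every $K\leq M$, hence
\[
g_\ast(K)=h(\pi(K))=\theta((K+N')\cap N)=\iota_L\theta(N\cap\_)\rho_{N'}(K).
\]
Thus this composite belongs to $\mathfrak{E}_M$, and $\mathfrak{E}_M$-$C_2$ forces $L$ to be a complement in $\Lambda(M)$, i.e.\ a direct summand of $M$.

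The only step needing any care is matching $g_\ast$ with $\iota_L\theta(N\cap\_)\rho_{N'}$ via the functor $(-)_\ast$, which is exactly the content of Lemma \ref{modpix}; the rest amounts to direct translations between the module and lattice languages.
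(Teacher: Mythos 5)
Your proof is correct and follows essentially the same route as the paper's: both directions translate between a module isomorphism $N\cong L$ and the composite $\iota_L\theta(N\cap\_)\rho_{N'}$, using modularity to identify $\theta(N\cap(K+N'))$ with $\theta(K)$ for $K\leq N$ and Lemma \ref{modpix} to match $(i_Lhp_N)_\ast$ with that composite. Your surjectivity argument in (a)$\Rightarrow$(b) via $g(N)=f_\ast(N)=\theta(N)=L$ is a slightly cleaner version of the paper's element-by-element check, but it is not a different method.
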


\begin{proof}
    (a)$\Rightarrow$(b) Let $N,L\in\Lambda(M)$ with $N$ a direct summand of $M$ with complement $N'$, and $\theta:[0,N]\to[0,L]$ an isomorphism of lattices such that $\iota_L\theta(N\cap\_)\rho_{N'}\in\mathfrak{E}_M$. Then, there exists an endomorphism $f\in\End_R(M)$ such that $\iota_L\theta(N\cap\_)\rho_{N'}=f_\ast$. Let $g:N\to L$ defined as $g(n)=f(n)$. Suppose $g(n)=0$. This implies that $f_\ast(Rn)=0=\theta(N\cap(Rn+N'))=\theta(Rn+(N\cap N')=\theta(Rn)$. Since $\theta$ is an isomorphism, $Rn=0$. Thus $g$ is inyective. Let $l\in L$. Consider $Rl$. Then, there exists $K$ with $0\leq K\leq N$ such that $\theta(K)=Rl$. Since $\rho_{N'}$ is induced by the canonical projection $M\to M/N'$, $f_\ast(K)=\theta(N\cap(K+N'))=\theta(K+(N\cap N'))=\theta(K)=Rl$. This implies that there exists $k\in K$ such that $f(n)=l$. Hence $g$ is an isomorphism. By hypothesis, $L$ is a direct summand of $M$. Thus $\Lambda(M)$ satisfies $\mathfrak{E}_M$-$C_2$.
    
    (b)$\Rightarrow$(a) Let $N,L$ be submodules of $M$ with $N$ a direct summand and suppose that there exists an isomorphism $f:N\to L$. Then $f$ induces an isomorphism of lattices $\theta:[0,N]\to[0,L]$. Note that $\iota_L\theta(N\cap\_)\rho_{N'}(K)=i_Lfp_N(K)$ where $i_L:L\to M$ and $p_N:M\to N$ are the canonical inclusion and the canonical projection respectively. Therefore $\iota_L\theta(N\cap\_)\rho_{N'}=(i_Lfp_N)_\ast$, that is, $\iota_L\theta(N\cap\_)\rho_{N'}\in\mathfrak{E}_M$. By hypothesis, $L$ is a complement in $\Lambda(M)$, that is, $L$ is a direct summand.
\end{proof}

\begin{prop}\label{ricd2}
	The following conditions are equivalent for a complete modular lattice $\mathcal{L}$ and a submonoid $\mathfrak{m}\subseteq\End_{lin}(\mathcal{L})$ containing all the projections:
	\begin{enumerate}[label=\emph{(\alph*)}]
		\item $\mathcal{L}$ is an $\mathfrak{m}$-Rickart Lattice.
		\item $\mathcal{L}$ satisfies $\mathfrak{m}$-$D_2$ condition and for every $\varphi\in\mathfrak{m}$ there exists an isomorphism  $\theta:[\mathbf{0},\varphi(\mathbf{1})]\to [\mathbf{0},x]$ with $x$ a complement in $\mathcal{L}$ such that $\iota_x\theta\varphi\in\mathfrak{m}$.
	\end{enumerate}
\end{prop}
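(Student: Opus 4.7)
The strategy is to exploit three available tools: Lemma \ref{isolin} (which recognizes any composition $\iota_x\theta\rho_a$ coming from an isomorphism $\theta:[a,\mathbf{1}]\to[\mathbf{0},x]$ as a linear morphism with kernel exactly $a$), the canonical isomorphism $[\ker_\varphi,\mathbf{1}]\cong[\mathbf{0},x]$ given by $y\mapsto y\wedge x$ whenever $x$ is a complement of $\ker_\varphi$, and the standing hypothesis that $\mathfrak{m}$ contains all projections.

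For (a)$\Rightarrow$(b), the $\mathfrak{m}$-$D_2$ part is essentially automatic: if $\theta:[a,\mathbf{1}]\to[\mathbf{0},x]$ is an isomorphism with $x$ a complement and $\iota_x\theta\rho_a\in\mathfrak{m}$, Lemma \ref{isolin} tells us this is a linear morphism with kernel $a$, which must be a complement since $\mathcal{L}$ is $\mathfrak{m}$-Rickart. For the second part of (b), given $\varphi\in\mathfrak{m}$, pick a complement $x$ of $\ker_\varphi$ and define $\theta:[\mathbf{0},\varphi(\mathbf{1})]\to[\mathbf{0},x]$ by $\theta(z)=\overline{\varphi}^{-1}(z)\wedge x$; this is an isomorphism as a composition of two. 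Using that $\overline{\varphi}^{-1}(\varphi(a))=a\vee\ker_\varphi$ (which follows from $\varphi(a)=\varphi(a\vee\ker_\varphi)$ and the isomorphism $\overline{\varphi}$ being defined on $[\ker_\varphi,\mathbf{1}]$), I would compute
\[\iota_x\theta\varphi(a)=(a\vee\ker_\varphi)\wedge x=\pi_x(a),\]
the last equality holding because $\ker_\varphi$ is a complement of $x$. Therefore $\iota_x\theta\varphi=\pi_x\in\mathfrak{m}$ by the hypothesis on $\mathfrak{m}$.

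For (b)$\Rightarrow$(a), let $\varphi\in\mathfrak{m}$ and take the isomorphism $\theta:[\mathbf{0},\varphi(\mathbf{1})]\to[\mathbf{0},x]$ provided by (b), with $x$ a complement. Compose with $\overline{\varphi}:[\ker_\varphi,\mathbf{1}]\to[\mathbf{0},\varphi(\mathbf{1})]$ to obtain an isomorphism $\theta\overline{\varphi}:[\ker_\varphi,\mathbf{1}]\to[\mathbf{0},x]$. Then for every $a\in\mathcal{L}$,
\[\iota_x(\theta\overline{\varphi})\rho_{\ker_\varphi}(a)=\iota_x\theta\overline{\varphi}(a\vee\ker_\varphi)=\iota_x\theta\varphi(a\vee\ker_\varphi)=\iota_x\theta\varphi(a),\]
using property (1) of the linear morphism $\varphi$ in the last step. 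Hence $\iota_x(\theta\overline{\varphi})\rho_{\ker_\varphi}=\iota_x\theta\varphi\in\mathfrak{m}$, and the $\mathfrak{m}$-$D_2$ condition forces $\ker_\varphi$ to be a complement. Thus $\mathcal{L}$ is $\mathfrak{m}$-Rickart.

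The main obstacle is the identification $\iota_x\theta\varphi=\pi_x$ in the forward direction: nothing in the definition of an $\mathfrak{m}$-Rickart lattice guarantees that a constructed linear morphism lies in $\mathfrak{m}$, so without the hypothesis that $\mathfrak{m}$ contains all projections, we would have no handle on membership in $\mathfrak{m}$. The careful bookkeeping of which direction to take the canonical isomorphism between $[\ker_\varphi,\mathbf{1}]$ and $[\mathbf{0},x]$ is precisely what makes $\pi_x$ appear and lets the projections hypothesis do its work.
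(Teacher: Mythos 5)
Your proof is correct and follows essentially the same route as the paper's: Lemma \ref{isolin} plus the Rickart hypothesis for the $\mathfrak{m}$-$D_2$ part, the isomorphism $\theta=(\_\wedge x)\circ\overline{\varphi}^{-1}$ onto a complement $x$ of $\ker_\varphi$ so that $\iota_x\theta\varphi=\pi_x\in\mathfrak{m}$ for the forward direction, and the composition $\theta\overline{\varphi}$ with $\iota_x\theta\overline{\varphi}\rho_{\ker_\varphi}=\iota_x\theta\varphi$ feeding into $\mathfrak{m}$-$D_2$ for the converse. The verification $\overline{\varphi}^{-1}(\varphi(a))=a\vee\ker_\varphi$ is correctly justified, so there is nothing to add.
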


\begin{proof}
	(a)$\Rightarrow$(b) Let $a\in\mathcal{L}$ such that there exists an isomorphism $\theta:[a,\mathbf{1}]\to [\mathbf{0},x]$ with $x$ a complement in $\mathcal{L}$ and $\iota_x\theta\rho_a$ in $\mathfrak{m}$. By Lemma \ref{isolin} and since $\mathcal{L}$ is $\mathfrak{m}$-Rickart, $a$ is a complement. On the other hand, let $\varphi\in \mathfrak{m}$. By hypothesis $\ker_\varphi$ is a complement in $\mathcal{L}$. Let $k'$ be a complement of $\ker_\varphi$. Then $[\mathbf{0},k']\cong [\ker_\varphi,\mathbf{1}]$ and $\overline{\varphi}:[\ker_\varphi,\mathbf{1}]\to[\mathbf{0},\varphi(\mathbf{1})]$ is an isomorphism. Define $\theta:[\mathbf{0},\varphi(\mathbf{1})]\to[\mathbf{0},k']$ as $\theta=(\_\wedge k')\overline{\varphi}^{-1}$. Then, $\iota_{k'}\theta\varphi(a)=(a\vee\ker_{\varphi})\wedge k'=\pi_{k'}(a)$. Since $\mathfrak{m}$ contains all the projections, $\iota_{k'}\theta\varphi\in\mathfrak{m}$.
	
	(b)$\Rightarrow$(a) Let $\varphi:\mathcal{L}\to \mathcal{L}$ be a linear morphism in $\mathfrak{m}$ with kernel $\ker_\varphi$. Then, there is an isomorphism $\overline{\varphi}:[\ker_\varphi,\mathbf{1}]\to[\mathbf{0}, \varphi(\mathbf{1})]$. By hypothesis, $[\mathbf{0},\varphi(\mathbf{1})]\cong_\theta[\mathbf{0},x]$ with $x$ a complement in $\mathcal{L}$ such that $\iota_x\theta\varphi\in \mathfrak{m}$. Consider the isomorphism $\theta\overline{\varphi}$. We claim that $\iota_x\theta\overline{\varphi}\rho_{\ker_\varphi}\in\mathfrak{m}$. For every $a\in\mathcal{L}$, we have that $\iota_x\theta\overline{\varphi}\rho_{\ker_\varphi}(a)=\iota_x\theta\varphi(a)$. Thus $\iota_x\theta\overline{\varphi}\rho_{\ker_\varphi}\in\mathfrak{m}$. By $\mathfrak{m}$-$D_2$ condition, $\ker_\varphi$ is complemented in $\mathcal{L}$. Thus, $\mathcal{L}$ is $\mathfrak{m}$-Rickart.
\end{proof}

\begin{prop}\label{dricc2}
	The following conditions are equivalent for a complete modular lattice $\mathcal{L}$ and a submonoid $\mathfrak{m}\subseteq\End_{lin}(\mathcal{L})$ containing all the projections:
	\begin{enumerate}[label=\emph{(\alph*)}]
		\item $\mathcal{L}$ is a dual-$\mathfrak{m}$-Rickart Lattice.
		\item $\mathcal{L}$ satisfies $\mathfrak{m}$-$C_2$ condition and for every $\varphi\in\mathfrak{m}$ there exists an isomorphism  $\theta:[\mathbf{0},x]\to [\mathbf{0},\varphi(\mathbf{1})]$ with $x$ a complement in $\mathcal{L}$ such that $\iota_{\varphi(\mathbf{1})}\theta(x\wedge\_)\rho_{x'}\in\mathfrak{m}$ where $x'$ is a complement of $x$.
	\end{enumerate}
\end{prop}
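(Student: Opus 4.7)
The proof will follow the pattern of Proposition \ref{ricd2}, and is in fact slightly more direct: the composite appearing in the hypothesis of (b) is already in the exact form demanded by $\mathfrak{m}$-$C_2$, so no rewriting through an induced isomorphism is required (contrast with Proposition \ref{ricd2}, where one had to massage $\iota_x\theta\varphi$ into $\iota_x\theta\overline{\varphi}\rho_{\ker_\varphi}$ to invoke $\mathfrak{m}$-$D_2$).

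For the direction (a)$\Rightarrow$(b) I will verify the two clauses in turn. To establish $\mathfrak{m}$-$C_2$, suppose $\theta\colon[\mathbf{0},x]\to[\mathbf{0},a]$ is an isomorphism with $x'$ a complement of $x$ and $\psi:=\iota_a\theta(x\wedge\_)\rho_{x'}\in\mathfrak{m}$. Since $\rho_{x'}(\mathbf{1})=\mathbf{1}$, one computes
\[
\psi(\mathbf{1})=\iota_a\theta(x\wedge\mathbf{1})=\theta(x)=a,
\]
the last equality because an isomorphism of bounded intervals sends top to top. As $\mathcal{L}$ is dual-$\mathfrak{m}$-Rickart, $\psi(\mathbf{1})=a$ has a complement. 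For the second clause, given $\varphi\in\mathfrak{m}$, the dual-$\mathfrak{m}$-Rickart property provides a complement $x'$ of $\varphi(\mathbf{1})$; take $x:=\varphi(\mathbf{1})$ and $\theta$ the identity on $[\mathbf{0},\varphi(\mathbf{1})]$. Then
\[
\iota_{\varphi(\mathbf{1})}\theta(x\wedge\_)\rho_{x'}(a)=\varphi(\mathbf{1})\wedge(a\vee x')=\pi_{\varphi(\mathbf{1})}(a),
\]
and $\pi_{\varphi(\mathbf{1})}\in\mathfrak{m}$ because $\mathfrak{m}$ contains all projections.

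The direction (b)$\Rightarrow$(a) is immediate: for any $\varphi\in\mathfrak{m}$ the hypothesis furnishes an isomorphism $\theta\colon[\mathbf{0},x]\to[\mathbf{0},\varphi(\mathbf{1})]$ with $x$ a complement and $\iota_{\varphi(\mathbf{1})}\theta(x\wedge\_)\rho_{x'}\in\mathfrak{m}$, which is precisely the premise of $\mathfrak{m}$-$C_2$ with $a=\varphi(\mathbf{1})$; hence $\varphi(\mathbf{1})$ is a complement, so $\mathcal{L}$ is dual-$\mathfrak{m}$-Rickart. The only routine check, and the closest thing to an obstacle, is that the various compositions really are linear morphisms (so eligible to lie in $\mathfrak{m}$); this follows in each case from the canonical isomorphism $x\wedge\_\colon[x',\mathbf{1}]\to[\mathbf{0},x]$ combined with the given $\theta$, along the lines of Lemma \ref{isolin}, and presents no real difficulty.
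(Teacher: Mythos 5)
Your proof is correct and follows essentially the same route as the paper's: the same identity-$\theta$ construction producing $\pi_{\varphi(\mathbf{1})}$ for the second clause of (b), the same observation that $\psi(\mathbf{1})=a$ lets dual-$\mathfrak{m}$-Rickartness yield $\mathfrak{m}$-$C_2$, and the same direct invocation of $\mathfrak{m}$-$C_2$ for (b)$\Rightarrow$(a). Your explicit computation of $\psi(\mathbf{1})=a$ and your remark that the composite is already in the exact shape demanded by $\mathfrak{m}$-$C_2$ (unlike the $D_2$ case) are accurate refinements of what the paper leaves implicit.
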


\begin{proof}
    (a)$\Rightarrow$(b) Let $a\in\mathcal{L}$ such that there exists an isomorphism $\theta:[\mathbf{0},x]\to [\mathbf{0},a]$ with $x$ a complement in $\mathcal{L}$ and $\iota_a\theta(x\wedge\_)\rho_{x'}$ in $\mathfrak{m}$ where $x'$ is a complement of $x$. By Lemma \ref{isolin} and since $\mathcal{L}$ is dual-$\mathfrak{m}$-Rickart, $a$ is a complement. On the other hand, let $\varphi\in \mathfrak{m}$. By hypothesis $\varphi(\mathbf{1})$ is a complement in $\mathcal{L}$. Let $b$ be a complement of $\varphi(\mathbf{1})$. Take $\theta=Id_{[\mathbf{0},\varphi(\mathbf{1})]}$. Thus $\iota_{\varphi(\mathbf{1})}\theta(\varphi(\mathbf{1})\wedge\_)\rho_{b}=\pi_{\varphi(\mathbf{1})}\in\mathfrak{m}$.
    
    (b)$\Rightarrow$(a) Let $\varphi:\mathcal{L}\to \mathcal{L}$ be a linear morphism in $\mathfrak{m}$. By hypothesis, $[\mathbf{0},x]\cong_\theta[\mathbf{0},\varphi(\mathbf{1})]$ with $x$ a complement in $\mathcal{L}$ such that $\iota_{\varphi(\mathbf{1})}\theta(x\wedge\_)\rho_{x'}\in \mathfrak{m}$ where $x'$ is a complement of $x$. It follows from  the $\mathfrak{m}$-$C_2$ condition, that $\varphi(\mathbf{1})$ is complemented in $\mathcal{L}$. Thus, $\mathcal{L}$ is dual-$\mathfrak{m}$-Rickart.
\end{proof}

\begin{cor}[{\cite[Proposition 2.11]{leerickart}}]
	The following conditions are equivalent for an $R$-module $M$.
	\begin{enumerate}[label=\emph{(\alph*)}]
		\item $M$ is a Rickart module.
		\item $M$ satisfies $(D_2)$ and for every $f\in\End_R(M)$ there is an isomorphism $g:f(M)\to N$ with $N$ a direct summand of $M$.
	\end{enumerate}
\end{cor}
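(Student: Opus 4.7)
The plan is to deduce this as the $\mathfrak{m}=\mathfrak{E}_M$ specialization of Proposition \ref{ricd2}, using the lemma that $M$ satisfies $(D_2)$ iff $\Lambda(M)$ satisfies $\mathfrak{E}_M$-$D_2$, together with Proposition \ref{lricmric} and the remark that $\mathfrak{E}_M$ contains all projections. All that really requires work is converting the lattice-existential ``there is an isomorphism $\theta:[\mathbf{0},\varphi(\mathbf{1})]\to[\mathbf{0},x]$ with $\iota_x\theta\varphi\in\mathfrak{E}_M$'' into the module-existential ``there is a module isomorphism $g:f(M)\to N$ with $N$ a direct summand,'' and vice-versa.

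For (a)$\Rightarrow$(b), I would start with $M$ Rickart, apply Proposition \ref{lricmric} to get $\Lambda(M)$ is $\mathfrak{E}_M$-Rickart, and then apply Proposition \ref{ricd2}. The $\mathfrak{E}_M$-$D_2$ condition yields $(D_2)$ for $M$ by the preceding lemma. For the second clause, given $f\in\End_R(M)$, set $\varphi=f_\ast$ and take the lattice isomorphism $\theta:[\mathbf{0},f(M)]\to[\mathbf{0},N]$ with $N$ a direct summand and $\iota_N\theta f_\ast\in\mathfrak{E}_M$. Write $\iota_N\theta f_\ast=h_\ast$ for some $h\in\End_R(M)$. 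A direct check (using $h(Rm)=\theta(f(Rm))$ and the fact that $\theta$ is an order isomorphism that preserves cyclic submodules in the sense that $\theta(f(Rm))=0$ iff $f(Rm)=0$) shows that $\ker h=\ker f$ and $h(M)=N$, so the factorization $M\twoheadrightarrow M/\ker f\cong f(M)\xrightarrow{g}N\hookrightarrow M$ produces the desired module isomorphism $g:f(M)\to N$.

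For (b)$\Rightarrow$(a), assume $M$ satisfies $(D_2)$ and every $f(M)$ is isomorphic as a module to some direct summand $N$ of $M$. Then $\Lambda(M)$ satisfies $\mathfrak{E}_M$-$D_2$ by the lemma. Given $\varphi\in\mathfrak{E}_M$, write $\varphi=f_\ast$, take $g:f(M)\xrightarrow{\cong}N$ with $N$ a direct summand, and let $\theta=g_\ast:[\mathbf{0},f(M)]\to[\mathbf{0},N]$. Then $\iota_N\theta f_\ast=(i_N\circ g\circ f)_\ast$, which lies in $\mathfrak{E}_M$ because $i_N\circ g\circ f\in\End_R(M)$. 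Hence the hypotheses of Proposition \ref{ricd2} hold for $\mathcal{L}=\Lambda(M)$ and $\mathfrak{m}=\mathfrak{E}_M$, giving $\Lambda(M)$ is $\mathfrak{E}_M$-Rickart and therefore $M$ is Rickart by Proposition \ref{lricmric}.

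The main (and really only) obstacle is the direction (a)$\Rightarrow$(b): extracting a bona fide $R$-module isomorphism from a lattice isomorphism. The crucial point is that Proposition \ref{ricd2} does not just provide a lattice isomorphism $\theta$; it provides one such that $\iota_x\theta\varphi$ is itself induced by a module endomorphism. It is this extra information that lets one descend from a submodule-lattice isomorphism to a module isomorphism, without having to use any structural hypothesis on $M$.
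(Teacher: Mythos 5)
Your proposal is correct and follows exactly the route the paper intends: the corollary is the specialization of Proposition \ref{ricd2} to $\mathfrak{m}=\mathfrak{E}_M$ via Proposition \ref{lricmric} and the lemma identifying $(D_2)$ with $\mathfrak{E}_M$-$D_2$. The translation you supply between the lattice-level datum $\iota_x\theta\varphi\in\mathfrak{E}_M$ and a genuine module isomorphism $g:f(M)\to N$ (using $\ker h=\Ker f$ and $h(M)=N$ for the inducing endomorphism $h$) is the detail the paper leaves implicit, and you carry it out correctly.
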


\begin{cor}[{\cite[Proposition 2.21]{lee2011dual}}]
	The following conditions are equivalent for an $R$-module $M$.
	\begin{enumerate}[label=\emph{(\alph*)}]
		\item $M$ is a dual-Rickart module.
		\item $M$ satisfies $(C_2)$ and for every $f\in\End_R(M)$ there is an isomorphism $g:N\to f(M)$ with $N$ a direct summand of $M$.
	\end{enumerate}
\end{cor}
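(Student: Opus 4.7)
The plan is to derive this corollary as a direct application of Proposition \ref{dricc2} with $\mathcal{L} = \Lambda(M)$ and $\mathfrak{m} = \mathfrak{E}_M$. The hypothesis that $\mathfrak{m}$ contains all projections is satisfied by the remark following Proposition \ref{modpix}. So what remains is to translate each clause of the lattice-level equivalence (a)$\Leftrightarrow$(b) in Proposition \ref{dricc2} into its module-level counterpart.

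First, by Proposition \ref{lricmric}, the condition that $\Lambda(M)$ is dual-$\mathfrak{E}_M$-Rickart is equivalent to $M$ being a dual-Rickart module, which handles the translation of condition (a). Next, the equivalence between $\Lambda(M)$ satisfying the $\mathfrak{E}_M$-$C_2$ condition and $M$ satisfying $(C_2)$ is precisely the lemma stated immediately before Proposition \ref{dricc2}, so half of condition (b) transfers for free.

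The substantive step is translating the existence of the lattice isomorphism $\theta:[\mathbf{0},x] \to [\mathbf{0},\varphi(\mathbf{1})]$ (with $x$ a complement and $\iota_{\varphi(\mathbf{1})}\theta(x\wedge\_)\rho_{x'} \in \mathfrak{E}_M$) into the existence of an $R$-module isomorphism $g: N \to f(M)$ with $N$ a direct summand of $M$. For each $f \in \End_R(M)$, the corresponding $\varphi = f_*\in\mathfrak{E}_M$ satisfies $\varphi(\mathbf{1}) = f(M)$, and $x$ being a complement in $\Lambda(M)$ means $x = N$ is a direct summand. Given an $R$-isomorphism $g:N\to f(M)$, I would let $\theta$ be the induced lattice isomorphism $\Lambda(N)\cong\Lambda(f(M))$, and then verify via Lemma \ref{modpix} that $\iota_{f(M)}\theta(N\cap\_)\rho_{N'} = (i \circ g \circ \pi_N)_{*}$, where $i:f(M)\hookrightarrow M$ and $\pi_N:M\to N$ are the canonical inclusion and projection; this composite is an $R$-endomorphism of $M$, so the map lies in $\mathfrak{E}_M$. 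Conversely, given $\theta$ and an $h\in\End_R(M)$ with $h_* = \iota_{f(M)}\theta(N\cap\_)\rho_{N'}$, I would define $g:N\to f(M)$ by $g(n) = h(n)$ and mimic the argument in the $(C_2)$ lemma's proof (using that $\theta$ is a lattice isomorphism to deduce injectivity and surjectivity of $g$) to see that $g$ is an $R$-isomorphism.

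This step is essentially routine, following exactly the pattern of the two preceding lemmas (for $(D_2)$ and $(C_2)$), so I do not anticipate any real obstacle; the only mild subtlety is book-keeping the side of the projection and the use of modularity to identify $\pi_N(L) = N \cap (L + N')$, both of which are already handled by Lemma \ref{modpix}.
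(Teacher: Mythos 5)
Your proposal is correct and is exactly the argument the paper intends: the corollary is stated as an immediate consequence of Proposition \ref{dricc2} applied to $\mathcal{L}=\Lambda(M)$ and $\mathfrak{m}=\mathfrak{E}_M$, with Proposition \ref{lricmric} and the $\mathfrak{E}_M$-$C_2$ lemma handling two of the three clauses, and the dictionary between module isomorphisms $g:N\to f(M)$ and lattice isomorphisms $\theta$ with $\iota_{f(M)}\theta(N\cap\_)\rho_{N'}=(i\,g\,\pi_N)_\ast\in\mathfrak{E}_M$ handling the third. Your verification of that dictionary (via Lemma \ref{modpix} in one direction and the injectivity/surjectivity argument mimicking the $(C_2)$ lemma in the other) is sound.
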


\begin{defn}
    Let $\mathcal{L}$ be a modular lattice and $\mathfrak{m}$ be a submonoid of $\End_{lin}(\mathcal{L})$. The lattice $\mathcal{L}$ is $k$-local $\mathfrak{m}$-retractable if for each $\varphi \in \mathfrak{m}$ and for each $b\leq \ker_\varphi$, there exists $\psi_{b}:\mathcal{L}\rightarrow [\mathbf{0},\ker_\varphi]$  such that $b\leq \psi_{b}(\mathbf{1})$ and $\iota_{\ker_\varphi}\psi_b\in\mathfrak{m}$.
\end{defn}

\begin{prop} 
Let $\mathcal{L}$ be a complete modular lattice and $\mathfrak{m}\subseteq\End_{lin}(\mathcal{L})$ be a submonoid containig all the projections. If $\mathcal{L}$ is $\mathfrak{m}$-Rickart, then $\mathcal{L}$ is $k$-local $\mathfrak{m}$-retractable.
\end{prop}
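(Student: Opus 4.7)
The plan is to observe that the same witness works uniformly for every $b\leq\ker_\varphi$, namely the projection onto $\ker_\varphi$ itself. Given $\varphi\in\mathfrak{m}$ and $b\leq\ker_\varphi$, the $\mathfrak{m}$-Rickart hypothesis says that $\ker_\varphi$ is a complement in $\mathcal{L}$. Pick any complement $k'$ of $\ker_\varphi$; then the projection $\pi_{\ker_\varphi}:\mathcal{L}\to\mathcal{L}$ makes sense, and it lies in $\mathfrak{m}$ because $\mathfrak{m}$ contains all projections.

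The next step is to produce the required $\psi_b:\mathcal{L}\to[\mathbf{0},\ker_\varphi]$. I would simply take $\psi_b$ to be the corestriction of $\pi_{\ker_\varphi}$ to the interval $[\mathbf{0},\ker_\varphi]$, which is well defined because $\pi_{\ker_\varphi}(a)=(a\vee k')\wedge\ker_\varphi\leq\ker_\varphi$ for every $a\in\mathcal{L}$. By construction $\iota_{\ker_\varphi}\psi_b=\pi_{\ker_\varphi}\in\mathfrak{m}$, so the membership condition holds.

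Finally I would verify the containment $b\leq\psi_b(\mathbf{1})$. Since $\psi_b(\mathbf{1})=\pi_{\ker_\varphi}(\mathbf{1})=(\mathbf{1}\vee k')\wedge\ker_\varphi=\ker_\varphi$, the inequality $b\leq\ker_\varphi$ given in the hypothesis gives $b\leq\psi_b(\mathbf{1})$ immediately. Note that $\psi_b$ in fact does not depend on $b$; the same morphism serves every $b\leq\ker_\varphi$.

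There is essentially no obstacle here: once one realizes that the image of the projection $\pi_{\ker_\varphi}$ is all of $\ker_\varphi$, the defining inequality is automatic. The only small point to be careful about is choosing the complement $k'$ of $\ker_\varphi$ in order to speak of $\pi_{\ker_\varphi}$ (since, as noted in the Remark following the definition of projection, projections depend on the choice of complement), but any choice works since we only need the image to equal $\ker_\varphi$ and the morphism to lie in $\mathfrak{m}$.
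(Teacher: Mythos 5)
Your proof is correct and is essentially the same as the paper's: both take $\psi_b$ to be the corestriction of the projection $\pi_{\ker_\varphi}$ onto $[\mathbf{0},\ker_\varphi]$ (available because the $\mathfrak{m}$-Rickart hypothesis makes $\ker_\varphi$ a complement and $\mathfrak{m}$ contains all projections) and observe that $b\leq\ker_\varphi=\pi_{\ker_\varphi}(\mathbf{1})$. Your write-up just spells out the details, including the harmless dependence on the choice of complement $k'$, more explicitly than the paper does.
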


\begin{proof}  
Let  $\varphi:\mathcal{L}\rightarrow\mathcal{L}$ be a linear morphism. Since $\mathcal{L}$ is $\mathfrak{m}$-Rickart, we can consider the projection $\pi_{\ker_\varphi}:\mathcal{L}\to[\mathbf{0},\ker_\varphi]$ co-restricted. Then for $b\leq \ker_\varphi$,  we have that $b\leq\ker_\varphi=\pi_{\ker_\varphi}(\mathbf{1})$.
\end{proof}

\begin{defn}
	Let $\mathcal{L}$ be a complete modular lattice, $\mathfrak{m}$ be a submonoid of $\End_{lin}(\mathcal{L})$ and $x\in\mathcal{L}$. It is said that $x$ is $\mathfrak{m}$-$\mathcal{L}$-generated if there exists a family $\{\varphi_i:\mathcal{L}\to[\mathbf{0},x]\}_{i\in I}$ of linear morphisms such that $\iota_x\varphi_i\in\mathfrak{m}$ for all $i\in I$ and $x=\bigvee_{i\in I}\varphi_i(\mathbf{1})$.
\end{defn}

\begin{rem}
    It is clear that if $\mathcal{L}$ is $k$-local $\mathfrak{m}$-retractable, then $\ker_\varphi$ is $\mathfrak{m}$-$\mathcal{L}$-generated for all $\varphi\in\mathfrak{m}$. 
\end{rem}

\begin{defn}
	Let $\mathcal{L}$ be a complete modular lattice, $\mathfrak{m}$ be a submonoid of $\End_{lin}(\mathcal{L})$ and $x\in\mathcal{L}$. It is said that $x$ is \emph{$\mathfrak{m}$-$\mathcal{L}$-cogenerated} if there exists a family $\{\varphi_i:[x,\mathbf{1}]\to\mathcal{L}\}_{i\in I}$ of linear morphisms such that $\varphi_i\rho_x\in\mathfrak{m}$ for all $i\in I$ and $x=\bigwedge_{i\in I}\ker_{\varphi_i}$.
\end{defn}

\begin{prop}\label{kercompkergenann}
	Let $\mathcal{L}$ be a complete modular lattice and $\mathfrak{m}$ be a submonoid of $\End_{lin}(\mathcal{L})$ containing all the projections. The following conditions are equivalent for $\varphi\in\mathfrak{m}$:
	\begin{enumerate}[label=\emph{(\alph*)}]
		\item $\ker_\varphi$ has a complement in $\mathcal{L}$.
		\item $\ker_\varphi$ is $\mathfrak{m}$-$\mathcal{L}$-generated and $\annr(\varphi)=\varepsilon\mathfrak{m}$ for some idempotent $\varepsilon\in\mathfrak{m}$.
	\end{enumerate}
\end{prop}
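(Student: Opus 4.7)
The plan is to prove both implications by exploiting the idempotent projection $\pi_{\ker_\varphi}$ together with Proposition \ref{idemcomp}.

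For (a) $\Rightarrow$ (b), let $k'$ be a complement of $\ker_\varphi$ and set $\varepsilon := \pi_{\ker_\varphi}$, which belongs to $\mathfrak{m}$ by hypothesis and is idempotent with $\varepsilon(\mathbf{1}) = \ker_\varphi$. The corestriction $\varepsilon^{\upharpoonright}\colon \mathcal{L}\to[\mathbf{0},\ker_\varphi]$ then exhibits $\ker_\varphi$ as $\mathfrak{m}$-$\mathcal{L}$-generated via a single-element family. To show $\annr(\varphi) = \varepsilon\mathfrak{m}$: any element of $\varepsilon\mathfrak{m}$ has image contained in $[\mathbf{0},\ker_\varphi]$, and for such $x$ one has $\varphi(x) = \varphi(x \vee \ker_\varphi) = \varphi(\ker_\varphi) = \mathbf{0}$, giving $\varepsilon\mathfrak{m} \subseteq \annr(\varphi)$. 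Conversely, for $\psi \in \annr(\varphi)$ the isomorphism $\overline{\varphi}\colon [\ker_\varphi,\mathbf{1}]\to[\mathbf{0},\varphi(\mathbf{1})]$ combined with $\varphi(\psi(\mathbf{1})) = \mathbf{0}$ forces $\psi(\mathbf{1}) \leq \ker_\varphi$, hence $\psi(a) \leq \ker_\varphi$ for all $a$ by monotonicity. Then by modularity,
\[
\varepsilon\psi(a) = (\psi(a)\vee k')\wedge \ker_\varphi = \psi(a) \vee (k'\wedge \ker_\varphi) = \psi(a),
\]
so $\psi = \varepsilon\psi \in \varepsilon\mathfrak{m}$.

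For (b) $\Rightarrow$ (a), Proposition \ref{idemcomp} applied to the idempotent $\varepsilon \in \mathfrak{m}$ tells us that $\varepsilon(\mathbf{1})$ has a complement in $\mathcal{L}$ (namely $\ker_\varepsilon$), so it suffices to prove $\varepsilon(\mathbf{1}) = \ker_\varphi$. The inequality $\varepsilon(\mathbf{1})\leq \ker_\varphi$ follows because $\varepsilon = \varepsilon\cdot\varepsilon \in \varepsilon\mathfrak{m} = \annr(\varphi)$, so $\varphi\varepsilon = 0$ and the previous isomorphism argument gives $\varepsilon(\mathbf{1})\leq \ker_\varphi$. For the reverse, take a family $\{\varphi_i\colon \mathcal{L}\to[\mathbf{0},\ker_\varphi]\}_{i\in I}$ witnessing $\ker_\varphi$ to be $\mathfrak{m}$-$\mathcal{L}$-generated. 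Each $\iota_{\ker_\varphi}\varphi_i$ lies in $\mathfrak{m}$ and maps into $[\mathbf{0},\ker_\varphi]$, so by the same vanishing argument it lies in $\annr(\varphi) = \varepsilon\mathfrak{m}$; writing $\iota_{\ker_\varphi}\varphi_i = \varepsilon\eta_i$ and evaluating at $\mathbf{1}$ yields $\varphi_i(\mathbf{1})\leq \varepsilon(\mathbf{1})$. Taking the supremum gives $\ker_\varphi = \bigvee_{i\in I}\varphi_i(\mathbf{1}) \leq \varepsilon(\mathbf{1})$.

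The main obstacle I anticipate is the annihilator computation in (a) $\Rightarrow$ (b): one must carefully translate the equation $\varphi\psi = 0$ in $\mathfrak{m}$ into the pointwise bound $\psi(a) \leq \ker_\varphi$ using the linear morphism axioms, and then apply modularity at exactly the right step to collapse $\varepsilon\psi$ back to $\psi$. Once this identification of $\annr(\varphi)$ with $\varepsilon\mathfrak{m}$ is secured, the rest follows from Proposition \ref{idemcomp} and the definition of $\mathfrak{m}$-$\mathcal{L}$-generation.
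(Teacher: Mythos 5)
Your proof is correct and follows essentially the same route as the paper's: in (a)$\Rightarrow$(b) you take $\varepsilon=\pi_{\ker_\varphi}$, use $\psi(\mathbf{1})\leq\ker_\varphi$ plus modularity to get $\varepsilon\psi=\psi$, and in (b)$\Rightarrow$(a) you combine $\varphi\varepsilon=0$ with the generating family to force $\varepsilon(\mathbf{1})=\ker_\varphi$ and conclude via Proposition \ref{idemcomp}. The only cosmetic difference is that you factor the generators as $\iota_{\ker_\varphi}\varphi_i=\varepsilon\eta_i$ where the paper writes $\varepsilon\psi_i=\psi_i$ directly; the argument is the same.
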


\begin{proof}
	(a)$\Rightarrow$(b) Since $\ker_\varphi$ is a complement in $\mathcal{L}$, $\ker_\varphi=\pi_{\ker_\varphi}(\mathbf{1})$. Thus, $\ker_\varphi$ is $\mathfrak{m}$-$\mathcal{L}$-generated. By hypothesis, there exists $x\in\mathcal{L}$ such that $\mathbf{1}=\ker_\varphi\vee x$ and $\mathbf{0}=\ker_\varphi\wedge x$. We have that $\pi_{\ker_\varphi}(\mathbf{1})=\ker_\varphi$, so $\varphi\pi_{\ker_\varphi}(\mathbf{1})=\mathbf{0}$. This implies that $\pi_{\ker_\varphi}\in\annr(\varphi)$. Now, let $\psi\in\annr(\varphi)$, that is, $\varphi\psi=0$. Hence $\varphi\psi(\mathbf{1})=\mathbf{0}$. Therefore, $\psi(\mathbf{1})\leq\ker_\varphi=\pi_{\ker_\varphi}(\mathbf{1})$. Let $a\in\mathcal{L}$. Then $\psi(a)=(\psi(a)\vee x)\wedge\ker_\varphi=\pi_{\ker_\varphi}\psi(a)$. Thus $\pi_{\ker_\varphi}\psi=\psi$. This implies that $\annr(\varphi)=\pi_{\ker_\varphi}\mathfrak{m}$.
	
	(b)$\Rightarrow$(a) By hypothesis there exists $\varepsilon\in\mathfrak{m}$ idempotent such that $\annr(\varphi)=\varepsilon\mathfrak{m}$. Then $\varphi\varepsilon(\mathbf{1})=\mathbf{0}$. Thus $\varepsilon(\mathbf{1})\leq\ker_\varphi$. On the other hand, there exists linear morphisms $\psi_i:\mathcal{L}\to[\mathbf{0},\ker_\varphi]$ ($i\in I$) such that $\bigvee_{i\in I}\psi_i(\mathbf{1})=\ker_\varphi$. This implies that $\varphi\psi_i=0$ for all $i\in I$ and so $\psi_i\in\annr(\varphi)$ for all $i\in I$. Therefore $\varepsilon\psi_i=\psi_i$ for all $i\in I$. Then
	\[\ker_\varphi=\bigvee_{i\in I}\psi_i(\mathbf{1})=\bigvee_{i\in I}\varepsilon\psi_i(\mathbf{1})\leq\varepsilon(\mathbf{1}).\]
	Thus, $\varepsilon(\mathbf{1})=\ker_\varphi$. Since $\varepsilon$ is idempotent, $\ker_\varphi$ is a complement in $\mathcal{L}$.
\end{proof}

\begin{prop}\label{imcompintkercogen}
	Let $\mathcal{L}$ be a complete modular lattice and $\mathfrak{m}$ be a submonoid of $\End_{lin}(\mathcal{L})$ containing all the projections. The following conditions are equivalent for $\varphi\in\mathfrak{m}$:
	\begin{enumerate}[label=\emph{(\alph*)}]
		\item $\varphi(\mathbf{1})$ has a complement in $\mathcal{L}$.
		\item $\varphi(\mathbf{1})$ is $\mathfrak{m}$-$\mathcal{L}$-cogenerated and $\annl(\varphi)=\mathfrak{m}\varepsilon$ for some idempotent $\varepsilon\in\mathfrak{m}$.
	\end{enumerate}
\end{prop}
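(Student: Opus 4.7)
The approach is to dualize the proof of Proposition \ref{kercompkergenann}, swapping kernels with images, and right with left annihilators.

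For (a)$\Rightarrow$(b), let $y$ be a complement of $\varphi(\mathbf{1})$ and set $\varepsilon:=\pi_{y}$, the projection onto $y$ taken with the complement $y'=\varphi(\mathbf{1})$; this is an idempotent in $\mathfrak{m}$ with $\ker_{\varepsilon}=\varphi(\mathbf{1})$, and $\varepsilon\varphi=0$ since $\varphi(\mathbf{1})\leq\ker_{\varepsilon}$, so $\varepsilon\in\annl(\varphi)$. For the reverse containment, given $\tau\in\annl(\varphi)$ one has $\varphi(\mathbf{1})\leq\ker_{\tau}$, and a short computation using modularity to rewrite $((a\vee\varphi(\mathbf{1}))\wedge y)\vee\varphi(\mathbf{1})$ as $a\vee\varphi(\mathbf{1})$, together with the kernel identity $\tau(b)=\tau(b\vee\ker_{\tau})$, yields $\tau=\tau\varepsilon$. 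Cogeneration of $\varphi(\mathbf{1})$ is witnessed by the single morphism $\psi:[\varphi(\mathbf{1}),\mathbf{1}]\to\mathcal{L}$ defined by $\psi(z)=z\wedge y$: one checks $\psi\rho_{\varphi(\mathbf{1})}=\varepsilon\in\mathfrak{m}$ and $\ker_{\psi}=\varphi(\mathbf{1})$, so a singleton family suffices.

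For (b)$\Rightarrow$(a), the target is the equality $\ker_{\varepsilon}=\varphi(\mathbf{1})$, since then Proposition \ref{idemcomp} produces $\varepsilon(\mathbf{1})$ as a complement of $\varphi(\mathbf{1})$. The inclusion $\varphi(\mathbf{1})\leq\ker_{\varepsilon}$ is immediate from $\varepsilon\varphi=0$. For the opposite inclusion, take a cogenerating family $\{\psi_{i}:[\varphi(\mathbf{1}),\mathbf{1}]\to\mathcal{L}\}_{i\in I}$ with $\psi_{i}\rho_{\varphi(\mathbf{1})}\in\mathfrak{m}$ and $\bigwedge_{i}\ker_{\psi_{i}}=\varphi(\mathbf{1})$. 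Each composite $\psi_{i}\rho_{\varphi(\mathbf{1})}$ annihilates $\varphi$ because $\psi_{i}(\varphi(\mathbf{1}))=\mathbf{0}$, so it lies in $\annl(\varphi)=\mathfrak{m}\varepsilon$ and can be written as $\tau_{i}\varepsilon$. Any $a\leq\ker_{\varepsilon}$ is then killed by $\tau_{i}\varepsilon$, giving $\ker_{\varepsilon}\leq\ker_{\tau_{i}\varepsilon}=\ker_{\psi_{i}\rho_{\varphi(\mathbf{1})}}=\ker_{\psi_{i}}$; intersecting over $i\in I$ delivers $\ker_{\varepsilon}\leq\varphi(\mathbf{1})$.

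The main subtlety is the identification $\ker_{\psi_{i}\rho_{\varphi(\mathbf{1})}}=\ker_{\psi_{i}}$ as an element of $\mathcal{L}$, which rests on the composition formula for kernels of linear morphisms (analogous to Lemma \ref{kerpi}) and on the fact that $\rho_{\varphi(\mathbf{1})}$ restricts to the identity of $[\varphi(\mathbf{1}),\mathbf{1}]$. Once this bookkeeping is settled, everything else is a transparent dualization of Proposition \ref{kercompkergenann}.
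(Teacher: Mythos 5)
Your proof is correct and follows essentially the same route as the paper's: the same idempotent projection onto a complement of $\varphi(\mathbf{1})$ (with kernel $\varphi(\mathbf{1})$), the same singleton cogenerating morphism $z\mapsto z\wedge y$, and the same absorption argument for (b)$\Rightarrow$(a). The only minor difference is that you obtain $\tau=\tau\varepsilon$ in one stroke from the kernel identity plus a modularity computation, whereas the paper splits it into two inequalities using monotonicity and Proposition \ref{fipi}; both are valid.
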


\begin{proof}
	(a)$\Rightarrow$(b) Since $\varphi(\mathbf{1})$ is complemented in $\mathcal{L}$, there exists $x\in\mathcal{L}$ such that $\mathbf{1}=\varphi(\mathbf{1})\vee x$ and $\mathbf{0}=\varphi(\mathbf{1})\wedge x$. Then $\varphi(\mathbf{1})=\ker_{\iota_x(x\wedge\_)}$. Thus, $\varphi(\mathbf{1})$ is $\mathfrak{m}$-$\mathcal{L}$-cogenerated. We have that $\pi_{x}\varphi(\mathbf{1})=\mathbf{0}$. This implies that $\pi_{x}\in\annl(\varphi)$. Now, let $\psi\in\annl(\varphi)$, that is, $\psi\varphi=0$. Hence $\psi\varphi(\mathbf{1})=\mathbf{0}$. Therefore, $\varphi(\mathbf{1})\leq\ker_\psi$. Let $a\in\mathcal{L}$. Then $\psi\pi_x(a)=\psi((a\vee \varphi(\mathbf{1}))\wedge x)\leq\psi((a\vee \varphi(\mathbf{1})))\wedge \psi(x)=\psi(a)\wedge\psi(x)=\psi(a)\wedge\psi(\mathbf{1})=\psi(a)$. On the other hand, $a\leq\pi_{\varphi(\mathbf{1})}(a)\vee\pi_x(a)$. Hence $\psi(a)\leq\psi(\pi_{\varphi(\mathbf{1})}(a)\vee\pi_x(a))=\psi(\pi_{\varphi(\mathbf{1})}(a))\vee\psi(\pi_x(a))=\psi\pi_x(a)$. Thus $\psi\pi_{x}=\psi$. This implies that $\annl(\varphi)=\mathfrak{m}\pi_{x}$.
	
	(b)$\Rightarrow$(a) By hypothesis there exists $\varepsilon\in\mathfrak{m}$ idempotent such that $\annl(\varphi)=\mathfrak{m}\varepsilon$. Then $\varepsilon\varphi(\mathbf{1})=\mathbf{0}$. Thus $\varphi(\mathbf{1})\leq\ker_\varepsilon$. On the other hand, there exists linear morphisms $\psi_i:[\varphi(\mathbf{1}),\mathbf{1}]\to\mathcal{L}$ ($i\in I$) such that $\bigwedge_{i\in I}\ker_{\psi_i}=\varphi(\mathbf{1})$ and $\psi_i\rho_{\varphi(\mathbf{1})}\in\mathfrak{m}$. This implies that $\psi_i\rho_{\varphi(\mathbf{1})}\varphi=0$ for all $i\in I$ and so $\psi_i\rho_{\varphi(\mathbf{1})}\in\annl(\varphi)$ for all $i\in I$. Therefore $\psi_i\rho_{\varphi(\mathbf{1})}\varepsilon=\psi_i\rho_{\varphi(\mathbf{1})}$ for all $i\in I$. Then, for each $i\in I$,
	\[\psi_i(\ker_\varepsilon)=\psi_i(\ker_\varepsilon\vee\varphi(\mathbf{1}))=\psi_i\rho_{\varphi(\mathbf{1})}(\ker_{\varepsilon})=\psi_i\rho_{\varphi(\mathbf{1})}\varepsilon(\ker_\varepsilon)=\mathbf{0}.\]
	This implies that $\ker_\varepsilon\leq\bigwedge_{i\in I}\ker_{\psi_i}=\varphi(\mathbf{1})$. Thus, $\varphi(\mathbf{1})=\ker_\varepsilon$. Since $\varepsilon$ is idempotent, $\varphi(\mathbf{1})$ is complemented in $\mathcal{L}$.
\end{proof}

\begin{defn}
	A monoid $E$ with zero element is called \emph{right Rickart} if for every $\varphi\in E$ there exists an idempotent element $\varepsilon\in E$ such that $\annr(\varphi)=\{\psi\in E\mid \varphi\psi=0\}=\varepsilon E$. A \emph{left Rickart} monoid is defined similarly.
\end{defn}

It is clear that if a ring $R$ is right Rickart if and only if $R$ as monoid with the multiplication is right Rickart in the sense of the above definition. In \cite{leerickart} it is proved that, if $M$ is a Rickart module, then $\End_R(M)$ is a right Rickart ring. The next results explore the lattice-counterpart of this fact.

\begin{cor}\label{ricendoric}
	The following conditions are equivalent for a complete modular lattice $\mathcal{L}$ and a submonoid $\mathfrak{m}\subseteq\End_{lin}(\mathcal{L})$ containing all the projections:
	\begin{enumerate}[label=\emph{(\alph*)}]
		\item $\mathcal{L}$ is $\mathfrak{m}$-Rickart.
		\item The monoid $\mathfrak{m}$ is right Rickart and $\mathcal{L}$ is $k$-local $\mathfrak{m}$-retractable.
		\item The monoid $\mathfrak{m}$ is right Rickart and $\ker_\varphi$ is $\mathfrak{m}$-$\mathcal{L}$-generated for all $\varphi\in\mathfrak{m}$.
	\end{enumerate}
\end{cor}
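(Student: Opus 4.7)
The plan is to route everything through Proposition \ref{kercompkergenann}, which characterizes, for a fixed $\varphi\in\mathfrak{m}$, when $\ker_\varphi$ has a complement in terms of $\annr(\varphi)$ being generated by an idempotent and $\ker_\varphi$ being $\mathfrak{m}$-$\mathcal{L}$-generated. Since the present corollary is simply the ``for every $\varphi$'' version packaged with the notions of right Rickart monoid and $k$-local $\mathfrak{m}$-retractability, the bulk of the work has already been done and the proof reduces to cycling through the three conditions.

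I would argue (a)$\Rightarrow$(b)$\Rightarrow$(c)$\Rightarrow$(a). For (a)$\Rightarrow$(b), assume $\mathcal{L}$ is $\mathfrak{m}$-Rickart, so $\ker_\varphi$ has a complement for every $\varphi\in\mathfrak{m}$. Proposition \ref{kercompkergenann} then gives, for each $\varphi$, an idempotent $\varepsilon\in\mathfrak{m}$ with $\annr(\varphi)=\varepsilon\mathfrak{m}$; this is exactly the statement that $\mathfrak{m}$ is right Rickart. The $k$-local $\mathfrak{m}$-retractability is provided verbatim by the proposition stated just above the definition of $\mathfrak{m}$-$\mathcal{L}$-generated, whose proof uses the projection $\pi_{\ker_\varphi}$ (well defined because $\ker_\varphi$ is a complement and $\mathfrak{m}$ contains all projections).

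For (b)$\Rightarrow$(c), invoke the remark that any $k$-local $\mathfrak{m}$-retractable lattice has all kernels $\mathfrak{m}$-$\mathcal{L}$-generated: the maps $\psi_b$ produced by $k$-local $\mathfrak{m}$-retractability, indexed over $b\leq\ker_\varphi$, give a generating family of linear morphisms $\mathcal{L}\to[\mathbf{0},\ker_\varphi]$. Finally, (c)$\Rightarrow$(a) follows by applying Proposition \ref{kercompkergenano} in the other direction: for each $\varphi\in\mathfrak{m}$ the right annihilator $\annr(\varphi)$ is generated by an idempotent (because $\mathfrak{m}$ is right Rickart) and $\ker_\varphi$ is $\mathfrak{m}$-$\mathcal{L}$-generated, so $\ker_\varphi$ has a complement; this is $\mathfrak{m}$-Rickartness of $\mathcal{L}$.

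There is no real obstacle here beyond bookkeeping: the content is entirely contained in Proposition \ref{kercompkergenann}, the preceding proposition on $k$-local $\mathfrak{m}$-retractability, and the remark that $k$-local $\mathfrak{m}$-retractability implies the $\mathfrak{m}$-$\mathcal{L}$-generation of kernels. The only point that deserves mild attention is that the hypothesis ``$\mathfrak{m}$ contains all the projections'' is used in two places implicitly (to ensure $\pi_{\ker_\varphi}\in\mathfrak{m}$ and to apply Proposition \ref{kercompkergenann}), so I would make those invocations explicit to avoid any ambiguity about which ambient monoid the idempotents live in.
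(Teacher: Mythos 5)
Your proposal is correct and follows exactly the route the paper intends: the corollary is left without proof precisely because it is the ``for all $\varphi$'' assembly of Proposition \ref{kercompkergenann}, the preceding proposition on $k$-local $\mathfrak{m}$-retractability, and the remark that retractability implies $\mathfrak{m}$-$\mathcal{L}$-generation of kernels. The only blemish is the stray reference \verb|kercompkergenano| in your (c)$\Rightarrow$(a) step, which should read \ref{kercompkergenann}.
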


\begin{cor}\label{dricendodric}
	The following conditions are equivalent for a complete modular lattice $\mathcal{L}$ and a submonoid $\mathfrak{m}\subseteq\End_{lin}(\mathcal{L})$ containing all the projections:
	\begin{enumerate}[label=\emph{(\alph*)}]
		\item $\mathcal{L}$ is dual-$\mathfrak{m}$-Rickart.
		\item The monoid $\mathfrak{m}$ is left Rickart and $\varphi(\mathbf{1})$ is $\mathfrak{m}$-$\mathcal{L}$-cogenerated for all $\varphi\in\mathfrak{m}$.
	\end{enumerate}
\end{cor}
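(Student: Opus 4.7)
The plan is to derive this corollary as a direct consequence of Proposition \ref{imcompintkercogen}, which already characterizes exactly when a single image $\varphi(\mathbf{1})$ has a complement in $\mathcal{L}$. The only real work is to repackage the pointwise conditions supplied by the proposition as a global statement about the monoid $\mathfrak{m}$.

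For the implication (a)$\Rightarrow$(b), I will fix an arbitrary $\varphi\in\mathfrak{m}$. Since $\mathcal{L}$ is dual-$\mathfrak{m}$-Rickart, $\varphi(\mathbf{1})$ has a complement, so Proposition \ref{imcompintkercogen} provides an idempotent $\varepsilon_\varphi\in\mathfrak{m}$ with $\annl(\varphi)=\mathfrak{m}\varepsilon_\varphi$ and simultaneously guarantees that $\varphi(\mathbf{1})$ is $\mathfrak{m}$-$\mathcal{L}$-cogenerated. Since this holds for every $\varphi\in\mathfrak{m}$, the left annihilator of each element of $\mathfrak{m}$ is generated by an idempotent on the left, which is precisely the definition of a left Rickart monoid.

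For the implication (b)$\Rightarrow$(a), I will again fix $\varphi\in\mathfrak{m}$. The assumption that $\mathfrak{m}$ is left Rickart yields an idempotent $\varepsilon\in\mathfrak{m}$ with $\annl(\varphi)=\mathfrak{m}\varepsilon$, while the cogeneration hypothesis provides the remaining condition in item (b) of Proposition \ref{imcompintkercogen}. The proposition therefore forces $\varphi(\mathbf{1})$ to have a complement, and since $\varphi$ was arbitrary in $\mathfrak{m}$, the lattice $\mathcal{L}$ is dual-$\mathfrak{m}$-Rickart.

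There is no genuine obstacle here; the substantive content sits entirely in Proposition \ref{imcompintkercogen}, whose hypothesis that $\mathfrak{m}$ contains all projections is already part of the corollary's standing assumptions. The only point where one should be slightly careful is the translation between ``$\annl(\varphi)=\mathfrak{m}\varepsilon_\varphi$ for each individual $\varphi$'' and ``$\mathfrak{m}$ is a left Rickart monoid'', but this is immediate from the definition of left Rickart monoid given just above the corollary.
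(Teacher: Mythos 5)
Your derivation is correct and is exactly the intended one: the paper states this as an unproved corollary of Proposition \ref{imcompintkercogen}, and quantifying that proposition over all $\varphi\in\mathfrak{m}$ while unpacking the definition of a left Rickart monoid is all that is required. Nothing further is needed.
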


\begin{cor}[{Theorem 3.9, \cite{leerickart}}]
	The following conditions are equivalent for an $R$-module $M$:
	\begin{enumerate}[label=\emph{(\alph*)}]
		\item $M$ is Rickart.
		\item The ring $\End_{R}(M)$ is right Rickart and $M$ is $k$-local-retractable.
		\item The ring $\End_{R}(M)$ is right Rickart and $\Ker\varphi$ is $M$-generated for all $\varphi\in\End_{R}(M)$.
	\end{enumerate}
\end{cor}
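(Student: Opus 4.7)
The plan is to specialize Corollary \ref{ricendoric} to the lattice $\mathcal{L}=\Lambda(M)$ and the submonoid $\mathfrak{m}=\mathfrak{E}_M$. The remark after Proposition \ref{modpix} guarantees that $\mathfrak{E}_M$ contains all projections, and Proposition \ref{lricmric} already identifies ``$\Lambda(M)$ is $\mathfrak{E}_M$-Rickart'' with ``$M$ is Rickart'', so clause (a) on the module side and the lattice side coincide.

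For the remaining two clauses I would set up a dictionary between module statements and their $\mathfrak{E}_M$-lattice counterparts. The canonical map $(-)_\ast\colon\End_R(M)\to\mathfrak{E}_M$ is a surjective homomorphism of monoids with zero, and it sends $\annr(f)$ onto $\annr(f_\ast)$, since $fg=0$ iff $f_\ast g_\ast=0$. Moreover, every linear morphism $\psi\colon\Lambda(M)\to[\mathbf{0},N]$ with $\iota_N\psi\in\mathfrak{E}_M$ is of the form $h_\ast$ for some $h\in\End_R(M)$ with $h(M)\leq N$, and conversely every such $h$ produces such a $\psi$. These observations translate ``$\Lambda(M)$ is $k$-local $\mathfrak{E}_M$-retractable'' into ``$M$ is $k$-local-retractable'', and ``$\ker_\varphi$ is $\mathfrak{E}_M$-$\Lambda(M)$-generated for every $\varphi\in\mathfrak{E}_M$'' into ``$\Ker f$ is $M$-generated for every $f\in\End_R(M)$''.

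The main obstacle is the equivalence ``$\mathfrak{E}_M$ is right Rickart'' $\Leftrightarrow$ ``$\End_R(M)$ is right Rickart'', because $(-)_\ast$ need not be injective. The direction $(\Leftarrow)$ is routine: if $\annr(f)=e\End_R(M)$ with $e^2=e$, then $\annr(f_\ast)=e_\ast\mathfrak{E}_M$ with $e_\ast$ idempotent. For $(\Rightarrow)$ I would invoke Proposition \ref{idemcomp}: given an idempotent $\varepsilon\in\mathfrak{E}_M$ generating $\annr(f_\ast)$, the decomposition $M=\ker_\varepsilon\oplus\varepsilon(M)$ furnishes an honest module idempotent $e\in\End_R(M)$ with $\Ker e=\ker_\varepsilon$ and $e(M)=\varepsilon(M)\subseteq\Ker f$; combining this with the already translated retractability or $M$-generation of $\Ker f$ shows $\Ker f\subseteq e(M)$, mirroring the second half of the proof of Proposition \ref{kercompkergenann}, and therefore $\annr(f)=e\End_R(M)$. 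With all three translations in hand, the chain (a)$\Leftrightarrow$(b)$\Leftrightarrow$(c) on the module side is exactly the chain supplied by Corollary \ref{ricendoric}.
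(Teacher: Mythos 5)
Your proposal is correct and is essentially the argument the paper intends: the corollary is stated without a separate proof precisely because it is meant to be the specialization of Corollary \ref{ricendoric} to $\mathcal{L}=\Lambda(M)$ and $\mathfrak{m}=\mathfrak{E}_M$ via Proposition \ref{lricmric} and Proposition \ref{modpix}, and you supply the missing dictionary, including the one step that genuinely requires care (the non-injectivity of $(-)_\ast$).

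Two refinements. First, your translation of the right Rickart condition from $\mathfrak{E}_M$ back to $\End_R(M)$ does not actually need the retractability or $M$-generation hypothesis, and it is cleaner to avoid it: given $\annr(f_\ast)=\varepsilon\mathfrak{E}_M$ with $\varepsilon$ idempotent, Proposition \ref{idemcomp} gives $M=\ker_\varepsilon\oplus\varepsilon(M)$, and if $e$ is the module projection onto $\varepsilon(M)$, then $fe=0$ because $\varepsilon(M)\leq\Ker f$, while for any $h$ with $fh=0$ one has $h_\ast=\varepsilon h_\ast$, hence $h(M)=\varepsilon(h(M))\leq\varepsilon(M)=e(M)$ and so $eh=h$; thus $\annr(f)=e\End_R(M)$ unconditionally. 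Your detour through $\Ker f\subseteq e(M)$ proves more than is needed (it already reproves that $\Ker f$ is a summand) and entangles the translation of one conjunct with the other. Second, and more substantively, the dictionary entry for retractability is not exact as you state it: the lattice definition quantifies over \emph{all} $b\leq\ker_\varphi$ and demands a single $\psi_b$ with $b\leq\psi_b(\mathbf{1})$, so for $b=\ker_\varphi$ it forces $\Ker f$ to be the image of a single endomorphism, which is formally stronger than the elementwise module notion of $k$-local retractability. Hence ``$M$ $k$-local-retractable $\Rightarrow$ $\Lambda(M)$ $k$-local $\mathfrak{E}_M$-retractable'' is not justified. This does not sink the corollary, but it means you cannot simply translate clause (b) verbatim and cite the lattice equivalence; instead one should close the cycle as (a)$\Rightarrow$(b) directly (the projection onto the summand $\Ker f$ witnesses retractability), (b)$\Rightarrow$(c) trivially on the module side, and (c)$\Rightarrow$(a) via the exact translation of clause (c) together with Proposition \ref{kercompkergenann}.
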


\begin{cor}
	The following conditions are equivalent for an $R$-module $M$:
	\begin{enumerate}[label=\emph{(\alph*)}]
		\item $M$ is dual-Rickart.
		\item The ring $\End_{R}(M)$ is left Rickart and $\Coker\varphi$ is $M$-cogenerated for all $\varphi\in\End_{R}(M)$.
	\end{enumerate}
\end{cor}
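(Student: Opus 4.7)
The plan is to derive this as the module-theoretic instance of Corollary \ref{dricendodric}. I would take $\mathcal{L}=\Lambda(M)$ and $\mathfrak{m}=\mathfrak{E}_M$; the latter contains all projections by the remark following Proposition \ref{lricmric} (via Lemma \ref{modpix}), so Corollary \ref{dricendodric} is available. By Proposition \ref{lricmric}, $M$ is dual-Rickart if and only if $\Lambda(M)$ is dual-$\mathfrak{E}_M$-Rickart, so the task reduces to rewriting the two conditions on the right-hand side of Corollary \ref{dricendodric} in module-theoretic language.

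First I would verify that $\mathfrak{E}_M$ is a left Rickart monoid if and only if $\End_R(M)$ is a left Rickart ring. The map $(-)_\ast\colon\End_R(M)\to\mathfrak{E}_M$ is a surjective homomorphism of monoids with zero which sends idempotents to idempotents, and $gf=0$ in $\End_R(M)$ if and only if $g_\ast f_\ast=0$ in $\mathfrak{E}_M$. The forward implication is then routine: an idempotent generator of $\annl(f)$ maps to an idempotent generator of $\annl(f_\ast)$. The converse is the delicate point: from $\annl(f_\ast)=\mathfrak{E}_M\varepsilon$ with $\varepsilon^2=\varepsilon$, Proposition \ref{idemcomp} produces a direct sum decomposition $M=\ker_\varepsilon\oplus\varepsilon(M)$ with $\Img f\le\ker_\varepsilon$; arguing as in the proof of Proposition \ref{imcompintkercogen} one obtains $\ker_\varepsilon=\Img f$, and then the canonical projection onto the complement $\varepsilon(M)$ of $\Img f$ provides the ring idempotent generating $\annl(f)$.

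Second, I would translate the cogeneration condition. For $\varphi=f_\ast$ we have $\varphi(\mathbf{1})=\Img f$, and the canonical lattice isomorphism $[\Img f,M]\cong\Lambda(\Coker f)$, $K\mapsto K/\Img f$, sets up a bijection between linear morphisms $\psi\colon[\Img f,M]\to\Lambda(M)$ with $\psi\rho_{\Img f}\in\mathfrak{E}_M$ and $R$-homomorphisms $\bar g\colon\Coker f\to M$, via the rule $\psi\rho_{\Img f}=(\bar g\circ\pi_f)_\ast$ where $\pi_f\colon M\to\Coker f$ is the canonical projection. Under this bijection $\ker_\psi$ corresponds to $\Ker\bar g$, so the condition $\bigwedge_i\ker_{\psi_i}=\Img f$ translates precisely to $\bigcap_i\Ker\bar g_i=0$, i.e., $\Coker f$ is $M$-cogenerated.

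Combining the two translations with Corollary \ref{dricendodric} yields the equivalence of (a) and (b). The main obstacle is the first step, and specifically the direction $\mathfrak{E}_M$ left Rickart $\Rightarrow$ $\End_R(M)$ left Rickart, because $(-)_\ast$ need not be injective (for instance multiplication by $n$ and by $-n$ induce the same linear morphism on $\Lambda(\mathbb{Z})$); one therefore cannot simply lift idempotents syntactically, and must instead reconstruct the ring idempotent geometrically from the lattice decomposition provided by Proposition \ref{idemcomp}.
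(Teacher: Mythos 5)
Your route is exactly the paper's: the corollary is stated there with no proof, as the instance $\mathcal{L}=\Lambda(M)$, $\mathfrak{m}=\mathfrak{E}_M$ of Corollary \ref{dricendodric} combined with Proposition \ref{lricmric}, and your two translation steps (left Rickart monoid versus left Rickart ring, and $\mathfrak{E}_M$-$\Lambda(M)$-cogeneration of $\Img f$ versus $M$-cogeneration of $\Coker f$) supply precisely the details the paper leaves implicit. You are also right that the only delicate point is the non-injectivity of $(-)_\ast$, and your geometric reconstruction of the ring idempotent via Proposition \ref{idemcomp} is the correct fix. One inaccuracy should be repaired: from $\annl(\varphi)=\mathfrak{E}_M\varepsilon$ alone you cannot conclude $\ker_\varepsilon=\Img f$; that equality in the proof of Proposition \ref{imcompintkercogen} uses the cogeneration hypothesis, which is not available in your standalone lemma (i). Indeed for $M=\mathbb{Z}$ and $f$ multiplication by $2$ one has $\varepsilon=0$ and $\ker_\varepsilon=\mathbb{Z}\supsetneq 2\mathbb{Z}=\Img f$. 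The construction survives without it: writing $\varepsilon=k_\ast$, Proposition \ref{idemcomp} gives $M=\Ker k\oplus k(M)$, and the ring idempotent $e$ projecting onto $k(M)$ along $\Ker k$ satisfies $ef=0$ (since $\varepsilon f_\ast=0$ forces $\Img f\leq\Ker k=\Ker e$) and $\annl(f)\subseteq \End_R(M)e$ (since $g_\ast\in\mathfrak{E}_M\varepsilon$ forces $\Ker g\geq\ker_\varepsilon=\Ker e$, whence $g=ge$). So $e$ generates $\annl(f)$ even when $\Ker e$ strictly contains $\Img f$; with that adjustment the argument is complete.
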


\begin{defn}
	A monoid $E$ with zero element is called \emph{right Baer} if for every $X\subseteq E$ there exists an idempotent element $\varepsilon\in E$ such that $\annr(X)=\{\psi\in E\mid \varphi\psi=0\;\forall \varphi\in X\}=\varepsilon E$. A \emph{left Baer} monoid is defined similarly.
\end{defn}

It is clear that if a ring $R$ is Baer if and only if $R$ as monoid with the multiplication is Baer in the sense of the above definition, moreover the Baer condition on a ring is left-right symmetric. Below we prove that for the case of the monoid $\End_{lin}(\mathcal{L})$ with $\mathcal{L}$ a complete modular lattice, the right and left Baer conditions are equivalent (Compare).

\begin{lemma}
    The following conditions are equivalent for a complete modular lattice $\mathcal{L}$ and a submonoid $\mathfrak{m}\subseteq\End_{lin}(\mathcal{L})$ containing all the projections:
    \begin{enumerate}[label=\emph{(\alph*)}]
        \item For every $X\subseteq \mathfrak{m}$, $\annl(X)=\{\psi\in \mathfrak{m}\mid \psi\varphi=0\;\forall \varphi\in X\}=\mathfrak{m}\zeta$ for some idempotent $\zeta\in \mathfrak{m}$.
        \item For every $Y\subseteq \mathfrak{m}$, $\annr(Y)=\{\psi\in \mathfrak{m}\mid \varphi\psi=0\;\forall \varphi\in X\}=\varepsilon \mathfrak{m}$ for some idempotent $\varepsilon\in \mathfrak{m}$. 
    \end{enumerate}
\end{lemma}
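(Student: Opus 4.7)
The plan is to exploit the standard Galois-type bijection between left- and right-annihilator closed subsets of $\mathfrak{m}$, together with the ``complementary idempotent'' supplied by Proposition~\ref{idemcomp}: if $\eta\in\mathfrak{m}$ is idempotent, then $\mathbf{1}=\ker_\eta\vee\eta(\mathbf{1})$ and $\mathbf{0}=\ker_\eta\wedge\eta(\mathbf{1})$, so the projections $\pi_{\ker_\eta}$ and $\pi_{\eta(\mathbf{1})}$ are idempotents that lie in $\mathfrak{m}$ by hypothesis. They will play the role of $1-\eta$, which has no literal analogue in this monoid setting.

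For (a)$\Rightarrow$(b), I would fix $Y\subseteq\mathfrak{m}$ and apply (a) to $\annr(Y)$ to obtain an idempotent $\zeta\in\mathfrak{m}$ with $\annl(\annr(Y))=\mathfrak{m}\zeta$. The standard annihilator identity $\annr(\annl(\annr(Y)))=\annr(Y)$ then reduces the task to proving $\annr(\mathfrak{m}\zeta)=\pi_{\ker_\zeta}\mathfrak{m}$. The inclusion $\supseteq$ is immediate from $\zeta\pi_{\ker_\zeta}=0$. For $\subseteq$, taking the identity as a factor in $\mu\zeta\psi=0$ gives $\zeta\psi=0$, so $\psi(x)\le\ker_\zeta$ for every $x$; modularity together with $\ker_\zeta\wedge\zeta(\mathbf{1})=\mathbf{0}$ then yields $\pi_{\ker_\zeta}\psi(x)=(\psi(x)\vee\zeta(\mathbf{1}))\wedge\ker_\zeta=\psi(x)$, whence $\psi=\pi_{\ker_\zeta}\psi\in\pi_{\ker_\zeta}\mathfrak{m}$. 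Thus (b) will hold with $\varepsilon=\pi_{\ker_\zeta}$.

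The converse (b)$\Rightarrow$(a) proceeds dually. Given $X\subseteq\mathfrak{m}$, apply (b) to $\annl(X)$ to get an idempotent $\varepsilon\in\mathfrak{m}$ with $\annr(\annl(X))=\varepsilon\mathfrak{m}$, so $\annl(X)=\annl(\varepsilon\mathfrak{m})=\annl(\varepsilon)$. Writing $b=\varepsilon(\mathbf{1})$ and $b'=\ker_\varepsilon$ (complements, by Proposition~\ref{idemcomp}), the goal becomes $\annl(\varepsilon)=\mathfrak{m}\pi_{b'}$. The inclusion $\mathfrak{m}\pi_{b'}\subseteq\annl(\varepsilon)$ follows from $\pi_{b'}(b)=\mathbf{0}$. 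For the reverse inclusion, given $\psi$ with $\psi(b)=\mathbf{0}$, Proposition~\ref{fipi} applied to the two-element independent family $\{b,b'\}$ gives $x\le\pi_b(x)\vee\pi_{b'}(x)$ for every $x$; since $\psi\pi_b(x)\le\psi(b)=\mathbf{0}$, this forces $\psi(x)\le\psi\pi_{b'}(x)$, while the trivial bound $\pi_{b'}(x)\le x\vee b$ gives $\psi\pi_{b'}(x)\le\psi(x)\vee\psi(b)=\psi(x)$. Hence $\psi=\psi\pi_{b'}\in\mathfrak{m}\pi_{b'}$, establishing (a) with $\zeta=\pi_{b'}$.

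The main obstacle is precisely this last pointwise reconstruction. Without a literal $1-\varepsilon$ in the monoid, one cannot simply ``subtract off'' $\varepsilon$; instead the equality $\psi=\psi\pi_{b'}$ has to be recovered element by element via the lattice decomposition $x\le\pi_b(x)\vee\pi_{b'}(x)$ from Proposition~\ref{fipi}. It is exactly here that the ``contains all projections'' hypothesis becomes indispensable, since it is what guarantees that the witness idempotents $\pi_{b'}$ and $\pi_{\ker_\zeta}$ actually live in $\mathfrak{m}$.
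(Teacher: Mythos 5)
Your proof is correct, and for the direction (a)$\Rightarrow$(b) it is essentially the paper's own argument: the same Galois identity $\annr(\annl(\annr(Y)))=\annr(Y)$, the same witness $\varepsilon=\pi_{\ker_\zeta}$ (with complement $\zeta(\mathbf{1})$ supplied by Proposition \ref{idemcomp}), and the same one-line verification $\psi=\pi_{\ker_\zeta}\psi$ from $\psi(x)\leq\ker_\zeta$. Where you go beyond the paper is the converse: the paper dismisses (b)$\Rightarrow$(a) with ``the conditions are symmetric,'' whereas you actually carry it out and, correctly, observe that the argument is \emph{not} literally symmetric in this monoid. Recovering $\psi=\psi\pi_{\ker_\varepsilon}$ on the right cannot be done by evaluating at $\mathbf{1}$ alone; it needs the pointwise decomposition $x\leq\pi_{\varepsilon(\mathbf{1})}(x)\vee\pi_{\ker_\varepsilon}(x)$ from Proposition \ref{fipi} together with the two inequalities you give, which is exactly the extra work the paper itself performs in the analogous left-sided statements (Propositions \ref{imcompintkercogen} and \ref{dbaercar}) but silently omits here. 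So your version is, if anything, the more honest proof of the lemma as stated.
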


\begin{proof}
    Since the conditions are symmetric, we only prove (a)$\Rightarrow$(b). Let $Y\subseteq \mathfrak{m}$. Then $\annr(\annl(\annr(Y)))=\annr(Y)$. By hypothesis, there exists an idempotent $\zeta\in \mathfrak{m}$ such that $\annl(\annr(Y))=\mathfrak{m}\zeta$. By Proposition \ref{idemcomp}, $\ker_\zeta$ is a complement in $\mathcal{L}$. Consider $\pi_{\ker_\zeta}\in \mathfrak{m}$. We claim that $\annr(Y)=\annr(\mathfrak{m}\zeta)=\pi_{\ker_\zeta}\mathfrak{m}$. Since $\zeta\pi_{\ker_\zeta}=0$, then $\pi_{\ker_\zeta}\mathfrak{m}\subseteq\annr(Y)$. Now, let $\psi\in\annr(Y)=\annr(\mathfrak{m}\zeta)$. Then $\zeta\psi=0$. This implies that $\psi(\mathbf{1})\leq\ker_\zeta$. Hence $\pi_{\ker_\zeta}(\psi(a))=\psi(a)$ for all $a\in \mathcal{L}$. Thus, $\psi\in \pi_{\ker_\zeta}\mathfrak{m}$. This proves the claim.
\end{proof}

In \cite[Theorem 4.1]{rizvibaer} it is proved that, if $M$ is a Baer module, then $\End_R(M)$ is a Baer ring. The next results explore the lattice-counterpart of this fact.

\begin{prop}\label{baercar}
	Let $\mathcal{L}$ be a complete modular lattice and $\mathfrak{m}$ be a submonoid of $\End_{lin}(\mathcal{L})$ containing all the projections. The following conditions are equivalent:
	\begin{enumerate}[label=\emph{(\alph*)}]
		\item $\mathcal{L}$ is $\mathfrak{m}$-Baer.
		\item For every $a\in\mathcal{L}$ there exists an idempotent $\varepsilon\in\mathfrak{m}$ such that  $\{\varphi\in\mathfrak{m}\mid \varphi(a)=\mathbf{0}\}=\mathfrak{m}\varepsilon$.
		\item The monoid $\mathfrak{m}$ is Baer and $\bigwedge_{\varphi\in X}\ker_\varphi$ is $\mathfrak{m}$-$\mathcal{L}$-generated for all $X\subseteq\mathfrak{m}$.
	\end{enumerate}
\end{prop}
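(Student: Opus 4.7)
The plan is to prove $(a) \Leftrightarrow (b)$ and $(a) \Leftrightarrow (c)$, relying throughout on the following key identity: if $\varphi \in \mathfrak{m}$ and $k \leq \ker_\varphi$, with $k$ having complement $k'$, then $\varphi = \varphi\pi_{k'}$. To see this, compute $\varphi\pi_{k'}(x) = \varphi((x \vee k) \wedge k')$, and observe that by modularity $((x \vee k) \wedge k') \vee k = (x \vee k) \wedge (k' \vee k) = x \vee k$; joining with $\ker_\varphi \geq k$ gives $((x \vee k) \wedge k') \vee \ker_\varphi = x \vee \ker_\varphi$, whence $\varphi((x \vee k) \wedge k') = \varphi(x \vee \ker_\varphi) = \varphi(x)$. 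Note $\pi_{k'} \in \mathfrak{m}$ since $\mathfrak{m}$ contains all projections.

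For $(a) \Rightarrow (b)$, given $a \in \mathcal{L}$, I would set $X_a = \{\varphi \in \mathfrak{m} : \varphi(a) = \mathbf{0}\}$ and $k = \bigwedge_{\varphi \in X_a}\ker_\varphi$, so $a \leq k$. By (a), $k$ has a complement $k'$, and I take $\varepsilon := \pi_{k'}$: it is idempotent with kernel $k$, and $\varepsilon(a) = (a \vee k) \wedge k' = k \wedge k' = \mathbf{0}$, so $\mathfrak{m}\varepsilon \subseteq X_a$. Conversely, any $\varphi \in X_a$ has $k \leq \ker_\varphi$ by the definition of $k$, and the key identity yields $\varphi = \varphi\pi_{k'} \in \mathfrak{m}\varepsilon$. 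For $(b) \Rightarrow (a)$, take $X \subseteq \mathfrak{m}$, set $k = \bigwedge_{\varphi \in X}\ker_\varphi$, and apply (b) to $a := k$ to obtain an idempotent $\varepsilon$ with $\{\varphi : \varphi(k) = \mathbf{0}\} = \mathfrak{m}\varepsilon$. Each $\varphi \in X$ lies in this set, so $\varphi = \psi\varepsilon$ for some $\psi$, giving $\ker_\varepsilon \leq \ker_{\psi\varepsilon} = \ker_\varphi$; meeting over $X$ yields $\ker_\varepsilon \leq k$. Since $\varepsilon \in \mathfrak{m}\varepsilon$ forces $\varepsilon(k) = \mathbf{0}$ and hence $k \leq \ker_\varepsilon$, we conclude $k = \ker_\varepsilon$, which by Proposition \ref{idemcomp} has complement $\varepsilon(\mathbf{1})$.

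For $(a) \Rightarrow (c)$, given $X \subseteq \mathfrak{m}$ with $k = \bigwedge_{\varphi \in X}\ker_\varphi$, first note that $\annr(X) = \{\psi \in \mathfrak{m} : \psi(\mathbf{1}) \leq k\}$ (using that linear morphisms are monotone). Under (a), $k$ has complement $k'$ and $\pi_k$ is an idempotent of $\mathfrak{m}$ with $\pi_k(\mathbf{1}) = k$. Then $\pi_k\mathfrak{m} \subseteq \annr(X)$ since $\varphi\pi_k \leq \varphi(k) = \mathbf{0}$ for $\varphi \in X$; for the reverse, any $\psi \in \annr(X)$ satisfies $\psi(a) \leq k$, so by modularity $\pi_k\psi(a) = (\psi(a) \vee k') \wedge k = \psi(a) \vee (k' \wedge k) = \psi(a)$, i.e.\ $\psi = \pi_k\psi \in \pi_k\mathfrak{m}$. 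Hence $\mathfrak{m}$ is right Baer, and by the preceding lemma it is Baer. The singleton $\{\pi_k\}$, co-restricted to $[\mathbf{0},k]$, witnesses that $k$ is $\mathfrak{m}$-$\mathcal{L}$-generated. For $(c) \Rightarrow (a)$, the right Baer property applied to $X$ supplies an idempotent $\varepsilon$ with $\annr(X) = \varepsilon\mathfrak{m}$, so $\varepsilon(\mathbf{1}) \leq k$. Using $\mathfrak{m}$-$\mathcal{L}$-generation of $k$, pick $\{\psi_i : \mathcal{L} \to [\mathbf{0}, k]\}$ with $\iota_k\psi_i \in \mathfrak{m}$ and $\bigvee_i \psi_i(\mathbf{1}) = k$; each $\iota_k\psi_i$ lies in $\annr(X) = \varepsilon\mathfrak{m}$, so $\psi_i(\mathbf{1}) \leq \varepsilon(\mathbf{1})$, and joining yields $k \leq \varepsilon(\mathbf{1})$. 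Thus $k = \varepsilon(\mathbf{1})$, and Proposition \ref{idemcomp} provides the complement $\ker_\varepsilon$.

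The main technical step is the identity $\varphi = \varphi\pi_{k'}$ when $k \leq \ker_\varphi$; once it is in place, each implication reduces to routine manipulation of annihilators together with Proposition \ref{idemcomp}, which does the work of converting an idempotent of $\mathfrak{m}$ into a complementary pair in $\mathcal{L}$. The preceding lemma is a convenient shortcut that removes any need to handle right and left Baer conditions separately.
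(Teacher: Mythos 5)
Your proof is correct and follows essentially the same route as the paper: the idempotent for $(b)$ is $\pi_{k'}$ with $k=\bigwedge_{\varphi\in X_a}\ker_\varphi$, the right annihilator for $(c)$ is $\pi_k\mathfrak{m}$, and Proposition~\ref{idemcomp} converts idempotents back into complements; only the logical organization ($(a)\Leftrightarrow(b)$ and $(a)\Leftrightarrow(c)$ rather than the cycle $(a)\Rightarrow(b)\Rightarrow(c)\Rightarrow(a)$) differs. The one genuine local improvement is your proof of the key identity $\varphi=\varphi\pi_{k'}$ for $k\leq\ker_\varphi$: you derive it in one line from the modular law and property (1) of linear morphisms, via $((x\vee k)\wedge k')\vee\ker_\varphi=x\vee\ker_\varphi$, whereas the paper obtains the two inequalities separately, invoking Proposition~\ref{fipi} and the join- and meet-compatibility of $\varphi$. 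Your version is more elementary and self-contained, and would slightly streamline the paper's argument.
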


\begin{proof}
	(a)$\Rightarrow$(b) Let $a\in\mathcal{L}$. Consider $X_a=\{\varphi\in\mathfrak{m}\mid \varphi(a)=\mathbf{0}\}$ and $k=\bigwedge_{\varphi\in X_a}\ker_\varphi$. Note that $X_a=\{\varphi\in\mathfrak{m}\mid \varphi(k)=\mathbf{0}\}$. By hypothesis, $k$ has a complement $k'$ in $\mathcal{L}$. Then $\pi_{k'}\in X_a$. If $\psi\in\mathfrak{m}$, then $\psi\pi_{k'}(k)=\mathbf{0}$. Thus, $\mathfrak{m}\pi_{k'}\subseteq X_a$. Now, let $\varphi\in X_a$ and $x\in\mathcal{L}$. We have that $\varphi(k)=\mathbf{0}$, that is, $k\leq\ker_\varphi$. Also, $\varphi(\mathbf{1})=\varphi(k\vee k')=\varphi(k')$. Then 
	\[\varphi\pi_{k'}(x)=\varphi((x\vee k)\wedge k')\leq\varphi(x\vee k)\wedge\varphi(k')=\varphi(x)\wedge\varphi(k')=\varphi(x).\]
	On the other hand, it follows from Proposition \ref{fipi} that $x\leq\pi_{k}(x)\vee\pi_{k'}(x)$. Therefore, $\varphi(x)\leq\varphi(\pi_{k}(x)\vee\pi_{k'}(x))=\varphi(\pi_{k}(x))\vee\varphi(\pi_{k'}(x))=\varphi\pi_{k'}(x)$. Thus, $\varphi=\varphi\pi_{k'}$ and so $X_a=\mathfrak{m}\varepsilon$ with $\varepsilon=\pi_{k'}$.
	
	(b)$\Rightarrow$(c) Let $X\subseteq \mathfrak{m}$ and $k=\bigwedge_{\varphi\in X}\ker_\varphi$. By hypothesis there exists an idempotent $\varepsilon\in\mathfrak{m}$ such that $X_k=\{\psi\in\mathfrak{m}\mid \psi(k)=\mathbf{0}\}=\mathfrak{m}\varepsilon$. Suppose $\varepsilon(a)=\mathbf{0}$ for some $a\in\mathcal{L}$. Since $X\subseteq X_k$, $\varphi(a)=\varphi\varepsilon(a)=\mathbf{0}$. Hence $a\leq k$. Thus, $k=\ker_\varepsilon$. This proves that $k$ is $\mathfrak{m}$-$\mathcal{L}$-generated.	Consider $\pi_k$. Since $\varepsilon\pi_k=0$, $\pi_k\mathfrak{m}\subseteq\annr(X)$. Now, let $\xi\in\annr(X)$, that is, $\varphi\xi=0$ for all $\varphi\in X$. This implies that $\xi(\mathbf{1})\leq k$ and so $\varepsilon\xi(\mathbf{1})\leq\varepsilon(k)=\mathbf{0}$. Therefore, $\xi(\mathbf{1})\leq k$. It follows that $\xi=\xi\pi_k$. Thus, $\annr(X)=\pi_k\mathfrak{m}$.
	
	(c)$\Rightarrow$(a) A small modification of the proof (b)$\Rightarrow$(a) of Proposition \ref{kercompkergenann} works here.
	
	Let $X\subseteq\mathfrak{m}$. By hypothesis there exists $\varepsilon\in\mathfrak{m}$ idempotent such that $\annr(X)=\varepsilon\mathfrak{m}$. Then $\varphi\varepsilon(\mathbf{1})=\mathbf{0}$ for all $\varphi\in X$. Thus $\varepsilon(\mathbf{1})\leq\ker_\varphi$ for all $\varphi\in X$. On the other hand, there exists linear morphisms $\psi_i:\mathcal{L}\to[\mathbf{0},\bigwedge_{\varphi\in X}\ker_\varphi]$ ($i\in I$) such that $\bigvee_{i\in I}\psi_i(\mathbf{1})=\bigwedge_{\varphi\in X}\ker_\varphi$. This implies that $\varphi\psi_i=0$ for all $i\in I$ and for all $\varphi\in X$, and so $\psi_i\in\annr(X)$ for all $i\in I$. Therefore $\varepsilon\psi_i=\psi_i$ for all $i\in I$. Then
	\[\bigwedge_{\varphi\in X}\ker_\varphi=\bigvee_{i\in I}\psi_i(\mathbf{1})=\bigvee_{i\in I}\varepsilon\psi_i(\mathbf{1})\leq\varepsilon(\mathbf{1}).\]
	Thus, $\varepsilon(\mathbf{1})=\bigwedge_{\varphi\in X}\ker_\varphi$. Since $\varepsilon$ is idempotent, $\bigwedge_{\varphi\in X}\ker_\varphi$ is a complement in $\mathcal{L}$.
\end{proof}

\begin{cor}
    The following conditions are equivalent for an $R$-module $M$ with $S=\End_R(M)$:
    \begin{enumerate}[label=\emph{(\alph*)}]
        \item $M$ is Baer.
        \item For every $N\leq M$ there exists an idempotent $e\in S$ such that $\{f\in S\mid f(N)=0\}=Se$
        \item $S$ is a Baer ring and $\bigwedge_{f\in X}\Ker f$ is $M$-generated for all $X\subseteq S$.
    \end{enumerate}
\end{cor}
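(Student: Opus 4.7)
The strategy is to derive the corollary as a direct specialization of Proposition \ref{baercar} to the lattice $\Lambda(M)$ with submonoid $\mathfrak{m} = \mathfrak{E}_M$. By Proposition \ref{lricmric}, condition (a) of the corollary is equivalent to $\Lambda(M)$ being $\mathfrak{E}_M$-Baer, and $\mathfrak{E}_M$ contains all projections by Proposition \ref{modpix}; hence the hypotheses of Proposition \ref{baercar} are satisfied. It then remains to translate conditions (b) and (c) of that proposition into the module-theoretic statements of the corollary.

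For the translation, I would use the surjective monoid homomorphism $(-)_\ast \colon S \to \mathfrak{E}_M$ together with the elementary fact that $f(N) = 0$ iff $f_\ast(N) = 0$. This identifies $\{f \in S \mid f(N) = 0\}$ with the preimage of $\{\varphi \in \mathfrak{E}_M \mid \varphi(N) = 0\}$ under $(-)_\ast$, and similarly identifies $\annr_S(X)$ with $\annr_{\mathfrak{E}_M}(X_\ast)$. For the $M$-generation condition in (c), the equivalence with $\mathfrak{E}_M$-$\Lambda(M)$-generation is immediate, since $g_\ast(M) = g(M) = \Img g$ for $g \in S$, and joins in $\Lambda(M)$ coincide with submodule sums.

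The main point to verify, and the step I expect to be the principal obstacle, is the correspondence of idempotents: an idempotent $\varepsilon \in \mathfrak{E}_M$ need not itself be the image of an idempotent in $S$. To handle this, I would invoke Proposition \ref{idemcomp} to obtain the decomposition $M = \ker_\varepsilon \oplus \varepsilon(M)$, and define $e \in S$ to be the canonical projection onto $\varepsilon(M)$ along $\ker_\varepsilon$; this is a genuine idempotent in $S$ with $\Ker e = \ker_\varepsilon$. To verify that $\{f \in S \mid f(N) = 0\} = Se$, one direction follows from $N \leq \ker_\varepsilon = \Ker e$, which forces $Se \subseteq \{f \mid f(N) = 0\}$. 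For the converse, given $f$ with $f(N) = 0$, the factorization $f_\ast = \psi \varepsilon$ provided by condition (b) of Proposition \ref{baercar} gives $f(\ker_\varepsilon) = \psi(\varepsilon(\ker_\varepsilon)) = 0$, so $\Ker e = \ker_\varepsilon \leq \Ker f$, and then the standard argument $m = (m - e(m)) + e(m)$ with $m - e(m) \in \Ker e \leq \Ker f$ yields $f = fe \in Se$.

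An analogous idempotent-replacement argument yields the Baer ring/monoid correspondence in (c): an idempotent generator $\varepsilon$ of $\annr_{\mathfrak{E}_M}(X_\ast)$ is replaced by the corresponding module-theoretic idempotent $e$, and one checks that $\annr_S(X) = eS$ by the same reasoning. Once the idempotent translation is settled, all three equivalences (a)$\Leftrightarrow$(b)$\Leftrightarrow$(c) become immediate consequences of Proposition \ref{baercar}, and the proof is complete.
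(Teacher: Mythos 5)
Your proposal is correct and follows essentially the same route as the paper, which states this corollary without proof as an immediate specialization of Proposition \ref{baercar} to $\Lambda(M)$ with $\mathfrak{m}=\mathfrak{E}_M$ via Proposition \ref{lricmric}. The one point you elaborate beyond the paper --- lifting an idempotent of $\mathfrak{E}_M$ to a genuine idempotent of $S$ through Proposition \ref{idemcomp} and then bridging the non-injectivity of $(-)_\ast$ by the $\Ker e\leq\Ker f$ argument --- is handled correctly and is exactly the detail the paper leaves implicit (there the idempotent produced is already a projection $\pi_{k'}=(\iota\pi)_\ast$ by Lemma \ref{modpix}).
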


\begin{prop}\label{dbaercar}
	Let $\mathcal{L}$ be a complete modular lattice and $\mathfrak{m}$ be a submonoid of $\End_{lin}(\mathcal{L})$ containing all the projections. The following conditions are equivalent:
	\begin{enumerate}[label=\emph{(\alph*)}]
		\item $\mathcal{L}$ is dual-$\mathfrak{m}$-Baer.
		\item For every $a\in\mathcal{L}$ there exists an idempotent $\varepsilon\in\mathfrak{m}$ such that  $\{\varphi\in\mathfrak{m}\mid \varphi(\mathbf{1})\leq a\}=\varepsilon\mathfrak{m}$.
		\item The monoid $\mathfrak{m}$ is Baer and $\bigvee_{\varphi\in X}\varphi(\mathbf{1})$ is $\mathfrak{m}$-$\mathcal{L}$-cogenerated for all $X\subseteq\mathfrak{m}$.
	\end{enumerate}
\end{prop}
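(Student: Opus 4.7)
The plan is to mirror the proof of Proposition \ref{baercar} verbatim, dualizing meets of kernels into joins of images and the vanishing condition $\varphi(a)=\mathbf{0}$ into $\varphi(\mathbf{1})\leq a$. In every direction the idempotent $\varepsilon$ produced by (b) will satisfy $\varepsilon(\mathbf{1})=j$, where $j=\bigvee_{\varphi\in X}\varphi(\mathbf{1})$ for the relevant set $X$, so that Proposition \ref{idemcomp} supplies $j$ with a canonical complement $j'=\ker_\varepsilon$ and the projection $\pi_{j'}$ serves as the bridge between the annihilator side and the complement side.

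For (a)$\Rightarrow$(b), fix $a\in\mathcal{L}$, set $X_a=\{\varphi\in\mathfrak{m}\mid\varphi(\mathbf{1})\leq a\}$ and $j=\bigvee_{\varphi\in X_a}\varphi(\mathbf{1})$. Dual-$\mathfrak{m}$-Baerness makes $j$ a complement, with some chosen complement $j'$, and I take $\varepsilon:=\pi_j\in\mathfrak{m}$. Monotonicity of linear morphisms gives $(\varepsilon\xi)(\mathbf{1})\leq\varepsilon(\mathbf{1})=j\leq a$, so $\varepsilon\mathfrak{m}\subseteq X_a$. Conversely, for $\varphi\in X_a$ and any $b\in\mathcal{L}$ one has $\varphi(b)\leq j$, and modularity yields $\pi_j(\varphi(b))=(\varphi(b)\vee j')\wedge j=\varphi(b)\vee(j'\wedge j)=\varphi(b)$, so $\varphi=\varepsilon\varphi\in\varepsilon\mathfrak{m}$.

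For (b)$\Rightarrow$(c), given $X\subseteq\mathfrak{m}$ and $j=\bigvee_{\varphi\in X}\varphi(\mathbf{1})$, I apply (b) at $a=j$ to obtain an idempotent $\varepsilon$ with $X_j=\varepsilon\mathfrak{m}$. Writing $\varepsilon=\varepsilon\cdot\mathrm{id}_{\mathcal{L}}\in\varepsilon\mathfrak{m}=X_j$ yields $\varepsilon(\mathbf{1})\leq j$, while $X\subseteq X_j$ gives $\varphi=\varepsilon\xi_\varphi$ and hence $\varphi(\mathbf{1})\leq\varepsilon(\mathbf{1})$ for each $\varphi\in X$, so $j\leq\varepsilon(\mathbf{1})$; Proposition \ref{idemcomp} then supplies the complement $j':=\ker_\varepsilon$ of $j$. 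Cogeneration of $j$ is witnessed by the single morphism $\varphi_0:[j,\mathbf{1}]\to\mathcal{L}$, $\varphi_0(y)=y\wedge j'$, which has $\ker_{\varphi_0}=j$ and satisfies $\varphi_0\rho_j=\pi_{j'}\in\mathfrak{m}$. To prove that $\mathfrak{m}$ is Baer I invoke the symmetry lemma for left and right annihilators in $\mathfrak{m}$ and show $\annl(X)=\mathfrak{m}\pi_{j'}$: the inclusion $\supseteq$ follows because $\pi_{j'}(\varphi(b))=(\varphi(b)\vee j)\wedge j'=j\wedge j'=\mathbf{0}$ whenever $\varphi\in X$; for $\subseteq$, each $\psi\in\annl(X)$ satisfies $j\leq\ker_\psi$, and Proposition \ref{fipi} applied to the finite independent family $\{j,j'\}$ gives $b\leq\pi_j(b)\vee\pi_{j'}(b)$, whence $\psi(b)\leq\psi\pi_j(b)\vee\psi\pi_{j'}(b)=\psi\pi_{j'}(b)$ (using $\pi_j(b)\leq j\leq\ker_\psi$), while $\pi_{j'}(b)\leq b\vee j$ together with $j\leq\ker_\psi$ gives $\psi\pi_{j'}(b)\leq\psi(b\vee j)=\psi(b)$, forcing $\psi=\psi\pi_{j'}$.

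For (c)$\Rightarrow$(a), I fix $X\subseteq\mathfrak{m}$ and $j=\bigvee_{\varphi\in X}\varphi(\mathbf{1})$, extract from the Baer hypothesis an idempotent $\zeta\in\mathfrak{m}$ with $\annl(X)=\mathfrak{m}\zeta$, and aim to prove $\ker_\zeta=j$, after which Proposition \ref{idemcomp} makes $j$ complemented by $\zeta(\mathbf{1})$. The inequality $j\leq\ker_\zeta$ is immediate since $\zeta\varphi=0$ for $\varphi\in X$. For the reverse, cogeneration supplies linear morphisms $\varphi_i:[j,\mathbf{1}]\to\mathcal{L}$ with $\varphi_i\rho_j\in\mathfrak{m}$ and $j=\bigwedge_i\ker_{\varphi_i}$; since $\varphi(b)\leq j$ collapses through $\rho_j$ and $\varphi_i(j)=\mathbf{0}$, each $\varphi_i\rho_j$ lies in $\annl(X)$ and hence equals $\eta_i\zeta$ for some $\eta_i\in\mathfrak{m}$. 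Evaluating at $\ker_\zeta$ yields $\varphi_i(\ker_\zeta\vee j)=\eta_i\zeta(\ker_\zeta)=\mathbf{0}$, and the isomorphism clause of linearity forces $\ker_\zeta\vee j\leq\ker_{\varphi_i}$ for each $i$, so $\ker_\zeta\leq\bigwedge_i\ker_{\varphi_i}=j$. The delicate step is this final reversal: without the cogeneration hypothesis one would only recover that $\ker_\zeta$ is a complement, and the cogenerating family is precisely what ties the abstract Baer idempotent of the monoid to the specific element $j$ of $\mathcal{L}$.
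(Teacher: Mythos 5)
Your proof is correct and follows essentially the same route as the paper's: in (a)$\Rightarrow$(b) the idempotent is the projection onto $j=\bigvee_{\varphi\in X_a}\varphi(\mathbf{1})$; in (b)$\Rightarrow$(c) you identify $\varepsilon(\mathbf{1})=j$, use Proposition \ref{idemcomp} to get the complement $\ker_\varepsilon$, witness cogeneration by $\pi_{\ker_\varepsilon}$, and compute $\annl(X)=\mathfrak{m}\pi_{\ker_\varepsilon}$ via Proposition \ref{fipi} and the left--right symmetry lemma; and in (c)$\Rightarrow$(a) you pin down $\ker_\zeta=j$ exactly as the paper does. No gaps.
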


\begin{proof}
	(a)$\Rightarrow$(b) Let $a\in\mathcal{L}$. Consider $D_a=\{\varphi\in\mathfrak{m}\mid \varphi(\mathbf{1})\leq a\}$ and $d=\bigvee_{\varphi\in D_a}\varphi(\mathbf{1})$. Note that $D_a=\{\varphi\in\mathfrak{m}\mid \varphi(\mathbf{1})\leq d\}$. By hypothesis, $d$ has a complement $d'$ in $\mathcal{L}$. Then $\pi_{d}\in D_a$. If $\psi\in\mathfrak{m}$, then $\pi_{d}\psi(\mathbf{1})\leq d$. Thus, $\pi_{d}\mathfrak{m}\subseteq D_a$. Now, let $\varphi\in D_a$ and $x\in\mathcal{L}$. We have that $\varphi(x)\leq d$. Then $\pi_{d}\varphi(x)=(\varphi(x)\vee d')\wedge d=\varphi(x)$ by modularity.
	Thus, $D_a=\varepsilon\mathfrak{m}$ with $\varepsilon=\pi_{d}$.
	
	(b)$\Rightarrow$(a)(c) Let $X\subseteq\mathfrak{m}$ and $d=\bigvee_{\varphi\in X}\varphi(\mathbf{1})$. Consider $D_d=\{\varphi\in\mathfrak{m}\mid \varphi(\mathbf{1})\leq d\}$. By hypothesis, $D_d=\varepsilon\mathfrak{m}$ for some idempotent $\varepsilon$. It follows that $\varepsilon(\mathbf{1})\leq d$. On the other hand, if $\varphi\in D_d$, then $\varphi=\varepsilon\varphi$. Therefore $\varphi(\mathbf{1})=\varepsilon\varphi(\mathbf{1})\leq\varepsilon(\mathbf{1})$. This implies $d=\bigvee_{\varphi\in X}\varphi(\mathbf{1})\leq\varepsilon(\mathbf{1})$. Thus $\varepsilon(\mathbf{1})=d$ and by Proposition \ref{idemcomp}, $d$ is complemented in $\mathcal{L}$. This proves (a). Now, consider the linear morphism $[d,\mathbf{1}]\to \mathcal{L}$ given by the composition of the isomorphism $\ker_\varepsilon\wedge\_:[d,\mathbf{1}]\to[\mathbf{0},\ker_\varepsilon]$ and the canonical inclusion $\iota_{\ker_\varepsilon}$. Then $\iota_{\ker_\varepsilon}(\ker_\varepsilon\wedge\_)\rho_d=\pi_{\ker_\varepsilon}\in\mathfrak{m}$. This implies that $d$ is $\mathfrak{m}$-$\mathcal{L}$-cogenerated. Since $\varphi=\varepsilon\varphi$, $\pi_{\ker_\varepsilon}\varphi=0$. Then $\mathfrak{m}\pi_{\ker_\varepsilon}\subseteq\annl(X)$. Let $\psi\in\annl(X)$, that is, $\psi\varphi=0$ for all $\varphi\in X$. It follows that $\psi\varphi(\mathbf{1})=\mathbf{0}$ for all $\varphi\in X$ and so $\psi(d)=\mathbf{0}$. Hence $\psi\varepsilon=0$. By Proposition \ref{fipi}.(1), $\psi(a)\leq\psi\pi_{\ker_\varepsilon}(a)$ for all $a\in\mathcal{L}$. On the other hand, 
	\[\psi\pi_{\ker_\varepsilon}(a)=\psi((a\vee d)\wedge\ker_\varepsilon)\leq\psi(a\vee d)\wedge\psi(\ker_\varepsilon)\leq\psi(a)\vee\psi(d)=\psi(a).\]
	Thus, $\psi=\psi\pi_{\ker_\varepsilon}$. This implies that $\annl(X)=\mathfrak{m}\pi_{\ker_\varepsilon}$.
	
	(c)$\Rightarrow$(a) It follows from a small modification on the proof (b)$\Rightarrow$(a) of Proposition \ref{imcompintkercogen}.

	Let $X\subseteq\mathfrak{m}$ and $d=\bigvee_{\varphi\in X}\varphi(\mathbf{1})$. By hypothesis there exists $\varepsilon\in\mathfrak{m}$ idempotent such that $\annl(X)=\mathfrak{m}\varepsilon$. Then $\varepsilon\varphi(\mathbf{1})=\mathbf{0}$ for all $\varphi\in X$. Thus $\varphi(\mathbf{1})\leq\ker_\varepsilon$ for all $\varphi\in X$. On the other hand, there exists linear morphisms $\psi_i:[d,\mathbf{1}]\to\mathcal{L}$ ($i\in I$) such that $\bigwedge_{i\in I}\ker_{\psi_i}=d$ and $\psi_i\rho_d\in\mathfrak{m}$. This implies that $\psi_i\rho_d\varphi=0$ for all $i\in I$ and for all $\varphi\in X$, and so $\psi_i\rho_d\in\annl(\varphi)$ for all $i\in I$. Therefore $\psi_i\rho_d\varepsilon=\psi_i\rho_d$ for all $i\in I$. Then, for each $i\in I$,
	\[\psi_i(\ker_\varepsilon)=\psi_i(\ker_\varepsilon\vee d)=\psi_i\rho_d(\ker_{\varepsilon})=\psi_i\rho_d\varepsilon(\ker_\varepsilon)=\mathbf{0}.\]
	This implies that $\ker_\varepsilon\leq\bigwedge_{i\in I}\ker_{\psi_i}=d$. Thus, $d=\ker_\varepsilon$. Since $\varepsilon$ is idempotent, $d$ is complemented in $\mathcal{L}$.
\end{proof}

\begin{cor}
    The following conditions are equivalent for an $R$-module $M$ with $S=\End_R(M)$:
    \begin{enumerate}[label=\emph{(\alph*)}]
        \item $M$ is dual-Baer.
        \item For every $N\leq M$ there exists an idempotent $e\in S$ such that $\{f\in S\mid f(M)\leq N\}=eS$
        \item $S$ is a Baer ring and the factor module $M/\bigvee_{f\in X}f(M)$ is $M$-cogenerated for all $X\subseteq S$.
    \end{enumerate}
\end{cor}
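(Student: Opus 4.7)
The plan is to derive this as a direct corollary of Proposition \ref{dbaercar} applied to the lattice $\mathcal{L} = \Lambda(M)$ with submonoid $\mathfrak{m} = \mathfrak{E}_M \subseteq \End_{lin}(\Lambda(M))$, which contains all projections by Proposition \ref{modpix}. By Proposition \ref{lricmric}, condition (a) is equivalent to $\Lambda(M)$ being dual-$\mathfrak{E}_M$-Baer, so the work reduces to translating each of the three conditions in Proposition \ref{dbaercar} into the ring/module statements given above.

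For the (b)-version, I would use the surjective monoid morphism $(-)_\ast : S \to \mathfrak{E}_M$, which satisfies $f_\ast(M) = f(M)$. Starting from the ring-side property with idempotent $e \in S$, the image $\varepsilon = e_\ast$ is the required idempotent in $\mathfrak{E}_M$ since $\{f \in S : f(M) \leq N\}_\ast = \{\varphi \in \mathfrak{E}_M : \varphi(M) \leq N\}$ and $(-)_\ast$ carries $eS$ to $\varepsilon \mathfrak{E}_M$. For the converse, an idempotent $\varepsilon \in \mathfrak{E}_M$ gives $M = \ker_\varepsilon \oplus \varepsilon(M)$ by Proposition \ref{idemcomp}, so the canonical module projection $e : M \to \varepsilon(M)$ along $\ker_\varepsilon$ is the required ring idempotent; any $f \in S$ with $f(M) \leq N$ in fact satisfies $f(M) \leq \varepsilon(M) = e(M)$ (since $\varepsilon(M)$ coincides with $\bigvee_{\varphi(M)\leq N} \varphi(M)$), whence $f = ef \in eS$.

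For the (c)-version, two things must be matched. First, $S$ being a Baer ring must correspond to $\mathfrak{E}_M$ being a Baer monoid: the equivalence $fg = 0 \Leftrightarrow f_\ast g_\ast = 0$ identifies the annihilators of $X \subseteq S$ and $X_\ast \subseteq \mathfrak{E}_M$, and the lifting of idempotents from $\mathfrak{E}_M$ to $S$ via Proposition \ref{idemcomp} (as in the (b) argument) lets us transfer the generating idempotent between the two sides. Second, setting $N = \bigvee_{f \in X} f(M)$, an $R$-homomorphism $g_i : M/N \to M$ is the same data as an endomorphism $h_i : M \to M$ vanishing on $N$, and the induced $(h_i)_\ast$ factors as $\psi_i \rho_N$ for a linear morphism $\psi_i : [N, M] \to \Lambda(M)$ with $\ker_{\psi_i} = \Ker h_i$; under this bijection, $\bigcap_i \Ker g_i = 0$ in $M/N$ becomes $\bigwedge_i \ker_{\psi_i} = N$ in $\Lambda(M)$, so $M$-cogeneration of $M/N$ matches $\mathfrak{E}_M$-$\Lambda(M)$-cogeneration of $N$.

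The main obstacle throughout is that $(-)_\ast : S \to \mathfrak{E}_M$ fails to be injective in general, so idempotents, kernels, and annihilators cannot be transported blindly. Proposition \ref{idemcomp} is the key tool that rescues the argument: every idempotent linear morphism in $\mathfrak{E}_M$ forces a genuine direct-sum decomposition of $M$, and the corresponding module projection provides a bona fide ring idempotent that plays the role of the lattice idempotent when transferring conditions back to $S$.
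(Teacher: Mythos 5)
Your proposal is correct and follows exactly the route the paper intends: the corollary is stated without proof as an immediate consequence of Proposition \ref{dbaercar} via the identification $\mathfrak{m}=\mathfrak{E}_M$ (using Proposition \ref{lricmric} and Proposition \ref{modpix}), and your translation of each condition is the right dictionary. You in fact supply more detail than the paper, correctly identifying the non-injectivity of $(-)_\ast$ as the only real issue and resolving it via Proposition \ref{idemcomp}, which produces a genuine module projection realizing each lattice idempotent.
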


\begin{cor}
The following conditions are equivalent for an $R$-module $M$ with $S=\End_R(M)$:
    \begin{enumerate}[label=\emph{(\alph*)}]
        \item $M$ is Baer and dual-Baer.
        \item $S$ is a Baer ring, $\bigwedge_{f\in X}\Ker f$ is $M$-generated and $M/\bigvee_{f\in X}f(M)$ is $M$-cogenerated for all $X\subseteq S$.
    \end{enumerate}
    In this case the set of direct summnads of $M$ is a complete complemented sublattice of $\Lambda(M)$.
\end{cor}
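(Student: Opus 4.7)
The plan is to read off this corollary as an immediate juxtaposition of the two corollaries immediately preceding it (the characterizations of Baer and dual-Baer modules derived from Proposition~\ref{baercar} and Proposition~\ref{dbaercar}), with the final sublattice assertion coming from Proposition~\ref{baerricscip}.

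For (a)$\Rightarrow$(b), I would argue in two pieces. If $M$ is Baer, the preceding Baer corollary delivers at once that $S$ is a Baer ring and that $\bigwedge_{f\in X}\Ker f$ is $M$-generated for every $X\subseteq S$. If $M$ is dual-Baer, the preceding dual-Baer corollary delivers that $M/\bigvee_{f\in X}f(M)$ is $M$-cogenerated for every $X\subseteq S$ (together with a second, compatible, assertion that $S$ is Baer).

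For (b)$\Rightarrow$(a), the first and second clauses of (b) are precisely the hypotheses in item (c) of the Baer corollary, so $M$ is Baer; the first and third clauses of (b) are precisely the hypotheses in item (c) of the dual-Baer corollary, so $M$ is dual-Baer. The one thing worth recording in this step is that the single hypothesis ``$S$ is a Baer ring'' supplies the monoid-side requirement for both characterizations simultaneously; this is legitimate because the lemma immediately preceding Proposition~\ref{baercar} says that on a submonoid of $\End_{lin}(\Lambda(M))$ containing all projections—which $\mathfrak{E}_M$ is, by Proposition~\ref{modpix}—the left and right Baer conditions coincide.

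For the closing assertion, once $M$ is Baer and dual-Baer, Proposition~\ref{lricmric} makes $\Lambda(M)$ both $\mathfrak{E}_M$-Baer and dual-$\mathfrak{E}_M$-Baer, and Proposition~\ref{baerricscip} applied in its two forms then forces $\Lambda(M)$ to satisfy SCIP and SCSP. Hence $C(\Lambda(M))$ is closed under arbitrary meets and arbitrary joins taken in $\Lambda(M)$, so it is a complete sublattice, and since each of its elements admits a complement in $\Lambda(M)$ which itself lies in $C(\Lambda(M))$, this sublattice is complemented. There is no substantive obstacle; the whole argument is bookkeeping, and the only nuance is the compatibility of the two Baer-ring hypotheses noted above.
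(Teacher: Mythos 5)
Your proof is correct and is exactly the intended derivation: the paper leaves this corollary unproved precisely because it is the conjunction of the two corollaries that precede it (obtained from Proposition~\ref{baercar} and Proposition~\ref{dbaercar}), with the final assertion following from Proposition~\ref{baerricscip} applied in both its direct and dual forms to get SCIP and SCSP for $\Lambda(M)$. Your observation that the single hypothesis ``$S$ is a Baer ring'' feeds both characterizations at once is the right point to flag, and it is justified both by the left--right symmetry of the Baer condition for rings and by the lemma preceding Proposition~\ref{baercar}.
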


The following propositions give conditions in order to the concepts of Rickart and Baer (resp. dual-Rickart and dual-Baer) coincide. As corollaries we get \cite[Theorem 4.2]{lee2011dual} and \cite[Theorem 4.5]{leerickart}

\begin{prop}
	Let $\mathcal{L}$ be a complete modular lattice such that $C(\mathcal{L})$ satisfies ACC and $\mathfrak{m}$ be a submonoid of $\End_{lin}(\mathcal{L})$ containing all the projections. The following conditions are equivalent:
	\begin{enumerate}[label=\emph{(\alph*)}]
		\item $\mathcal{L}$ is $\mathfrak{m}$-Rickart.
		\item $\mathcal{L}$ is $\mathfrak{m}$-Baer.
	\end{enumerate}
\end{prop}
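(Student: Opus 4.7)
The implication \emph{(b)}$\Rightarrow$\emph{(a)} is immediate from the definitions. For \emph{(a)}$\Rightarrow$\emph{(b)}, I fix $X \subseteq \mathfrak{m}$ and set $k = \bigwedge_{\varphi \in X} \ker_\varphi$; the task is to produce a complement of $k$. The plan is to combine CIP (which $\mathcal{L}$ inherits from being $\mathfrak{m}$-Rickart) with the ACC on $C(\mathcal{L})$ through a maximality argument, in the same spirit as the Artinian/Noetherian corollary established after Proposition \ref{riccipssp}.

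First, by Proposition \ref{riccipssp}, $\mathcal{L}$ satisfies CIP, so the finite meet $k_F := \bigwedge_{\varphi \in F}\ker_\varphi$ lies in $C(\mathcal{L})$ for every finite $F \subseteq X$. Let $Q$ be the set of all $c \in C(\mathcal{L})$ that occur as a complement of some $k_F$; it is nonempty (take $F = \emptyset$, so $k_\emptyset = \mathbf{1}$ with complement $\mathbf{0}$), so ACC on $C(\mathcal{L})$ yields a maximal element $c^* \in Q$, say a complement of $k_{F^*}$ for some finite $F^*\subseteq X$.

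I then claim $c^*$ is a complement of $k$. For any finite $F \supseteq F^*$ we have $k_F \leq k_{F^*}$; choosing any complement $d$ of $k_F$ in $\mathcal{L}$ (granted by CIP), a short modular computation will show that $c_F := c^* \vee (d \wedge k_{F^*})$ is again a complement of $k_F$, while clearly $c_F \geq c^*$. As $c_F \in Q$, maximality forces $c_F = c^*$, so $c^*$ is itself a complement of $k_F$. Since two comparable complements of the same element in a modular lattice must coincide, $k_F = k_{F^*}$ for every finite $F \supseteq F^*$. Applying this to $F \cup F^*$ for arbitrary finite $F \subseteq X$ gives $k_{F^*} = k_{F \cup F^*} \leq k_F$, whence $k_{F^*} \leq \bigwedge_F k_F = k \leq k_{F^*}$, i.e.\ $k = k_{F^*}$, and $c^*$ is the required complement.

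The only step that needs more than bookkeeping is verifying that $c^* \vee (d \wedge k_{F^*})$ is a complement of $k_F$: the join with $k_F$ equals $c^* \vee k_{F^*} = \mathbf{1}$, using $(d \wedge k_{F^*}) \vee k_F = k_{F^*}$ by modularity since $k_F \leq k_{F^*}$; and the meet with $k_F$ collapses by modularity to $d \wedge k_F = \mathbf{0}$. I anticipate no real obstacle beyond this. Alternatively, since every element of $C(\mathcal{L})$ is $\ker_{\pi_{c'}}$ for $\pi_{c'} \in \mathfrak{m}$, the same maximality argument shows that $\mathcal{L}$ satisfies SCIP, after which Proposition \ref{baerricscip} concludes at once.
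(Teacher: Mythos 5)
Your proof is correct, but it follows a genuinely different route from the paper's. Both arguments rest on the same two ingredients --- CIP, supplied by Proposition \ref{riccipssp}, and a maximal element supplied by ACC on $C(\mathcal{L})$ --- but they apply the maximality to different families. The paper sets $a=\bigwedge_{\varphi\in X}\ker_\varphi$, considers the annihilator $X_a=\{\varphi\in\mathfrak{m}\mid\varphi(a)=\mathbf{0}\}$, and maximizes over $\Gamma=\{x\in C(\mathcal{L})\mid \pi_x\in X_a\}$; it then shows that a complement $k'$ of a maximal $k\in\Gamma$ equals $\bigwedge_{\varphi\in X_a}\ker_\varphi$ and concludes through the annihilator characterization of Proposition \ref{baercar}. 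You instead maximize over complements of the finite partial meets $k_F$ and deduce that the descending net $\{k_F\}$ stabilizes at $k_{F^*}=k$; the trick of trading the descending chain of kernels (for which DCC would be the natural hypothesis) for the ascending chain of complements $c_F=c^*\vee(d\wedge k_{F^*})$ is exactly what makes ACC on $C(\mathcal{L})$ usable. Your two modular computations do check out --- the join is $c^*\vee((d\vee k_F)\wedge k_{F^*})=c^*\vee k_{F^*}=\mathbf{1}$, and the meet is $(c_F\wedge k_{F^*})\wedge k_F=(d\wedge k_{F^*})\wedge k_F=d\wedge k_F=\mathbf{0}$, the first factor by modularity since $d\wedge k_{F^*}\leq k_{F^*}$ and $c^*\wedge k_{F^*}=\mathbf{0}$ --- as does the fact that comparable complements of the same element coincide in a modular lattice. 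Your version is purely order-theoretic, never mentions annihilators, and yields SCIP as a by-product (so Proposition \ref{baerricscip} finishes immediately, as you note); the paper's version is less self-contained but ties the equivalence to the annihilator machinery that is the theme of Section \ref{mbrl}.
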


\begin{proof}
	Suppose $\mathcal{L}$ is $\mathfrak{m}$-Rickart. For $a\in\mathcal{L}$ let $X_a$ denote the set $\{\varphi\in\mathfrak{m}\mid\varphi(a)=\mathbf{0}\}$. If $\varphi\in X_a$, then $a\leq\ker_\varphi$. Let $k'$ be a complement of $\ker_\varphi$ in $\mathcal{L}$. Then $\pi_{k'}(a)=\mathbf{0}$, that is, $\pi_{k'}\in X_a$ with $k'\in C(\mathcal{L})$. Therefore, if $0\neq X_a$ then there always exists $\mathbf{0}\neq x\in C(\mathcal{L})$ such that $\pi_x\in X_a$. Let $\Gamma=\{x\in C(\mathcal{L})\mid \pi_x\in X_a\}$. Then $\Gamma\neq\emptyset$ and by hypothesis $\Gamma$ has maximal elements. Let $k\in\Gamma$ be a maximal and $k'$ be a complement of $k$. We claim that $X_a\cap X_k=0$. Suppose $0\neq X_a\cap X_k=X_{a\vee k}$. By the comment above, there exists $\mathbf{0}\neq x\in C(\mathcal{L})$ such that $\pi_x\in X_{a\vee k}$. Let $x'$ be a complement of $x$ in $\mathcal{L}$. Hence $k\leq x'$. Note that $a\leq k'\wedge x'$. Since $\mathcal{L}$ is Rickart, $x'\wedge k'\in C(\mathcal{L})$ and following the proof of Proposition \ref{riccipssp}, a complement for $k'\wedge x'$ is $k\vee x$. It follows that $k< k\vee x$ and $k\vee x\in\Gamma$ which is a contradiction. Thus, $X_a\cap X_k=0$. Now, since $\pi_k\in X_a$, $a\leq \pi_{k}(a)\vee\pi_{k'}(a)=\pi_{k'}(a)\leq k'$. This implies that $\pi_{k'}(a)=a$. Let $\varphi\in X_a$. Then $\varphi\pi_{k'}(a)=\varphi(a)=\mathbf{0}$. Hence $\varphi\pi_{k'}\in X_a\cap X_k=0$. Thus, $k'\leq \ker_\varphi$ for all $\varphi\in X_a$. Then  $k'=\bigwedge_{\varphi\in X_a}\ker_\varphi$ and so $\mathcal{L}$ is Baer by Proposition \ref{baercar}
\end{proof}

\begin{prop}
	Let $\mathcal{L}$ be a complete modular lattice such that $C(\mathcal{L})$ satisfies ACC and $\mathfrak{m}$ be a submonoid of $\End_{lin}(\mathcal{L})$ containing all the projections. The following conditions are equivalent:
	\begin{enumerate}[label=\emph{(\alph*)}]
		\item $\mathcal{L}$ is dual-$\mathfrak{m}$-Rickart.
		\item $\mathcal{L}$ is dual-$\mathfrak{m}$-Baer.
	\end{enumerate}
\end{prop}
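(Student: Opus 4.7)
The implication (b)$\Rightarrow$(a) is immediate from the remark following the definitions that (dual-)$\mathfrak{m}$-Baer implies (dual-)$\mathfrak{m}$-Rickart. For (a)$\Rightarrow$(b), my plan is to verify condition (b) of Proposition \ref{dbaercar}, namely that for every $a\in\mathcal{L}$ there is an idempotent $\varepsilon\in\mathfrak{m}$ with
\[D_a := \{\varphi\in\mathfrak{m}\mid \varphi(\mathbf{1})\leq a\} = \varepsilon\mathfrak{m}.\]
This will follow the dual strategy of the previous proposition, replacing kernels/meets with images/joins and CIP with CSP.

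Fix $a\in\mathcal{L}$ and consider the set $\Gamma=\{x\in C(\mathcal{L})\mid x\leq a\}$. Since $\mathbf{0}\in\Gamma$, it is non-empty, and by the ACC hypothesis on $C(\mathcal{L})$ it has a maximal element $d\in\Gamma$, with complement $d'$ in $\mathcal{L}$. Since $\mathfrak{m}$ contains all projections, $\pi_d\in\mathfrak{m}$ is idempotent with $\pi_d(\mathbf{1})=d\leq a$, so $\pi_d\in D_a$. The candidate $\varepsilon$ is $\pi_d$, and the goal is to show $D_a=\pi_d\mathfrak{m}$.

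The inclusion $\pi_d\mathfrak{m}\subseteq D_a$ is immediate because $\pi_d\psi(\mathbf{1})\leq d\leq a$. For the reverse inclusion, take $\varphi\in D_a$. Since $\mathcal{L}$ is dual-$\mathfrak{m}$-Rickart, $\varphi(\mathbf{1})$ has a complement in $\mathcal{L}$, and by the CSP half of Proposition \ref{riccipssp} the sum $d\vee\varphi(\mathbf{1})$ is also a complement, i.e. lies in $C(\mathcal{L})$. Both $d$ and $\varphi(\mathbf{1})$ are $\leq a$, so $d\vee\varphi(\mathbf{1})\in\Gamma$, and maximality of $d$ forces $\varphi(\mathbf{1})\leq d$. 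A one-line modular computation now finishes the job: for every $x\in\mathcal{L}$, since $\varphi(x)\leq d$, we have
\[\pi_d\varphi(x)=(\varphi(x)\vee d')\wedge d=\varphi(x)\vee(d'\wedge d)=\varphi(x),\]
so $\pi_d\varphi=\varphi$ and $\varphi\in\pi_d\mathfrak{m}$.

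The only real subtlety is ensuring that the maximal element $d$ picks up \emph{every} $\varphi\in D_a$, which is why the CSP input is essential: without it we could not guarantee that $d\vee\varphi(\mathbf{1})$ stays within $C(\mathcal{L})$ and hence within $\Gamma$, and the maximality argument would collapse. Once CSP is in hand, the remainder is a direct dualization of the previous proposition (with the roles of $X_a$ and $\Gamma$ exchanged), so I do not anticipate any further obstacles.
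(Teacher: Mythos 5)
Your proof is correct, and its engine is the same as the paper's: take a maximal element $d$ of $\Gamma=\{x\in C(\mathcal{L})\mid x\leq a\}$ (which exists by ACC on $C(\mathcal{L})$), and use Proposition \ref{riccipssp} (dual-$\mathfrak{m}$-Rickart $\Rightarrow$ the join of two complements is a complement) to force $\varphi(\mathbf{1})\leq d$ for every relevant $\varphi$ via maximality. The difference is in the packaging. The paper applies this only to elements of the form $a=\bigvee_{\varphi\in X}\varphi(\mathbf{1})$ and concludes directly that the maximal $\ell\in\Gamma$ equals $a$, so that $a$ itself is a complement; no appeal to Proposition \ref{dbaercar} is needed. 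You instead run the argument for an \emph{arbitrary} $a\in\mathcal{L}$, establish the annihilator identity $\{\varphi\in\mathfrak{m}\mid\varphi(\mathbf{1})\leq a\}=\pi_d\mathfrak{m}$, and then invoke Proposition \ref{dbaercar}(b)$\Rightarrow$(a). Your route is a little longer but proves something slightly stronger along the way (the idempotent-generated description of $D_a$ for every $a$, not just for joins of images), and it dualizes the structure of the paper's proof of the non-dual Rickart-implies-Baer proposition more faithfully. All the individual steps check out: $\Gamma\neq\emptyset$ since $\mathbf{0}\in\Gamma$, $\pi_d$ is idempotent and lies in $\mathfrak{m}$ by the projection hypothesis, and the modular computation $(\varphi(x)\vee d')\wedge d=\varphi(x)\vee(d'\wedge d)=\varphi(x)$ for $\varphi(x)\leq d$ is exactly right.
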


\begin{proof}
	Suppose $\mathcal{L}$ is dual-$\mathfrak{m}$-Rickart. Let $X\subseteq\mathfrak{m}$ and $a=\bigvee_{\varphi\in X}\varphi(\mathbf{1})$. Consider $\Gamma=\{x\in C(\mathcal{L})\mid x\leq a\}$. Then $\Gamma\neq\emptyset$ and by hypothesis $\Gamma$ has maximal elements. Let $\ell\in\Gamma$ be a maximal. Then $\ell\leq a$. Suppose that $\ell< a$. This implies that there exists $\psi\in\mathfrak{m}$ such that $\psi(\mathbf{1})\leq a$ and $\psi(\mathbf{1})\nleq \ell$. By Proposition \ref{riccipssp}, $\ell\vee\psi(\mathbf{1})\in C(\mathcal{L})$ and $\ell<\ell\vee\psi(\mathbf{1})\leq a$. This is a contradiction. Then  $\ell=a=\bigvee_{\varphi\in X}\varphi(\mathbf{1})$ and so $\mathcal{L}$ is dual-$\mathfrak{m}$-Baer.
\end{proof}

\section{(Dual-)Baer and $\mathcal{K}$-(co)nonsingularity}\label{ksing}

In the module-theoretic context there are the following definitions which have appeared in the literature with different names (for example see \cite{rizvibaer} and \cite{tutuncu2010dual}). For the convenience of the reader, we present the definitions with the names from \cite[Definition 9.4]{crivei2016rickart} where the definitions are given in the context of abelian categories.

\begin{defn}
	Let $M$ be an  $R$-module.
	\begin{itemize}
		\item $M$ is $\mathcal{K}$-nonsingular if whenever $\Ker\varphi$ is essential in $M$ with $\varphi\in\End_{R}(M)$ it follows that $\varphi=0$.

		\item $M$ is $\mathcal{K}$-cononsingular if whenever $\varphi(N)\neq 0$ for all $\varphi\in\End_{R}(M)$ it follows that $N$ is essential in $M$.

		\item $M$ is $\mathcal{T}$-nonsingular if for every nonzero endomorphism $\varphi$ of $M$, $\varphi(M)$ is not small in $M$.

		\item $M$ is $\mathcal{T}$-cononsingular if for every non-small submodule $N$ of $M$, there exists a nonzero endomorphism $\varphi$ of $M$ such that $\varphi^{-1}(N)=M$.
	\end{itemize}
\end{defn}

Baer and dual-Baer modules have been characterized using these notions \cite[Theorem 2.12]{rizvibaer} and \cite[Theorem 2.14]{tutuncu2010dual}. Also, the counterpart in abelian categories was presented in \cite[Theorem 9.5]{crivei2016rickart}. As it can be seen, with the notion of linear morphism, it is possible to carry the above definitions to the lattice context.

\begin{defn}
	Let $\mathcal{L}$ be a complete modular lattice and let $\mathfrak{m}$ be a submonoid with zero of $\End_{lin}(\mathcal{L})$.
	\begin{enumerate}
		\item $\mathcal{L}$ is called \emph{$\mathfrak{m}$-$\mathcal{K}$-nonsingular} if whenever $\ker_\varphi$ is essential in $\mathcal{L}$ with $\varphi\in\mathfrak{m}$ it follows that $\varphi=0$.
		\item $\mathcal{L}$ is called \emph{$\mathfrak{m}$-$\mathcal{T}$-nonsingular} if whenever $\varphi(\mathbf{1})$ is superfluous in $\mathcal{L}$ with $\varphi\in\mathfrak{m}$ it follows that $\varphi=0$.
	\end{enumerate}
If the submonoid we are considering is $\End_{lin}(\mathcal{L})$, we will omit the $\mathfrak{m}$.
\end{defn}

\begin{defn}
	Let $\mathcal{L}$ be a complete modular lattice and let $\mathfrak{m}$ be a submonoid with zero of $\End_{lin}(\mathcal{L})$.
	\begin{enumerate}
		\item $\mathcal{L}$ is called \emph{$\mathfrak{m}$-$\mathcal{K}$-cononsingular} if whenever $\varphi(a)\neq \mathbf{0}$ for all $0\neq\varphi\in\mathfrak{m}$ it follows that $a$ is essential in $\mathcal{L}$.
		\item $\mathcal{L}$ is called \emph{$\mathfrak{m}$-$\mathcal{T}$-cononsingular} if $\varphi(\mathbf{1})\nleq a$ for all $0\neq\varphi\in\mathfrak{m}$, then $a$ is superfluous in $\mathcal{L}$.
	\end{enumerate}
If the submonoid we are considering is $\End_{lin}(\mathcal{L})$, we will omit the $\mathfrak{m}$.
\end{defn}

The next result follows from the above definitions.

\begin{prop}
	Let $M$ be an $R$-module. Then,
	\begin{enumerate}
		\item $M$ is $\mathcal{K}$-nonsingular if and only if $\Lambda(M)$ is $\mathfrak{E}_M$-$\mathcal{K}$-nonsingular.
		\item $M$ is $\mathcal{T}$-nonsingular if and only if $\Lambda(M)$ is $\mathfrak{E}_M$-$\mathcal{T}$-nonsingular.
		\item $M$ is $\mathcal{K}$-cononsingular if and only if $\Lambda(M)$ is $\mathfrak{E}_M$-$\mathcal{K}$-cononsingular.
		\item $M$ is $\mathcal{T}$-cononsingular if and only if $\Lambda(M)$ is $\mathfrak{E}_M$-$\mathcal{T}$-cononsingular.
	\end{enumerate}
\end{prop}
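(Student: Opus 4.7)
The plan is to unpack each of the four equivalences directly from the definitions using the dictionary provided by the monoid homomorphism $(-)_\ast:\End_R(M)\to\End_{lin}(\Lambda(M))$, whose image is by definition $\mathfrak{E}_M$. First I would record the following three observations that make the translations mechanical: (i) every $\varphi\in\mathfrak{E}_M$ has the form $f_\ast$ for some $f\in\End_R(M)$, and conversely each $f_\ast$ lies in $\mathfrak{E}_M$; (ii) for each $f\in\End_R(M)$ one has $\ker_{f_\ast}=\Ker f$, $f_\ast(\mathbf{1})=f(M)$, and more generally $f_\ast(N)=f(N)$ for every $N\leq M$; (iii) $f_\ast=0$ in $\End_{lin}(\Lambda(M))$ if and only if $f=0$, since $f_\ast(Rm)=Rf(m)$ shows that a non-zero $f$ produces a non-zero $f_\ast$. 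Finally, a submodule of $M$ is essential (respectively superfluous) in $M$ exactly when it is essential (respectively superfluous) as an element of $\Lambda(M)$; this is how those lattice notions are defined in $\Lambda(M)$.

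With this dictionary in hand, items (1) and (2) are immediate. For (1), the hypothesis ``$\ker_\varphi$ essential forces $\varphi=0$ for every $\varphi\in\mathfrak{E}_M$'' becomes, through $\varphi=f_\ast$ and (ii)–(iii), the statement ``$\Ker f$ essential forces $f=0$ for every $f\in\End_R(M)$,'' which is precisely $\mathcal{K}$-nonsingularity of $M$; reversing the chain proves the other direction. Item (2) is handled identically, substituting $\varphi(\mathbf{1})=f(M)$ for $\ker_\varphi$ and superfluity for essentialness. Items (3) and (4) are dual and carry the quantifier on the endomorphism: for (3), ``$\varphi(N)\neq \mathbf{0}$ for every non-zero $\varphi\in\mathfrak{E}_M$'' translates to ``$f(N)\neq 0$ for every non-zero $f\in\End_R(M)$,'' after which essentialness of $N$ in $\Lambda(M)$ and in $M$ coincide; for (4), one uses the equivalence ``$\varphi(\mathbf{1})\nleq N$ for all non-zero $\varphi$'' $\iff$ ``$f(M)\nleq N$ for all non-zero $f$'' together with the matching of superfluity.

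There is no real obstacle, only one detail worth spelling out: observation (iii) is what guarantees that the surjection $(-)_\ast:\End_R(M)\to\mathfrak{E}_M$ restricts to a bijection between non-zero endomorphisms and non-zero elements of $\mathfrak{E}_M$, so that the universal quantifiers ``for all $\varphi\in\mathfrak{E}_M$'' and ``for all $f\in\End_R(M)$'' (and their non-zero counterparts in (3) and (4)) correspond cleanly under the translation. Once this point is made explicit, each of the four items reduces to a single line of rewriting using (i)–(iii) and the agreement of the notions of essential and superfluous submodules with the corresponding elements of $\Lambda(M)$.
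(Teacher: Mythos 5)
Your proof is correct and is exactly the unpacking that the paper has in mind: the paper states this proposition with no proof at all (``The next result follows from the above definitions''), and your dictionary (i)--(iii) together with the identification of essential/superfluous submodules with essential/superfluous elements of $\Lambda(M)$ is the intended argument. One small correction: $(-)_\ast$ restricted to non-zero endomorphisms is only a surjection onto the non-zero elements of $\mathfrak{E}_M$, not a bijection (e.g.\ $f$ and $-f$ induce the same linear morphism), but since every predicate you use ($\Ker f$, $f(N)$, $f(M)$, $f=0$) factors through $f_\ast$, surjectivity is all the quantifier translation requires.
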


\begin{defn}
	Let $\mathcal{L}$ be a lattice.
	\begin{enumerate}
		\item $\mathcal{L}$ satisfies the condition \emph{($C_1$)} if for every $x\in\mathcal{L}$ there exists $c\in C(\mathcal{L})$ such that $x$ is essential in the interval $[\mathbf{0},c]$ \cite[Definition 1.1]{albu2016conditions}. 
		\item $\mathcal{L}$ satisfies the condition \emph{($D_1$)} if for every $x\in\mathcal{L}$ there exists $c\in C(\mathcal{L})$ with complement $c'$ such that $c\leq x$ and $x\wedge c'$ is superfluous in $\mathcal{L}$.
	\end{enumerate}
\end{defn}

\begin{lemma}
    Let $\mathcal{L}$ be  a complete modular lattice and let $\mathfrak{m}$ be a submonoid of $\End_{lin}(\mathcal{L})$ containing all the projections. If $\mathcal{L}$ satisfies ($C_1$) then $\mathcal{L}$ is $\mathfrak{m}$-$\mathcal{K}$-cononsingular.
\end{lemma}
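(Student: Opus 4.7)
The plan is to prove the contrapositive: assume $a\in\mathcal{L}$ is not essential in $\mathcal{L}$ and produce some $\varphi\in\mathfrak{m}$ with $\varphi\neq 0$ and $\varphi(a)=\mathbf{0}$. Since $\mathfrak{m}$ is assumed to contain all projections, the natural candidate for $\varphi$ is a suitable projection $\pi_{c'}$, whose kernel is the complement $c$ of $c'$; to make $\varphi(a)=\mathbf{0}$ it then suffices to find a complement $c\in C(\mathcal{L})$ with $a\leq c$ and $c\neq \mathbf{1}$.

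First I would apply the condition ($C_1$) to the element $a$ to obtain some $c\in C(\mathcal{L})$ with $a$ essential in $[\mathbf{0},c]$; in particular $a\leq c$. Then I would verify the key dichotomy: if $c=\mathbf{1}$, then ``$a$ essential in $[\mathbf{0},c]$'' is literally the statement that $a$ is essential in $\mathcal{L}$, contradicting our assumption. Hence $c<\mathbf{1}$, and any complement $c'$ of $c$ satisfies $c'\neq\mathbf{0}$.

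Now set $\varphi=\pi_{c'}$, which lies in $\mathfrak{m}$ by hypothesis. Unfolding the definition of the projection gives $\varphi(a)=(a\vee c)\wedge c'=c\wedge c'=\mathbf{0}$ because $a\leq c$. On the other hand $\varphi(\mathbf{1})=(\mathbf{1}\vee c)\wedge c'=c'\neq\mathbf{0}$, so $\varphi\neq 0$ in $\mathfrak{m}$. This contradicts the hypothesis that $\psi(a)\neq\mathbf{0}$ for every $0\neq\psi\in\mathfrak{m}$, and therefore forces $a$ to be essential in $\mathcal{L}$.

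There is no genuine obstacle here; the argument is essentially a bookkeeping exercise once one realises that ($C_1$) delivers a complement $c$ with $a\leq c$ and that $c\neq\mathbf{1}$ is automatic when $a$ fails to be essential. The only point that deserves care is verifying $\ker_{\pi_{c'}}=c$ and $\pi_{c'}(\mathbf{1})=c'$, both of which are immediate from the formula $\pi_{c'}(x)=(x\vee c)\wedge c'$ established earlier in the paper.
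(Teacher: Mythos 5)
Your proof is correct and is essentially the paper's argument, merely phrased as a contrapositive: both apply $(C_1)$ to obtain $c\in C(\mathcal{L})$ with $a$ essential in $[\mathbf{0},c]$, observe that $\pi_{c'}(a)=(a\vee c)\wedge c'=\mathbf{0}$, and conclude that $c'=\mathbf{0}$, i.e.\ $c=\mathbf{1}$. No substantive difference.
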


\begin{proof}
    Let $a\in\mathcal{L}$ such that $\varphi(a)\neq\mathbf{0}$ for all $\varphi\in\mathfrak{m}$. By hypothesis, there exists $c\in C(\mathcal{L})$ such that $a$ is essential in $[\mathbf{0},c]$. Let $c'$ be a complement of $c$ in $\mathcal{L}$ and consider the projection $\pi_{c'}$. Then $\pi_{c'}(a)=\mathbf{0}$. This implies that $\pi_{c'}=0$ and so $c'=\mathbf{0}$. Therefore $c=\mathbf{1}$ and hence $a$ is essential in $\mathcal{L}$.
\end{proof}

\begin{cor}[{\cite[Lemma 2.13]{rizvibaer}}]
	Let $M$ be an $R$-module. If $M$ satisfies ($C_1$), then $M$ is $\mathcal{K}$-cononsingular.
\end{cor}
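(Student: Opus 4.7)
The plan is to deduce this corollary directly from the preceding lattice-theoretic lemma by translating it to the module setting via the identifications already established in the paper. The key observation is that all the relevant module-theoretic notions have been shown to correspond exactly to their $\mathfrak{E}_M$-versions in $\Lambda(M)$.

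First, I would recall that $\Lambda(M)$ is a complete modular lattice, and that the submonoid $\mathfrak{E}_M \subseteq \End_{lin}(\Lambda(M))$ contains all the projections, as noted in the remark following Proposition \ref{modpix}. Second, I would note that the condition ($C_1$) for $M$ as a module is literally the lattice condition ($C_1$) applied to $\Lambda(M)$: for every submodule $N \leq M$, there exists a direct summand $C$ of $M$ (equivalently, $C \in C(\Lambda(M))$) such that $N$ is essential in $C$. Third, by the proposition preceding the $C_1$ definition, $M$ is $\mathcal{K}$-cononsingular if and only if $\Lambda(M)$ is $\mathfrak{E}_M$-$\mathcal{K}$-cononsingular.

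With these translations in place, the corollary is immediate: assuming $M$ satisfies ($C_1$), the lattice $\Lambda(M)$ satisfies ($C_1$), and so by the lemma (applied with $\mathcal{L} = \Lambda(M)$ and $\mathfrak{m} = \mathfrak{E}_M$), $\Lambda(M)$ is $\mathfrak{E}_M$-$\mathcal{K}$-cononsingular. Therefore $M$ is $\mathcal{K}$-cononsingular.

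There is no real obstacle here, since the work has been done at the lattice level; the only thing to verify carefully is the equivalence of the module-level and lattice-level versions of ($C_1$) and of $\mathcal{K}$-cononsingularity, both of which follow straight from the definitions by identifying direct summands with complemented elements of $\Lambda(M)$ and endomorphisms $f \in \End_R(M)$ with their induced linear morphisms $f_\ast \in \mathfrak{E}_M$.
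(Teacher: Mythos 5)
Your proof is correct and is essentially the paper's intended argument: the corollary is stated without proof precisely because it is the specialization of the preceding lattice lemma to $\mathcal{L}=\Lambda(M)$ and $\mathfrak{m}=\mathfrak{E}_M$, using that $\mathfrak{E}_M$ contains all projections and that the module-level ($C_1$) and $\mathcal{K}$-cononsingularity correspond to their $\mathfrak{E}_M$-versions by the stated proposition. Nothing further is needed.
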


\begin{lemma}
	Let $\mathcal{L}$ be  a complete modular lattice and let $\mathfrak{m}$ be a submonoid of $\End_{lin}(\mathcal{L})$ containing all the projections. If $\mathcal{L}$ satisfies ($D_1$) then $\mathcal{L}$ is $\mathfrak{m}$-$\mathcal{T}$-cononsingular.
\end{lemma}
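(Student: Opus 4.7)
The plan is to mimic exactly the proof of the preceding $\mathcal{K}$-cononsingular lemma, transporting it through the $\mathcal{K}$/$\mathcal{T}$ duality: the role of ``$\varphi$ killing $a$'' is replaced by ``$\varphi(\mathbf{1})$ landing below $a$'', the role of essentiality by superfluity, and the choice $\pi_{c'}$ by the choice $\pi_c$. The condition that $\mathfrak{m}$ contains all projections is what allows us to feed these $\pi_c$'s into the hypothesis, just as in the dual case.

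Concretely, I would start by fixing $a\in\mathcal{L}$ with the property that $\varphi(\mathbf{1})\nleq a$ for every $0\neq\varphi\in\mathfrak{m}$, and I need to show that $a$ is superfluous. First I would apply condition ($D_1$) to $a$ to extract $c\in C(\mathcal{L})$ with complement $c'\in C(\mathcal{L})$ such that $c\leq a$ and $a\wedge c'$ is superfluous in $\mathcal{L}$. Since $\mathfrak{m}$ contains all projections, the linear morphism $\pi_c\in\mathfrak{m}$ is available, and by the formula $\pi_c(\mathbf{1})=(\mathbf{1}\vee c')\wedge c = c$ we obtain $\pi_c(\mathbf{1})=c\leq a$.

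The hypothesis on $a$ then forces $\pi_c=0$, which in turn forces $c=\mathbf{0}$ and therefore $c'=\mathbf{1}$. Consequently $a=a\wedge\mathbf{1}=a\wedge c'$, and since the latter was chosen superfluous in $\mathcal{L}$ by ($D_1$), we conclude that $a$ is superfluous, as required.

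I do not expect any real obstacle: the argument is short and purely formal, dual to the one displayed just above in the paper. The only technical point worth double-checking is that the projection used truly belongs to $\mathfrak{m}$ even in the degenerate case $c=\mathbf{0}$ (where $\pi_c$ is simply the zero endomorphism, which lies in any submonoid with zero), and that the two conclusions ``$\pi_c=0$'' and ``$c=\mathbf{0}$'' are equivalent, which is immediate from $\pi_c(\mathbf{1})=c$. No further lemma beyond ($D_1$), the definition of a projection, and the convention that $\mathfrak{m}$ contains all projections is needed.
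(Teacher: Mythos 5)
Your proof is correct and is essentially identical to the one in the paper: apply ($D_1$) to $a$, observe $\pi_c(\mathbf{1})=c\leq a$ forces $\pi_c=0$ and hence $c=\mathbf{0}$, $c'=\mathbf{1}$, so $a=a\wedge c'$ is superfluous. Your extra remarks on the degenerate case and on $\pi_c=0\iff c=\mathbf{0}$ are accurate but not needed.
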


\begin{proof}
	Let $a\in\mathcal{L}$. By hypothesis, there exists $c\in C(\mathcal{L})$ with complement $c'$ such that $c\leq a$ and $a\wedge c'$ is superfluous in $\mathcal{L}$. Consider the projection $\pi_{c}$. Then $\pi_{c}(1)=\pi_c(c)=c\leq a$. This implies that $\pi_{c}=0$ and so $c=\mathbf{0}$. Therefore $c'=\mathbf{1}$ and hence $a$ is superfluous in $\mathcal{L}$.
\end{proof}

\begin{cor}[{\cite[Lemma 2.12]{tutuncu2010dual}}]
	Let $M$ be an $R$-module. If $M$ satisfies ($D_1$), then $M$ is $\mathcal{T}$-cononsingular.
\end{cor}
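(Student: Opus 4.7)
The statement to be proved is the module-theoretic corollary that $(D_1)$ implies $\mathcal{T}$-cononsingularity, and the natural plan is simply to transport the lattice-theoretic lemma that immediately precedes it down to modules via the dictionary $\Lambda(M) \leftrightarrow M$, $\mathfrak{E}_M \leftrightarrow \End_R(M)$ that the paper has been building throughout Section~\ref{mbrl}.

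Concretely, I would take $\mathcal{L} = \Lambda(M)$ and $\mathfrak{m} = \mathfrak{E}_M$. Then $\mathcal{L}$ is a complete modular (in fact upper-continuous) lattice, and the remark following Proposition~\ref{lricmric} ensures that $\mathfrak{E}_M$ is a submonoid of $\End_{lin}(\Lambda(M))$ that contains all projections (via Lemma~\ref{modpix}). To invoke the preceding lemma I need to verify two translations: first, that $M$ satisfies $(D_1)$ as a module iff $\Lambda(M)$ satisfies $(D_1)$ as a lattice, and second, that $M$ is $\mathcal{T}$-cononsingular iff $\Lambda(M)$ is $\mathfrak{E}_M$-$\mathcal{T}$-cononsingular. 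The first is immediate from the identifications complements in $\Lambda(M) =$ direct summands of $M$ and superfluous elements in $\Lambda(M) =$ small submodules of $M$, applied pointwise to the definition. The second is exactly the proposition stated earlier comparing $\mathcal{T}$-cononsingularity of $M$ with $\mathfrak{E}_M$-$\mathcal{T}$-cononsingularity of $\Lambda(M)$.

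With all three ingredients in place — the lattice is complete modular, $\mathfrak{E}_M$ contains all projections, and $\Lambda(M)$ satisfies $(D_1)$ — the lemma yields that $\Lambda(M)$ is $\mathfrak{E}_M$-$\mathcal{T}$-cononsingular, which by the translation is exactly $\mathcal{T}$-cononsingularity of $M$. There is no real obstacle here; the corollary is a packaging result, and the only thing one must be careful about is quoting the correct proposition to move between $\End_R(M)$ and $\mathfrak{E}_M$ (so that a witness $f \in \End_R(M)$ with $f(M) \nleq N$ corresponds to $\varphi = f_\ast \in \mathfrak{E}_M$ with $\varphi(\mathbf{1}) \nleq N$, and conversely every $\varphi \in \mathfrak{E}_M$ arises this way). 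Once that identification is made explicit, the proof is a single line: apply the preceding lemma.
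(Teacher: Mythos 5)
Your proposal is correct and is exactly the intended argument: the corollary is stated as an immediate consequence of the preceding lattice lemma, obtained by specializing to $\mathcal{L}=\Lambda(M)$ and $\mathfrak{m}=\mathfrak{E}_M$ (which contains all projections by Lemma \ref{modpix}) and using the translations of $(D_1)$ and $\mathcal{T}$-cononsingularity between $M$ and $\Lambda(M)$. Nothing further is needed.
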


\begin{lemma}
	Let $\mathcal{L}$ be  a complete modular lattice and let $\mathfrak{m}$ be a submonoid of $\End_{lin}(\mathcal{L})$ containing all the projections. If $\mathcal{L}$ is $\mathfrak{m}$-$\mathcal{K}$-nonsingular and satisfies ($C_1$) then $\mathcal{L}$ is $\mathfrak{m}$-Baer.
\end{lemma}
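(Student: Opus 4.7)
The plan is to fix $X\subseteq\mathfrak{m}$, set $k=\bigwedge_{\varphi\in X}\ker_\varphi$, and show that the element $c\in C(\mathcal{L})$ provided by the $(C_1)$ condition (so that $k$ is essential in $[\mathbf{0},c]$) actually coincides with $k$. Once this is done, $k=c$ has a complement and $\mathcal{L}$ is $\mathfrak{m}$-Baer.

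Fix a complement $c'$ of $c$. For each $\varphi\in X$, since $\pi_c\in\mathfrak{m}$ (the monoid contains all projections), the composite $\varphi\pi_c$ lies in $\mathfrak{m}$. Using the standard computation for kernels of compositions of linear morphisms (as in Lemma \ref{kerpi}), one gets $\ker_{\varphi\pi_c}=(\ker_\varphi\wedge c)\vee c'$. Because $k\leq\ker_\varphi$ and $k\leq c$, the element $\ker_\varphi\wedge c$ lies between $k$ and $c$, and so it is essential in $[\mathbf{0},c]$. Hence $\ker_{\varphi\pi_c}\geq k\vee c'$. The crucial intermediate step is then to show:

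\smallskip\noindent\emph{Auxiliary claim.} If $a$ is essential in $[\mathbf{0},c]$ and $c'$ is a complement of $c$ in $\mathcal{L}$, then $a\vee c'$ is essential in $\mathcal{L}$.

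\smallskip
To prove this, I would take an arbitrary $\mathbf{0}\neq x\in\mathcal{L}$ and split on whether $\pi_c(x)=(x\vee c')\wedge c$ is $\mathbf{0}$ or not. If $\pi_c(x)=\mathbf{0}$, then $x\vee c'$ becomes a complement of $c$ lying above $c'$; by modularity, two comparable complements of the same element are equal, so $x\leq c'$ and the intersection $(a\vee c')\wedge x=x$ is nonzero. If $\pi_c(x)\neq\mathbf{0}$, then essentiality of $a$ in $[\mathbf{0},c]$ yields $\mathbf{0}\neq n:=a\wedge(x\vee c')\leq c$, and one checks via the modular law (with $c'\leq n\vee c'$) that $c'\vee(x\wedge(n\vee c'))=n\vee c'$. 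If $x\wedge(n\vee c')$ were $\mathbf{0}$, this would force $n\leq c'$, hence $n=\mathbf{0}$ because $c\wedge c'=\mathbf{0}$; contradiction. So $x\wedge(a\vee c')\geq x\wedge(n\vee c')\neq\mathbf{0}$, proving the claim.

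Applying the claim with $a=\ker_\varphi\wedge c$ shows $\ker_{\varphi\pi_c}$ is essential in $\mathcal{L}$. By $\mathfrak{m}$-$\mathcal{K}$-nonsingularity, $\varphi\pi_c=0$, so $\varphi(c)=\varphi\pi_c(\mathbf{1})=\mathbf{0}$, i.e.\ $c\leq\ker_\varphi$. This holds for every $\varphi\in X$, so $c\leq k$; combined with $k\leq c$ we get $k=c\in C(\mathcal{L})$, which is what we needed. The main obstacle is the auxiliary claim — it is the one place where modularity has to be used in a nontrivial way — while the rest of the argument is the expected translation of the module-theoretic proof of \cite[Theorem 2.12]{rizvibaer} to the linear-morphism setting.
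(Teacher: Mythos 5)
Your proposal is correct and follows the same route as the paper's proof: apply $(C_1)$ to $k=\bigwedge_{\varphi\in X}\ker_\varphi$ to get $c$, observe that $\ker_{\varphi\pi_c}=(\ker_\varphi\wedge c)\vee c'$ is essential in $\mathcal{L}$, and invoke $\mathfrak{m}$-$\mathcal{K}$-nonsingularity to force $c\leq\ker_\varphi$ for all $\varphi\in X$, hence $k=c$. The only difference is that you explicitly prove the essentiality-transfer step (that $a$ essential in $[\mathbf{0},c]$ implies $a\vee c'$ essential in $\mathcal{L}$), which the paper asserts without proof; your argument for that claim is sound.
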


\begin{proof}
	Let $\{\varphi_i\}_I\subseteq\mathfrak{m}$ and let $k=\bigwedge_I\ker_{\varphi_i}$. By hypothesis there exists an element $c\in\mathcal{L}$ with complement $c'$ such that $k$ is essential in $[\mathbf{0},c]$. This implies that $\ker_{\varphi_i}\wedge c$ is also essential in $[\mathbf{0},c]$ for all $i\in I$. Therefore $\ker_{\varphi_i\pi_c}=(\ker_{\varphi_i}\wedge c)\vee c'$ is essential in $\mathcal{L}$ for all $i\in I$. Since $\mathcal{L}$ is $\mathfrak{m}$-$\mathcal{K}$-nonsingular, $\varphi_i\pi_c=0$ for all $i\in I$. Hence $c\leq\ker_{\varphi_i}$ for all $i\in I$, that is, $c\leq k$. Thus, $k=c$. This proves that $\mathcal{L}$ is $\mathfrak{m}$-Baer.
\end{proof}

\begin{cor}[{\cite[Lemma 2.14]{rizvibaer}}]
	Let $M$ be an $R$-module. If $M$ is $\mathcal{K}$-nonsingular and satisfies ($C_1$), then $M$ is Baer.
\end{cor}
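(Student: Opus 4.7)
The plan is to derive this corollary directly from the preceding lemma by specialising to the lattice $\mathcal{L}=\Lambda(M)$ with submonoid $\mathfrak{m}=\mathfrak{E}_M$. Since all the technical work has been done in the lattice-theoretic setting, the proof should be essentially a translation.

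First, I would invoke the Remark after Proposition \ref{lricmric} to note that $\mathfrak{E}_M$ is a submonoid of $\End_{lin}(\Lambda(M))$ that contains all the projections (this uses Lemma \ref{modpix}, which identifies $(\iota\pi)_\ast$ with the lattice-theoretic projection $\pi_N$ for any direct summand $N$ of $M$). Then I would transfer the hypotheses: by the proposition relating $\mathcal{K}$-nonsingularity of $M$ to $\mathfrak{E}_M$-$\mathcal{K}$-nonsingularity of $\Lambda(M)$, the hypothesis that $M$ is $\mathcal{K}$-nonsingular gives that $\Lambda(M)$ is $\mathfrak{E}_M$-$\mathcal{K}$-nonsingular. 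The condition $(C_1)$ for the module $M$ in its classical form (every submodule is essential in a direct summand) is literally the same statement as the lattice condition $(C_1)$ for $\Lambda(M)$, because complements in $\Lambda(M)$ coincide with direct summands of $M$, and essentiality in $\Lambda(M)$ coincides with essentiality of submodules.

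With these hypotheses verified, the preceding lemma applies and yields that $\Lambda(M)$ is $\mathfrak{E}_M$-Baer. Finally, I would invoke Proposition \ref{lricmric} (in the Baer case) to conclude that $M$ is a Baer module.

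No real obstacle is expected here: the corollary is a straightforward instantiation, and every ingredient — the lemma itself, the identification of $\mathfrak{E}_M$-Baer with Baer (Proposition \ref{lricmric}), the fact that $\mathfrak{E}_M$ contains all the projections, and the correspondences for $\mathcal{K}$-nonsingularity and $(C_1)$ — has already been established in the paper. The only minor thing to be careful about is making the translation of the $(C_1)$ condition explicit, since the paper states it for lattices while the corollary phrases it for modules, but this is immediate from the definitions.
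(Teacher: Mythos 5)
Your proposal is correct and is exactly the instantiation the paper intends: the corollary is stated without proof precisely because it follows from the preceding lemma applied to $\mathcal{L}=\Lambda(M)$ and $\mathfrak{m}=\mathfrak{E}_M$, using that $\mathfrak{E}_M$ contains all projections, the equivalence of $\mathcal{K}$-nonsingularity with $\mathfrak{E}_M$-$\mathcal{K}$-nonsingularity, the identification of the module and lattice $(C_1)$ conditions, and Proposition \ref{lricmric} to pass from $\mathfrak{E}_M$-Baer to Baer. All ingredients are cited correctly and no gap remains.
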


\begin{lemma}
	Let $\mathcal{L}$ be a complete modular lattice and let $\mathfrak{m}$ be a submonoid of $\End_{lin}(\mathcal{L})$ containing all the projections. If $\mathcal{L}$ is $\mathfrak{m}$-$\mathcal{T}$-nonsingular and satisfies ($D_1$) then $\mathcal{L}$ is dual-$\mathfrak{m}$-Baer.
\end{lemma}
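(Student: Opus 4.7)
The plan is to dualize the proof of the previous lemma verbatim, replacing the $(C_1)$/$\mathcal{K}$-nonsingularity/kernel argument with its image-side counterpart. Given $\{\varphi_i\}_I \subseteq \mathfrak{m}$, set $d = \bigvee_{i\in I}\varphi_i(\mathbf{1})$; the goal is to show $d$ has a complement in $\mathcal{L}$.

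First I would invoke $(D_1)$ to produce $c \in C(\mathcal{L})$ with complement $c'$ such that $c \leq d$ and $d\wedge c'$ is superfluous in $\mathcal{L}$. By modularity (and $c \leq d$) one has $d = c \vee (d\wedge c')$, so it suffices to prove $d\wedge c' = \mathbf{0}$, which will force $d = c$ and hence $d \in C(\mathcal{L})$. For each $i\in I$ form $\pi_{c'}\varphi_i$; this lies in $\mathfrak{m}$ because $\mathfrak{m}$ contains all projections and is closed under composition. Since $\varphi_i(\mathbf{1})\leq d$, we obtain
\[
(\pi_{c'}\varphi_i)(\mathbf{1}) = \pi_{c'}(\varphi_i(\mathbf{1})) \leq \pi_{c'}(d) = (d\vee c)\wedge c' = d\wedge c'.
\]
Any subelement of a superfluous element is superfluous (if $s\leq t$ and $s\vee x = \mathbf{1}$, then $t\vee x = \mathbf{1}$, so $x=\mathbf{1}$), hence $(\pi_{c'}\varphi_i)(\mathbf{1})$ is superfluous in $\mathcal{L}$.

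Now $\mathfrak{m}$-$\mathcal{T}$-nonsingularity gives $\pi_{c'}\varphi_i = 0$, so $\varphi_i(\mathbf{1}) \leq \ker_{\pi_{c'}} = c$ for every $i\in I$. Taking the join yields $d \leq c$, and combined with $c\leq d$ this forces $d = c \in C(\mathcal{L})$.

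The only subtlety I anticipate is the bookkeeping around which element plays the role of ``kernel of the projection'': one must remember that $\ker_{\pi_{c'}} = c$ (not $c'$), and use $c\leq d$ in order to simplify $\pi_{c'}(d)$ to $d\wedge c'$ via modularity. Everything else is formal transport of the argument of the preceding lemma through the self-dual framework already established for $\mathfrak{m}$-$\mathcal{K}$-nonsingularity versus $\mathfrak{m}$-$\mathcal{T}$-nonsingularity and for $(C_1)$ versus $(D_1)$.
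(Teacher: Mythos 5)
Your proof is correct and follows essentially the same route as the paper's: apply $(D_1)$ to get $c\leq d$ with $d\wedge c'$ superfluous, observe that $\pi_{c'}\varphi_i(\mathbf{1})\leq d\wedge c'$ is therefore superfluous, invoke $\mathfrak{m}$-$\mathcal{T}$-nonsingularity to kill $\pi_{c'}\varphi_i$, and conclude $d=c$. The extra remarks (that subelements of superfluous elements are superfluous, and that $\ker_{\pi_{c'}}=c$) are exactly the right justifications, which the paper leaves implicit.
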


\begin{proof}
	Let $\{\varphi_i\}_I\subseteq\mathfrak{m}$ and let $a=\bigvee_I\varphi_i(\mathbf{1})$. By hypothesis there exists an element $c\in\mathcal{L}$ with complement $c'$ such that $c\leq a$ and $a\wedge c'$ is superfluous in $\mathcal{L}$. This implies that $\pi_{c'}\varphi_i(\mathbf{1})=(\varphi_i(\mathbf{1})\vee c)\wedge c'$ is also superfluous in $\mathcal{L}$ for all $i\in I$. Since $\mathcal{L}$ is $\mathfrak{m}$-$\mathcal{T}$-nonsingular, $\pi_{c'}\varphi_i=0$ for all $i\in I$. Hence $\varphi_i(\mathbf{1})\leq c$ for all $i\in I$, that is, $a\leq c$. Thus, $a=c$. This proves that $\mathcal{L}$ is dual-Baer.
\end{proof}

\begin{cor}[{\cite[Lemma 2.11]{tutuncu2010dual}}]
	Let $M$ be an $R$-module. If $M$ is $\mathcal{T}$-nonsingular and satisfies ($D_1$), then $M$ is dual-Baer.
\end{cor}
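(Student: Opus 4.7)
The plan is to mirror the proof of the previous lemma (the $\mathfrak{m}$-$\mathcal{K}$-nonsingular plus $(C_1)$ version) under the order-reversing dictionary: replace kernels with images, meets with joins, essential with superfluous, and the projection $\pi_c$ with $\pi_{c'}$. So I would start with an arbitrary family $\{\varphi_i\}_{i \in I} \subseteq \mathfrak{m}$, set $a = \bigvee_{i \in I} \varphi_i(\mathbf{1})$, and aim to show that $a$ has a complement in $\mathcal{L}$.

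Applying $(D_1)$ to $a$, I obtain $c \in C(\mathcal{L})$ with some complement $c'$ such that $c \leq a$ and $a \wedge c'$ is superfluous in $\mathcal{L}$. The strategy is to prove $a = c$, which will immediately give $c'$ as a complement of $a$. Since $c \leq a$, I already have one inequality, and the task reduces to showing $a \leq c$.

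For each $i \in I$, I would compute the composite $\pi_{c'} \varphi_i \in \mathfrak{m}$ (note $\pi_{c'} \in \mathfrak{m}$ because $\mathfrak{m}$ contains all projections) and examine its image:
\[\pi_{c'} \varphi_i(\mathbf{1}) = (\varphi_i(\mathbf{1}) \vee c) \wedge c' \leq (a \vee c) \wedge c' = a \wedge c',\]
using $c \leq a$ for the last equality. Since $a \wedge c'$ is superfluous and being superfluous is inherited by smaller elements, $\pi_{c'} \varphi_i(\mathbf{1})$ is superfluous in $\mathcal{L}$. By $\mathfrak{m}$-$\mathcal{T}$-nonsingularity, this forces $\pi_{c'} \varphi_i = 0$, so $\varphi_i(\mathbf{1}) \leq \ker_{\pi_{c'}} = c$ for every $i$. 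Taking the join over $I$ yields $a \leq c$, finishing the argument and giving dual-$\mathfrak{m}$-Baerness.

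I do not expect a real obstacle here: the delicate points are only (i) confirming that $\pi_{c'} \in \mathfrak{m}$ (guaranteed by the hypothesis on $\mathfrak{m}$), (ii) knowing $\ker_{\pi_{c'}} = c$ (which comes from the preliminaries on projections), and (iii) the monotonicity of the superfluous property, which is standard in any bounded lattice. Everything else is a routine dualization of the preceding lemma.
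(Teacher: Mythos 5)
Your argument is correct and coincides with the paper's proof of the lattice lemma immediately preceding this corollary: the same choice of $c$ via $(D_1)$, the same bound $\pi_{c'}\varphi_i(\mathbf{1})\leq a\wedge c'$, and the same use of $\mathfrak{m}$-$\mathcal{T}$-nonsingularity to force $\varphi_i(\mathbf{1})\leq c=\ker_{\pi_{c'}}$ and hence $a=c$. The only remaining (routine) step for the module statement is to pass through the translation results ($\mathfrak{E}_M$ contains all projections, and the module-level $\mathcal{T}$-nonsingularity, $(D_1)$ and dual-Baer conditions correspond to their $\mathfrak{E}_M$-lattice counterparts), which is exactly how the paper obtains the corollary.
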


\begin{lemma}
	Let $\mathcal{L}$ be  a complete modular lattice and let $\mathfrak{m}$ be a submonoid of $\End_{lin}(\mathcal{L})$. If $\mathcal{L}$ is $\mathfrak{m}$-Rickart then $\mathcal{L}$ is $\mathfrak{m}$-$\mathcal{K}$-nonsingular.
\end{lemma}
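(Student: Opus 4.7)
The plan is to take a morphism $\varphi\in\mathfrak{m}$ with $\ker_\varphi$ essential in $\mathcal{L}$ and show $\varphi=0$ directly from the $\mathfrak{m}$-Rickart hypothesis. First I would use $\mathfrak{m}$-Rickartness of $\mathcal{L}$ to produce a complement $k'$ of $\ker_\varphi$, so that $\ker_\varphi\wedge k'=\mathbf{0}$ and $\ker_\varphi\vee k'=\mathbf{1}$.

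Next, I would invoke the essentiality of $\ker_\varphi$: since $k'\wedge\ker_\varphi=\mathbf{0}$, the definition of essential element forces $k'=\mathbf{0}$. Consequently $\ker_\varphi=\mathbf{1}$.

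Finally, I would translate $\ker_\varphi=\mathbf{1}$ into $\varphi=0$ using Definition \ref{linmor}. The isomorphism $\overline{\varphi}:[\ker_\varphi,\mathbf{1}]\to[\mathbf{0},\varphi(\mathbf{1})]$ now has a singleton domain $\{\mathbf{1}\}$, so its codomain $[\mathbf{0},\varphi(\mathbf{1})]$ is also a singleton, forcing $\varphi(\mathbf{1})=\mathbf{0}$. Combined with item (1) of Definition \ref{linmor}, for any $x\in\mathcal{L}$ we get
\[\varphi(x)=\varphi(x\vee\ker_\varphi)=\varphi(\mathbf{1})=\mathbf{0},\]
hence $\varphi=0$, which is exactly the $\mathfrak{m}$-$\mathcal{K}$-nonsingularity condition.

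There is no serious obstacle here; the proof is a direct unwinding of definitions. The only subtlety worth flagging is that one must remember that a ``kernel'' of a linear morphism is not automatically the smallest preimage of $\mathbf{0}$ in the usual module sense but is characterised through the isomorphism $\overline{\varphi}$, so the collapse of the domain interval to a singleton is the correct reason why $\varphi$ vanishes once $\ker_\varphi=\mathbf{1}$.
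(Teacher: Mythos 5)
Your proof is correct and follows exactly the same route as the paper's: use the Rickart hypothesis to get a complement of $\ker_\varphi$, note that essentiality kills the complement so $\ker_\varphi=\mathbf{1}$, and conclude $\varphi=0$. The only difference is that you spell out the final step (collapsing the interval $[\ker_\varphi,\mathbf{1}]$ to a singleton to deduce $\varphi(\mathbf{1})=\mathbf{0}$), which the paper leaves implicit.
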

	
\begin{proof}
	Let $\varphi\in\mathfrak{m}$ such that $\ker_\varphi$ is essential in $\mathcal{L}$. Since $\mathcal{L}$ is $\mathfrak{m}$-Rickart, $\ker_\varphi$ has a complement. This implies that $\ker_\varphi=\mathbf{1}$ and hence $\varphi=0$.
\end{proof}

\begin{cor}[{\cite[Lemma 2.15]{rizvibaer}}]
	Let $M$ be an $R$-module. If $M$ is Baer, then $M$ is $\mathcal{K}$-nonsingular.
\end{cor}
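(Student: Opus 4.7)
The plan is to take a linear morphism $\varphi\in\mathfrak{m}$ whose kernel $\ker_\varphi$ is essential in $\mathcal{L}$, and to show that the $\mathfrak{m}$-Rickart hypothesis forces $\ker_\varphi=\mathbf{1}$, whence $\varphi=0$ by property (2) in the definition of a linear morphism (Definition \ref{linmor}).

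First, I would invoke the $\mathfrak{m}$-Rickart property directly: there must exist $k'\in\mathcal{L}$ with $\ker_\varphi\wedge k'=\mathbf{0}$ and $\ker_\varphi\vee k'=\mathbf{1}$. The essential hypothesis means that $x\wedge \ker_\varphi=\mathbf{0}$ implies $x=\mathbf{0}$ for every $x\in\mathcal{L}$; applying this to $x=k'$ gives $k'=\mathbf{0}$, so $\ker_\varphi=\ker_\varphi\vee k'=\mathbf{1}$.

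Finally, from $\ker_\varphi=\mathbf{1}$ and the isomorphism $\overline{\varphi}\colon[\ker_\varphi,\mathbf{1}]\to[\mathbf{0},\varphi(\mathbf{1})]$ of Definition \ref{linmor}, the domain is the one-point interval $\{\mathbf{1}\}$, so $\varphi(\mathbf{1})=\mathbf{0}$. Combining this with the identity $\varphi(x)=\varphi(x\vee\ker_\varphi)=\varphi(\mathbf{1})=\mathbf{0}$ for all $x\in\mathcal{L}$, we conclude $\varphi=0$, as required.

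There is no real obstacle here: the argument is essentially the observation that an essential element which is also a complement must be the top, a fact that uses only the defining properties of essentiality and complementation in a bounded lattice. This parallels exactly the module-theoretic fact that an essential direct summand of $M$ equals $M$.
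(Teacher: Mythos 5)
Your argument is correct and is essentially the paper's own proof of the preceding lattice-level Lemma: an essential element with a complement must equal $\mathbf{1}$, forcing $\varphi=0$; your write-up just fills in the (routine) details that the paper leaves implicit. The only things left unsaid are the equally routine translation steps — $M$ Baer implies $\Lambda(M)$ is $\mathfrak{E}_M$-Rickart via Proposition \ref{lricmric}, and $\mathfrak{E}_M$-$\mathcal{K}$-nonsingularity of $\Lambda(M)$ is equivalent to $\mathcal{K}$-nonsingularity of $M$ — which the paper also treats as immediate.
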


\begin{lemma}
	Let $\mathcal{L}$ be  a complete modular lattice and let $\mathfrak{m}$ be a submonoid of $\End_{lin}(\mathcal{L})$. If $\mathcal{L}$ is dual-$\mathfrak{m}$-Rickart then $\mathcal{L}$ is $\mathfrak{m}$-$\mathcal{T}$-nonsingular.
\end{lemma}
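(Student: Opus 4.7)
The plan is to argue directly from the definitions, dualizing the proof of the corresponding lemma for $\mathcal{K}$-nonsingularity. Take an arbitrary $\varphi \in \mathfrak{m}$ with $\varphi(\mathbf{1})$ superfluous in $\mathcal{L}$, and aim to show $\varphi = 0$.

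By the dual-$\mathfrak{m}$-Rickart hypothesis, $\varphi(\mathbf{1})$ has a complement $b \in \mathcal{L}$, so $\varphi(\mathbf{1}) \vee b = \mathbf{1}$ and $\varphi(\mathbf{1}) \wedge b = \mathbf{0}$. Since $\varphi(\mathbf{1})$ is superfluous, the equation $\varphi(\mathbf{1}) \vee b = \mathbf{1}$ forces $b = \mathbf{1}$. Substituting into $\varphi(\mathbf{1}) \wedge b = \mathbf{0}$ yields $\varphi(\mathbf{1}) = \mathbf{0}$.

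It remains to conclude $\varphi = 0$. Since $\varphi$ is a linear morphism, Definition \ref{linmor} gives an isomorphism $\overline{\varphi}\colon [\ker_\varphi, \mathbf{1}] \to [\mathbf{0}, \varphi(\mathbf{1})]$. But $\varphi(\mathbf{1}) = \mathbf{0}$ collapses the codomain to $\{\mathbf{0}\}$, so the interval $[\ker_\varphi, \mathbf{1}]$ is trivial; hence $\ker_\varphi = \mathbf{1}$. Then for every $x \in \mathcal{L}$,
\[\varphi(x) = \varphi(x \vee \ker_\varphi) = \varphi(\mathbf{1}) = \mathbf{0},\]
so $\varphi = 0$, proving $\mathfrak{m}$-$\mathcal{T}$-nonsingularity.

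There is really no obstacle here: the argument is the exact dual of the preceding lemma, relying only on the definition of superfluous (small) element, the definition of dual-$\mathfrak{m}$-Rickart, and the structural property of linear morphisms recorded in Definition \ref{linmor}. No hypothesis on $\mathfrak{m}$ beyond being a submonoid with zero is needed.
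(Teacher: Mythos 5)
Your proof is correct and follows the same route as the paper's: use the superfluous condition against the join with a complement to force $\varphi(\mathbf{1})=\mathbf{0}$, then conclude $\varphi=0$. You have merely spelled out the final step (that $\varphi(\mathbf{1})=\mathbf{0}$ forces $\ker_\varphi=\mathbf{1}$ and hence $\varphi=0$) which the paper leaves implicit.
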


\begin{proof}
	Let $\varphi\in\mathfrak{m}$ such that $\varphi(\mathbf{1})$ is superfluous in $\mathcal{L}$. Since $\mathcal{L}$ is dual-Rickart, $\varphi(\mathbf{1})$ has a complement. This implies that $\varphi(\mathbf{1})=\mathbf{0}$ and hence $\varphi=0$.
\end{proof}

\begin{cor}[{\cite[Corollary 2.3]{tutuncu2010dual}}]
	Let $M$ be an $R$-module. If $M$ is dual-Baer, then $M$ is $\mathcal{T}$-nonsingular.
\end{cor}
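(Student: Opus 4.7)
The plan is to deduce this corollary by translating the module-theoretic hypothesis into a lattice statement, applying the preceding lemma, and then translating back. Since every dual-Baer module is dual-Rickart (the analogue at the lattice level is recorded in the Remark after the definition of $\mathfrak{m}$-(Baer/Rickart), and at the module level it is immediate from the definition), it suffices to prove the a priori weaker assertion that every dual-Rickart module is $\mathcal{T}$-nonsingular.

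First I would invoke Proposition \ref{lricmric} to pass from $M$ being dual-Rickart to $\Lambda(M)$ being dual-$\mathfrak{E}_M$-Rickart. Next, I would apply the immediately preceding Lemma with $\mathcal{L}=\Lambda(M)$ and $\mathfrak{m}=\mathfrak{E}_M$ to conclude that $\Lambda(M)$ is $\mathfrak{E}_M$-$\mathcal{T}$-nonsingular. Finally, I would appeal to the Proposition that translates the four $\mathcal{K}/\mathcal{T}$-(co)nonsingularity conditions on $M$ into the corresponding conditions on $\Lambda(M)$ with respect to $\mathfrak{E}_M$; item (2) of that proposition yields that $M$ itself is $\mathcal{T}$-nonsingular.

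There is essentially no obstacle here: the work has already been done in the preceding lattice-theoretic lemma and in the dictionary Propositions \ref{lricmric} and the translation proposition for the (co)nonsingularity notions. The only thing to double check is that the forgetful chain ``dual-Baer $\Rightarrow$ dual-Rickart'' is used in the right place, and that in the translation Proposition the endomorphism $f\in\End_R(M)$ with $f(M)$ small corresponds precisely to the linear endomorphism $f_\ast\in\mathfrak{E}_M$ with $f_\ast(\mathbf{1})=f(M)$ superfluous in $\Lambda(M)$, so that the zero endomorphism of $M$ corresponds to the zero linear morphism $0\in\mathfrak{E}_M$; both are immediate from the definition of $(-)_\ast$.
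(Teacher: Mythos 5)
Your proposal is correct and follows exactly the route the paper intends: the corollary is an immediate consequence of the preceding lattice lemma combined with the dictionary results (Proposition \ref{lricmric} for the dual-Rickart/dual-Baer translation and the proposition identifying $\mathcal{T}$-nonsingularity of $M$ with $\mathfrak{E}_M$-$\mathcal{T}$-nonsingularity of $\Lambda(M)$), together with the trivial implication dual-Baer $\Rightarrow$ dual-Rickart. Your closing check that $f=0$ in $\End_R(M)$ corresponds to $f_\ast=0$ in $\mathfrak{E}_M$ and that smallness of $f(M)$ is exactly superfluity of $f_\ast(\mathbf{1})$ is the only point requiring care, and you have handled it.
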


\begin{lemma}
	Let $\mathcal{L}$ be  a complete modular lattice and let $\mathfrak{m}$ be a submonoid of $\End_{lin}(\mathcal{L})$ containing all the projections. If $\mathcal{L}$ is $\mathfrak{m}$-Baer and $\mathfrak{m}$-$\mathcal{K}$-cononsingular then $\mathcal{L}$ satisfies ($C_1$).
\end{lemma}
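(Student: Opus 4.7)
The plan is to take $x \in \mathcal{L}$ and use the $\mathfrak{m}$-Baer property in the idempotent form of Proposition \ref{baercar}(b): there exists an idempotent $\varepsilon \in \mathfrak{m}$ such that $X_x := \{\varphi \in \mathfrak{m} \mid \varphi(x) = \mathbf{0}\} = \mathfrak{m}\varepsilon$. Since $\varepsilon = \varepsilon\cdot\varepsilon \in X_x$, one has $\varepsilon(x) = \mathbf{0}$, so $x \leq \ker_\varepsilon$. Set $c := \ker_\varepsilon$ and $c' := \varepsilon(\mathbf{1})$; by Proposition \ref{idemcomp}, $c$ and $c'$ are complementary in $\mathcal{L}$, so $c \in C(\mathcal{L})$. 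The candidate for the $(C_1)$ hull of $x$ is $c$.

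It remains to verify that $x$ is essential in $[\mathbf{0}, c]$. I would first reduce this to showing that $x \vee c'$ is essential in $\mathcal{L}$: if $y \leq c$ and $y \neq \mathbf{0}$, then essentiality of $x\vee c'$ gives $(x\vee c')\wedge y \neq \mathbf{0}$, and by modularity
\[(x\vee c')\wedge y = (x\vee c')\wedge c\wedge y = (x\vee (c'\wedge c))\wedge y = x\wedge y,\]
using $x\leq c$ and $c \wedge c' = \mathbf{0}$. Hence $x\wedge y \neq \mathbf{0}$, as required.

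To establish that $x \vee c'$ is essential in $\mathcal{L}$, I apply the $\mathfrak{m}$-$\mathcal{K}$-cononsingularity hypothesis: it suffices to show that every $\varphi \in \mathfrak{m}$ with $\varphi(x \vee c') = \mathbf{0}$ satisfies $\varphi = 0$. From $\varphi(x\vee c') = \mathbf{0}$ one has both $\varphi(x) = \mathbf{0}$ and $\varphi(c') = \mathbf{0}$. The first condition places $\varphi$ in $X_x = \mathfrak{m}\varepsilon$, so $\varphi = \psi\varepsilon$ for some $\psi \in \mathfrak{m}$. Then
\[\varphi(\mathbf{1}) = \psi(\varepsilon(\mathbf{1})) = \psi(c') = \psi\varepsilon(c') = \varphi(c') = \mathbf{0},\]
using that $\varepsilon$ is idempotent so $\varepsilon(c') = \varepsilon\varepsilon(\mathbf{1}) = \varepsilon(\mathbf{1}) = c'$. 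Since a linear morphism with $\varphi(\mathbf{1}) = \mathbf{0}$ is identically zero, we conclude $\varphi = 0$.

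The only subtle point, and the step I expect to be the main obstacle, is the modular computation $(x\vee c')\wedge y = x\wedge y$ for $y \leq c$, which is what allows essentiality in the interval $[\mathbf{0},c]$ to be read off from essentiality in $\mathcal{L}$; everything else is a direct unwinding of the Baer idempotent and the cononsingularity hypothesis.
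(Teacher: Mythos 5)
Your proof is correct and follows essentially the same route as the paper: extract the idempotent $\varepsilon$ from the Baer characterization (Proposition \ref{baercar}), take $c=\ker_\varepsilon$ as the complemented hull, use $\mathfrak{m}$-$\mathcal{K}$-cononsingularity to show $x\vee c'$ is essential in $\mathcal{L}$, and transfer essentiality down to $[\mathbf{0},c]$ by the modular identity $(x\vee c')\wedge y = x\wedge y$ for $y\leq c$. The only cosmetic difference is that you fix $c'=\varepsilon(\mathbf{1})$ and compute $\varphi(\mathbf{1})=\psi(\varepsilon(\mathbf{1}))$ directly, whereas the paper takes an arbitrary complement $k'$ and splits $\varphi(\mathbf{1})=\varphi(\ker_\varepsilon)\vee\varphi(k')$; both computations are valid.
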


\begin{proof}
	Let $a\in\mathcal{L}$. By Proposition \ref{baercar}, there exists an idempotent $\varepsilon\in\mathfrak{m}$ such that $\varepsilon(a)=\mathbf{0}$. Then $a\leq\ker_\varepsilon$ and $\ker_\varepsilon\in C(\mathcal{L})$. We claim that $a$ is essential in $[\mathbf{0},\ker_\varepsilon]$. Let $k'$ be a complement of $\ker_\varepsilon$. Let $\varphi\in\mathfrak{m}$ such that $\varphi(a\vee k')=\mathbf{0}$. Then $\varphi(k')=\mathbf{0}$ and $\varphi(a)=\mathbf{0}$. Since $\varphi(a)=\mathbf{0}$, $\varphi=\varphi\varepsilon$. Therefore, 
	\[\varphi(\mathbf{1})=\varphi(\ker_\varepsilon\vee k')=\varphi(\ker_\varepsilon)\vee\varphi(k')=\varphi\varepsilon(\ker_\varepsilon)\vee\varphi(k')=\mathbf{0}.\]
	Thus, $\varphi=0$. By hypothesis, $a\vee k'$ is essential in $\mathcal{L}$. Now, let $\mathbf{0}\leq b\leq \ker_\varepsilon$ such that $a\wedge b=\mathbf{0}$. Then $(a\vee k')\wedge b\leq (a\vee k')\wedge \ker_\varepsilon=a$. Hence $(a\vee k')\wedge b\leq a\wedge b=\mathbf{0}$. This implies that $b=\mathbf{0}$ proving the claim.
\end{proof}

\begin{cor}[{\cite[Lemma 2.16]{rizvibaer}}]
	Let $M$ be an $R$-module. If $M$ is Baer and $\mathcal{K}$-cononsingular, then $M$ satisfies ($C_1$).
\end{cor}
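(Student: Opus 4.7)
The plan is to use Proposition \ref{baercar} to build the desired complement $c$, and then extract essentiality in $[\mathbf{0},c]$ from $\mathfrak{m}$-$\mathcal{K}$-cononsingularity. Given $a\in\mathcal{L}$, I would first apply Proposition \ref{baercar}(b) to obtain an idempotent $\varepsilon\in\mathfrak{m}$ with
\[\{\varphi\in\mathfrak{m}\mid\varphi(a)=\mathbf{0}\}=\mathfrak{m}\varepsilon.\]
Since $\varepsilon=\varepsilon\cdot\varepsilon\in\mathfrak{m}\varepsilon$, we get $\varepsilon(a)=\mathbf{0}$, so $a\leq\ker_\varepsilon$. By Proposition \ref{idemcomp}, $\ker_\varepsilon$ is a complement in $\mathcal{L}$, so it is a natural candidate for $c$, and the only remaining task is to verify that $a$ is essential in $[\mathbf{0},\ker_\varepsilon]$.

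For this, pick a complement $k'$ of $\ker_\varepsilon$. The key intermediate step is to show that $a\vee k'$ is essential in $\mathcal{L}$, and here I would invoke $\mathfrak{m}$-$\mathcal{K}$-cononsingularity. Suppose $\varphi\in\mathfrak{m}$ satisfies $\varphi(a\vee k')=\mathbf{0}$; then $\varphi(a)=\mathbf{0}$ and $\varphi(k')=\mathbf{0}$. The first equation puts $\varphi$ in $\mathfrak{m}\varepsilon$, so $\varphi=\varphi\varepsilon$, and therefore
\[\varphi(\mathbf{1})=\varphi(\ker_\varepsilon\vee k')=\varphi(\ker_\varepsilon)\vee\varphi(k')=\varphi\varepsilon(\ker_\varepsilon)\vee\mathbf{0}=\mathbf{0},\]
which yields $\varphi=0$. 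By $\mathfrak{m}$-$\mathcal{K}$-cononsingularity, this forces $a\vee k'$ to be essential in $\mathcal{L}$.

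Finally, to pull essentiality down to the interval $[\mathbf{0},\ker_\varepsilon]$, I would take any $b\leq\ker_\varepsilon$ with $a\wedge b=\mathbf{0}$ and use the modular law: since $a\leq\ker_\varepsilon$,
\[(a\vee k')\wedge\ker_\varepsilon=a\vee(k'\wedge\ker_\varepsilon)=a,\]
so $(a\vee k')\wedge b\leq a\wedge b=\mathbf{0}$; essentiality of $a\vee k'$ in $\mathcal{L}$ then gives $b=\mathbf{0}$.

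The main obstacle is the middle step: one has to see that the constraint ``$\varphi$ kills $a$'' packaged by the Baer condition is strong enough, in combination with the choice of complement $k'$ of $\ker_\varepsilon$, to force $\varphi(\mathbf{1})=\mathbf{0}$ whenever $\varphi(a\vee k')=\mathbf{0}$. The rest is just modular-lattice bookkeeping.
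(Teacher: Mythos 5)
Your argument is correct and is essentially the paper's own proof (of the lattice lemma from which this corollary is deduced): the same choice $c=\ker_\varepsilon$ via Proposition \ref{baercar}, the same use of $\mathcal{K}$-cononsingularity applied to $a\vee k'$, and the same modular-law step $(a\vee k')\wedge\ker_\varepsilon=a$ to descend essentiality to $[\mathbf{0},\ker_\varepsilon]$. No gaps.
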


\begin{lemma}
	Let $\mathcal{L}$ be  a complete modular lattice and let $\mathfrak{m}$ be a submonoid of $\End_{lin}(\mathcal{L})$ containing all the projections. If $\mathcal{L}$ is dual-$\mathfrak{m}$-Baer and $\mathfrak{m}$-$\mathcal{T}$-cononsingular then $\mathcal{L}$ satisfies ($D_1$).
\end{lemma}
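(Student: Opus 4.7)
The plan is to dualize the preceding lemma (the one proving \,$\mathfrak{m}$-Baer $+$ $\mathfrak{m}$-$\mathcal{K}$-cononsingular $\Rightarrow$ ($C_1$)). Fix $x\in\mathcal{L}$. I would apply Proposition \ref{dbaercar} to the element $x$: there exists an idempotent $\varepsilon\in\mathfrak{m}$ such that
\[\{\varphi\in\mathfrak{m}\mid\varphi(\mathbf{1})\leq x\}=\varepsilon\mathfrak{m}.\]
Set $c:=\varepsilon(\mathbf{1})$ and $c':=\ker_\varepsilon$. By Proposition \ref{idemcomp}, $c$ and $c'$ are complements of each other in $\mathcal{L}$, so $c\in C(\mathcal{L})$. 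Since $\varepsilon=\varepsilon\cdot\mathrm{id}\in\varepsilon\mathfrak{m}$, we have $c=\varepsilon(\mathbf{1})\leq x$, which gives the first half of condition ($D_1$).

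It then remains to show that $x\wedge c'$ is superfluous in $\mathcal{L}$. Here I invoke $\mathfrak{m}$-$\mathcal{T}$-cononsingularity: it is enough to verify that for every $\varphi\in\mathfrak{m}$ with $\varphi(\mathbf{1})\leq x\wedge c'$, one has $\varphi=0$. So take such a $\varphi$. Since $\varphi(\mathbf{1})\leq x$, by the characterization above there is $\psi\in\mathfrak{m}$ with $\varphi=\varepsilon\psi$, whence
\[\varphi(\mathbf{1})=\varepsilon\psi(\mathbf{1})\leq\varepsilon(\mathbf{1})=c.\]
Combined with $\varphi(\mathbf{1})\leq c'$ and the identity $c\wedge c'=\mathbf{0}$ from Proposition \ref{idemcomp}, this forces $\varphi(\mathbf{1})=\mathbf{0}$, hence $\varphi=0$ (because $\varphi(a)\leq\varphi(\mathbf{1})=\mathbf{0}$ for every $a\in\mathcal{L}$). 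Thus $x\wedge c'$ satisfies the hypothesis of $\mathfrak{m}$-$\mathcal{T}$-cononsingularity and so is superfluous in $\mathcal{L}$, which completes ($D_1$).

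There is essentially no hard step here: the proof is a formal dualization of the $\mathfrak{m}$-Baer/$\mathcal{K}$-cononsingular/($C_1$) lemma, with Proposition \ref{dbaercar} playing the role Proposition \ref{baercar} played there and with $c=\varepsilon(\mathbf{1})$ taking the place that $\ker_\varepsilon$ had in the primal argument. The one spot that warrants a bit of care is confirming that containment in $\varepsilon\mathfrak{m}$ actually implies $\varphi(\mathbf{1})\leq\varepsilon(\mathbf{1})$; this is immediate but is the hinge of the whole argument, so I would state it explicitly rather than leave it to the reader.
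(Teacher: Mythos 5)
Your proof is correct and follows essentially the same route as the paper: apply Proposition \ref{dbaercar} to get the idempotent $\varepsilon$ with $\varepsilon(\mathbf{1})\leq x$, then use the identity $\{\varphi\mid\varphi(\mathbf{1})\leq x\}=\varepsilon\mathfrak{m}$ to show any $\varphi$ with $\varphi(\mathbf{1})\leq x\wedge\ker_\varepsilon$ vanishes, and conclude by $\mathfrak{m}$-$\mathcal{T}$-cononsingularity. The only cosmetic difference is that you derive $\varphi(\mathbf{1})=\mathbf{0}$ from $\varphi(\mathbf{1})\leq\varepsilon(\mathbf{1})\wedge\ker_\varepsilon=\mathbf{0}$, whereas the paper computes $\varphi(\mathbf{1})=\varepsilon\varphi(\mathbf{1})\leq\varepsilon(x\wedge\ker_\varepsilon)=\mathbf{0}$; both are immediate.
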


\begin{proof}
	Let $a\in\mathcal{L}$. By Proposition \ref{dbaercar}, there exists an idempotent $\varepsilon\in\mathfrak{m}$ such that $\varepsilon(\mathbf{1})\leq a$ and $\varepsilon(\mathbf{1})\in C(\mathcal{L})$. We have to show that $a\wedge\ker_{\varepsilon}$ is superfluous in $\mathcal{L}$. Let $\varphi\in\mathfrak{m}$ such that $\varphi(\mathbf{1})\leq a\wedge\ker_\varepsilon$. Then $\varphi=\varepsilon\varphi$. Therefore $\varphi(\mathbf{1})=\varepsilon\varphi(\mathbf{1})\leq \varepsilon(a\wedge\ker_\varepsilon)=\mathbf{0}$. Thus $\varphi=0$. Since $\mathcal{L}$ is $\mathcal{T}$-cononsingular, $a\wedge\ker_\varepsilon$ is superfluous in $\mathcal{L}$.
\end{proof}

\begin{cor}[{\cite[Proposition 2.13]{tutuncu2010dual}}]
	Let $M$ be an $R$-module. If $M$ is dual-Baer and $\mathcal{T}$-cononsingular, then $M$ satisfies ($D_1$).
\end{cor}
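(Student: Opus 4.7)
The plan is simply to apply the preceding lemma with $\mathcal{L} = \Lambda(M)$ and $\mathfrak{m} = \mathfrak{E}_M$, and then translate the lattice-theoretic condition $(D_1)$ back to the module-theoretic $(D_1)$. First I would observe that $\mathfrak{E}_M$ contains all the projections by the Remark following Proposition \ref{modpix}, so the hypothesis on $\mathfrak{m}$ in the lemma is satisfied.

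Next, I would verify the two hypotheses. By Proposition \ref{lricmric} (applied in the Baer case), the assumption that $M$ is dual-Baer is equivalent to $\Lambda(M)$ being dual-$\mathfrak{E}_M$-Baer. By the Proposition giving the lattice characterisations of (co)nonsingularity (item~(4) there), $M$ being $\mathcal{T}$-cononsingular is equivalent to $\Lambda(M)$ being $\mathfrak{E}_M$-$\mathcal{T}$-cononsingular. Thus both hypotheses of the preceding lemma hold for $\Lambda(M)$ and $\mathfrak{E}_M$.

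Applying that lemma yields that $\Lambda(M)$ satisfies $(D_1)$: for every $N \in \Lambda(M)$ there exists $c \in C(\Lambda(M))$ with complement $c'$ such that $c \leq N$ and $N \wedge c'$ is superfluous in $\Lambda(M)$. Since an element of $\Lambda(M)$ is in $C(\Lambda(M))$ exactly when it is a direct summand of $M$, the meet is the intersection of submodules, and superfluous in $\Lambda(M)$ coincides with small in $M$, this is precisely the module-theoretic condition $(D_1)$, and the corollary follows. The translation step is entirely routine; there is no real obstacle here, as all the substantive content has been absorbed into the lemma and into the dictionary between $M$ and $\Lambda(M)$ established earlier in the paper.
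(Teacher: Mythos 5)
Your proposal is correct and is exactly the route the paper intends: the corollary is obtained by specializing the preceding lemma to $\mathcal{L}=\Lambda(M)$ and $\mathfrak{m}=\mathfrak{E}_M$, using Proposition \ref{lricmric}, the equivalence of $\mathcal{T}$-cononsingularity for $M$ with $\mathfrak{E}_M$-$\mathcal{T}$-cononsingularity for $\Lambda(M)$, and the fact that $\mathfrak{E}_M$ contains all projections. The translation of the lattice condition $(D_1)$ back to the module condition is indeed routine, since complements in $\Lambda(M)$ are direct summands and superfluous elements are small submodules.
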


\begin{cor}\label{baercarK}
	The following conditions are equivalent for a complete modular lattice $\mathcal{L}$ and a submonoid $\mathfrak{m}$ of $\End_{lin}(\mathcal{L})$ containing all the projections:
	\begin{enumerate}[label=\emph{(\alph*)}]
		\item $\mathcal{L}$ is $\mathfrak{m}$-$\mathcal{K}$-nonsingular and satisfies ($C_1$).
		\item $\mathcal{L}$ is $\mathfrak{m}$-Baer and $\mathfrak{m}$-$\mathcal{K}$-cononsingular.
	\end{enumerate}
\end{cor}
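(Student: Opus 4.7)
The plan is to simply assemble this corollary from the four lemmas established immediately before its statement, since each direction splits cleanly into two implications whose ingredients have already been proven.

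For the direction (a) $\Rightarrow$ (b), I would start from the assumption that $\mathcal{L}$ is $\mathfrak{m}$-$\mathcal{K}$-nonsingular and satisfies $(C_1)$. The lemma preceding the corollary which shows that $\mathfrak{m}$-$\mathcal{K}$-nonsingularity together with $(C_1)$ implies $\mathfrak{m}$-Baer gives half of (b) directly. For the other half, the earlier lemma stating that $(C_1)$ alone implies $\mathfrak{m}$-$\mathcal{K}$-cononsingularity (using that $\mathfrak{m}$ contains all projections) yields the remaining clause, so (b) follows.

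For the converse (b) $\Rightarrow$ (a), I would assume that $\mathcal{L}$ is $\mathfrak{m}$-Baer and $\mathfrak{m}$-$\mathcal{K}$-cononsingular. The lemma showing that $\mathfrak{m}$-Baer and $\mathfrak{m}$-$\mathcal{K}$-cononsingular together imply $(C_1)$ supplies one clause of (a). For the other clause, I use that every $\mathfrak{m}$-Baer lattice is in particular $\mathfrak{m}$-Rickart (which is immediate from the definitions, by taking $X = \{\varphi\}$) and then apply the lemma which says $\mathfrak{m}$-Rickart implies $\mathfrak{m}$-$\mathcal{K}$-nonsingular. Combining these yields (a).

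There is no real obstacle here; the corollary is meant as a packaging result collecting the four preceding lemmas into one equivalence, much in the spirit of \cite[Theorem 2.12]{rizvibaer} for modules. The only point that merits a remark is that the hypothesis that $\mathfrak{m}$ contains all the projections is needed in three of the four lemmas invoked, so both directions of the equivalence rely essentially on that standing assumption.
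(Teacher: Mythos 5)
Your proof is correct and is exactly the intended argument: the paper states this corollary without proof precisely because it is the assembly of the four preceding lemmas (together with the trivial implication $\mathfrak{m}$-Baer $\Rightarrow$ $\mathfrak{m}$-Rickart noted earlier in the paper) that you describe. Nothing further is needed.
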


\begin{cor}\label{dbaercarT}
	The following conditions are equivalent for a complete modular lattice $\mathcal{L}$ and a submonoid $\mathfrak{m}$ of $\End_{lin}(\mathcal{L})$ containing all the projections:
	\begin{enumerate}[label=\emph{(\alph*)}]
		\item $\mathcal{L}$ is $\mathfrak{m}$-$\mathcal{T}$-nonsingular and satisfies ($D_1$).
		\item $\mathcal{L}$ is dual-$\mathfrak{m}$-Baer and $\mathfrak{m}$-$\mathcal{T}$-cononsingular.
	\end{enumerate}
\end{cor}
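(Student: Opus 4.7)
The plan is to derive this corollary by combining four of the lemmas proved immediately beforehand, exactly as in the structure of Corollary \ref{baercarK}. The two implications are each a short assembly of two previous results, so no new arguments are needed; I only need to make sure each lemma's hypotheses are available at the point of use.

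For (a)$\Rightarrow$(b), I would begin by assuming $\mathcal{L}$ is $\mathfrak{m}$-$\mathcal{T}$-nonsingular and satisfies ($D_1$). The lemma stating that ``$\mathfrak{m}$-$\mathcal{T}$-nonsingular plus ($D_1$) implies dual-$\mathfrak{m}$-Baer'' gives the first half of (b) directly. For the second half, I appeal to the lemma stating that any lattice satisfying ($D_1$) is automatically $\mathfrak{m}$-$\mathcal{T}$-cononsingular (provided $\mathfrak{m}$ contains all the projections, which is part of the global hypothesis of the corollary). Together these give (b).

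For (b)$\Rightarrow$(a), I would assume $\mathcal{L}$ is dual-$\mathfrak{m}$-Baer and $\mathfrak{m}$-$\mathcal{T}$-cononsingular. Since dual-$\mathfrak{m}$-Baer trivially implies dual-$\mathfrak{m}$-Rickart (noted in the remark after the definitions), the lemma asserting that every dual-$\mathfrak{m}$-Rickart lattice is $\mathfrak{m}$-$\mathcal{T}$-nonsingular yields the first half of (a). For the second half, I apply the lemma stating that ``dual-$\mathfrak{m}$-Baer plus $\mathfrak{m}$-$\mathcal{T}$-cononsingular implies ($D_1$)''. Combining these gives (a).

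There is no substantial obstacle here: the corollary is a packaging statement, and the only thing to be careful about is that the hypothesis ``$\mathfrak{m}$ contains all the projections'' is preserved through all four invocations, which it is, since it is exactly the running hypothesis of the statement. The proof therefore reduces to citing Lemmas in the order above.
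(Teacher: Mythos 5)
Your proposal is correct and is exactly the intended argument: the paper gives no separate proof of this corollary precisely because it is the assembly of the four preceding lemmas you cite, in the order you cite them. The hypothesis that $\mathfrak{m}$ contains all the projections is indeed the only thing to track, and it is available at each invocation (and is not even needed for the step "dual-$\mathfrak{m}$-Rickart implies $\mathfrak{m}$-$\mathcal{T}$-nonsingular").
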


\section{Decomposition and direct products of Rickart lattices}\label{decprod}

\begin{prop}
	Let $\mathcal{L}$ be a complete modular lattice and $\{a_1,...,a_n\}$ be a independent family of fully invariant elements of $\mathcal{L}$ such that $\mathbf{1}=\bigvee_{i=1}^na_i$. Then, $\mathcal{L}$ is Rickart if and only if $[\mathbf{0},a_i]$ are Rickart lattices for all $1\leq i\leq n$. 
\end{prop}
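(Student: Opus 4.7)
The plan is to prove the two directions separately, with the substantive work lying in $(\Leftarrow)$.

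For $(\Rightarrow)$, each $a_i$ is a complement in $\mathcal{L}$ with complement $a_i':=\bigvee_{j\ne i}a_j$ (by independence and $\bigvee_ia_i=\mathbf{1}$). Taking $\mathfrak{m}=\End_{lin}(\mathcal{L})$ and $\mathfrak{n}=\End_{lin}([\mathbf{0},a_i])$, the compatibility condition $\iota\psi\pi_{a_i}\in\mathfrak{m}$ required by Proposition~\ref{compintric} holds trivially, since compositions of linear morphisms are linear morphisms. Thus each $[\mathbf{0},a_i]$ is Rickart.

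For $(\Leftarrow)$, fix $\varphi\in\End_{lin}(\mathcal{L})$. Because $a_i$ is fully invariant, $\varphi(a_i)\leq a_i$, so $\varphi$ restricts to an endomorphism $\varphi_i$ of $[\mathbf{0},a_i]$; a short verification using the isomorphism $\overline{\varphi}:[\ker_\varphi,\mathbf{1}]\to[\mathbf{0},\varphi(\mathbf{1})]$ together with modularity shows that $\varphi_i$ is a linear morphism with $\ker_{\varphi_i}=\ker_\varphi\wedge a_i$. By hypothesis, pick for each $i$ a complement $c_i\leq a_i$ of $\ker_\varphi\wedge a_i$ inside $[\mathbf{0},a_i]$ and put $c:=\bigvee_ic_i$. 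Since $c\vee\ker_\varphi\geq c_i\vee(\ker_\varphi\wedge a_i)=a_i$ for each $i$, one obtains $c\vee\ker_\varphi=\mathbf{1}$ immediately.

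The heart of the proof is to show $c\wedge\ker_\varphi=\mathbf{0}$, and this is where full invariance (as opposed to mere independence) plays an essential role. Observe that $a_i'$ is also fully invariant as a finite join of fully invariant elements. Setting $y:=\pi_{a_i}(\ker_\varphi)=(\ker_\varphi\vee a_i')\wedge a_i$, I would compute $\varphi(y)\leq\varphi(\ker_\varphi)\vee\varphi(a_i')\leq\mathbf{0}\vee a_i'=a_i'$ while also $\varphi(y)\leq\varphi(a_i)\leq a_i$, forcing $\varphi(y)\leq a_i\wedge a_i'=\mathbf{0}$, whence $y\leq\ker_\varphi\wedge a_i$. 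Thus $\pi_{a_i}(\ker_\varphi)=\ker_\varphi\wedge a_i$. A direct calculation from independence and modularity yields $\pi_{a_i}(c)=c_i$. Monotonicity of $\pi_{a_i}$ then gives $\pi_{a_i}(c\wedge\ker_\varphi)\leq c_i\wedge(\ker_\varphi\wedge a_i)=\mathbf{0}$, and Proposition~\ref{fipi}(1), applied to the finite independent family $\{a_i\}$, delivers $c\wedge\ker_\varphi\leq\bigvee_i\pi_{a_i}(c\wedge\ker_\varphi)=\mathbf{0}$. The main obstacle is precisely the identity $\pi_{a_i}(\ker_\varphi)\leq\ker_\varphi$; this is where full invariance is indispensable, as the corresponding statement for arbitrary complemented independent decompositions fails.
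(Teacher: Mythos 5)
Your proof is correct and follows essentially the same route as the paper: the key step in both is using full invariance of $a_i$ and of $a_i'=\bigvee_{j\neq i}a_j$ to force $\varphi(\pi_{a_i}(\ker_\varphi))\leq a_i\wedge a_i'=\mathbf{0}$, hence $\pi_{a_i}(\ker_\varphi)=\ker_\varphi\wedge a_i$, and then invoking Proposition~\ref{fipi}. The only difference is that you make explicit the final verification (that $c=\bigvee_i c_i$ is a complement of $\ker_\varphi$, via $\pi_{a_i}(c)=c_i$ and $\pi_{a_i}(c\wedge\ker_\varphi)=\mathbf{0}$), which the paper compresses into the assertion that $\ker_\varphi=\bigvee_i(\ker_\varphi\wedge a_i)$ is a complement in $[\mathbf{0},\bigvee_i a_i]=\mathcal{L}$.
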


\begin{proof}
	$\Rightarrow$ It follows from Proposition \ref{compintric}.
	
	$\Leftarrow$ Suppose $[\mathbf{0},a_i]$ is a Rickart lattice for every $1\leq i\leq n$. Let $\varphi:\mathcal{L}\to\mathcal{L}$ be a linear morphism. For any $1\leq i\leq n$, $\varphi(a_i)\leq a_i$. This implies that $\varphi|:[\mathbf{0},a_i]\to[\mathbf{0},a_i]$ is a linear morphism. Therefore, $\ker_\varphi\wedge a_i$ is a complement in $[\mathbf{0},a_i]$. On the other hand,
	\[\varphi(\pi_{a_i}(\ker_\varphi))=\varphi\left(\left(\ker_\varphi\vee \bigvee_{i\neq j}a_j\right)\wedge a_i \right)\leq \varphi\left(\ker_\varphi\vee \bigvee_{i\neq j}a_j\right)=\varphi\left(\bigvee_{i\neq j}a_j\right).\]
	Hence,
	\[\varphi(\pi_{a_i}(\ker_\varphi))\leq \varphi\left(\bigvee_{i\neq j}a_j\right)\wedge\varphi(a_i)\leq\left(\bigvee_{i\neq j}a_j\right)\wedge a_i=\mathbf{0}.\]
	Thus, $\pi_{a_i}(\ker_\varphi)\leq\ker_\varphi\wedge a_i$ for all $1\leq i\leq n$. This implies that $\ker_\varphi=\bigvee_{i=1}^n\ker_\varphi\wedge a_i$. Then, $\ker_\varphi$ is a complement in $[\mathbf{0},\bigvee_{i=1}^na_i]=\mathcal{L}$.
\end{proof}

\begin{lemma}[{\cite[Proposition 6.8]{calugareanu2013lattice}}]\label{artif}
	Let $\mathcal{L}$ be a complete Artinian or Noetherian lattice. Then there exists an independent family $\{a_i\}_{i=1}^n$ of elements of $\mathcal{L}$ such that $\mathbf{1}=\bigvee_{i=1}^na_i$ and $[\mathbf{0},a_i]$ is indecomposable for all $1\leq i\leq n$.
\end{lemma}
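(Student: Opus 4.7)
The plan is to split into the Artinian and Noetherian cases, both via a chain-condition argument on a suitable set. Let $\mathcal{D}\subseteq\mathcal{L}$ denote the collection of elements $x$ for which $[\mathbf{0},x]$ admits a finite independent decomposition into indecomposable subintervals; that is, $x=\bigvee_{i=1}^{n}a_{i}$ for some independent family $\{a_{i}\}$ with each $[\mathbf{0},a_{i}]$ indecomposable. The empty decomposition gives $\mathbf{0}\in\mathcal{D}$, and the goal is to show $\mathbf{1}\in\mathcal{D}$. A useful closure property, verified by a routine modular-lattice computation (exactly the argument used in Proposition \ref{fipi}), is that whenever $u,v\in\mathcal{D}$ with $u\wedge v=\mathbf{0}$, the concatenated family witnesses $u\vee v\in\mathcal{D}$.

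For the Artinian case I would argue by contradiction. If $\mathbf{1}\notin\mathcal{D}$, then by DCC the set $\mathcal{L}\setminus\mathcal{D}$ has a minimal element $x_{0}$. Since $x_{0}\in\mathcal{L}\setminus\mathcal{D}$, the interval $[\mathbf{0},x_{0}]$ cannot be indecomposable (otherwise the singleton $\{x_{0}\}$ would be a decomposition), so $x_{0}=u\vee v$ with $u,v$ complements in $[\mathbf{0},x_{0}]$ and $\mathbf{0}<u,v<x_{0}$. Minimality of $x_{0}$ forces $u,v\in\mathcal{D}$, and the closure property yields $x_{0}\in\mathcal{D}$, a contradiction.

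For the Noetherian case I would work instead with the subset $S=\{s\in\mathcal{D} : s$ has a complement in $\mathcal{L}\}$, which contains $\mathbf{0}$, and use ACC to pick a maximal element $s_{0}\in S$. Let $s_{0}'$ be a complement of $s_{0}$ in $\mathcal{L}$. If $s_{0}'=\mathbf{0}$ we are done. Otherwise the task is to exhibit some $\mathbf{0}<u\leq s_{0}'$ with $[\mathbf{0},u]$ indecomposable and $u$ complemented in $\mathcal{L}$. Once such a $u$ is in hand, a short modular-lattice calculation shows that $s_{0}\vee u$ still has a complement in $\mathcal{L}$, and appending $u$ to the decomposition of $s_{0}$ produces a new element of $S$ strictly above $s_{0}$ (the combined family is independent because $u\wedge s_{0}\leq s_{0}'\wedge s_{0}=\mathbf{0}$), contradicting maximality.

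The main obstacle is precisely the construction of this $u$, and it is here that ACC really enters. I would iterate: set $u_{1}=s_{0}'$ with complement $c_{1}=s_{0}$ in $\mathcal{L}$. If $[\mathbf{0},u_{n}]$ is not indecomposable, write $u_{n}=u_{n+1}\vee w_{n+1}$ with $u_{n+1}\wedge w_{n+1}=\mathbf{0}$ and $\mathbf{0}<u_{n+1},w_{n+1}<u_{n}$, and set $c_{n+1}=c_{n}\vee w_{n+1}$. A modular-lattice computation using $u_{n}\wedge c_{n}=\mathbf{0}$ and $w_{n+1}\leq u_{n}$ shows that $c_{n+1}$ is a complement of $u_{n+1}$ in $\mathcal{L}$; moreover $w_{n+1}\not\leq c_{n}$, since $w_{n+1}\leq u_{n}$ and $u_{n}\wedge c_{n}=\mathbf{0}$ would otherwise force $w_{n+1}=\mathbf{0}$. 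Hence $c_{1}<c_{2}<\cdots$ is a strictly ascending chain, and ACC forces the procedure to halt at some $n$ with $[\mathbf{0},u_{n}]$ indecomposable, yielding the required $u$.
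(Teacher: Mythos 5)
Your argument is correct, and in fact the paper offers no proof of its own to compare against: Lemma \ref{artif} is simply quoted from \cite[Proposition 6.8]{calugareanu2013lattice}, so your write-up supplies a self-contained proof where the paper has only a citation. Both halves check out: in the Artinian case the DCC-minimal element of $\mathcal{L}\setminus\mathcal{D}$ cannot be indecomposable and cannot decompose, which is the intended contradiction; in the Noetherian case the recursion $u_{n+1}\vee w_{n+1}=u_n$, $c_{n+1}=c_n\vee w_{n+1}$ does produce complements of the $u_n$ in $\mathcal{L}$ (the computation $u_{n+1}\wedge(c_n\vee w_{n+1})=u_{n+1}\wedge u_n\wedge(c_n\vee w_{n+1})=u_{n+1}\wedge w_{n+1}=\mathbf{0}$ uses modularity exactly as you indicate), the chain $c_1<c_2<\cdots$ is strictly increasing, and the resulting indecomposable complemented $u\leq s_0'$ does push $s_0\vee u$ back into $S$. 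Two small remarks. First, your proof uses modularity throughout (juxtaposition of independent families, the complement computations); the lemma as stated in the paper omits the word ``modular,'' but every application in Section \ref{decprod} is to complete modular lattices and the cited source assumes modularity, so this is consistent with the intended reading --- you should just state the hypothesis explicitly. Second, the closure property you invoke (if $\{a_i\}$ and $\{b_j\}$ are independent and $\bigl(\bigvee a_i\bigr)\wedge\bigl(\bigvee b_j\bigr)=\mathbf{0}$ then the union is independent) is the standard juxtaposition lemma for independent families in modular lattices; it is routine, but it is not literally the argument of Proposition \ref{fipi}, so the parenthetical attribution should be replaced by a reference to that standard fact.
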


\begin{defn}
    Let $\mathcal{L}$ be a complete lattice. The \emph{Socle} and the \emph{Radical} of $\mathcal{L}$ are the elements defined as follows:
    \[\Soc(\mathcal{L})=\bigvee\{x\in\mathcal{L}\mid x\text{ is an atom}\},\]
    \[\Rad(\mathcal{L})=\bigwedge\{x\in\mathcal{L}\mid x\text{ is a coatom}\}.\]
    \end{defn}

\begin{lemma}\label{ricind2}
	Let $\mathcal{L}$ be a complete modular lattice suppose $Soc(\mathcal{L})\neq\mathbf{0}$ and $\Rad(\mathcal{L})\neq\mathbf{1}$. Then $\mathcal{L}$ is indecomposable Rickart if and only if $\mathcal{L}\cong 2$ where $2$ is the complete lattice of two elements.
\end{lemma}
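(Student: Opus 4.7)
\textbf{Proof plan for Lemma \ref{ricind2}.}

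The implication ($\Leftarrow$) is immediate: in $2$ the only linear endomorphisms are $\mathrm{id}$ and the zero map, whose kernels are $\mathbf{0}$ and $\mathbf{1}$, both complemented, and the lattice is obviously indecomposable. So the real content is the forward implication, and the plan is to manufacture, from an atom and a coatom, a linear endomorphism whose kernel is the coatom, and then use Rickart plus indecomposability to force that coatom to be $\mathbf{0}$.

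Assume that $\mathcal{L}$ is indecomposable and Rickart. Since $\Soc(\mathcal{L})\neq\mathbf{0}$, there exists an atom $s\in\mathcal{L}$, and since $\Rad(\mathcal{L})\neq\mathbf{1}$, there exists a coatom $m\in\mathcal{L}$. The intervals $[\mathbf{0},s]$ and $[m,\mathbf{1}]$ are both two-element chains, so the map $\theta:[m,\mathbf{1}]\to[\mathbf{0},s]$ sending $m\mapsto \mathbf{0}$ and $\mathbf{1}\mapsto s$ is an isomorphism of lattices.

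By Lemma \ref{isolin}, the composite
\[\varphi = \iota_s\,\theta\,\rho_m : \mathcal{L}\to\mathcal{L}\]
is a linear morphism with $\ker_\varphi = m$. Since $\mathcal{L}$ is Rickart, $m = \ker_\varphi$ has a complement in $\mathcal{L}$. But $\mathcal{L}$ is indecomposable, so the only elements with complements are $\mathbf{0}$ and $\mathbf{1}$; hence $m\in\{\mathbf{0},\mathbf{1}\}$. Since $m$ is a coatom, $m<\mathbf{1}$, forcing $m=\mathbf{0}$. Thus $\mathbf{0}$ itself is a coatom of $\mathcal{L}$, i.e.\ there is no element strictly between $\mathbf{0}$ and $\mathbf{1}$. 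Because $\Soc(\mathcal{L})\neq\mathbf{0}$ guarantees $\mathbf{0}\neq\mathbf{1}$, this yields $\mathcal{L}=\{\mathbf{0},\mathbf{1}\}\cong 2$.

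The only nontrivial step is recognising that the lattice-theoretic data (an atom and a coatom) together with Lemma \ref{isolin} already produce a linear endomorphism whose kernel is a nontrivial fixed element of $\mathcal{L}$; once this is in hand, the Rickart and indecomposability hypotheses collapse the lattice immediately. No use of the full hypothesis on the whole socle or radical is needed beyond the existence of one atom and one coatom.
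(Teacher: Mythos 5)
Your proof is correct and follows essentially the same route as the paper: both construct a linear endomorphism sending everything below a coatom $m$ to $\mathbf{0}$ and everything else to an atom, so that its kernel is $m$, and then use Rickart plus indecomposability to force $m=\mathbf{0}$. The only cosmetic difference is that you justify linearity by invoking Lemma~\ref{isolin} applied to the isomorphism of the two-element intervals $[m,\mathbf{1}]$ and $[\mathbf{0},s]$, whereas the paper defines the same map directly; your version is, if anything, slightly cleaner.
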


\begin{proof}
	By hypothesis, there exist $x,y\in \mathcal{L}$ such that $x$ is a coatom and $y$ is an atom. Then it can be defined the linear morphism $\varphi:\mathcal{L}\to \mathcal{L}$ as 
	\[\varphi(a)=\begin{cases}
	\mathbf{0} & \text{if } a\leq x \\
	y & \text{if } a=\mathbf{1}
	\end{cases}\]
	Hence, $\ker_\varphi=x$. By hypothesis $x$ has to be a complement but $\mathcal{L}$ is indecomposable. Thus $x=\mathbf{0}$ and $y=\mathbf{1}$, that is, $\mathcal{L}\cong 2$. The converse is obvious. 
\end{proof}

\begin{prop}\label{if2}
	Let $\mathcal{L}$ be a complete modular lattice. Then $\mathcal{L}$ is Rickart of finite length if and only if there exists an independent family $\{a_i\}_{i=1}^n$ of elements of $\mathcal{L}$ such that $\mathbf{1}=\bigvee_{i=1}^na_i$ and $[\mathbf{0},a_i]\cong 2$ for all $1\leq i\leq n$.
\end{prop}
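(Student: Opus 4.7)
The plan is to prove both implications by leveraging the Krull--Schmidt-type decomposition from Lemma~\ref{artif} and then invoking the indecomposable case from Lemma~\ref{ricind2}, while for the reverse direction I will need to argue that $\mathcal{L}$ is actually complemented.

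For the forward direction, assume $\mathcal{L}$ is Rickart and of finite length. Finite length gives Artinianity, so Lemma~\ref{artif} produces an independent family $\{a_i\}_{i=1}^n$ with $\mathbf{1}=\bigvee_{i=1}^n a_i$ and each $[\mathbf{0},a_i]$ indecomposable. Each $a_i$ is a complement in $\mathcal{L}$ (witnessed by $\bigvee_{j\neq i} a_j$), and for every $\psi\in\End_{lin}([\mathbf{0},a_i])$ the composite $\iota\psi\pi_{a_i}$ is a linear endomorphism of $\mathcal{L}$ since composition of linear morphisms is linear. Applying Proposition~\ref{compintric} with $\mathfrak{m}=\End_{lin}(\mathcal{L})$ and $\mathfrak{n}=\End_{lin}([\mathbf{0},a_i])$ shows that $[\mathbf{0},a_i]$ is a Rickart lattice. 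Because $[\mathbf{0},a_i]$ is a nontrivial interval of the finite-length lattice $\mathcal{L}$, it has atoms and coatoms, so $\Soc([\mathbf{0},a_i])\neq\mathbf{0}$ and $\Rad([\mathbf{0},a_i])\neq\mathbf{1}$. Lemma~\ref{ricind2} then forces $[\mathbf{0},a_i]\cong 2$.

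For the reverse direction, assume the family exists. Then each $a_i$ is an atom. Applying Proposition~\ref{fipi} to any $x\in\mathcal{L}$ shows $x\leq\bigvee_{i=1}^n \pi_{a_i}(x)$ with each $\pi_{a_i}(x)\in\{\mathbf{0},a_i\}$, and a chain in $[\mathbf{0},\mathbf{1}]$ thus has length bounded by $n$, so $\mathcal{L}$ has finite length. Note also that $\Soc(\mathcal{L})\geq \bigvee_{i=1}^n a_i=\mathbf{1}$. I will deduce Rickartness by showing the stronger statement that every element of $\mathcal{L}$ has a complement. Given $x\in\mathcal{L}$, finite length lets me pick a maximal $y$ with $x\wedge y=\mathbf{0}$. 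If $x\vee y<\mathbf{1}$, the equality $\Soc(\mathcal{L})=\mathbf{1}$ produces some atom $a_i$ with $a_i\nleq x\vee y$; since $a_i$ is an atom, $(x\vee y)\wedge a_i=\mathbf{0}$, and Lemma~\ref{lemmaret} (applied with $a=x$, $b=y$, $c=a_i$) gives $x\wedge(y\vee a_i)=\mathbf{0}$, contradicting maximality of $y$. Hence $y$ is a complement of $x$, and $\mathcal{L}$ being complemented is in particular Rickart.

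The main obstacle is the reverse direction: establishing complementedness from the mere existence of an independent family of atoms summing to $\mathbf{1}$. The $M_3$ example shows that $\mathcal{L}$ need not be the Boolean lattice $2^n$, so a naive attempt to take $\bigvee\{a_i:a_i\nleq x\}$ as a complement fails, and one really has to run the maximality/swap argument; the precise ingredient making it work is Lemma~\ref{lemmaret}, which encodes the modular-law computation needed to enlarge an independent configuration by a fresh atom.
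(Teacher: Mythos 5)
Your forward direction is essentially the paper's argument: Lemma \ref{artif} for the decomposition into indecomposable intervals, Proposition \ref{compintric} to pass Rickartness down to each $[\mathbf{0},a_i]$, and Lemma \ref{ricind2} to force $[\mathbf{0},a_i]\cong 2$; you are in fact more careful than the paper in verifying the socle/radical hypotheses of Lemma \ref{ricind2}. The reverse direction is where you genuinely diverge. The paper proves complementedness by induction on $n$: setting $y=a_1\vee\cdots\vee a_{n-1}$, it exploits $[y,\mathbf{1}]\cong[\mathbf{0},a_n]\cong 2$ to split into the cases $x\leq y$ (induction) and $x\vee y=\mathbf{1}$ (where a complement of $x\wedge y$ inside $[\mathbf{0},y]$ is upgraded to a complement of $x$). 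You instead run the classical atomic/exchange argument: choose $y$ maximal with $x\wedge y=\mathbf{0}$ and, if $x\vee y<\mathbf{1}$, adjoin an atom $a_i\nleq x\vee y$ using Lemma \ref{lemmaret}. That argument is correct and arguably cleaner --- it avoids the induction and isolates exactly the modular-law content in Lemma \ref{lemmaret} --- at the price of needing the ascending chain condition up front to produce the maximal $y$.

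One local slip: your justification of finite length does not quite go through as written. From $x\leq\bigvee_i\pi_{a_i}(x)$ with $\pi_{a_i}(x)\in\{\mathbf{0},a_i\}$ you get $x\leq\bigvee_{i\in S_x}a_i$ with $S_x=\{i\mid\pi_{a_i}(x)=a_i\}$, but $x<x'$ does not force $S_x\subsetneq S_{x'}$ (in $M_3$ with independent atoms $a_1,a_2$ and third atom $c$ one has $S_c=S_{\mathbf{1}}=\{1,2\}$), so chains are not immediately bounded by $n$. The conclusion is still correct and the fix is standard: by modularity and independence, $[a_1\vee\cdots\vee a_{k-1},\,a_1\vee\cdots\vee a_k]\cong[\mathbf{0},a_k]\cong 2$ for each $k$, so $\mathbf{0}<a_1<a_1\vee a_2<\cdots<\mathbf{1}$ is a composition series and $\mathcal{L}$ has length exactly $n$.
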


\begin{proof}
	$\Rightarrow$ By Lemma \ref{artif}, there exists an independent family $\{a_i\}_{i=1}^n$ of elements of $\mathcal{L}$ such that $\mathbf{1}=\bigvee_{i=1}^na_i$ and each interval $[\mathbf{0},a_i]$ is indecomposable. By Proposition \ref{compintric} and Lemma \ref{ricind2}, $[\mathbf{0},a_i]\cong 2$ for all $1\leq i\leq n$.
	
	$\Leftarrow$ It is clear that $\mathcal{L}$ has finite length. We claim that $\mathcal{L}$ is complemented. We prove the claim by induction in the length of the independent family $\{a_i\}_{i=1}^n$. If $n=1$, then $\mathcal{L}=[\mathbf{0},a_1]\cong 2$ which is complemented. Now, suppose that the interval $[\mathbf{0},a_1\vee\cdots\vee a_{n-1}]$ is complemented and $\bigvee_{i=1}^na_i=\mathbf{1}$. Let $x\in\mathcal{L}$. Set $y=a_1\vee\cdots\vee a_{n-1}$ and consider $x\vee y$. We have that $y\leq x\vee y\leq \mathbf{1}$ and by hypothesis $[y,\mathbf{1}]\cong[\mathbf{0},a_n]\cong 2$. This implies that $x\leq y$ or $x\vee y=\mathbf{1}$. If $x\leq y$ then $x$ has a complement by induction hypothesis. Suppose $x\vee y=\mathbf{1}$. Since $x\wedge y\leq y$ then there exists $\mathbf{0}\leq b\leq y$ such that $b\wedge(x\wedge y)=\mathbf{0}$ and $b\vee(x\wedge y)=y$. Therefore $b\wedge x=b\wedge y\wedge x=\mathbf{0}$ and $\mathbf{1}=x\vee y=x\vee(b\vee(x\wedge y))=x\vee((b\vee x)\wedge b)=x\vee b$. Thus, $x$ is a complement proving the claim.
\end{proof}

The last proofs of Lemma \ref{ricind2} and Proposition \ref{if2} can be applied to modules in particular cases as follows.

\begin{cor}\label{ricmodsimp}
	Suppose that
	\begin{enumerate}
		\item $R$ is a left local ring and $M$ is an $R$-module with $\Soc(M)\neq 0$ and $\Rad(M)\neq M$, or
		\item $M$ is a Kasch $R$-module and $\Rad(M)\neq M$, or
		\item $M$ contains an $M$-generated simple module.
	\end{enumerate}
Then, $M$ is indecomposable Rickart if and only if $M$ is simple.
\end{cor}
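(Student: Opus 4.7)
The reverse implication is immediate: a simple module is indecomposable, and by Schur's lemma every endomorphism has kernel $0$ or $M$, both of which are (trivially) direct summands.

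For the forward direction, in each of the three cases the strategy is to mimic the construction in the proof of Lemma \ref{ricind2}: we shall exhibit a nonzero $f\in\End_R(M)$ of the form
\[M \twoheadrightarrow M/N \xrightarrow{\;\cong\;} S \hookrightarrow M,\]
where $N$ is a maximal submodule of $M$ and $S$ is a simple submodule of $M$. Then $\Ker f = N$, and since $M$ is Rickart this kernel is a direct summand of $M$. Because $M$ is indecomposable and $f\neq 0$, we must have $N=0$, whence $M\cong M/N$ is simple.

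The only thing that changes between the three cases is the reason why such a maximal $N$ and simple $S$ with $M/N\cong S\leq M$ exist. In case (1), $\Rad(M)\neq M$ supplies a maximal submodule $N$, $\Soc(M)\neq 0$ supplies a simple submodule $S$, and over a left local ring $R$ every simple left $R$-module is isomorphic to $R/\mathfrak{m}$, so $M/N\cong S$ automatically. In case (2), $\Rad(M)\neq M$ again gives a maximal $N$, and since $M/N$ is a simple object of $\sigma[M]$ the Kasch hypothesis guarantees an embedding $M/N\hookrightarrow M$; take $S$ to be its image. In case (3), a simple $M$-generated submodule $S\leq M$ by definition admits a nonzero (hence surjective) morphism $g\colon M\to S$; set $N=\Ker g$, which is automatically maximal.

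No step is really an obstacle; the only subtlety is recognizing that the hypotheses in (1)--(3) are exactly what is needed to convert the ``coatom + atom'' construction of Lemma \ref{ricind2}, which works for arbitrary linear endomorphisms of $\Lambda(M)$, into an honest $R$-endomorphism of $M$ (this is necessary because, as noted after Proposition \ref{lricmric}, not every linear endomorphism of $\Lambda(M)$ lies in $\mathfrak{E}_M$).
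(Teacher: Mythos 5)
Your proposal is correct and follows exactly the route the paper intends: the authors give no separate argument for this corollary but state that the proof of Lemma \ref{ricind2} "can be applied to modules in particular cases," and your three case analyses supply precisely the module endomorphism $M\twoheadrightarrow M/N\cong S\hookrightarrow M$ needed to realize the coatom--atom linear morphism inside $\mathfrak{E}_M$. Your closing remark correctly identifies why this realization step is the whole content of the corollary.
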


\begin{cor}\label{ricmodss}
	Suppose that
	\begin{enumerate}
		\item $R$ is a local ring and $M$ is an $R$-module of finite length, or
		\item $M$ is a Kasch $R$-module of finite length.
	\end{enumerate}
	Then, $M$ is Rickart if and only if $M$ is semisimple.
\end{cor}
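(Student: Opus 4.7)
The plan is to prove the two implications separately, with the real work in the ``only if'' direction and mirroring the strategy of Proposition \ref{if2}. For the ``if'' direction, when $M$ is semisimple every submodule is a direct summand, so the kernel of any endomorphism is a direct summand and $M$ is even Baer, a fortiori Rickart.

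For the converse I would apply Krull--Schmidt (valid since $M$ has finite length) to decompose $M=M_1\oplus\cdots\oplus M_n$ with each $M_i$ indecomposable. Each summand $M_i$ is then Rickart by the module version of Proposition \ref{compintric}, of finite length, and therefore satisfies $\Rad(M_i)\neq M_i$ and $\Soc(M_i)\neq 0$. The task reduces to showing each such $M_i$ is simple, because then $M=\bigoplus M_i$ is semisimple.

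In case (1), $R$ remains local when restricted to any $M_i$, so the hypotheses of Corollary \ref{ricmodsimp}(1) are satisfied and each $M_i$ is simple. Case (2) is the delicate one, because Kasch-ness of $M$ is not obviously inherited by its direct summands, so one cannot apply Corollary \ref{ricmodsimp}(2) to $M_i$ directly. To handle this I would work globally: pick a maximal submodule $N_i\subset M_i$ with simple quotient $T_i=M_i/N_i\in\sigma[M_i]\subseteq\sigma[M]$; Kasch-ness of $M$ supplies an embedding $j\colon T_i\hookrightarrow M$; and then $f=j\circ q\circ p_i\in\End_R(M)$, with $p_i\colon M\twoheadrightarrow M_i$ and $q\colon M_i\twoheadrightarrow T_i$ the canonical maps, has kernel $N_i\oplus\bigoplus_{k\neq i}M_k$. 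Rickart-ness of $M$ gives a complement $K$ of this kernel, and projecting $K$ onto the $i$-th factor yields $M_i=N_i\oplus p_i(K)$ with $p_i(K)\neq 0$ (otherwise $K\subseteq\Ker f$, contradicting $K\cap\Ker f=0$ and $K\neq 0$). The indecomposability of $M_i$ then forces $N_i=0$, so $M_i$ is simple.

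The main obstacle is precisely case (2): Kasch-ness of $M$ need not pass to the indecomposable summands, so the solution is to build the needed ``witness'' endomorphism on all of $M$ rather than on each $M_i$ separately, using a Kasch embedding of $T_i$ into $M$ (not necessarily into $M_i$) and then leveraging the direct-sum decomposition to recover a decomposition of $M_i$.
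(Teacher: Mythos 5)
Your proof is correct and follows the same overall strategy as the paper: decompose $M$ (finite length) into indecomposable Rickart summands $M_i$, handle the local case via Corollary \ref{ricmodsimp}, and in the Kasch case use an embedding of the simple quotient $M_i/N_i$ into $M$ to force $N_i$ to split off. The one genuine difference is how the Rickart hypothesis is applied in case (2). The paper invokes the \emph{relative} Rickart property --- citing an external result that a direct summand $M_i$ of a Rickart module $M$ is $M$-Rickart, so the kernel $N_i$ of the composite $M_i\twoheadrightarrow M_i/N_i\hookrightarrow M$ is automatically a direct summand of $M_i$. You instead precompose with the projection $p_i:M\to M_i$ to get a genuine endomorphism $f=j\circ q\circ p_i$ of $M$ with $\Ker f=N_i\oplus\bigoplus_{k\neq i}M_k$, take a complement $K$, and verify by hand that $M_i=N_i\oplus p_i(K)$ (the check that $N_i\cap p_i(K)=0$ goes through: if $p_i(k)\in N_i$ for $k\in K$ then $k\in \Ker f\cap K=0$). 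This is a self-contained, more elementary substitute for the relative Rickart lemma; what it costs is a few lines of direct-sum bookkeeping, and what it buys is independence from the cited theorem. Your worry that Kasch-ness does not pass to summands is exactly the point the paper's citation is designed to circumvent, and your global construction circumvents it equally well.
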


\begin{proof}
	Since $M$ has finite length, $M=M_1\oplus\cdots\oplus M_\ell$ for some $\ell>0$ with each $M_i$ an indecomposable Rickart module of finite length. If $R$ is local, the result is clear. Suppose $M$ is a Kasch module and let $N$ be a maximal submodule of $M_i$. Then, there exists an embedding $M_i/N\to M$. It follows from \cite[Theorem 2.6]{leedirect} (see also Proposition \ref{ricdirsumsub} below) that $M_i$ is $M$-Rickart, hence $N$ is direct summand of $M_i$. Thus, $M_i$ is simple.
\end{proof}

\begin{cor}\label{sumric}
	Let $\mathcal{L}$ be a finite modular lattice and $\{a_1,...,a_n\}$ be a independent family of elements of $\mathcal{L}$ such that $\mathbf{1}=\bigvee_{i=1}^na_i$. The following conditions are equivalent:
	\begin{enumerate}[label=\emph{(\alph*)}]
		\item $\mathcal{L}$ is Rickart.
		\item $[\mathbf{0},a_i]$ are Rickart lattices for all $1\leq i\leq n$.
	\end{enumerate}
\end{cor}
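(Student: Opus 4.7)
The plan is to reduce both implications to results that have already been established in the excerpt, namely Proposition \ref{compintric} for (a)$\Rightarrow$(b) and Proposition \ref{if2} for the converse. The payoff of this corollary over the earlier proposition preceding Lemma \ref{artif} is that the elements $a_i$ are no longer assumed to be fully invariant, and finiteness of $\mathcal{L}$ is precisely what will let us work around the absence of that hypothesis.

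For (a)$\Rightarrow$(b), each $a_i$ has complement $\bigvee_{j\neq i}a_j$ in $\mathcal{L}$, so I would apply Proposition \ref{compintric} with $\mathfrak{m}=\End_{lin}(\mathcal{L})$ and $\mathfrak{n}=\End_{lin}([\mathbf{0},a_i])$. The hypothesis that $\iota_{a_i}\psi\pi_{a_i}\in\mathfrak{m}$ for every $\psi\in\mathfrak{n}$ is automatic since compositions of linear morphisms are linear, and the conclusion is exactly that $[\mathbf{0},a_i]$ is Rickart.

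For (b)$\Rightarrow$(a), each interval $[\mathbf{0},a_i]$ is a finite (hence finite-length) Rickart lattice, so the forward direction of Proposition \ref{if2} furnishes, for each $i$, an independent family $\{b_{i,1},\dots,b_{i,k_i}\}$ in $[\mathbf{0},a_i]$ with $a_i=\bigvee_{j=1}^{k_i}b_{i,j}$ and $[\mathbf{0},b_{i,j}]\cong 2$ for each $j$. I would then argue that the combined family $\{b_{i,j}\}_{i,j}$ is independent in $\mathcal{L}$ with join $\mathbf{1}$, so that the converse direction of Proposition \ref{if2} delivers that $\mathcal{L}$ itself is Rickart. The join equality is immediate since $\bigvee_{i,j}b_{i,j}=\bigvee_i a_i=\mathbf{1}$.

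The one piece that requires a short calculation, and is the main obstacle, is the verification of independence of $\{b_{i,j}\}_{i,j}$. Fix $(i,j)$ and set $u=\bigvee_{j'\neq j}b_{i,j'}$ and $v=\bigvee_{i'\neq i}a_{i'}$, so that $\bigvee_{(i',j')\neq(i,j)}b_{i',j'}=u\vee v$. Since $u\leq a_i$ and $a_i\wedge v=\mathbf{0}$ by independence of the $a_i$'s, modularity gives $(u\vee v)\wedge a_i=u\vee(v\wedge a_i)=u$; then, using $b_{i,j}\leq a_i$, we get
\[b_{i,j}\wedge(u\vee v)=b_{i,j}\wedge a_i\wedge(u\vee v)=b_{i,j}\wedge u=\mathbf{0}\]
by independence of $\{b_{i,j'}\}_{j'}$ inside $[\mathbf{0},a_i]$. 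This finishes the argument.
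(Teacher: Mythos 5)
Your proof is correct and follows essentially the same route as the paper: Proposition \ref{compintric} for (a)$\Rightarrow$(b), and for the converse a decomposition of each $[\mathbf{0},a_i]$ via the forward direction of Proposition \ref{if2} followed by an application of its converse direction to the combined family. The only difference is that you explicitly verify the independence of the family $\{b_{i,j}\}$ (correctly, via modularity), a step the paper simply asserts.
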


\begin{proof}
	(a)$\Rightarrow$(b) is clear. For the converse, since each interval $[\mathbf{0},a_i]$ is a Rickart lattice of finite length, there exists an independent family $\{b_{ij}\}_{j=1}^{\ell_i}$ in the interval $[\mathbf{0},a_i]$ such that $\bigvee_{j=1}^{\ell_i}b_{ij}=a_i$ and $[\mathbf{0},b_{ij}]\cong 2$ for all $1\leq j\leq\ell_i$ by Proposition \ref{if2}. Then, the family $\{b_{ij}\mid 1\leq i\leq n,\;1\leq j\leq \ell_i\}$ is an independent family of elements of $\mathcal{L}$ such that $\bigvee\{b_{ij}\mid 1\leq i\leq n,\;1\leq j\leq \ell_i\}=\mathbf{1}$ and $[\mathbf{0},b_{ij}]\cong 2$. It follows from Proposition \ref{if2} that $\mathcal{L}$ is Rickart.
\end{proof}

In general there might be indecomposable Artinian non Noetherian Rickart lattices as the following example shows.

\begin{example}
	Consider the lattice $\mathcal{L}$ given by the following diagram.
	\[\xymatrix{ & \mathbf{1}\ar@{-}[d] &  \\
	 & \vdots\ar@{-}[d] & \\
 	 & c_1\ar@{-}[d] & \\
  	 & c_0\ar@{-}[dl]\ar@{-}[dr] & \\
   	 a\ar@{-}[dr] & & b\ar@{-}[dl] \\
     & \mathbf{0} & }\]
	Then $\mathcal{L}$ is Artinian and indecomposable. Note that any nonzero linear morphism $\varphi:\mathcal{L}\to\mathcal{L}$ must be an isomorphism. Thus, $\mathcal{L}$ is Rickart. 
\end{example}

Given a family of complete modular lattices $\{\mathcal{L}_i\}_I$, the Cartesian product $\prod_I\mathcal{L}_i$ is a complete modular lattice with supremum and infimimum pointwise. The cartesian product is defined as the set of functions 
\[\prod_I\mathcal{L}_i=\left\lbrace f:I\to\bigcup_I\mathcal{L}_i\mid f(i)\in\mathcal{L}_i\right\rbrace \]
Then, $f\leq g$ if and only if $f(i)\leq g(i)$ for all $i\in I$, $(f\wedge g)(i)=f(i)\wedge g(i)$ and $(f\vee g)(i)=f(i)\vee g(i)$. For every $i\in I$, we have the canonical projections $\pi_i:\prod_I\mathcal{L}_i\to\mathcal{L}_i$ defined as $\pi_i(f)=f(i)$.

\begin{prop}
	Let $\{\mathcal{L}_i\}_I$ be a family of complete modular lattices. Then the projections $\pi_i:\prod_I\mathcal{L}_i\to \mathcal{L}_i$ are linear morphisms.
\end{prop}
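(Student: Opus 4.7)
The plan is to exhibit an explicit kernel for each projection and verify the two clauses of Definition \ref{linmor} directly. For each $i \in I$, define $k_i \in \prod_I \mathcal{L}_i$ by
\[
k_i(j) = \begin{cases} \mathbf{1}_{\mathcal{L}_j} & \text{if } j \neq i, \\ \mathbf{0}_{\mathcal{L}_i} & \text{if } j = i. \end{cases}
\]
I claim that $\ker_{\pi_i} = k_i$ and that the element $a'$ of Definition \ref{linmor} is $\mathbf{1}_{\mathcal{L}_i}$.

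For condition (1), since joins in $\prod_I \mathcal{L}_i$ are pointwise, for any $f \in \prod_I \mathcal{L}_i$ one has $(f \vee k_i)(i) = f(i) \vee \mathbf{0}_{\mathcal{L}_i} = f(i)$, and hence $\pi_i(f \vee k_i) = f(i) = \pi_i(f)$.

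For condition (2), I would observe that the interval $[k_i, \mathbf{1}]$ of $\prod_I \mathcal{L}_i$ consists of precisely those $g$ with $g(j) = \mathbf{1}_{\mathcal{L}_j}$ for every $j \neq i$, while the $i$-th coordinate ranges freely over $\mathcal{L}_i$. Since $\pi_i(\mathbf{1}) = \mathbf{1}_{\mathcal{L}_i}$, the target interval is $[\mathbf{0}, \mathbf{1}_{\mathcal{L}_i}] = \mathcal{L}_i$. The restriction $\overline{\pi_i}: [k_i, \mathbf{1}] \to \mathcal{L}_i$, $g \mapsto g(i)$, admits the obvious inverse sending $a \in \mathcal{L}_i$ to the unique element of $[k_i, \mathbf{1}]$ with $i$-th coordinate equal to $a$. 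Because the order, arbitrary joins, and arbitrary meets on $\prod_I \mathcal{L}_i$ are all defined componentwise and the elements of $[k_i, \mathbf{1}]$ differ only in coordinate $i$, both $\overline{\pi_i}$ and its inverse preserve arbitrary suprema and infima. Hence $\overline{\pi_i}$ is an isomorphism of complete lattices, as required by Remark \ref{linmorjoins}.

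There is no genuine obstacle here; the proof is a direct unwinding of the definitions of the product lattice and of linear morphism. The only point needing a moment of care is identifying the correct kernel: it is the element that equals $\mathbf{1}$ in every coordinate other than $i$ and $\mathbf{0}$ in coordinate $i$, not the constant zero element (which would already fail (1), since then $[\ker_{\pi_i}, \mathbf{1}] = \prod_I \mathcal{L}_i$ does not map isomorphically onto $\mathcal{L}_i$ whenever some $\mathcal{L}_j$ with $j \neq i$ is nontrivial).
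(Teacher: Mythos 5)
Your proof is correct and follows essentially the same route as the paper: the same kernel element ($\mathbf{1}$ in every coordinate except $\mathbf{0}$ in coordinate $i$), the same pointwise verification of condition (1), and the same explicit inverse for $\overline{\pi_i}$ (the paper calls it $\rho$, sending $a$ to the tuple with $i$-th coordinate $a$ and $\mathbf{1}$ elsewhere). Your added remark that the isomorphism respects arbitrary joins and meets, as required by Remark \ref{linmorjoins}, is a welcome extra precision.
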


\begin{proof}
	Fix $j\in I$, and consider $\pi_j$. Let $k_j\in\prod_I\mathcal{L}_i$ be the function defined as follows:
	\[k_j(i)=\begin{cases}
	\mathbf{0}\;\text{ if }\;i=j \\
	\mathbf{1}\;\text{ if }\;i\neq j
	\end{cases}\]
	Then $\pi_j(k_j)=k_j(j)=\mathbf{0}\in\mathcal{L}_j$. Also, 
	\[\pi_j(f\vee k_j)=(f\vee k_j)(j)=f(j)\vee k_j(j)=f(j)\vee\mathbf{0}=f(j)=\pi_j(f).\]
	For $a\in \mathcal{L}_j$, define
	\[f_a(i)=\begin{cases}
	a\;\text{ if }\; i=j \\
	\mathbf{1}\;\text{ if }\; i\neq j
	\end{cases}\]
	Let $\rho:\mathcal{L}_j\to [k_j,\mathbf{1}]\subseteq\prod_I\mathcal{L}_i$ given by $\rho(a)=f_a$. It is clear that $\rho$ is a lattice morphism and $k_j\leq f_a$ for all $a\in\mathcal{L}_j$. Consider $k_j\leq g\leq \mathbf{1}$. Then,
	\[\rho\pi_j(g)=\rho(g(j))=f_{g(j)}\]
	and 
	\[f_{g(j)}(i)=\begin{cases}
	g(j)\;\text{ if }\; i=j \\
	\mathbf{1}\;\text{ if }\; i\neq j
	\end{cases}=g(i)\]
	Thus, $\rho\pi_j(g)=g$. Now consider $a\in\mathcal{L}_j$. Then,
	\[\pi_j\rho(a)=\pi_j(f_a)=f_a(j)=a.\]
	Therefore $\pi_j$ induces an isomorphism of lattices $\overline{\pi_j}:[k_j,\mathbf{1}]\to\mathcal{L}_j$. Hence $\pi_j$ is a linear morphism with kernel $\ker_{\pi_j}=k_j$.
\end{proof}

\begin{defn}
	Let $\mathcal{L}$ and $\mathcal{M}$ be lattices. It is said that $\mathcal{L}$ is $\mathcal{M}$-Rickart if $\ker_\varphi$ is complemented in $\mathcal{L}$ for every linear morphism $\varphi:\mathcal{L}\to \mathcal{M}$.
\end{defn}

\begin{prop}\label{ricdirsumsub}
	Let $\mathcal{L}$ and $\mathcal{M}$ be complete modular lattices and suppose $\mathcal{L}$ is $\mathcal{M}$-Rickart. If $a\in\mathcal{L}$ is a complement and $x\in\mathcal{M}$, then $[\mathbf{0},a]$ is $[\mathbf{0},x]$-Rickart.
\end{prop}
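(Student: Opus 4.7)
The plan is to imitate the argument of Proposition \ref{compintric}, adapting it to the ``cross'' situation of linear morphisms going between intervals of two possibly different lattices. Fix a complement $a'$ of $a$ in $\mathcal{L}$, write $\pi_a^\upharpoonright:\mathcal{L}\to[\mathbf{0},a]$ for the corestriction of the projection $\pi_a$ to $[\mathbf{0},a]$, and write $\iota_x:[\mathbf{0},x]\to\mathcal{M}$ for the canonical inclusion; both are linear morphisms.

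Given an arbitrary linear morphism $\psi:[\mathbf{0},a]\to[\mathbf{0},x]$, my first step is to transport it to a linear morphism on $\mathcal{L}$ by forming the composite $\varphi=\iota_x\,\psi\,\pi_a^\upharpoonright:\mathcal{L}\to\mathcal{M}$. A direct computation with Lemma \ref{kerpi} (together with the fact that $\iota_x$ has trivial kernel) will give $\ker_\varphi=\ker_\psi\vee a'$. The hypothesis that $\mathcal{L}$ is $\mathcal{M}$-Rickart then produces a complement $k'$ of $\ker_\varphi$ in $\mathcal{L}$, so that $(\ker_\psi\vee a')\vee k'=\mathbf{1}$ and $(\ker_\psi\vee a')\wedge k'=\mathbf{0}$.

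The second step is to pull $k'$ back to a genuine complement of $\ker_\psi$ inside $[\mathbf{0},a]$. The right candidate is $d:=(k'\vee a')\wedge a$ rather than the naive $k'\wedge a$, because $k'$ is not forced to lie under $a$. The equality $\ker_\psi\vee d=a$ follows by one application of the modular law using $\ker_\psi\leq a$, rewriting $\ker_\psi\vee((k'\vee a')\wedge a)=(\ker_\psi\vee a'\vee k')\wedge a=\mathbf{1}\wedge a=a$. For the meet $\ker_\psi\wedge d=\mathbf{0}$ I would first bound $\ker_\psi\wedge d$ by $(\ker_\psi\vee a')\wedge(k'\vee a')$, and then apply modularity again (with $a'\leq\ker_\psi\vee a'$) to collapse this to $a'\vee(k'\wedge(\ker_\psi\vee a'))=a'\vee\mathbf{0}=a'$; combining with $\ker_\psi\leq a$ gives $\ker_\psi\wedge d\leq a\wedge a'=\mathbf{0}$.

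The only real subtlety is spotting the correct candidate $d=(k'\vee a')\wedge a$; once that is on the table, the verification is a clean pair of modular-law manipulations. Conceptually the proof is essentially Proposition \ref{compintric} with the ambient lattice $\mathcal{L}$ on the target side replaced by $\mathcal{M}$, and with the endomorphism $\iota\psi\pi_a^\upharpoonright\in\End_{lin}(\mathcal{L})$ replaced by the linear morphism $\iota_x\psi\pi_a^\upharpoonright:\mathcal{L}\to\mathcal{M}$.
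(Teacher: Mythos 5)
Your argument is correct and follows the same route as the paper: the paper's (very terse) proof invokes Proposition \ref{exmorf} together with the linearity of the inclusion $[\mathbf{0},x]\to\mathcal{M}$ to transport $\psi$ to a linear morphism $\mathcal{L}\to\mathcal{M}$ with kernel $\ker_\psi\vee a'$, exactly as you do with $\iota_x\psi\pi_a^{\upharpoonright}$. The only difference is that you make explicit the final descent --- producing the complement $(k'\vee a')\wedge a$ of $\ker_\psi$ in $[\mathbf{0},a]$ by two applications of the modular law --- which the paper leaves implicit (compare the corresponding step in Proposition \ref{compintric}, where the candidate is $k'\wedge a$ instead).
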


\begin{proof}
	It follows from Proposition \ref{exmorf} and the fact that the inclusion $\iota:[\mathbf{0},x]\to\mathcal{M}$ is a linear morphism.
\end{proof}

\begin{prop}
	Let $\mathcal{L}$ and $\mathcal{M}$ be complete modular lattices and $\{a_i\}_I$ be a independent family of elements of $\mathcal{M}$ with $\mathbf{1}=\bigvee_{i\in I}a_i$.
	\begin{enumerate}
		\item If $\mathcal{L}$ has CIP, $\mathcal{L}$ is $\mathcal{M}$-Rickart if and only if $\mathcal{L}$ is $[\mathbf{0},a_i]$-Rickart for all $i\in I$ with $I=\{1,...,n\}$.
		\item If $\mathcal{L}$ is upper continuous and has SCIP, $\mathcal{L}$ is $\mathcal{M}$-Rickart if and only if $\mathcal{L}$ is $[\mathbf{0},a_i]$-Rickart for all $i\in I$ for any index set $I$.
	\end{enumerate}
\end{prop}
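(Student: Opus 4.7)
The plan is to reduce both directions to the behaviour of the canonical inclusions $\iota_{a_i}:[\mathbf{0},a_i]\to\mathcal{M}$ and of the projections $\pi_{a_i}:\mathcal{M}\to\mathcal{M}$ (which are available because, by independence, each $a_i$ admits $\bigvee_{j\ne i}a_j$ as a complement in $\mathcal{M}$), and then to close the argument with Proposition~\ref{fipi} and the (S)CIP hypothesis.

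First I would dispatch the easier direction. Assume $\mathcal{L}$ is $\mathcal{M}$-Rickart and fix $i\in I$. Given any linear morphism $\varphi:\mathcal{L}\to[\mathbf{0},a_i]$, the composition $\iota_{a_i}\varphi:\mathcal{L}\to\mathcal{M}$ is again linear, since inclusions of initial intervals are always linear with zero kernel. A direct check shows $\ker_{\iota_{a_i}\varphi}=\ker_\varphi$, so the $\mathcal{M}$-Rickart hypothesis produces a complement in $\mathcal{L}$. This works uniformly for cases (1) and (2).

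For the converse, take an arbitrary linear morphism $\varphi:\mathcal{L}\to\mathcal{M}$ and, for each $i$, let $\overline{\pi_i}:\mathcal{M}\to[\mathbf{0},a_i]$ denote the corestriction of $\pi_{a_i}$ (a linear morphism). Then $\overline{\pi_i}\varphi:\mathcal{L}\to[\mathbf{0},a_i]$ is linear and by hypothesis each $k_i:=\ker_{\overline{\pi_i}\varphi}$ has a complement in $\mathcal{L}$. The heart of the proof is the identity
\[\ker_\varphi \;=\; \bigwedge_{i\in I} k_i.\]
The inclusion $\ker_\varphi\leq\bigwedge_i k_i$ is immediate (apply $\overline{\pi_i}$ to $\varphi(\ker_\varphi)=\mathbf{0}$). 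For the reverse inclusion, suppose $x\leq k_i$ for every $i$; then $\pi_{a_i}(\varphi(x))=\mathbf{0}$ for all $i$, and Proposition~\ref{fipi} applied in $\mathcal{M}$ gives $\varphi(x)\leq\bigvee_{i\in I}\pi_{a_i}(\varphi(x))=\mathbf{0}$, whence $x\leq\ker_\varphi$ by the kernel characterisation of a linear morphism.

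Once the identity is in hand, in case~(1) the family $\{k_i\}$ is finite and CIP in $\mathcal{L}$ makes $\bigwedge_i k_i=\ker_\varphi$ a complement; in case~(2) the same conclusion is obtained from SCIP. The main obstacle I anticipate is the application of Proposition~\ref{fipi}(2) in the infinite case: since $\{a_i\}$ lives in $\mathcal{M}$, that result is really being invoked in $\mathcal{M}$, so the upper-continuity assumption must be read as applying to $\mathcal{M}$ as well (or else one must exploit Remark~\ref{linmorjoins} --- that linear morphisms commute with arbitrary joins --- to weaken the demand). Apart from this point the remainder is routine bookkeeping with linear morphisms.
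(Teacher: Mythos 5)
Your proof is correct and follows essentially the same route as the paper's: the forward direction by composing with the inclusions $\iota_{a_i}$, and the converse by establishing $\ker_\varphi=\bigwedge_{i\in I}\ker_{\pi_{a_i}\varphi}$ via Proposition~\ref{fipi} and then invoking CIP (resp.\ SCIP). Your side remark is also well taken: Proposition~\ref{fipi}(2) is applied inside $\mathcal{M}$, so the upper-continuity hypothesis in part (2) is really a hypothesis on $\mathcal{M}$, a point the paper's statement leaves implicit.
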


\begin{proof}
	(1)$\Rightarrow$ It follows from Proposition \ref{compintric}.
	
	$\Leftarrow$ Let $\varphi:\mathcal{L}\to \mathcal{M}$ be a linear morphism. Then, we have the linear morphisms $\pi_{a_i}\varphi:\mathcal{L}\to [\mathbf{0},a_i]$. By hypothesis, $\ker_{\pi_{a_i}\varphi}$ is complemented in $\mathcal{L}$ for every $i\in I$. We claim that $\ker_\varphi=\bigwedge_I\ker_{\pi_{a_i}\varphi}$. It is clear that $\ker_\varphi\leq\bigwedge_I\ker_{\pi_{a_i}\varphi}$. Suppose $I$ is finite. It follows from Proposition \ref{fipi}.(1) that
	\[\varphi\left( \bigwedge_I\ker_{\pi_{a_i}\varphi}\right)\leq \bigvee_{j\in I}\pi_j\left(\varphi\left( \bigwedge_I\ker_{\pi_{a_i}\varphi}\right) \right)\leq\bigvee_{j\in I}\pi_{a_j}\varphi(\ker_{\pi_{a_j}\varphi})=0.\]
	This proves the claim. Now, since $\mathcal{L}$ has the CIP, $\ker_\varphi$ is complemented in $\mathcal{L}$.
	
	(2) The proof is similar using Proposition \ref{fipi}.(2).
\end{proof}

\begin{prop}
	Let $\{\mathcal{L}_i\}_I$ be a family of complete modular lattices. If $\prod_I\mathcal{L}_i$ is a Rickart lattice, then $\mathcal{L}_i$ is $\mathcal{L}_j$-Rickart for all $i,j\in I$. If $I$ is finite, then the converse is true.
\end{prop}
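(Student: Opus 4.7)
The plan is to handle the forward direction as an immediate application of Proposition \ref{ricdirsumsub}, and the converse by a coordinate-wise decomposition of a linear endomorphism of the product.

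For the forward direction, let $a_i\in\prod_I\mathcal{L}_i$ denote the element equal to $\mathbf{1}$ at coordinate $i$ and $\mathbf{0}$ elsewhere. Then $a_i$ has an obvious complement (the element that is $\mathbf{0}$ at coordinate $i$ and $\mathbf{1}$ elsewhere), and there is a canonical isomorphism $[\mathbf{0},a_i]\cong\mathcal{L}_i$. Applying Proposition \ref{ricdirsumsub} with $\mathcal{L}=\mathcal{M}=\prod_I\mathcal{L}_i$, the complement $a=a_i$, and $x=a_j$, we obtain that $\mathcal{L}_i\cong[\mathbf{0},a_i]$ is $[\mathbf{0},a_j]$-Rickart, i.e., $\mathcal{L}_j$-Rickart.

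For the converse, assume $I$ is finite and each $\mathcal{L}_i$ is $\mathcal{L}_j$-Rickart. Let $\varphi:\prod_I\mathcal{L}_i\to\prod_I\mathcal{L}_i$ be a linear morphism. For each pair $i,j\in I$ define
\[\varphi_{ij}:=\pi_j\circ\varphi\circ\iota_i:\mathcal{L}_i\longrightarrow\mathcal{L}_j,\]
where $\iota_i:\mathcal{L}_i\cong[\mathbf{0},a_i]\hookrightarrow\prod_I\mathcal{L}_i$ is the canonical inclusion and $\pi_j$ is the canonical projection (both linear morphisms, the latter by the preceding proposition). By hypothesis, each $\ker_{\varphi_{ij}}$ is a complement in $\mathcal{L}_i$. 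Since $\mathcal{L}_i$ is in particular $\mathcal{L}_i$-Rickart, hence Rickart, Proposition \ref{riccipssp} tells us that $\mathcal{L}_i$ satisfies CIP; since $I$ is finite, $k_i:=\bigwedge_{j\in I}\ker_{\varphi_{ij}}$ is a complement in $\mathcal{L}_i$, with some complement $k_i'$.

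Define $k,k'\in\prod_I\mathcal{L}_i$ coordinate-wise by $k(i)=k_i$ and $k'(i)=k_i'$. The main thing to check is $\ker_\varphi=k$. Because $k=\bigvee_{i\in I}\iota_i(k_i)$ and $\varphi$ commutes with arbitrary joins by Remark \ref{linmorjoins}, for each $j\in I$ one has
\[\pi_j\varphi(k)=\bigvee_{i\in I}\pi_j\varphi\iota_i(k_i)=\bigvee_{i\in I}\varphi_{ij}(k_i)=\mathbf{0},\]
so $\varphi(k)=\mathbf{0}$. Conversely, by Definition \ref{linmor}, $\ker_\varphi$ is the largest element annihilated by $\varphi$: if $\varphi(y)=\mathbf{0}$ then $\varphi(y\vee\ker_\varphi)=\varphi(y)=\mathbf{0}$ and the isomorphism $\overline{\varphi}$ on $[\ker_\varphi,\mathbf{1}]$ forces $y\leq\ker_\varphi$. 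Thus if $\varphi(m)=\mathbf{0}$ then by the same coordinate computation $\varphi_{ij}(m(i))=\mathbf{0}$ for all $i,j$, whence $m(i)\leq k_i$ and so $m\leq k$. Therefore $\ker_\varphi=k$, and since $k\wedge k'=\mathbf{0}$ and $k\vee k'=\mathbf{1}$ coordinate-wise, $\ker_\varphi$ is a complement in $\prod_I\mathcal{L}_i$.

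The only delicate step is the identification $\ker_\varphi=k$; once this is done via Remark \ref{linmorjoins} and the universal property of kernels in Definition \ref{linmor}, everything reduces to the coordinate-wise formation of complements, which is where finiteness of $I$ is used through CIP.
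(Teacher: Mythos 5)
Your proof is correct and follows essentially the same route as the paper's: the converse is identical (decompose $\varphi$ into the components $\pi_j\varphi\iota_i$, identify $\ker_\varphi$ coordinate-wise using that linear morphisms commute with joins, and invoke CIP from Proposition \ref{riccipssp} to complement each $k_i$). For the forward direction you delegate to Proposition \ref{ricdirsumsub} applied to $\mathcal{L}=\mathcal{M}=\prod_I\mathcal{L}_i$ with the coordinate summands $a_i$, whereas the paper writes out the explicit one-coordinate extension $\psi$ of a given $\varphi:\mathcal{L}_k\to\mathcal{L}_j$; these are the same extension argument packaged differently, and both are valid.
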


\begin{proof}
	Fix $j,k\in I$. Let $\varphi:\mathcal{L}_k\to\mathcal{L}_j$ be a linear morphism. Define $\psi:\prod_I\mathcal{L}_i\to\prod_I\mathcal{L}_i$ as 
	\[\psi(f)(i)=\begin{cases}
	\varphi(f(k))\;\text{ if }\; i=j \\
	\mathbf{0}\;\text{ if }\; i\neq j.
	\end{cases}\]
	It is not difficult to see that $\psi$ is a linear morphism with 
	\[\ker_\psi(i)=\begin{cases}
	\ker_\varphi\;\text{ if }\; i=k \\
	\mathbf{1}\;\text{ if }\; i\neq  k.
	\end{cases}\]
	By hypothesis, $\ker_\psi$ has a complement $f$ in $\prod_I\mathcal{L}_i$. Then $\mathbf{0}=(\ker_{\psi}\wedge f)(k)=\ker_{\psi}(k)\wedge f(k)=\ker_{\varphi}\wedge f(k)$ and $\mathbf{1}=(\ker_{\psi}\vee f)(k)=\ker_{\psi}(k)\vee f(k)=\ker_{\varphi}\vee f(k)$ in $\mathcal{L}_k$. Thus, $\ker_{\varphi}$ is a complement.
	
	Now assume $I=\{1,...,n\}$. Suppose that $\mathcal{L}_i$ is an $\mathcal{L}_j$-Rickart lattice for all $1\leq i,j\leq n$. Let $\varphi:\prod_{i=1}^n\mathcal{L}_i\to\prod_{i=1}^n\mathcal{L}_i$ be a linear morphism. For each $1\leq j\leq n$, consider the inclusions $\iota_j:\mathcal{L}_j\to\prod_{i=1}^n\mathcal{L}_i$ given by
	\[\iota_j(a)(i)=\begin{cases}
	a\;\text{ if }\; i=j \\
	\mathbf{0}\;\text{ if }\; i\neq j.
	\end{cases}\]
	Then we have a linear morphism $\pi_i\varphi\iota_j:\mathcal{L}_j\to\mathcal{L}_i$ for all $1\leq i,j\leq n$. Note that if $(a_1,...,a_n)\in\prod_{i=1}^n\mathcal{L}_i$ then $(a_1,...,a_n)=\bigvee_{i=1}^n\iota_i(a_i)$. Since $\varphi$ is a linear morphism $\varphi(a_1,...,a_n)=\varphi(\bigvee_{i=1}^n\iota_i(a_i))=\bigvee_{i=1}^n\varphi\iota_i(a_i)$ by \cite[Proposition 1.3]{albu2013category}. Hence
	\[\varphi(a_1,...,a_n)=\left(\bigvee_{i=1}^n\pi_1\varphi\iota_i(a_i),...,\bigvee_{i=1}^n\pi_n\varphi\iota_i(a_i) \right).\]
	Thus, $\varphi(a_1,...,a_n)=\mathbf{0}$ if and only if $\pi_j\varphi\iota_i(a_i)=\mathbf{0}$ for all $1\leq i,j\leq n$. Let $k_{ij}=\ker_{\pi_j\varphi\iota_i}$ and let $k_i=\bigwedge_{j=1}^n=k_{ij}$ for $1\leq i,j\leq n$. It is clear that $\varphi(k_1,...,k_n)=\mathbf{0}$. Now, if $\varphi(a_1,...,a_n)=\mathbf{0}$, then $a_i\leq\bigwedge_{j=1}^n=k_{ij}=k_i$. Thus, $\ker_\varphi=(k_1,...,k_n)$. By Proposition \ref{riccipssp}, $\mathcal{L}_i$ has the CIP for all $1\leq i\leq n$. Therefore each $k_i$ has a complement $x_i$ in $\mathcal{L}_i$. Thus $(x_1,...,x_n)$ is a complement for $\ker_\varphi$ in $\prod_{i=1}^n\mathcal{L}_i$.
\end{proof}

\bibliographystyle{acm}
\bibliography{biblio}

\end{document}